\documentclass[reqno]{amsart}

\usepackage[english]{babel}
\usepackage{amsmath}
\usepackage{amsthm}
\usepackage{amsfonts}
\usepackage{amssymb}
\usepackage{anysize}
\usepackage{hyperref}
\usepackage{appendix}
\usepackage{mathtools}
\usepackage{graphicx}
\usepackage{color}
\usepackage{forest} 
\usepackage{subcaption} 
\captionsetup[subfigure]{font=scriptsize,labelfont=scriptsize}
\usepackage{cancel}
\usepackage[normalem]{ulem}
\usepackage{enumitem}

\pagestyle{headings}

\newtheorem{defi}{Definition}[section]
\newtheorem{lem}[defi]{Lemma}
\newtheorem{theo}[defi]{Theorem}
\newtheorem{cor}[defi]{Corollary}
\newtheorem{pro}[defi]{Proposition}

\newtheorem{rem}[defi]{Remark}

\DeclareMathOperator{\divop}{div}

\DeclareMathOperator*{\esssup}{ess~sup}

\DeclareMathOperator{\Law}{Law}
\DeclareMathOperator{\Lip}{Lip}

\DeclareMathOperator{\RR}{\mathbb{R}}

\DeclareMathOperator{\R}{\mathbb{R}}

\newcommand{\E}{{\mathbb E}}
\DeclareMathOperator{\Tree}{\mathsf{T}}
\DeclareMathOperator{\Words}{\mathsf{W}}

\allowdisplaybreaks

\title[Mean-field limit of
non-exchangeable systems]{Mean-field limit of
non-exchangeable systems}

\author[Pierre-Emmanuel Jabin]{Pierre-Emmanuel Jabin}
\address[Pierre-Emmanuel Jabin]{\newline Dept. of Mathematics, Huck institutes, and Excellence Research Unit ``Modeling Nature'', Pennsylvania State University, University Park, PA 16803, USA} \email{pejabin@psu.edu}

\author[David Poyato]{David Poyato}
\address[David Poyato]{\newline Institut Camille Jordan (ICJ), UMR 5208 CNRS \& Universit\'e Claude Bernard Lyon 1, 69100 Villeurbanne, France, and Research Unit ``Modeling Nature'' (MNat), Universidad de Granada, Granada, 18071, Spain} 
\email{poyato@math.univ-lyon1.fr \ davidpoyato@ugr.es}

\author[Juan Soler]{Juan Soler}
\address[Juan Soler]{\newline Departamento de Matem\'atica Aplicada and Research Unit ``Modeling Nature'' (MNat), Facultad de Ciencias, Universidad de Granada, 18071 Granada, Spain}
\email{jsoler@ugr.es}

\begin{document}

\subjclass[2010]{35Q49, 35Q83, 35R02, 35Q70, 05C90, 
60G09, 35R06, 35Q89, 35Q92, 49N80, 92B20, 65N75} 
\keywords{Mean-field limits, Non-exchangeable multi-agent systems, Fully sparse graphs, Vlasov-type PDEs, Many particle complex systems, Synchronization, Emergent phenomena, Collective behaviour}

\thanks{\textbf{Acknowledgment.}  This work has been partially supported by the grants: DMS Grant  1908739, 2049020, 2205694 and 2219397 by the NSF (USA);  by the State Research Agency of the Spanish Ministry of Science and FEDER-EU, project PID2022-137228OB-I00 (MICIU/AEI /10.13039/501100011033); by Modeling Nature Research Unit, Grant QUAL21-011 funded by Consejería de Universidad, Investigaci\'on e Innovaci\'on (Junta de Andalucía);  by the European Union's Horizon Europe research and innovation program under the Marie Sk{\l}odowska-Curie grant agreement No 101064402, and by the European Union’s Horizon 2020 research and innovation program (grant agreement No 865711).}

\begin{abstract}
This paper deals with the derivation of the mean-field limit for multi-agent systems on a large class of sparse graphs. More specifically, the case of non-exchangeable multi-agent systems consisting of non-identical agents is addressed. The analysis does not only involve PDEs and stochastic analysis but also graph theory through a new concept of limits of sparse graphs (extended graphons) that reflect the structure of the connectivities in the network and has critical effects on the collective dynamics. In this article some of the main restrictive hypothesis in the previous literature on the connectivities between the agents (dense graphs) and the cooperation between them (symmetric interactions) are removed.
\end{abstract}

\maketitle

\tableofcontents

\section{Introduction. Multi-agent systems on graphs}\label{sec:introduction}
We derive in this article  the mean-field limit for multi-agent systems posed on a large class of graphs. More precisely, we consider the dynamics of $N$ agents, interacting pairwise through a complex map of connections between them. The information about this connectivity map is encompassed through a weighted, {\it a priori} non-symmetric graph $G_N$ where each vertex represents an agent. Denoting by  $X_i(t)\in \R^d$ 
the state of the $i$-th agent, and by $K$ the interaction kernel, this leads to the system
\begin{equation}\label{eq1}
\left\{
\begin{array}{rll}
\displaystyle \frac{d X_i}{dt} &=&
\displaystyle   \sum_{j=1}^N w_{ij}\,K(X_i-X_j) , \\ 
  X_i (0) &=&X_i^0,
\end{array}
\right.
\end{equation}
where the  $w_{ij}=w_{ij}^N\not=w_{ji}^N$ are the weight of the edge $(i,j)$ of the graph.

While we consider for simplicity deterministic systems like~\eqref{eq1}, our analysis would extend in a straightforward manner to stochastic multi-agent systems with additive noise such as
\begin{equation}\label{eq1stochastic}
dX_i =  \sum_{j=1}^N w_{ij}\,K(X_i-X_j)\,dt+\sigma_N\,dW_i, \\ 
\end{equation}
where the $W_i^t$ are independent Wiener processes.

This paper focuses on the analysis of the behavior of the system as $N\to \infty$, for smooth Lipschitz kernels~$K$ but with minimal assumptions on the connectivities~$w_{ij}$. We only impose scalings that naturally extend the classical notion of mean-field scaling:  First, the total interaction felt by an agent should be of order $1$, that is,
\begin{equation}
\max_{1\leq i\leq N}\sum_{j=1}^N |w_{ij}|=O(1),\quad \max_{1\leq j\leq N}\sum_{i=1}^N |w_{ij}|=O(1),\quad \mbox{as}\  N\rightarrow\infty,\label{meanfieldscaling}
  \end{equation}
ensuring that complex dynamics emerge at time scales of order $1$ as well. Secondly, no single agent is allowed to have a dominant role, that is,
\begin{equation}
\max_{1\leq i,j\leq N} |w_{ij}|=o(1),\quad \mbox{as}\ N\rightarrow\infty,\label{vanishingweights}
  \end{equation}
which, together with \eqref{meanfieldscaling}, implies that a given agent is influenced by the average of a large number of other agents, so that we expect some concept of mean-field limit to persist.

System~\eqref{eq1} is a canonical example of so-called non-exchangeable or non-interchangeable systems. This refers to the fact that \eqref{eq1} is not symmetric under permutation: If $(X_i(t))_{1\leq i\leq N}$ solves \eqref{eq1} and $\sigma\in \mathcal{S}_N$ is any permutation, then $(X_{\sigma(i)}(t))_{1\leq i\leq N}$ is not in general a solution to~\eqref{eq1}. Instead the same permutation~$\sigma$ would need to be applied to the connectivities~$w_{ij}$, thus modifying the system. Another way of formulating this is that agents are not identical because the effect of the $j$-th on the $i$-th agent may be very different from the effect on the $k$-the agent: One could have $w_{ij}\gg 1$ but $w_{ik}\approx 0$ for example.

Classically, studies of many-particle systems have often focused on identical agents and exchangeable systems which corresponds to choosing~$w_{ij}=1/N$ in~\eqref{eq1}. The question of the mean-field limits for exchangeable systems where all agents are identical is now rather well understood for Lipschitz interaction kernels, with even significant progress on singular kernels; see for example the reviews~\cite{golse,Jab,JW2}. The case of non-exchangeable systems and non-identical agents had remained comparatively less explored but recently started to receive a lot of attention because of their critical importance for many applications and their connections to diverse areas of Mathematics. The analysis of non-exchangeable multi-agent systems indeed not only involves PDEs and stochastic analysis but also graph theory. On the other hand, multi-agent systems are also found in a large number of fields from their classical applications in Physics to the Bio-sciences, Social Sciences or Economics. In many of those settings, agents naturally appear as non-identical: The structure of the connectivities in the network has proved to have critical effects on the dynamics, see among many~\cite{PhPaChVi,PJ, PiShPaShLiChSi,W-S,WC}, and the role of the network of interactions is often critical. We present one example of such application in Appendix \ref{neurons-a} which is based on models for the dynamics of biological neurons for which we briefly refer for example to
\cite{Compte,FPZ,H,MG1,MG2,sporns,WJ}.

The main result of this paper is to incorporate the complex non-interchangeable graph structures defined by $w_{ij}$ into a simple limiting Vlasov equation
\begin{equation} \label{vlasov-eq-lim}
\partial_t f(t,x,\xi)+\divop_x\,\left(f(t,x,\xi) \,\int_0^1 w(\xi,d\zeta)\int_{\R^d} K(x-y)\, f(t,dy,\zeta)\,\right)=0,
\end{equation}
through the extended graphon $w(\xi,d\zeta)$.

In \eqref{vlasov-eq-lim}, $f$ describes the probability of finding an agent at position $x \in \R^d $, in a certain state of interaction with the network $\xi \in [0,\ 1]$, at time $ t> 0$. This means that the role of an agent can be represented by adding an extra one-dimensional variable. In this way, we associate to the discrete connectivities $w_{ij}$ a function $w$, or some more general object, which accounts for the effect of the architecture of the complex original system \eqref{eq1} as the number $N$ of agents is large. This type of graphon-like representation had previously been explored in particular in~\cite{CM1,CM2,CMM,KalMed,Medvedev2,Medvedev} or  in~\cite{GkoKueh,KueXu}. However those results required more stringent assumptions on the connectivities, so that for example \cite{CM1,CM2,CMM,KalMed} only apply to dense graphs or symmetric interactions, whilst \cite{GkoKueh,KueXu} allow for some sparse graphs, all of them requiring an {\it a priori} knowledge on how weights are generated as to discretize a prescribed limiting object. Of course, this way of prescribing weights provides some additional convergence, which is essential in these results. The case of sparse random graphs such as the famous Erd\"os-R\'enyi graphs, has also been treated for example in~\cite{Cop22,CLP23,Medvedev}. In these references one finds the common assumption that $Np_N\to \infty$, where $p_N\in(0,1)$ represents the probability of drawing an edge. This condition characterizes the generalized mean-field scaling \eqref{meanfieldscaling}-\eqref{vanishingweights} on this class of sparse random graphs. Some even sparser Erd\"os-R\'enyi graphs with $Np_N=O(1)$ have also been handled in~\cite{LRW23, ORS20}, but this time they do not satisfy the generalized mean-field scaling  \eqref{meanfieldscaling}-\eqref{vanishingweights} and lead instead to diffusive systems. While graphons are capable of handling limits of some sparse graphs \cite{BCCZ1,BCCZ2}, this usually requires some sort of renormalization of $w_{ij}$.

One of the major contributions of the present article is to  offer a unified framework to derive the mean-field limit without any assumptions, structural or not, outside of the extended mean-field scaling provided by the assumptions~\eqref{meanfieldscaling}-\eqref{vanishingweights}. This comes with considerable difficulties and impose a completely new strategy to obtain the limit equation.
\subsection{Mean--field limits for identical weights}
Before going further in the technical aspects of our work, we recall here the main notions for the idea of classical mean--field limit in exchangeable systems, corresponding to the simple case~$w_{ij}=\bar w/N$, 
\begin{equation} \label{eqm1}
\left\{\begin{array}{rll}
\displaystyle\frac{d {X}_i}{dt} &=&\displaystyle  \frac{\bar w}{N}\sum_{j=1}^N {K}( {X}_i - {X}_j),\\
\displaystyle  {X}_i(0)&=& {X}_{i}^{0}.
\end{array}\right.
\end{equation}
A critical question for any multi-agent system is how to choose initial data for~\eqref{eqm1}, since it is not feasible in almost any applications to individually identify or measure every single $X_i^0$. Classical assumptions rely on the notion of molecular chaos by imposing that the $X_i^0$ be independent and identically distributed ({\it i.i.d.}) random variables (or almost {\it i.i.d.} in some appropriate sense) according to an initial probability distribution~$f^0\in \mathcal{P}(\mathbb{R}^d)$.

This leads to the key notion of propagation of chaos, which consists in proving that the $X_i(t)$ solving~\eqref{eqm1} still remain approximately {\it i.i.d.} (again in some appropriate measure). For exchangeable systems such as~\eqref{eqm1}, propagation of chaos often then directly implies that the limit as $N\to\infty$ of the 1-particle distribution of the system solves the mean-field equation
\begin{equation}\label{eqm4-1}
\partial_t f(t,x)+\divop_x\left( \bar w \, f(t,x)\, \int_{\RR^d} {K}( {x} - y)\,f(t,dy)\right)=0.
\end{equation}
We refer to the seminal results in~\cite{BH,Do,NW} and later~\cite{spohn} for the derivation of~\eqref{eqm4-1} for $K\in W^{1,\infty}(\RR^d)$. As mentioned earlier, the case of non Lipschitz kernels corresponds to many realistic applications, it is much more difficult and still poorly understood to some degree. An extensive discussion would however carry us too far from the main goals of this article and we only briefly refer again to~\cite{golse,Jab,JW2}.

A useful notion for system~\eqref{eqm1} is the so-called empirical measure which is given by,
\begin{eqnarray*}\label{eqm2}
\mu^N(t, {x}):=\frac{1}{N}\sum_{i=1}^N\delta_{{X}_i(t)}(x),
\end{eqnarray*}
for every $t\geq 0$. In the deterministic case, the empirical measure is itself already a solution in distributional sense to the mean-field equation \eqref{eqm4-1}, which offers a straightforward approach to deriving the mean-field limit as $N\rightarrow\infty$, for Lipschitz kernels~$K$, through a stability analysis of Eq.~\eqref{eqm4-1} in the space of measures.  Specifically, the empirical measure offers a very simple way to formulate and understand the mean-field limit in terms of the weak convergence of the random measure~$\mu^N$ to the deterministic limit~$f$. Classical results for Lipschitz kernels for example imply that
  \[
\|\mu_N-f\|_{W^{-1,1}}\leq \|\mu_N^0-f\|_{W^{-1,1}}\,e^{t\,\bar w \|\nabla K\|_{L^\infty}}.
  \]
Some major differences are immediately apparent for non-exchangeable systems like~\eqref{eq1}. First of all, because agents are non-identical, it may not be realistic to assume that they are initially identically distributed.  We will still assume that the $X_i^0$ are independent though as this is essential for the reduction in complexity leading to~\eqref{eqm4-1}. Instead of propagation of chaos, we consequently have to deal with the weaker notion of {\em propagation of independence}.

It is possible to extend some of the classical approaches to prove this propagation of independence (as we do in the proof later). Unfortunately propagation of independence no longer directly implies the limit~\eqref{eqm4-1} for general non-exchangeable systems, creating major difficulties. The one exception occurs in the special case where the degree of each node of the graph is independent of $i$, that is, 
\[
\sum_{j=1}^N w_{ij}=\bar w,
\]
for all $i=1,\ldots,N$. In that setting, it is possible to still derive the classical mean-field limit~\eqref{eqm4-1}; see \cite{DGL} for {\it i.i.d} initial positions $X_i^0$, and to \cite{CDG} for the stability for this  initial distribution.
For general connectivities~$w_{ij}$ however, new ideas and new methods seem to be needed.
\subsection{The use of graphons for multi-agent systems}
%
Graphons offer a non-parametric method of modeling and estimating large networks and are constructed as limits of sequences of dense graphs. There now exists a large literature dedicated to graphons for which we refer to the seminal \cite{BCKL,BCLSV06,BCLSV2,BCLSV3,Lo}.

To give a rough idea of how graphons play a critical role in non-exchangeable systems, consider a sequences of dense graphs $G_N$ with nodes indexed from $1$ to $N$ and adjacency matrix $(w_{ij}^N)_{1\leq i,j\leq N}$ with the key assumption that
\begin{equation}
w_{ij}^N=\frac{\bar w_{ij}^N}{N}\quad \mbox{with}\quad  \max_{1\leq i,j\leq N}\bar w_{ij}^N=O(1),\quad \mbox{as}\  N\rightarrow\infty.\label{densegraph}
  \end{equation}
For a fixed $N$, it is straightforward to define the graphon over $[0,\ 1]^2$
\begin{equation}
w_N(\xi,\zeta)=\sum_{i,j=1}^N N w_{ij}^N\, \mathbb{I}_{[\frac{i-1}{N},\frac{i}{N})}(\xi)\,\mathbb{I}_{[\frac{j-1}{N},\frac{j}{N})}(\zeta).\label{graphon}
  \end{equation}
One directly observes that $w_N$ is uniformly bounded in $L^\infty([0,\ 1]^2)$ but a fundamental property behind graphons is that $w_N$ can be made compact in the appropriate distance after re-arrangements. More precisely, there exists a measure-preserving map $\phi_N$ on $[0,\ 1]$ such that the rearranged object $w_N(\phi_N(\xi),\phi_N(\zeta))$ is compact in the norms of operators from $L^\infty([0,\ 1])$ to $L^1([0,\ 1])$; this norm is equivalent to the so-called {\it cut metric}, see \cite{Lo}.  Identifying $w$ with its associated adjacency operator $\phi\in L^\infty([0,\ 1])\mapsto w(\phi)=\int_0^1 \phi(\zeta)\,w(\cdot,\zeta)\,d\zeta\in L^1([0,\ 1])$, its operator norm \(\| \cdot \|_{L^\infty \to L^1}\) is defined by 
$$\|w\|_{L^\infty \to L^1} = \sup_{\|\phi\|_{L^\infty} \leq 1} \|w(\phi)\|_{L^1}=\sup_{\Vert \phi\Vert_{L^\infty},\Vert \psi\Vert_{L^\infty}\leq 1}\int_{[0,\ 1]^2}\psi(\xi)\,\phi(\zeta)\,w(\xi,\zeta)\,d\xi\,d\zeta.$$ 
The previous compactness property of $w_N$ in the operator norm under the assumption that $w_N$ are uniformly bounded in $L^\infty([0,\ 1]^2)$  was initially established in~\cite{LS} using a weak version of the Szemer\'edi Regularity Lemma~\cite{sze} and a martingale argument.

This turns out to be a key property for the purpose of deriving the mean-field limit. For example, it provides a natural way to obtain weak stability for Eq.~\eqref{vlasov-eq-lim}. Namely, consider a sequence of weak solutions $f_N$ to \eqref{vlasov-eq-lim} with weights $w_N$, where $w_N$ are uniformly bounded in $L^\infty([0,\ 1]^2)$ and $f_N$ are uniformly bounded in $L^\infty([0,\ t_*]\times [0,\ 1],\ W^{1,1}\cap W^{1,\infty}(\R^d))$, for all $t_*>0$. Assume further that $f$ is any other solution to \eqref{vlasov-eq-lim} with weights $w$ lying in the same regularity class as above. We refer to Proposition \ref{existenceweak} for the precise well-posedness result of \eqref{vlasov-eq-lim}. Then, we can infer the stability estimate
\begin{equation}\label{E-weak-stability-graphons}
\frac{d}{dt}\int_0^1\int_{\mathbb{R}^d}|f_N-f|\,dx\,d\xi\leq C_1\int_0^1\int_{\mathbb{R}^d}|f_N-f|\,dx\,d\xi+C_2\,\Vert w_N-w\Vert_{L^\infty\to L^1},
\end{equation}
where $C_1,C_2$ are constants depending on the norms of $f$, the uniform bound of the norm of $w_N$ and the $L^1$ and Lipschitz norms of $K$. This estimate can easily be derived formally and how to obtain it rigorously goes beyond the limited technical scope of this introduction. We just mention that, since $K$ is Lipschitz, \eqref{E-weak-stability-graphons} can also follow from the notion of solutions in Proposition \ref{existenceweak} as long as $f_N$ and $f$ are smooth in $t$ and $x$: if $f_N,\,f\in W^{1,\infty}_{t,x} L^\infty_\xi$ for example, then we can use any $\chi(f_N-f)$ for $\chi$ smooth as a test function on \eqref{vlasov-eq-lim}.

Consequently, using the above theory of dense graph limits, one can set the measure-preserving maps $\phi_N$ on $[0,\ 1]$ so that the re-arranged objects $w_N(\phi_N(\xi),\phi_N(\zeta))$ converge (up to subsequence) in the operator norm, and also set $w$ to be the obtained limit. By doing so, the second term in the right hand side of \eqref{E-weak-stability-graphons} converges to zero and, assuming well-prepared initial data, we can pass to the limit as $N\rightarrow\infty$ in $f_N(t,x,\phi_N(\xi))$ and obtain a solution $f(t,x,\xi)$ to~\eqref{vlasov-eq-lim}. As a consequence, we can determine the limit of the $1$-particle distribution given by~$\int_0^1 f_N(t,x,\xi)\,d\xi$:
\[
\int_0^1 f_N(t,x,\xi)\,d\xi=\int_0^1 f_N(t,x,\phi_N(\xi))\,d\xi\to \int_0^1 f(t,x,\xi)\,d\xi, \quad \mbox{as}\ N\to\infty.
\]
We refer to~\cite{CM1,CM2,CMM,KalMed} in particular for a complete analysis that this brief sketch cannot do justice to. An alternative and more primitive approach to derive macroscopic limits for \eqref{eq1} was obtained in \cite{Medvedev2}, again for graphons $w\in L^\infty([0,\ 1]^2)$ and Lipschitz kernesl $K$, in terms of the graph-limit equation
\begin{equation}\label{E-graph-limit-equation}
\partial_t X(t,\xi)=\int_0^1 w(\xi,\zeta)\,K(X(t,\xi)-X(t,\zeta))\,d\zeta.
\end{equation}
Here, $X\in C^1(\mathbb{R}_+,L^\infty([0,\ 1])$ can be interpreted as a time-evolving parametrization of the continuum of agents by a continuous index $\xi\in [0,\ 1]$. Whilst a large portion of the literature sticks to this simpler formulation, we remark that \eqref{E-graph-limit-equation} corresponds to a special class of solutions to \eqref{vlasov-eq-lim} having the form $f(t,x,\xi)=\delta_{X(t,\xi)}(x)$, but clearly not all solutions of \eqref{vlasov-eq-lim} come from solutions to \eqref{E-graph-limit-equation}.

The critical issue  to implement a similar stability estimate like \eqref{E-weak-stability-graphons} in our case is that our assumptions~\eqref{meanfieldscaling}-\eqref{vanishingweights} of course cannot ensure that the weights $w_{ij}$ satisfy~\eqref{densegraph}. This concerns both dense and sparse graphs: sparse graph sequences in general will not satisfy~\eqref{densegraph}, but also many dense graph sequences may fail to satisfy~\eqref{densegraph}. As we noted above, one may still use graphons to characterize limits of some sparse graph sequences. For example, for sparse deterministic graphs \cite[Theorem 2.8]{BCCZ1} shows that any $C$-upper $L^p$ regular sequence of
weighted graphs with $p>1$ converges to some $w\in L^p([0,\ 1])$ in the cut distance. For sparse random graphs, \cite[Theorem 2.14, Corollary 2.15]{BCCZ1} derives a general convergence result of the sequence of $w$-random sparse graphs generated by any $w \in L^1([0,\ 1])$ toward $w$ itlsef in the cut distance. In both cases, a suitable renormalization on the graphon is needed. One fundamental advantage of the theory developed in this paper is that it applies to all sparse graphs which satisfy~\eqref{meanfieldscaling}-\eqref{vanishingweights}. However, our analysis and motivation are not limited to extend previous results to sparse graph sequences.

Various extensions of graphons have been proposed to handle less stringent assumptions than~\eqref{densegraph}. We mention in particular the aforementioned extension to {\it $L^p$ graphons} in~\cite{BCCZ1,BCCZ2}, the more general operator-based extension in~\cite{BacSze} based on {\it graphops}, and the measure-theoretic extension in \cite{KunLovSze} based on {\it s-graphons}. Each of the three classes of continuum objects contain the family of finite graphs, and they can be endowed with a compact metric space structure under a certain topology, similarly to what it was done for graphons. We emphasize that the above assumption~\eqref{meanfieldscaling} represents a scaling of weights $w_{ij}$ and only provides uniform bounds for $w_N$ in the mixed spaces $L^\infty_\xi\mathcal{M}_\zeta=L^\infty_\xi([0,\ 1],\ \mathcal{M}_\zeta([0,\ 1])$ and $L^\infty_\zeta\mathcal{M}_\xi=L^\infty_\zeta([0,\ 1],\ \mathcal{M}_\xi([0,\ 1])$. The scaling of weights in $L^\infty_\xi\mathcal{M}_\zeta \cap L^\infty_\zeta\mathcal{M}_\xi$ corresponds to a continuous versions of the same discrete version~\eqref{meanfieldscaling} of the scaling and suggests defining a new class of continuum objects which we shall call {\it extended graphons}, see next section and Definition \ref{D-extended-graphons} below for further details. A similarly scaled extension was developed in~\cite{KunLovSze}, based on {\it digraph measures} or bounded everywhere defined families of measures, and is discussed more at length after Theorem~\ref{maintheorem}. Closely related is also the scaling in~\cite{Lucon}, where the author considered limits of random graphs sampled from graphons not necessarily bounded but belonging to the mixed space $L^\infty_\xi([0,\ 1],L^2_\zeta([0,\ 1]))$. In contrast with classical graphons, our measure-valued definition of extended graphons reflects, in part, the sparsity of the network (as they are intimately related to graphops, s-graphons, and digraph measures), but also, the microscopic inhomogeneity of the network. A fundamental example of this lies in constant graphons, which are associated with homogeneous graphs. We refer to Section \ref{subsec:example-convergence} for some examples sparse and inhomogeneous graph sequences which converge to extended graphons under our notion of convergence. 

\begin{figure}[t]
\def\svgwidth{0.6\textwidth}
\centering
\begingroup%
  \makeatletter%
  \providecommand\color[2][]{%
    \errmessage{(Inkscape) Color is used for the text in Inkscape, but the package 'color.sty' is not loaded}%
    \renewcommand\color[2][]{}%
  }%
  \providecommand\transparent[1]{%
    \errmessage{(Inkscape) Transparency is used (non-zero) for the text in Inkscape, but the package 'transparent.sty' is not loaded}%
    \renewcommand\transparent[1]{}%
  }%
  \providecommand\rotatebox[2]{#2}%
  \newcommand*\fsize{\dimexpr\f@size pt\relax}%
  \newcommand*\lineheight[1]{\fontsize{\fsize}{#1\fsize}\selectfont}%
  \ifx\svgwidth\undefined%
    \setlength{\unitlength}{408.64993928bp}%
    \ifx\svgscale\undefined%
      \relax%
    \else%
      \setlength{\unitlength}{\unitlength * \real{\svgscale}}%
    \fi%
  \else%
    \setlength{\unitlength}{\svgwidth}%
  \fi%
  \global\let\svgwidth\undefined%
  \global\let\svgscale\undefined%
  \makeatother%
  \begin{picture}(1,0.77642154)%
    \lineheight{1}%
    \setlength\tabcolsep{0pt}%
    \put(0,0){\includegraphics[width=\unitlength,page=1]{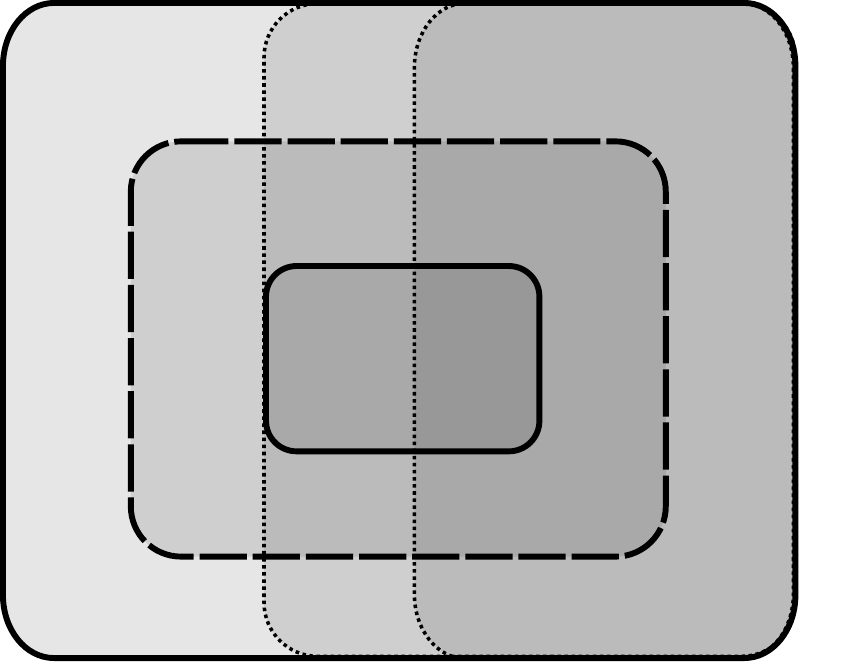}}%
    \put(0.37854123,0.35241791){\color[rgb]{0,0,0}\makebox(0,0)[lt]{\lineheight{1.25}\smash{\begin{tabular}[t]{l}graphons \cite{Lo}\end{tabular}}}}%
    \put(0.67156341,0.05300539){\color[rgb]{0,0,0}\makebox(0,0)[lt]{\lineheight{1.25}\smash{\begin{tabular}[t]{l}s-graphons \cite{KunLovSze}\end{tabular}}}}%
    \put(0.05039916,0.6809039){\color[rgb]{0,0,0}\makebox(0,0)[lt]{\lineheight{1.25}\smash{\begin{tabular}[t]{l}graphops \cite{BacSze}\end{tabular}}}}%
    \put(0.182568,0.52040951){\color[rgb]{0,0,0}\makebox(0,0)[lt]{\lineheight{1.25}\smash{\begin{tabular}[t]{l}{\bf (symmetric) extended graphons}\end{tabular}}}}%
    \put(0.35720001,0.68129722){\color[rgb]{0,0,0}\makebox(0,0)[lt]{\lineheight{1.25}\smash{\begin{tabular}[t]{l}(symmetric) digraph measures \cite{KueXu}\end{tabular}}}}%
  \end{picture}%
\endgroup%

\caption{Comparison of the various graph limit theories.}
\label{fig:graphons-comparison}
\end{figure}

The goal of this paper is not to provide a complete classification of the existing theories of graph limits. However, the above classes of objects can be compared, and so has been done in more dedicated literature, {\it e.g.} \cite{BacSze,KunLovSze} and also \cite{GkoKueh,KueXu}. Without any intention of comparing their topologies, the previous classes of objects are related as sketched in Figure \ref{fig:graphons-comparison}, at least when restricted to symmetric weights. We note that extended graphons account for an intermediate degree of sparsity as they occupy an intermediate position between the class of graphons and graphops. This fact will become clearer later in Section \ref{sec:hierarchy}, and more particularly in the operator representation of extended graphons in Lemma \ref{bilinearbound}. However, we do emphasize the topology that we use on those extended graphons is very distinct from other known objects.
\subsection{Our new result}
The main contribution of this paper is to provide the mean-field limit with only assumptions~\eqref{meanfieldscaling}-\eqref{vanishingweights}. This first requires a careful definition of the space for limiting kernel which we call {\it extended graphons} and we denote by~$L^\infty_\xi \mathcal{M}_\zeta\cap L^\infty_\zeta \mathcal{M}_\xi$; namely~$w\in L^\infty_\xi \mathcal{M}_\zeta\cap L^\infty_\zeta \mathcal{M}_\xi$ iff
\begin{equation}
w(\xi,d\zeta)\in L^\infty([0,\ 1],\ \mathcal{M}([0,\ 1])),\quad w(d\xi,\zeta)\in L^\infty([0,\ 1],\ \mathcal{M}([0,\ 1])),\label{deflimitingspace}
\end{equation}
where we define the space~$L^\infty([0,\ 1],\ \mathcal{M}([0,\ 1]))$ as the topological dual to the Banach space~$L^1([0,\ 1], $ $C([0,\ 1]))$, endowed with the corresponding weak-* topology.

We are now ready to state our main result,
  \begin{theo} \label{maintheorem}
Assume that the interaction kernel $K$ belongs to $W^{1,1}\cap W^{1,\infty}(\R^d)$ and consider a sequence $(X_i(t))_{1\leq i\leq N}$, solving the multi-agent system~\eqref{eq1} for connection weights satisfying only~\eqref{meanfieldscaling}-\eqref{vanishingweights}.  Assume moreover that the initial data $X_i^0$ are independent random variables, but not necessarily identically distributed,  and denote their laws by $f_i^0$. Finally assume that 
\[
\sup_{N\in \mathbb{N}} \sup_{1\leq i\leq N}\mathbb{E}[\vert X_i^0\vert^2]<\infty,\quad \sup_{N\in \mathbb{N}} \sup_{1\leq i\leq N} \|f_i^0\|_{W^{1,1}\cap W^{1,\infty}(\R^d)}<\infty.
\]
Then, there exist $w\in L^\infty_\xi \mathcal{M}_\zeta\cap L^\infty_\zeta \mathcal{M}_\xi$ and $f\in L^\infty([0,\ t_*]\times[0,\ 1],\;W^{1,1}\cap W^{1,\infty}(\R^d))$, for any $t_*>0$, such that $f$ solves~\eqref{vlasov-eq-lim} with $w$ and, up to the extraction of a subsequence,
\[
\sup_{0\leq t\leq T}\,\mathbb{E}\,W_1\left(\int_0^1 f(t,\cdot,\xi)\,d\xi,\;\mu_N(t,\cdot)\right)\to 0,
\]
as $N\to \infty$, where $W_1$ is the usual Wasserstein distance acting on the subset of $\mathcal{P}(\mathbb{R}^d)$ with finite first order moments, and $\mu^N(t,x):=\frac{1}{N}\sum_{i=1}^N \delta_{X_i(t)}(x)$ are the empirical measures associated to $(X_i(t))_{1\leq i\leq N}$.
 \end{theo}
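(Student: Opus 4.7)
My plan proceeds in four stages. \textbf{Step 1 (Propagation of independence).} On the same probability space where $(X_i)$ is defined I introduce the auxiliary independent process $(\bar X_i(t))_{1\leq i\leq N}$ by $\bar X_i(0)=X_i^0$ and
\[
\frac{d\bar X_i}{dt}=\sum_{j=1}^N w_{ij}\int_{\R^d}K(\bar X_i-y)\,\bar f_j^N(t,dy),\qquad \bar f_j^N(t):=\Law(\bar X_j(t)).
\]
By construction the $\bar X_i$ remain mutually independent and each marginal $\bar f_i^N$ satisfies its own nonlocal continuity equation, so the uniform $W^{1,1}\cap W^{1,\infty}$ bounds on $f_i^0$ propagate to $\bar f_i^N(t)$ on $[0,t_*]$ via standard Lipschitz-characteristics arguments. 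A synchronous coupling with $K\in W^{1,\infty}$ and a variance computation using the conditional independence of the $\bar X_j$ yield, via Gronwall,
\[
\frac{1}{N}\sum_{i=1}^N\E|X_i(t)-\bar X_i(t)|\leq C e^{Ct}\Bigl(\max_{i,j}|w_{ij}|\cdot\max_i\sum_j|w_{ij}|\Bigr)^{1/2}\longrightarrow 0
\]
uniformly on $[0,t_*]$, by \eqref{meanfieldscaling}--\eqref{vanishingweights}.

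\textbf{Step 2 (Graphon embedding and compactness).} Setting $I_i^N:=[\tfrac{i-1}{N},\tfrac{i}{N})$, I define
\[
w_N(\xi,d\zeta)\big|_{\xi\in I_i^N}:=N\sum_{j=1}^N w_{ij}\,\mathbb{I}_{I_j^N}(\zeta)\,d\zeta,\qquad f_N(t,x,\xi):=\bar f_i^N(t,x)\text{ for }\xi\in I_i^N.
\]
A direct calculation shows that $f_N$ solves \eqref{vlasov-eq-lim} with weight $w_N$. Hypothesis \eqref{meanfieldscaling} gives uniform bounds for $w_N$ in $L^\infty_\xi\mathcal{M}_\zeta\cap L^\infty_\zeta\mathcal{M}_\xi$, and the regularity of the $\bar f_i^N$ gives uniform bounds for $f_N$ in $L^\infty_{t,\xi}(W^{1,1}\cap W^{1,\infty}(\R^d))$. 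Banach--Alaoglu extracts a subsequence with $w_N\stackrel{*}{\rightharpoonup} w$ in the topology of \eqref{deflimitingspace}; combining with $x$- and $t$-compactness inherited from the Lipschitz velocity field produces a candidate limit $f\in L^\infty([0,t_*]\times[0,1],W^{1,1}\cap W^{1,\infty}(\R^d))$ by a diagonal argument.

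\textbf{Step 3 (Passage to the limit --- the main obstacle).} The decisive step is to identify
\[
\int_0^1 w_N(\xi,d\zeta)\int_{\R^d}K(x-y)\,f_N(t,dy,\zeta)\ \longrightarrow\ \int_0^1 w(\xi,d\zeta)\int_{\R^d}K(x-y)\,f(t,dy,\zeta).
\]
The obstruction is that weak-$*$ convergence of $w_N$ in $L^\infty_\xi\mathcal{M}_\zeta$ naturally tests against $L^1_\xi C_\zeta$ functions, whereas $\zeta\mapsto f_N(t,\cdot,\zeta)$ is merely a step function without any \emph{a priori} equicontinuity in $\zeta$, so $K*f_N$ is not admissible as a test function; this is precisely why the direct stability estimate \eqref{E-weak-stability-graphons} of the dense graphon setting breaks down. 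I would attack this by combining mollification in $\zeta$ with the uniform Lipschitz bound on $K*f_N$ in $x$, then closing the loop via the new topology underlying the class of extended graphons, where the two factors are coupled simultaneously rather than treated separately. Once this is accomplished, $f$ solves \eqref{vlasov-eq-lim} with weight $w$.

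\textbf{Step 4 (Empirical measure).} The coupling in Step~1 gives $\E\,W_1\bigl(\mu^N(t),\tfrac{1}{N}\sum_i\delta_{\bar X_i(t)}\bigr)\leq \tfrac{1}{N}\sum_i\E|X_i(t)-\bar X_i(t)|\to 0$. Since the $\bar X_i(t)$ are independent with uniformly bounded second moments, a Fournier--Guillin-type concentration (in its non-identically-distributed version) yields $\E\,W_1\bigl(\tfrac{1}{N}\sum_i\delta_{\bar X_i(t)},\tfrac{1}{N}\sum_i\bar f_i^N(t)\bigr)\to 0$. Since $\tfrac{1}{N}\sum_i\bar f_i^N(t)=\int_0^1 f_N(t,\cdot,\xi)\,d\xi\to\int_0^1 f(t,\cdot,\xi)\,d\xi$ in $L^1$ by Step~3, the triangle inequality for $W_1$ concludes the proof.
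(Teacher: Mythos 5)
Your Steps~1, 2 and~4 match the paper's structure almost exactly: the McKean coupling with the $\bar X_i$ dynamics, the graphon embedding of the $\bar f_i^N$ and $w_{ij}$, and the final triangle inequality through a Fournier--Guillin-type concentration bound for independent, non-identically distributed variables. These parts are sound.

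The genuine gap is in Step~3, and it is the heart of the whole problem. You correctly identify the obstruction --- no compactness in $\zeta$ for $f_N$, so $K\star f_N$ is not an admissible test function against the weak-$*$ convergence of $w_N$ --- but then propose to resolve it by ``mollification in $\zeta$ combined with the Lipschitz bound in $x$'' and ``closing the loop via the new topology.'' This cannot work as stated. Mollification in $\zeta$ destroys the information you need: the $f_N$ are step functions with no equicontinuity in $\zeta$ across $N$, so $L_\varepsilon\star_\zeta f_N$ differs from $f_N$ by an amount that does not go to zero as $\varepsilon\to0$ uniformly in $N$. Nor does the Lipschitz bound in $x$ help, since the obstruction lives entirely in the $\xi,\zeta$ variables. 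There is also no $t$- or $x$-compactness argument that can produce a strong limit $f$ of $f_N$ in any $L^p_\xi$ sense, which your Step~2 silently presumes. The paper's explicit position is that one \emph{cannot} pass to the limit directly in the PDE for $f_N$, and the resolution is an entirely different mechanism that your plan does not contain: (i)~introduce the observables $\tau(T,w_N,f_N)$ indexed by trees (Definition~\ref{D-tau}), which average over $\xi$ and are bounded in $W^{1,1}\cap W^{1,\infty}$ by Lemma~\ref{L-tau-bound}; (ii)~show they solve a closed, linear, tree-indexed hierarchy (Proposition~\ref{hierarchyeq}); (iii)~prove uniqueness/stability of that hierarchy by an artificial-viscosity argument in $\ell^2$-weighted norms (Theorems~\ref{stabilitydiff} and~\ref{stabilitynonviscous}); and (iv)~identify the limit of $\tau(T,w_N,f_N)$ as $\tau(T,w,f)$ for \emph{some} $(w,f)$ by constructing measure-preserving re-arrangements $\Phi_N$ through a Szemer\'edi-type regularity lemma (Lemma~\ref{kl}) applied simultaneously to all elements of the algebra $\mathcal{T}$ of transforms (Definition~\ref{defalgebra}, Lemma~\ref{lem:compactness-limits}, Theorem~\ref{lg}). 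Your extraction of $w$ by Banach--Alaoglu alone is not the $w$ the theorem uses; the paper must first re-arrange $w_N$ into $\tilde w_N$ via $\Phi_N$ before taking weak-$*$ limits, and $f$ is obtained as the limit of the re-arranged $\tilde f_N$, not of $f_N$ itself. Without the tree-observable hierarchy and the rearrangement compactness lemma, Step~3 cannot be completed.
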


Compared to the existing literature, Theorem~\ref{maintheorem} is the only result so far capable of handling the mean-field limit where we only assume assumptions~\eqref{meanfieldscaling}-\eqref{vanishingweights} on the connections $w_{ij}$ without any other structural assumptions. As mentioned earlier in the introduction, when the graph has some specific structure, Erd\"os-R\'enyi for example, other approaches exist that only require the scaling~\eqref{meanfieldscaling}-\eqref{vanishingweights} \cite{Cop22,CLP23,LRW23,Medvedev,ORS20}. We do not obtain classical graphons since  the limiting kernel $w$ only belongs to $L^\infty_\xi \mathcal{M}_\zeta\cap L^\infty_\zeta \mathcal{M}_\xi$, which is a natural space corresponding to the scaling in~\eqref{meanfieldscaling}. As mentioned above, a space with a similar scaling is the space  of  digraph measures $B([0,\ 1], \mathcal{M}([0,\ 1])$. This space was inspired in the s-graphons~\cite{KunLovSze}, specifically introduced to handle sparse graphs, and it was used in~\cite{KueXu} for mean-field limits. There are a few key differences with the present analysis, as  digraph measures $B([0,\ 1], \mathcal{M}([0,\ 1])$ consists of everywhere defined in $\xi$ bounded measures in $\zeta$ while our extended graphons in $L^\infty_\xi \mathcal{M}_\zeta\cap L^\infty_\zeta \mathcal{M}_\xi$ are only almost everywhere defined in $\xi$ (or in $\zeta$). The use of extended graphons offers several advantages when passing to the limit. In particular, in comparison~\cite{KueXu} appears to require an improved continuous dependence in $\xi$ of the digraph measures that further restricts the type of connectivities that can be used with respect to~\eqref{meanfieldscaling}-\eqref{vanishingweights}. 

But our notion of extended graphons in~$L^\infty_\xi \mathcal{M}_\zeta\cap L^\infty_\zeta \mathcal{M}_\xi$ does lead to some major issues, as it does not seem possible to pass to the limit in Eq.~\eqref{vlasov-eq-lim} directly. That imposes a new strategy briefly outlined in the sketch of proof in the next subsection. Some of that technical difficulty can be glimpsed from the fact that it is not yet clear in which sense Eq.~\eqref{vlasov-eq-lim} is posed with only $w\in L^\infty_\xi \mathcal{M}_\zeta\cap L^\infty_\zeta \mathcal{M}_\xi$. The main issue is the non-linear term of course since $f$ is only  essentially bounded in $\xi$ with no additional regularity, and only {\it a.e.} defined. More specifically, it is not immediately obvious how to make sense of terms like
\[
\xi\in [0,\ 1]\mapsto \int_0^1 \phi(\zeta)\,w(\xi,d\zeta),
\]
for $\phi\in L^\infty([0,\ 1])$. This is where the proper definition of $L^\infty_\xi \mathcal{M}_\zeta\cap L^\infty_\zeta \mathcal{M}_\xi$ is needed and one reason why we require $w$ to belong both to $L^\infty_\xi \mathcal{M}_\zeta$ and to $L^\infty_\zeta \mathcal{M}_\xi$ (which would be automatic if $w$ was symmetric). A careful analysis then allows proving that the above function is correctly defined in $L^\infty([0,\ 1])$ with 
\[
\left\|\int_0^1 \phi(\zeta)\,w(\xi,d\zeta)\right\|_{L^\infty_\xi}\leq \|w\|_{L^\infty_\xi \mathcal{M}_\zeta\cap L^\infty_\zeta \mathcal{M}_\xi}\,\|\phi\|_{L^\infty},
\]
as we prove later in Lemma~\ref{bilinearbound}. Note that the above also allow identifying any $w\in L^\infty_\xi \mathcal{M}_\zeta\cap L^\infty_\zeta \mathcal{M}_\xi$ with a bounded linear operator $L^\infty([0,\ 1])\to L^\infty([0,\ 1])$, and therefore this explains the above-mentioned relation in Figure \ref{fig:graphons-comparison} between extended graphons and graphops.

  We also note that our proofs could imply an even more general formulation of Theorem~\ref{maintheorem}. For example the assumptions $f_i^0\in L^1\cap L^\infty(\R^d)$ would be enough, and it should even be possible to further relax some of the assumptions on the graphs with
      \[
\frac{1}{N}\sum_{i=1}^N \left(\sum_{j=1}^N |w_{ij}|\right)^p=O(1),\quad \frac{1}{N}\sum_{i=1}^N \sup_{1\leq j\leq N} |w_{ij}|^{1/p} =o(1),\quad \mbox{as}\  N\rightarrow\infty,
      \]
      for some $p>1$, instead of \eqref{meanfieldscaling}-\eqref{vanishingweights}.

As in classical mean-field limits, independence of initial data is the key to the reduction of complexity provided by Theorem~\ref{maintheorem}. We assumed for simplicity that the $X_i^0$ were fully independent but it would be straightforward to relax this condition by having them approximately independent.      
      
Theorem~\ref{maintheorem} obtains the limit up to the extraction of a subsequence in $N$, since it makes no assumptions on the convergence of the initial data and it then amounts to a compactness result. Nevertheless, it is possible to derive the convergence of the whole sequence but the formulation is trickier than usual because of the role played by the connectivities~$w_{ij}$. The first necessary assumption is naturally the convergence of the full sequence of empirical measures: There exists $\bar f^0\in W^{1,\infty}(\R^d)$ such that
\begin{equation}
\mathbb{E}\,W_1\left(\bar f^0,\;\mu_N^0\right)\to 0,\quad \mbox{as}\ N\to \infty.\label{convergenceinitialempirical}
\end{equation}
Alternatively, since $X_i^0$ are independent, one may assume convergence of the $1$-particle distribution
\[
W_1\left(\bar f^0,\;\frac{1}{N}\,\sum_{i=1}^N f_i^0 \right) \to 0,\quad \mbox{as}\ N\to \infty.
\]
However \eqref{convergenceinitialempirical} is not enough and some additional convergence
of the $w_{ij}$ is also required.

Theorem~\ref{maintheorem} opens up new avenues to the analysis but still leaves some important questions unresolved. The main and obvious issue concerns the Lipschitz assumption on the kernel~$K$. This is a critical issue for many applications which involve some form of singular interactions. Among several examples, we mention the integrate and fire system for biological neurons that we briefly describe in Appendix~\ref{neurons-a}.  Integrate and fire models only very loosely fit within the multi-agent framework that we focus on in this paper  so that extending the present analysis to integrate and fire model would be a highly non trivial extension.

The Lipschitz regularity of $K$ plays a critical role in the first step of our analysis when propagating independence. It does not appear as essential in the next, more complex steps of our proofs. Replacing the classical trajectory methods to obtain propagation of independence could allow obtaining an equivalent of Theorem~\ref{maintheorem} with less stringent assumptions on~$K$.

\section{Notations, basic definitions, examples and sketch of the proof}
\subsection{Notations}
We denote by $\mathcal{M}(E)$ the space of finite Radon measures on $E$, where here the underlying space $E$ could be $\R^d$, the interval $[0,\ 1]$, or also some product of those spaces like $\R^d\times [0,\ 1]$ or $[0,\ 1]^2$. The space $\mathcal{P}(E)$ denotes the subspace of probability measures on $E$, that is measures $\mu\in \mathcal{M}(E)$ that are non-negative, with mass $1$.

For a given measure $\mu$, we denote by $\mu(x)$ the abstract notion and by $\mu(dx)$ when integrating against $\mu$. For example, the Dirac mass at $0$ is defined by $\mu(x)=\delta_0(x)$ and we write
\[
\int_E \phi(x)\,\mu(dx)=\int_E \phi(x)\,\delta_0(dx)=\phi(0),
\]
for all continuous function $\phi$.

This notation offers a convenient way of representing marginals for measures of several variables. For example, if $\mu\in \mathcal{M}(E\times E)$, then $\nu_1(x)=\int_E \mu(x,dy)$ simply denotes the first marginal of $\mu$, that is,
\[
\nu_1(O)=\mu(O\times E),
\]
for any measurable subset $O\subset E$, while $\nu_2(y)=\int_E \mu(dx,y)$ represents the second marginal of $\mu$.

We recall that $\mathcal{M}(E)$ is the topological dual of $C_0(E)$, the space of continuous functions with vanishing limit at infinity (more precisely, $C_0(\R^d)$ for $E=\R^d$ and $C([0,\ 1])$ for $E=[0,\ 1]$). Unless otherwise specified, we always use the corresponding implied weak-* topology on $\mathcal{M}(E)$. Specifically, we have that  $\mu_n\to\mu$ for the weak-* topology if, and only if,
\[
\int_E \phi(x)\,\mu_n(dx)\to \int_E \phi(x)\,\mu(dx),
\]
for all $\phi\in C_0(E)$. We also recall that, by the the Banach–Alaoglu theorem, any ball of finite radius of $\mathcal{M}(E)$ is precompact for this topology. If $E$ is compact, for example $E=[0,\ 1]$, then $\mathcal{P}(E)$ is a compact metric space for the weak-* topology.
\subsection{The space $L^\infty([0,\ 1],\ \mathcal{M}([0,\ 1])$}
We define the space $L^\infty([0,\ 1],\ \mathcal{M}([0,\ 1]))$ as the topological dual of the Bochner space $L^1([0,\ 1],\ C([0,\ 1]))$. So defined $L^\infty([0,\ 1],\ \mathcal{M}([0,\ 1]))$ is not a Bochner space because $\mathcal{M}([0,\ 1])$ fails the Radon–Nikodym property \cite{BT,DU}. However, by a variant of the Riesz representation theorem operating in the absence of the Radon-Nikodym property \cite{IT-77} we have that the dual space $L^\infty([0,\ 1],\ \mathcal{M}([0,\ 1]))$ amounts to a weak-* Bochner space. Specifically, objects $w\in L^\infty([0,\ 1],\ \mathcal{M}([0,\ 1]))$ are alternatively identified with maps $\xi\in [0,\ 1]\longmapsto w(\xi,\zeta)\in \mathcal{M}([0,\ 1])$ which are weakly-* measurables and such that there exists $C>0$ such that
$$\left\Vert \int_0^1 \phi(\zeta)\,w(\cdot,d\zeta)\right\Vert_{L^\infty([0,\ 1])}\leq C\,\Vert \phi\Vert_{C([0,\ 1])},$$
for all $\phi\in C([0,\ 1])$. We refer to Definition \ref{D-weak-star-Bochner-space} for further details. The best constant $C$ above defines the norm on the weak-* Bochner space $L^\infty([0,\ 1],\ \mathcal{M}([0,\ 1]))$ and, by the above duality theorem, it is equivalent to the norm of $L^\infty([0,\ 1],\ \mathcal{M}([0,\ 1]))$ as a dual space. The corresponding duality reads
\[
\left<w,\phi\right>=\int_{[0,\ 1]^2} \phi(\xi,\zeta)\,w(\xi,d\zeta),
\]
for all $w\in L^\infty([0,\ 1],\ \mathcal{M}([0,\ 1]))$ and all $\phi\in L^1([0,\ 1],\ C([0,\ 1]))$. Except otherwise specified, we always imbue $L^\infty([0,\ 1],\ \mathcal{M}([0,\ 1]))$ with its weak-* topology as a dual space. By the Banach-Alaouglu theorem, any ball of finite radius of $L^\infty([0,\ 1],\ \mathcal{M}([0,\ 1]))$ is again precompact in the weak-* topology.

We recall that $L^1([0,\ 1],\ C([0,\ 1]))$ is a separable, Banach space. Moreover, the following inclusions
$$C^\infty([0,\ 1]^2)\subset C([0,\ 1]^2)\subset L^1([0,\ 1],\ C([0,\ 1])),$$
are dense, which directly implies that $L^\infty([0,\ 1],\ \mathcal{M}([0,\ 1]))$ can be identified with a subspace of $\mathcal{M}([0,\ 1]^2)$ (and therefore of distributions $\mathcal{D}'([0,\ 1]^2)$). This leads to the inclusions:
$$L^\infty([0,\ 1],\mathcal{M}([0,\ 1]))\subset \mathcal{M}([0,\ 1]^2)\subset \mathcal{D}'([0,\ 1]^2).$$
Specifically, a distribution $w\in \mathcal{D}'([0,\ 1]^2)$ belongs to $L^\infty([0,\ 1],\ \mathcal{M}([0,\ 1]))$ if, and only if, there exists some $C>0$ such that
\[
\int_{E^2} \phi(\xi,\zeta)\,w(\xi,d\zeta)\,d\xi\leq C\,\|\phi\|_{L^1([0,\ 1],\ C_0([0,\ 1]))},
\]
for all $\phi\in C^\infty([0,\ 1]^2)$, the best $C$ being the norm of $L^\infty([0,\ 1],\ \mathcal{M}([0,\ 1]))$ as a dual space.

Since the weak-* topology on a dual space is metrizable when induced on balls, then we have that $w_n\to w$ for the weak-* topology of $L^\infty([0,\ 1],\ \mathcal{M}([0,\ 1]))$ if, and only if, $w_n\to w$ in the sense of distributions \emph{and} $\sup_n \|w_n\|_{L^\infty([0,\ 1],\ \mathcal{M}([0,\ 1]))}<\infty$. This also directly implies that $C^\infty([0,\ 1]^2)$ functions are dense in $L^\infty([0,\ 1],\ \mathcal{M}([0,\ 1]))$, so that this space is separable for the weak-* topology.

Similarly, the space $L^\infty([0,\ 1], L^2 [0,\ 1])$ can be defined either through duality with $L^1([0,\ 1],\ L^2([0,\ 1]))$ or as a Bochner space. Since this time $L^2([0,\ 1])$ does verify the Radon-Nikodym property, the classical version of the Riesz representation theorem \cite{DU} ensure that both approaches coincide.

\subsection{The space $L^\infty_\xi \mathcal{M}_\zeta\cap L^\infty_\zeta \mathcal{M}_\xi$ and examples}

Depending on the order of variables, we can actually define two spaces $L^\infty_\xi\mathcal{M}_\zeta:=L^\infty_\xi([0,\ 1],\,\mathcal{M}_\zeta([0,\ 1]])$ and $L^\infty_\zeta\mathcal{M}_\xi:=L^\infty_\zeta([0,\ 1],\,\mathcal{M}_\xi([0,\ 1]])$. As above, both can be regarded as subspaces of distributions $\mathcal{D}'([0,\ 1]^2)$ through the dualities
$$
\left<w_1,\phi\right>:=\int_{[0,\ 1]}\phi(\xi,\zeta)\,w_1(\xi,d\zeta),\quad
\left<w_2,\phi\right>:=\int_{[0,\ 1]}\phi(\xi,\zeta)\,w_2(d\xi,\zeta),
$$
for every $w_1\in L^\infty_\xi\mathcal{M}_\zeta$, $w_2\in L^\infty_\zeta\mathcal{M}_\xi$ and all $\phi\in C^\infty([0,\ 1]^2)$. As subspaces of distributions we can easily define the intersection $L^\infty_\xi \mathcal{M}_\zeta\cap L^\infty_\zeta \mathcal{M}_\xi$ with the corresponding union of the two topologies. Equivalently, this could be defined as the topological dual of $L^1_\xi C_\zeta+L^1_\zeta C_\xi$, with the corresponding weak-* topology. The induced norm on $L^\infty_\xi \mathcal{M}_\zeta\cap L^\infty_\zeta \mathcal{M}_\xi$ is simply
\[
\|w\|_{L^\infty_\xi \mathcal{M}_\zeta\cap L^\infty_\zeta \mathcal{M}_\xi}=\max\{\|w\|_{L^\infty_\xi \mathcal{M}_\zeta},\|w\|_{L^\infty_\zeta \mathcal{M}_\xi}\}.
\]
We again have that $w_n\to w$ for the weak-* topology of $L^\infty_\xi \mathcal{M}_\zeta\cap L^\infty_\zeta \mathcal{M}_\xi$ if, and only if, $w_n\to w$ in the sense of distributions and $\sup_n \|w_n\|_{L^\infty_\xi \mathcal{M}_\zeta\cap L^\infty_\zeta \mathcal{M}_\xi}<\infty$. As before $L^\infty_\xi \mathcal{M}_\zeta\cap L^\infty_\zeta \mathcal{M}_\xi$ is also separable and $C^\infty([0,\ 1])$ functions are dense for the weak-* topology. Again, any ball of finite radius of $L^\infty_\xi\mathcal{M}_\zeta\cap L^\infty_\zeta\mathcal{M}_\xi$ is precompact for the same weak-* topology. The space $L^\infty_\xi\mathcal{M}_\zeta\cap L^\infty_\zeta\mathcal{M}_\xi$ will also be introduced later in Definition \ref{D-extended-graphons} under the name of extended graphons, and will serve as a new graph limit theory.

An elementary but illuminating example of elements in $L^\infty_\xi\mathcal{M}_\zeta\cap L^\infty_\zeta\mathcal{M}_\xi$ consists in considering just $w=\delta(\xi-\zeta)$. Obviously this kernel can be regarded in three different equivalent ways. First, as an element in $L^\infty_\xi\mathcal{M}_\zeta$, {\it i.e.}, a parametrized family $\xi\in[0,\ 1]\mapsto \delta_\xi(\zeta)\in \mathcal{M}([0,\ 1])$ of measures in $\zeta$. Second, as an element in $L^\infty_\zeta\mathcal{M}_\xi$, {\it i.e.}, a parametrized family $\zeta\in[0,\ 1]\mapsto \delta_\zeta(\xi)\in \mathcal{M}([0,\ 1])$ of measures in $\xi$. Finally, as the measure in $\mathcal{M}([0,\ 1]^2)$ supported on the diagonal of $[0,\ 1]^2$. The definitions above provide a simple framework where we can simply switch between each interpretation as desired. 

A more interesting example consists in considering $w=\delta(\Phi(\xi)-\zeta)$ where $\Phi:[0,\ 1]\longrightarrow [0,\ 1]$ is any one-to-one, measure-preserving map. We build on this example to provide an illustration of a non-trivial limiting kernel for our main result. As above, this kernel can be seen in three different ways. First, it is clearly a parametrized family $\xi\in [0,\ 1]\mapsto\delta_{\Phi(\xi)}(\zeta)\in \mathcal{M}([0,\ 1])$ of measures in $\zeta$. More precisely,
\[
\int_{[0,\ 1]} \phi(\zeta)\,w(\xi,d\zeta)=\phi(\Phi(\xi))
\]
for all $\phi\in C([0,\ 1])$, which yields obviously a measurable function in $\xi$, and therefore $w$ defines an element in $L^\infty_\xi\mathcal{M}_\zeta$. Of course, the above embedding allows regarding $w$ as a measure in $\mathcal{M}([0,\ 1]^2)$, that is,
$$\int_{[0,\ 1]}\varphi(\xi,\zeta)\,w(d\xi,d\zeta)=\int_{[0,\ 1]}\varphi(\xi,\Phi(\xi))\,d\xi=\int_{[0,\ 1]}\varphi(\Phi^{-1}(\zeta),\zeta)\,d\zeta,$$
for all $\varphi\in C([0,\ 1]^2)$, where in the last equality we have made the change of variables $\zeta=\Phi(\xi)$ as $\Phi$ is one-to-one and measure-preserving. This allows regarding $w$ also as a  a parametrized family $\zeta\in [0,\ 1]\mapsto\delta_{\Phi^{-1}(\zeta)}(\xi)\in \mathcal{M}([0,\ 1])$ of measures in $\xi$, that is,
\[
\int_{[0,\ 1]} \phi(\xi)\,w(d\xi,\zeta)=\phi(\Phi^{-1}(\zeta)),
\]
for all $\phi\in C([0,\ 1])$. Of course, they all represent the same object as for all $\varphi\in C([0,\ 1]^2)$ we have
\[
\int_{[0,\ 1]^2} \varphi(\xi,\zeta)\,w(\xi,d\zeta)\,d\xi=\int_{[0,\ 1]^2}\varphi(\xi,\zeta)\,w(d\xi,d\zeta)=\int_{[0,\ 1]^2} \varphi(\xi,\zeta)\,w(d\xi,\zeta)\,d\zeta.
\]

\subsection{Further comments on our extended graphons}
There are compelling reasons why extended graphons $w\in L^\infty_\xi \mathcal{M}_\zeta\cap L^\infty_\zeta \mathcal{M}_\xi$ appear to be the correct scale for the type of complex structures of connectivities that we are interested in.

We first point out that our extended graphons occupy an intermediary place in the current hierarchy of graphon-like objects, see Figure \ref{fig:graphons-comparison}. Obviously they are more general than classical graphons since they are not necessarily bounded, that is, $w\not\in L^\infty_{\xi,\zeta}=L^\infty([0,\ 1]^2)$ in general. However as mentioned above, the analysis will show later that any $w\in L^\infty_\xi \mathcal{M}_\zeta\cap L^\infty_\zeta \mathcal{M}_\xi$ can also be seen as the kernel of a bounded operator from $L^\infty([0,\ 1])\to L^\infty([0,\ 1])$; instead of an operator $L^\infty([0,\ 1])\to L^1([0,\ 1])$ for graphops. As such extended graphons appear as intermediary objects between classical graphons and other extensions such as graphops, s-graphons and digraph measures, see Figure \ref{fig:graphons-comparison}.

However the topology that we consider in our analysis for this space $L^\infty_\xi \mathcal{M}_\zeta\cap L^\infty_\zeta \mathcal{M}_\xi$ is one of the major differences with respect to the existing literature. The proper convergence for our large-graph limit cannot hold in the classical cut distance, or any direct extensions such as the topology of bounded linear operators $L^\infty([0,\ 1])\to L^\infty([0,\ 1])$, which we have defined earlier, to the best of our knowledge. This is clear from the stability estimate \eqref{E-weak-stability-graphons} for graphons, which inevitably breaks for unbounded graphons. Instead the key notion of convergence that we use is based on a new countable family of observables that mixes the connection kernel and the initial laws on each agent. Specifically, the observables can be defined as follows
\begin{equation}
    \begin{split}      &\tau(T,(w_{ij})_{1\leq i,j\leq N},(f_i^0)_{1\leq i\leq N})(0,x_1,\ldots,x_{|T|})=\frac{1}{N}\,\sum_{i_1,\ldots,i_{|T|}=1}^N \prod_{(k,l)\in E(T)} w_{i_k\,i_l}\;\prod_{m\in V(T)} f_{i_m}^0(x_{m}),
\end{split}\label{observabletree}
    \end{equation}
and they are indexed by finite trees~$T$, where $f_i^0$ denotes the initial law of the variable $X^0_i$. Those observables have a corresponding representation at the limit when the discrete weights $(w_{ij})_{1\leq i,j\leq N}$ and laws $f_i^0(x)$ are replaced by kernels $w(\xi,\zeta)$ and initial data $f^0(x,\xi)$ through the definition,
\begin{equation}
 \begin{split}      &\qquad  \tau(T,w,f)(0,x_1,\ldots,x_{|T|})=\int_{[0,\ 1]^{|T|}} \prod_{(k,l)\in E(T)} w(\xi_k, \xi_l)\;\prod_{m\in V(T)} f^0(x_{m},\xi_m)\,d\xi_1\dots d\xi_{|T|}.
\end{split}\label{observabletree2}
    \end{equation}
    
Those definitions allow defining precisely the convergence in Theorem~\ref{maintheorem}: If we have the strong convergence in $L^2(\mathbb{R}^{d|T|})$ of the observables $\tau(T,(w_{ij})_{1\leq i,j\leq N},(f_i^0)_{1\leq i\leq N}) \to \tau(T,w,f^0(\xi,x))$ as $N\to\infty$, for all trees $T$, then the dynamics of the $X_i$ can effectively be represented at the limit by the solution $f$ to~\eqref{vlasov-eq-lim},
\[
\frac{1}{N}\,\sum_i \delta_{X_i(t)}(x)\to \int_0^1 f(t,x,d\xi).
\]
As it can be seen in the sketch of proof below, the observables $\tau(T,(w_{ij})_{1\leq i,j\leq N},(f_i^0)_{1\leq i\leq N})$ indeed play a critical role in our analysis as they entirely control the dynamics.

Note that since $f_i^0(x)$ and $f^0(x,\xi)$ are normalized probability measures with respect to the variable $x$, then by integrating the observables \eqref{observabletree}-\eqref{observabletree2} on the spacial variables we are led to the well-known notion of moments on a graph, that is,
$$
\int_{\mathbb{R}^{d|T|}} \tau(T,(w_{ij})_{1\leq i,j\leq N},(f_i^0)_{1\leq i\leq N})(0,x_1,\ldots,x_{|T|})\,dx_1\ldots,\,dx_{|T|}=\tau(T,(w_{ij})_{1\leq i,j\leq N}),
$$
together with its continuum version
$$
\int_{\mathbb{R}^{d|T|}} \tau(T,w,f)(0,x_1,\ldots,x_{|T|})\,dx_1\ldots,\,dx_{|T|}=\tau(T,w),
$$
where $\tau(T,(w_{ij})_{1\leq i,j\leq N})$ is the {\it homomorphism density} of the finite tree $T$ in the finite graph $(w_{ij})_{1\leq i,j\leq N}$, and $\tau(T,w)$ is its continuum version, which are also used in the classical theory of graphons. In particular, under the above-mentioned convergence of the observables, we also have $\tau(T,(w_{ij})_{1\leq i,j\leq N})\to \tau(T,w)$ as $N\to\infty$ for all finite tree $T$. In a first major difference to graphons though, when $w\in L^\infty_\xi\mathcal{M}_\zeta\cap L^\infty_\zeta\mathcal{M}_\xi$ is an extended graphons we can only define the associated observables $\tau(T,w,f)$ and the homomorphism density $\tau(T,w)$ for trees~$T$ and not any arbitrary graph.   
  
A second critical difference is that our notion of observables completely entangles the kernel with the initial conditions. In particular our main theorem does not provide independent convergence of the extended graphons and the initial data. In other words (see the example below as well), we could have that for some initial data $f_i^0(x)$ and some connections $w_{ij}$, one has that $\tau(T,(w_{ij})_{1\leq i,j\leq N},(f_i^0)_{1\leq i\leq N}) \to \tau(T,w,f^0)$ for some limiting $w(\xi,d\zeta)$ and $f^0(x,\xi)$. If one then considers different initial data $\tilde f_i^0(x)$ but for the same connections $w_{ij}$, there is no particular reason why we would be able to use the same limiting kernel: We may need to derive a different limiting kernel $\tilde w(\xi,d\zeta)$ together with a different limiting initial law $\tilde f^0(x,\xi)$ to maintain the convergence $\tau(T,(w_{ij})_{1\leq i,j\leq N},(\tilde f_i^0)_{1\leq i\leq N}) \to \tau(T,\tilde w,\tilde f^0)$. 

While our main theorem ensures that there always exists a limiting representation, it is not necessarily unique. We can for instance find different kernels $w(\xi,d\zeta)\neq\tilde w(\xi,d\zeta)$ and initial data $f^0(x,\xi)\neq \tilde f^0(x,\xi)$ such that $\tau(T,w,f^0)=\tau(T,\tilde w,\tilde f^0)$; see the example below again. 

All of this makes the notion of convergence that is developed in the present paper very different from most of mean-field limit results based on graphons or graphon-like objects. Because our result applies to any connections with only the bounds \eqref{meanfieldscaling}-\eqref{vanishingweights} without any additional convergence assumptions, it is unclear whether it would even be possible to identify a specific topology on the connections $w_{ij}$ or kernels $w$ without mixing them with the initial data. Another fundamental question is whether our class $L^\infty_\xi \mathcal{M}_\zeta \cap L^\infty_\zeta \mathcal{M}_\xi$ is optimal for detecting the sparse connectivity and the microscopic inhomogeneity of networks, and again we believe that it will depend on the properties of the network and its connectivities to be studied. Note our main Theorem \ref{maintheorem} operates up to the choice of a subsequence, that is, once a suitable subsequence of graphs is a selected so that the observables $\tau(T,(w_{ij})_{1\leq i,j\leq N},(f_i^0)_{1\leq i\leq N})$ (and therefore the homomorphism densities $\tau(T,(w_{ij})_{1\leq i,j\leq N})$) converge. Therefore, a natural question is to understand how big is the set of all possible accumulations points via this procedure. We want to clarify that any $w\in L^\infty_\xi\mathcal{M}_\zeta\cap L^\infty_\zeta\mathcal{M}_\xi$ can be recovered in this way, which suggests that the class of extended graphons is optimal for that purpose. More specifically, and as established in Lemma \ref{gooddensity}, for any $w \in L^\infty_\xi \mathcal{M}_\zeta \cap L^\infty_\zeta \mathcal{M}_\xi$, there exists $w_N \in L^\infty_{\xi,\zeta}$ uniformly bounded in $L^\infty_\xi L^1_\zeta \cap L^\infty_\zeta L^1_\xi$, converging to $w$ in $L^1_\xi H^{-1}_\zeta \cap L^1_\zeta H^{-1}_\xi$. This finding is subsequently utilized in Lemmas \ref{welldefinedM} and \ref{lemtreeF} to establish the convergence of $\tau(T, w_N)$ to $\tau(T, w)$. Certainly, if $w_N$ is further uniformly bounded in $L^\infty_{\xi,\zeta}$ (which is compatible with weights satisfying the scaling \eqref{densegraph}), then $\tau(T, w_N) \to \tau(T, w)$ for some $w\in L^\infty_{\xi,\zeta}$, thus retrieving the classical result of Lov\'asz-Szegedy \cite{Lo,LS}.

Several other open questions concern the new system of observables that we introduce in~\eqref{observabletree2}. Those are conjectured to naturally extend the notion of marginals, and hierarchy of marginals to non-exchangeable systems. The structure of the hierarchy of observables~\eqref{observabletree2} remains rather poorly understood however. A first example is the range of possible functions that one can reach through~\eqref{observabletree2}: What would be the conditions on a sequence $\alpha_T(x_1,\ldots,x_{|T|})$ indexed by trees $T$ so that we can find $w\in L^\infty_\xi \mathcal{M}_\zeta\cap L^\infty_\zeta \mathcal{M}_\xi$ and $f\in L^\infty(\R^d\times [0,\ 1])$,  such that
        \[
\tau(T,w,f)=\alpha(T), \quad\forall\,T?
        \]

\subsection{Example of convergence}\label{subsec:example-convergence}
To illustrate those remarks, we discuss in some depth a natural example of connection weights, based on a sparse graph. Choose $1\ll M\ll N$ (with $N$ multiple of $M$ for simplicity), and separate $\{1,\ldots, N\}$ into $N/M$ subsets $E_1^N,\ldots, E_{N/M}^N$, with size $|E_k^N|=M$ for each $k\leq N/M$. In other words, we separate our agents into $N/M$ distinct subpopulations. 

Introduce further a permutation $\phi\in \mathcal{S}_{N/M}$ which lets us define
\[
w_{ij}=\left\{\begin{array}{ll}
\displaystyle\frac{1}{M} &\mbox{if}\ i\in E_k^N\ \mbox{and}\ j\in E_{\phi(k)}^N\ \mbox{for some}\ k,\\
0, & \mbox{otherwise}.
\end{array}\right.
\]
This choice of connection has a very simple interpretation. Agents are divided in $N/M$ distinct classes  or layers $E_k^N$, $k=1,\dots, N/M$. Each agent $i$ in a class $E_k^N$ is connected with exactly $M$ other agents that all belong to the class $E_{\phi(k)}^N$. 

We note that the adjacency matrix $w_{ij}$ is sparse as we have exactly $N\cdot M$ non-zero entries among a total of $N^2$.  Because $M$ is much smaller than $N$ this does lead to a sparse graph of connections. Note also that those connections satisfy \eqref{meanfieldscaling}-\eqref{vanishingweights} as long as $M\gg1$. In particular if $i\in E_k^N$
\[
\sum_{j=1}^N |w_{ij}|=\sum_{j\in E^N_{\phi(k)}} \frac{1}{M}=\frac{|E^N_{\phi(k)}|}{M}=1,
\]
and also similarly $\sum_{i=1}^N |w_{ij}|=1$ since $\phi$ is a permutation. We also observe that this weights satisfy the assumptions in~\cite{CDG}. Therefore, if the $X_i^0$ are initially {\it i.i.d.}, then we can derive the mean-field limit to the classical Vlasov equation, that is, we can choose as a limiting kernel $w=1$. This is not possible in general if the $X_i^0$ are not {\it i.i.d.}, and this example will provide an excellent illustration that the choice of the limiting kernel may have to depend on the initial laws.

Let us introduce the usual graphon representation of $w_N$
\[
w_N(\xi,\zeta)=\sum_{i,j=1}^N Nw_{ij}\, \mathbb{I}_{[\frac{i-1}{N},\frac{i}{N})}(\xi)\, \mathbb{I}_{[\frac{j-1}{N},\frac{j}{N})}(\zeta).
\]
To derive a more explicit formula for $w_N$, it is useful to have a good labeling of the indices on each class $E_k^N$. Of course, for any $k=1,\ldots,N$, we can list the indices in $E_k^N$ consecutively by
\[
E_k^N=\left\{(\sigma(k)-1)\,M+1,\ldots,\sigma(k)\,M\right\},
\]
where $\sigma\in \mathcal{S}_{N/M}$ is some permutation. Introduce now the approximation of the Dirac mass at scale $M/N$,
\[
\delta_{\frac{M}{N}}(\xi)=\frac{N}{M}\,\mathbb{I}_{\xi\in [0,\;\frac{M}{N})},
\]
and let us extend the discrete permutation $\phi$ to the step function $\phi_N:[0,\ 1]\longrightarrow [0,\ 1]$ with
\[
\phi_N(\xi)=(\phi(\sigma(k))-1)\,\frac{M}{N},\quad\mbox{if}\ \xi\in\left[(\sigma(k)-1)\,\frac{M}{N},\ \sigma(k)\,\frac{M}{N}\right),
\]
for all $k=1,\ldots,\frac{N}{M}$. We then have the simple expression
\[
w_N(\xi,\zeta)= \delta_{\frac{M}{N}}(\zeta-\phi_N(\xi)).
\]

Let us note immediately that the classical Lovasz-Szegedy theory based on graphons cannot apply here. First, we of course have that $\|w_N\|_{L^\infty}=\frac{N}{M}\to \infty$. Moreover, we cannot have convergence of $w_N$ in the cut distance as it is not a Cauchy sequence for this topology. Consider $w_N$ and $w_{N'}$ (with corresponding $M$ and $M'$) with $\frac{N}{M}\gg \frac{N'}{M'}$. Choose $\psi(\xi)=\mathbb{I}_{\xi\in [(\phi(\sigma(1))-1)\,\frac{M}{N},\ \phi(\sigma(1))\,\frac{M}{N})}$ and note that
\[
\int_0^1 w_N(\xi,\zeta)\,\psi(\zeta)\,d\zeta=\frac{N}{M}\,\mathbb{I}_{\xi\in [(\sigma(1)-1)\,\frac{M}{N},\ \sigma(1)\,\frac{M}{N})}.
\]
Therefore, we have
\[
\left\|\int_0^1 w_N(\xi,\zeta) \,\psi(\zeta)\,d\zeta\right\|_{L^1}=1.
\]
However, using that $w_{N'}\in L^\infty_{\xi,\zeta}$ (and also any rearranged version) we also have
\[
\left\|\int w_{N'}(\Phi(\xi),\Phi(\zeta)) \,\psi(\zeta)\,d\zeta\right\|_{L^1}\leq \|w_{N'}\|_{L^\infty}\,\|\psi\|_{L^1}=\frac{N'}{M'}\frac{M}{N},
\]
for any measure-preserving map $\Phi:[0,\ 1]\longrightarrow [0,\ 1]$. Altogether implies the control on the cut distance
\[
\inf_{\Phi} \|w_N(\xi,\zeta)-w_{N'}(\Phi(\xi),\Phi(\zeta))\|_{L^\infty\to L^1}\geq 1-\frac{N'}{M'}\,\frac{M}{N} \, ,
\]
where the infimimum ranges over all measure-preserving maps of $[0,\ 1]$. Of course, since $\frac{N}{M}\gg \frac{N'}{M'}$, the above lower bound that does not converge to $0$ as $N'\to \infty$. Because $w_N$ is purely deterministic, there is also no renormalization that would preserve the dynamics and provide convergence in the cut distance.

\smallskip

This expression of $w_N$ of course amounts to a discretization of 
\[
\bar w_N(\xi,\zeta)=\delta_{\phi_N(\xi)}(\zeta),
\]
where we observe that $\phi_N$ is a {\it a.e.} defined measure-preserving map on $[0,\ 1]$. At this point, it is tempting to conjecture that, after re-indexing such that $\phi_N$ is compact and extracting a subsequence such that $\phi_N\to\bar\phi$, the limiting dynamics should follow the limiting kernel $\bar w(\xi,\zeta)=\delta_{\bar\phi(\xi)}(\zeta)$. But this is not correct {\it per se}. Instead, in such an example, it is not possible to determine the limiting kernel without some further information on the laws $f_i^0(x)$ of the initial positions $X_i^0$. To keep this example reasonably simple, we assume that we only have at most $N/M$ different laws $f_k$ with $k=1,\ldots,\frac{N}{M}$, and that the $X_i^0$ are actually {\it i.i.d.} for $i\in E_k^N$ according to the law $f_k(x)$. 

To determine the limiting kernel, our analysis requires to find the limit of every observable defined in the previous subsection. Let us hence write explicitly some of them. First we perform the obvious graphon extension of the initial law defined by
\[
f_N(x,\xi)=\sum_{k=1}^{N/M} f_k(x)\,\mathbb{I}_{\xi\in [(\sigma(k)-1)\,\frac{M}{N},\ \sigma(k)\,\frac{M}{N})} \, .
\]
The first observable, corresponding to the unique tree $T_1\in \Tree_1$ with just one vertex is simply
\[
\tau(T_1,(w_{ij})_{1\leq i,j\leq N},(f_i^0)_{1\leq i\leq N})=\int_0^1 f_N(x,\xi)\,d\xi=\frac{M}{N}\,\sum_{k=1}^{N/M} f_k(x).
\]
One can readily check that the observable for the unique tree $T\in \Tree_2$ with two vertices is
\[
\tau(T_2,(w_{ij})_{1\leq i,j\leq N},(f_i^0)_{1\leq i\leq N})=\int_0^1 f_N(x_1,\xi)\,f_N(x_2,\phi_N(\xi))\,d\xi.
\]
Other observables are built in a similar manner. We provide below some more examples. For instance, the tree $T_{k+1}\in \Tree_{k+1}$ with one root and $k$
leaves yields
\[
\tau(T_{k+1},(w_{ij})_{1\leq i,j\leq N},(f_i^0)_{1\leq i\leq N})=\int_0^1 f_N(x_1,\xi)\,\displaystyle \prod_{i=1}^k f_N(x_{i+1},\phi_N(\xi))\,d\xi.
\]
The tree $T_{2k+1}\in \Tree_{2k+1}$ with one root connected to $k$ vertices, each of them with one leaf, instead produces the observable
\[
\tau(T_{2k+1},(w_{ij})_{1\leq i,j\leq N},(f_i^0)_{1\leq i\leq N})=\int_0^1 f_N(x_1,\xi)\,\prod_{i=1}^k f_N(x_{2i},\phi_N(\xi))\,f_N(x_{2i+1},\phi_N(\phi_N(\xi))\,d\xi.
\]
As one can readily see, a sufficient condition to obtain $\delta_{\bar\phi(\xi)}(\zeta)$ as a limiting kernel is to have both $\phi_N$ converges strongly to $\bar\phi$ and $f_N$ converging strongly to some $\bar f$. We are allowed to reindex (or use a measure-preserving map) but it has to be the same map for both. We will in fact obtain later a lemma that guarantees that this is possible (and in fact up to a countable number of functions). However, we stress again that the limiting kernel is not unique in general. Hence, even when $\phi_N$ and $f_N$ are jointly compact, we may have other, different, and simpler acceptable limiting kernels. A trivial example was mentioned at the beginning of this subsection: if $f_N$ does not depend on $\xi$ then we may take $\bar w=1$.

We expect that the simplest limiting kernel depends on the decomposition in cycles of $\phi_N$ and how $f_N$ behaves on each cycle. Performing a full analysis is well beyond the scope of this example though. For this reason, let us further simplify things by assuming that $\phi$ has exactly one cycle of full length with simply: $\phi(k)=k+1$ for $k<\frac{N}{M}$, with $\phi(\frac{N}{M})=1$. Furthermore, let us assume that $\sigma=Id$, that dimension $d=1$ and $f_k(x)=f(x-k\,\frac{M}{N})$ for some smooth $f$. With these choices, one can check that
\[
f_N(x,\xi)=f(x-[\xi]_N),\quad \phi_N(\xi)= [\xi]_N\,
\]
where $[\xi]_N=\frac{M}{N}\,\lceil \frac{N}{M}\xi\rceil$ and $\lceil \cdot\rceil$ is the ceiling function. Since $f$ is smooth, we have that
\[
f_N(x,\xi)=f(x-\xi)+O\left(\frac{M}{N}\right), \quad \phi_N(\xi)=\xi+O\left(\frac{M}{N}\right).
\]
Hence, it is obvious that the limiting dynamics can be represented by choosing $\bar w=\delta(\xi-\zeta)$ and $\bar f(x,\xi)=f(x-\xi)$. But of course, even here, the representation is not unique: any composition by a measure-preserving map would still work.

However, we can prove in that special case that the limiting dynamics cannot be represented by some $\bar w\in L^\infty_{\xi,\zeta}$. Assume that there is one such $\bar w$ and $\bar f$ such that $\tau(T,\bar w,\bar f)=\bar \tau(T)$ for all tree $T$. To be specific,  
assume that $\bar w$
obeys the mean-field scaling, that is,
\[
\sup_{\xi\in [0,\ 1]} \int_0^1 |\bar w(\xi,\zeta)|\,d\zeta\leq 1,\quad \sup_{\zeta\in [0,\ 1]}\int_0^1 |\bar w(\xi,\zeta)|\,d\xi\leq 1.
\]
Assume moreover that the function
\[
x\in \mathbb{R}\longmapsto \int_0^1 |f(x-\xi)|^2\,d\xi,
\]
does not take only a finitely many values. We can easily calculate the limiting observables for any tree $T$
\[
\tau(T,w_N,f_N)\to \bar\tau(T):=\int_0^1 \prod_{i=1}^{|T|} f(x_i-\xi)\,d\xi.
\]
This is of course a very remarkable formula as it depends only on $|T|$. 

Consider the tree $T_3\in \Tree_3$ with two leaves attached to its root. Since we have $\int_{\mathbb{R}} \bar f(x_1,\xi_1)\,dx=1$ for all $\xi_1\in [0,\ 1]$, then we can calculate
\[
\begin{split}
\int_{\mathbb{R}^2} \tau(T_3,\bar w,\bar f)(x_1,x,x)\,dx_1\,dx&= \int_{\mathbb{R}} \int_{[0,\ 1]^3} \bar w(\xi_1,\xi_2)\,\bar w(\xi_1,\xi_3)\,\bar f(x,\xi_2)\,\bar f(x,\xi_3)\,d\xi_1\,d\xi_2\,d\xi_3\,dx\\
&=\int_{\mathbb{R}}\int_0^1 |W(\bar f)(x,\xi)|^2\,dx\,d\xi,
\end{split}
\]
where $W:L^2(\mathbb{R}\times [0,\ 1])\longrightarrow L^2(\mathbb{R}\times [0,\ 1])$ is the linear operator defined by
\[
W(g)(x,\xi):=\int_0^1 \bar w(\xi,\zeta)\,g(x,\zeta)\,d\zeta.
\]
Consider next the tree $T_4\in \Tree_4$ where we add one leaf to $T_3$ connected to one leaf (say node $2$) of $T_3$ (which is then no more a leaf of course). Calculate again,
\[
\begin{split}
\int_{\mathbb{R}^3} \tau(T_4,\bar w,\bar f)(x_1,x_2,x,x)&\,dx_1\,dx_2\,dx\\
&= \int_{\mathbb{R}} \int_{[0,\ 1]^4} \bar w(\xi_1,\xi_2)\,\bar w(\xi_2,\xi_4)\,\bar w(\xi_1,\xi_3)\,\bar f(x,\xi_3)\,\bar f(x,\xi_4)\,d\xi_1\,d\xi_2\,d\xi_3\,d\xi_4\,dx\\
&=\int_{\mathbb{R}}\int_0^1 W(\bar f)(x,\xi)\,W^2(\bar f)(x,\xi)\,dx\,d\xi.
\end{split}
\]
Now observe that by the above shape $\tau(T,\bar w,\bar f)=\bar\tau(T)$ of the limiting observables, we have
\[
\int_{\mathbb{R}^3} \tau(T_4,\bar w,\bar f)(x_1,x_2,x,x)\,dx_1\,dx_2\,dx=\int_{\mathbb{R}} \bar\tau(T_3)(x_1,x,x)\,dx_1\,dx,
\]
that is,
\[
\int_{\mathbb{R}}\int_0^1 W(\bar f)(x,\xi)\,W^2(\bar f)(x,\xi)\,dx\,d\xi=\int_{\mathbb{R}}\int_0^1 |W(\bar f)(x,\xi)|^2\,dx\,d\xi.
\]
Note that thanks to the mean-field scaling on $\bar w$, we have {
\[
\begin{split}
&\int_{\mathbb{R}}\int_0^1 |W^2(\bar f)(x,\xi)|^2\,dx\,d\xi\leq \int_{\mathbb{R}}\int_0^1 |W(\bar f)(x,\xi)|^2\,dx\,d\xi,
\end{split}
\]
since we have that by Cauchy-Schwartz
\[
\begin{split}
|W^2(\bar f)(x,\xi)|^2&=\left(\int_0^1 \bar w(\xi,\zeta)\,W(\bar f)(x,\xi)\,d\zeta\right)^2\leq \int_0^1 \bar w(\xi,\zeta)\,d\zeta\;\int_0^1 \bar w(\xi,\zeta)\,|W(\bar f)(x,\zeta)|^2\,d\zeta\\
&\leq \int_0^1 \bar w(\xi,\zeta)\,|W(\bar f)(x,\zeta)|^2\,d\zeta,
\end{split}
\]
from the mean-field scaling. Therefore as claimed,
\[
\begin{split}
&\int_{\mathbb{R}}\int_0^1 |W^2(\bar f)(x,\xi)|^2\,dx\,d\xi\leq \int_{\mathbb{R}}\int_0^1\int_0^1 \bar w(\xi,\zeta)\,|W(\bar f)(x,\zeta)|^2\,dx\,d\xi\,d\zeta\leq \int_{\mathbb{R}}\int_0^1 |W(\bar f)(x,\zeta)|^2\,dx\,d\zeta,
\end{split}
\]
still from the mean-field scaling. }

This equality, together with another Cauchy-Schwartz inequality, lead to 
\begin{align*}
\int_{\mathbb{R}}\int_0^1 W(\bar f)(x,\xi)&\,W^2(\bar f)(x,\xi)\,dx\,d\xi\\
&\leq \Vert W(\bar f)\Vert_{L^2}\Vert W^2(\bar f)\Vert_{L^2}\leq \Vert W(\bar f)\Vert_{L^2}^2=\int_\mathbb{R}\int_0^1 |W(\bar f)(x,\xi)|^2\,dx\,d\xi.
\end{align*}
The identity above corresponds to equality in the Cauchy-Schwartz inequality, which implies that $W(\bar f)$ and $W^2(\bar f)$ must be linearly dependent: there must exist $\lambda\in \mathbb{R}$ such that $W^2(\bar f)=\lambda\,W(\bar f)$. Furthermore the last inequality above also implies that $\lambda=1$. 

Once this is proved, we can keep adding leaves and with the same method, we can prove that
\[
W(\bar f)^k(x,\xi)=W(W(\bar f)^k)(x,\xi)=\int_0^1 \bar w(\xi,\zeta)\,W(\bar f)^k(x,\zeta)\,d\zeta.
\]
The density of polynomials in continuous functions shows that in fact
\[
\chi(W(\bar f)(x,\xi))=W(\chi(W(\bar f)))(x,\xi)=\int_0^1 \bar w(\xi,\zeta)\,\chi(W(\bar f)(x,\zeta))\,d\zeta,
\]
for any continuous function $\chi$. 

This unique fact implies an extremely simple structure on $W(\bar f)$ and $\bar w$ if $\bar w\in L^\infty_{\xi,\zeta}$. It is straightforward to show that {$W(\bar f)$ can only take a finite number of values}: there exists level sets $L_k$ such that
\[
W(\bar f)(x,\xi)=\sum_{k=1}^K \alpha_k\,\mathbb{I}_{(x,\xi)\in L_k}.
\]
Moreover $\bar w$ needs to be decomposed along, because we must have for all $x$ and $\xi$ that
\begin{equation}
\mathbb{I}_{(x,\xi)\in L_k}=\int_0^1 \bar w(\xi,\zeta)\,\mathbb{I}_{(x,\zeta)\in L_k}\,d\zeta.\label{identifyw1}
\end{equation}
{Indeed, assume by contradiction that $W(\bar f)$ takes infinitely many values. By taking appropriate functions $\chi$, this implies that we have an infinite number (at least countable) of sets $L_k$ such that for every $k$
\[
\mathbb{I}_{(x,\xi)\in L_k}=\int_0^1 \bar w(\xi,\zeta)\,\mathbb{I}_{(x,\zeta)\in L_k}\,d\zeta.
\]
However for any fixed $x$, we can only have a limited number of $L_k$ such that there exists $\xi$ with $(x,\xi)\in L_k$. Denote by $\bar L_k(x)$ the corresponding section:  $\bar L_k(x)=\{\xi,\ (x,\xi)\in L_k\}$. Observe that
\[
\int_0^1 \bar w(\xi,\zeta)\,\mathbb{I}_{(x,\zeta)\in L_k}\,d\zeta\leq \|\bar w\|_{L^\infty}\,|\bar L_k(x)|.
\]
Therefore if $|\bar L_k(x)|>0$, then we need to have that $|\bar L_k(x)|\geq \|\bar w\|_{L^\infty}^{-1}$ so that at most $\|\bar w\|_{L^\infty}^{-1}$ number of $\bar L_k$ sets can be of positive measure. Since we are on a compact support, a straightforward covering lemma then shows that we cannot have an infinite number of $L_k$ sets.
}

Observe that, in addition, we have that
\[
\int_{\mathbb{R}}\int_0^1 W^*(W(\bar f))(x,\xi)\,W(\bar f)(x,\xi)\,dx\,d\xi=\int_{\mathbb{R}}\int_0^1 W(\bar f)(x,\xi)\,W^2(\bar f)(x,\xi)\,dx\,d\xi,
\]
with $W^*$ defined by the formal adjoint
\[
W^*(g)(x,\xi):=\int_0^1 \bar w(\zeta,\xi)\,g(x,\zeta)\,d\zeta.
\]
We hence have from the argument above that
\[
\int_{\mathbb{R}}\int_0^1 W^*(W(\bar f))(x,\xi)\,W(\bar f)(x,\xi)\,dx\,d\xi=\int_{\mathbb{R}}\int_0^1 |W(\bar f)(x,\xi)|^2\,dx\,d\xi.
\]
Again the same mean-field scaling implies that 
\[
\int_{\mathbb{R}}\int_0^1 |W^*(W(\bar f))(x,\xi)|^2\,dx\,d\xi\leq \int_{\mathbb{R}}\int_0^1 |W(\bar f)(x,\xi)|^2\,dx\,d\xi,
\]
so we again have a case of equality in Cauchy-Schwartz and that proves that $W^*(W(\bar f))=W(\bar f)$,
or
\begin{equation}
\mathbb{I}_{(x,\xi)\in L_k}=\int_0^1 \bar w(\zeta,\xi)\,\mathbb{I}_{(x,\zeta)\in L_k}\,d\zeta.\label{identifyw2}
\end{equation}
Of course, this shows that the mean-field scaling also has to be an equality
\[
\int_0^1 \bar w(\xi,\zeta)\,d\zeta=1,\quad \int_0^1 \bar w(\xi,\zeta)\,d\xi=1.
\]
For any $x$, we recall that $\bar L_k(x)$ is the corresponding section:  $\bar L_k(x)=\{\xi,\ (x,\xi)\in L_k\}$. The identities~\eqref{identifyw1}-\eqref{identifyw2} imply that if $\xi\in \bar L_k(x)$ then $\mbox{supp}\,\bar w(\xi,\cdot)\subset \bar L_k(x)$ and reciprocally $\mbox{supp}\,\bar w(.,\zeta)\subset \bar L_k(x)$ if $\zeta\in \bar L_k(x)$. 

This has some surprising consequences on the structure of the $\bar L_k(x)$. For example, if $M=\bar L_l(y)\setminus \bar L_k(x)$, then for each $\xi\in M$, since $\xi\in \bar L_l(y)$ but $\xi\not \in \bar L_k(x)$
\[
\int_0^1 \bar w(\xi,\zeta)\,\mathbb{I}_{\zeta\in M}\,d\zeta=\int_0^1 \bar w(\xi,\zeta)\,\mathbb{I}_{\zeta\in \bar L_l(y)}\,d\zeta-\int_0^1 \bar w(\xi,\zeta)\,\mathbb{I}_{\zeta\in \bar L-l(y)\cap \bar L_k(x)}\,d\zeta=1-0=1.
\]
Because $\bar w\in L^\infty_{\xi,\zeta}$, a similar calculation as for proving that there is a finite number of level sets  implies that, if $\bar L_l(y)\cap \bar L_k(x)\neq \emptyset$ 
\[
|\bar L_l(y)\setminus \bar L_k(x)|\geq \frac{1}{\|\bar w\|_{L^\infty}},
\]
and of course, individually, we have also for any $x$ and $k$ that
\[
|\bar L_k(x)|\geq \frac{1}{\|\bar w\|_{L^\infty}}.
\]
Because the measure of $[0,\ 1]$ is finite, we can deduce that we can only have a finite number of different $\bar L_k(x)$: there exists $M_1,\dots, M_L\subset [0,\ 1]$ such that for any $x$ and $k$, there exists $l$ with $\bar L_k(x)=M_l$. This is a variant of the famous Vitali's covering lemma.

Note that for example
\[
\int_0^1 |W(\bar f)(x,\xi)|^2\,d\xi=\sum_{k=1}^K \alpha_k^2 |\bar L_k(x)|.
\]
Since $\bar L_k(x)$ has only a finite number of possibilities, this finally implies that the integral above can only take a finite number of values. But this integral is derived from $\tau(T_3,\bar w,\bar f)$ and it has to be equal to the corresponding quantity derived from $\bar \tau(T_3)$ which is
\[
\int_0^1 |f(x-\xi)|^2\,d\xi.
\]
This provides the contradiction since we assumed that the above quantity takes an infinite number of values as a function in $x$.

\medskip

This example shows that it can be very complicated to determine what are the possible limiting kernels even knowing explicitly the limiting observables (and even with simple formulas such as here). One major issue is that we have much less observables than for classical graphon theory where one can work with homomorphism densities~$\tau(G,w_N)$ for any graph~$G$ and not only trees. 

This means that  several additional transforms can leave the observables invariant. As a matter of fact, one can for example change $\bar w(\xi,\zeta)$ into $\bar w(\Phi(\xi),\Phi(\zeta))$ and $\bar f(x,\xi)$ into $\bar f(x,\Phi(\xi))\,J(\xi)$, where $J$ is the Jacobian of the transform $\Phi$. In particular, we are not necessarily limited here to measure-preserving transforms $\Phi$. This can lead to all sort of complicated phenomena, where it is sometimes possible to "split" Dirac masses for instance. It is one of the reasons why the analysis above is so intricate in spite of the simplicity of the choices.

        \subsection{Sketch of the proof}
        We briefly explain here some of the main ideas and steps that are used in the proof of the main Theorem \ref{maintheorem}.

		\medskip
        
        $\diamond$ {\em Step~1: Propagation of independence.} 
        
        This is done in Section~\ref{sec:independence} by introducing the independent system of coupled PDEs~\eqref{independ}, restated here
          \begin{equation}
          \left\{
\begin{array}{l}
\displaystyle \partial_t \bar f_i+\divop_x\left(\bar f_i(t,x)\,\sum_{j=1}^N w_{ij}\,\int_{\mathbb{R}^d} K(x-y)\,\bar f_j(t,dy)\right)=0,\\
\displaystyle \bar f_i(t=0)=f_i^0,
\end{array}\label{repeatindepend}
\right.
\end{equation}
and it proves ({\it cf.} Proposition~\ref{propindependence}) that $\bar f_i(t,x)$ are correct approximations for the law of $X_i(t)$ under the assumption \eqref{vanishingweights}. As mentioned earlier, this relies on a straightforward extension to the classical trajectorial methods for mean-field limits, see \cite{Sznit}.

For general connectivities $w_{ij}$, this is only the very beginning as it does not allow directly obtaining the limit of the corresponding $1$-particle distribution given by
\[
\frac{1}{N}\,\sum_{i=1}^N \bar f_i(t,x).
\]
The one exception concerns the special case where $\sum_{j=1}^N w_{ij}=\bar w$ is independent of $i$. If moreover all $f_i^0$ are equal to $\bar f^0$, then it can be seen that $\bar f_i=\bar f$ solves~\eqref{repeatindepend} provided that $\bar f$ is a solution to the classical mean-field limit.

\medskip

$\diamond$ {\em Step~2: Introducing extended graphons.} 

The next step in our proof is to define our extended empirical graphons for a fixed $N$, which is still done through the equivalent of formula~\eqref{graphon}, namely
  \[
  \begin{split}
    &  w_N(\xi,\zeta)=\sum_{i,j=1}^N Nw_{ij}\, \mathbb{I}_{[\frac{i-1}{N},\frac{i}{N})}(\xi)\,\mathbb{I}_{[\frac{j-1}{N},\frac{j}{N})}(\zeta),\\
    & f_N(t,x,\xi)=\sum_{i=1}^N \bar f_i(t,x)\,\mathbb{I}_{[\frac{i-1}{N}\frac{i}{N})}(\xi).
    \end{split}
  \]
 It is straightforward to check that, since the $\bar f_i$ solve~\eqref{repeatindepend}, then $w_N$ and $f_N$ solve the limiting equation~\eqref{vlasov-eq-lim}, namely we have
  \[
  \partial_t f_N(t,x,\xi)+\divop_x\,\left(f_N(t,x,\xi) \,\int_0^1 w_N(\xi,\zeta)\int_{\R^d} K(x-y)\, f_N(t,y,\zeta)\,dy\,d\zeta\right)=0.
  \]
  This is performed at the beginning of Section~\ref{sec:hierarchy} together with the study of some basic properties of our extended graphons under the only scaling $w_N\in L^\infty_\xi \mathcal{M}_\zeta\cap L^\infty_\zeta \mathcal{M}_\xi$, which include the aforementioned
  \[
\left\|\int_0^1 \phi(\zeta)\,w(\xi,d\zeta)\right\|_{L^\infty_\xi}\leq \|w\|_{L^\infty_\xi \mathcal{M}_\zeta\cap L^\infty_\zeta \mathcal{M}_\xi}\,\|\phi\|_{L^\infty}.
\]
For a fixed $w\in L^\infty_\xi \mathcal{M}_\zeta\cap L^\infty_\zeta \mathcal{M}_\xi$, this allows for example obtaining the existence and uniqueness of a solution $f\in L^\infty([0,\ T]\times[0,\ 1],\;W^{1,\infty}(\R^d))$ to~\eqref{vlasov-eq-lim}, see Proposition~\ref{existenceweak}. However, we are unfortunately incapable of passing to the limit directly in the above equation for $f_N$ to derive~\eqref{vlasov-eq-lim}. The main obstruction is the lack of compactness in $\xi$ on $f_N$ combined with the very weak topology in $L^\infty_\xi \mathcal{M}_\zeta\cap L^\infty_\zeta \mathcal{M}_\xi$.

\medskip

$\diamond$ {\em Step~3: Introducing and studying the new observables.} 

The latter difficulty is what leads to the introduction of the observables, that can be defined at any time $t$ through the formula~\eqref{observabletree2} that we repeat here,
  \[
\tau(T,w_N,f_N)(t,x_1,\ldots,x_{|T|})=\int_{[0,\ 1]^{|T|}} \prod_{(k,l)\in E(T)} w_N(\xi_k, \xi_l)\;\prod_{m\in V(T)}f_N(t,x_{m},\xi_m)\,d\xi_1\dots d\xi_{|T|}.
  \]
  Those observables include the $1$-particle distribution which simply corresponds to choosing the tree $T=T_1$ with only one vertex:
  \[
\tau(T_1,w_N,f_N)(t,x)=\frac{1}{N}\,\sum_{i=1}^N \bar f_i(t,x). 
\]
The fact that the $\tau(T,w_N,f_N)$ are well defined follows from the basic analysis of our extended graphons in the previous point. 

A critical point is that those observables solve an independent hierarchy of equations, which reads
\[
\partial_t \tau(T,w_N,f_N)+\sum_{i=1}^{|T|}\mbox{div}_{x_i}\left(\int_{\R^d} K(x_i-z)\,\tau(T+i,w_N,f_N)(t,x_1,\ldots,x_{|T|},z)\,dz\right)=0,
\]
where $T+i$ denotes the tree obtained from $T$ by adding a leaf on the $i$-th vertex.

It turns out that we can pass to the limit in the above hierarchy and even obtain the uniqueness of solutions upon appropriate assumptions on the $\tau(T,w_N,f_N)$ as per Theorem~\ref{stabilitynonviscous}. The analysis is performed in the second part of Section~\ref{sec:hierarchy} and is one of the major contribution of the paper.

Independently to the proof in this paper, we mention the very recent~\cite{Lacker} that obtains uniqueness on the classical exchangeable BBGKY or Vlasov hierarchies. The basic idea in that proof is reminiscent of the strategy that we develop here for Theorem~\ref{stabilitynonviscous}, even if~\cite{Lacker} uses relative entropies while we base our estimates on $L^2$ bounds.

\medskip

$\diamond$ {\em Step~4: Identifying the limit.}

  Theorem~\ref{stabilitynonviscous} allows passing to the limit in all $\tau(T,w_N,f_N)$, after extracting a sub-sequence, to some hierarchy of $h_T$. It remains to identify the $h_T$ by finding $w$ and $f^0$ such that if $f$ solves the limiting equation~\eqref{vlasov-eq-lim} with $w$ and $f^0$ as initial data then we have the representation
  \[
\tau(T,w,f)=h_T.
  \]
  This is done in Section~\ref{sec:graphons} in Theorem~\ref{lg} and relies on a modified regularity lemma for graphons stated in Lemma.~\ref{kl}. This also requires the definition of the $\tau(T,w,f)$  in general with only~$f\in L^\infty([0,\ T]\times[0,\ 1],\;W^{1,\infty}(\R^d))$ and~$w\in L^\infty_\xi \mathcal{M}_\zeta\cap L^\infty_\zeta \mathcal{M}_\xi$, which is based on an abstract algebra that encompasses the sequence of operations on the tree that are performed to produce $\tau(T,w,f)$. Putting all estimates together finally allows to conclude.

  As one can see, this does not prove the convergence of $f_N$ to $f$ in any direct sense. Instead it only shows the convergence of all $\tau(T,w_N,f_N)$ to the $\tau(T,w,f)$. It implies the claimed result by taking $T=T_1$ the trivial tree with only one node and no edge, which in particular yields
  \[
\int_0^1 f_N(t,x,\xi)\,d\xi=\frac{1}{N}\,\sum_{i=1}^N \bar f_i(t,x)\to \int_0^1 f(t,x,\xi)\,d\xi.
  \]

\section{A generalized McKean SDE and propagation of independence}\label{sec:independence}
The aim of this section is to show that independence of events is propagated in our system \eqref{eq1} in an appropriate sense. It will be the equivalent of the aforementioned classical concept of propagation of chaos (for uniform weights $w_{ij}=\frac{1}{N}$), but it is not and it requires a careful extension of the usual arguments. To that end, we shall propose an extension of the classical \cite{Sznit} McKean SDE associated with the particles system   to our current case with non-uniform weights.
\begin{lem}[McKean SDE]\label{lemindependence}
Consider the nonlinear system of SDE for $(\bar X_1,\ldots,\bar X_N)$ given by
\begin{equation}\label{eq1-independ}
\left\{\begin{array}{l}
\displaystyle \frac{d\bar X_i}{dt}=\sum_{j=1}^N w_{ij}\int_{\mathbb{R}^d}K(\bar X_i-y)\bar f_j(t,dy),\\
\displaystyle \bar X_i(t=0)=\bar X_i^0,
\end{array}
\right.
\end{equation}
where $K\in W^{1,\infty}$ and we denote $\bar f_i(t,\cdot)=\Law(\bar X_i(t))$. Then, for any random initial data $(\bar X_1^0,\ldots,\bar X_N^0)$ such that $\mathbb{E}[\vert \bar X_i^0\vert]<\infty$ there is existence and uniqueness, trajectorial and in law of solutions of \eqref{eq1-independ}. In addition, if $\bar X_i^0$ are independent then $\bar X_{i}(t)$ are also independent for each $t\in \mathbb{R}_+$.
\end{lem}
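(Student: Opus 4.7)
The plan is a direct adaptation of the classical McKean--Vlasov fixed-point argument (see \cite{Sznit}) to the non-exchangeable setting, exploiting the crucial fact that once the deterministic curves of laws $(\bar f_j(t))_{j=1}^N$ are regarded as given data, equation \eqref{eq1-independ} decouples into $N$ non-autonomous ODEs, each driven purely by its own initial datum $\bar X_i^0$. More precisely, for any $\bar f = (\bar f_1,\ldots,\bar f_N)\in C([0,T],\mathcal{P}_1(\R^d))^N$, the drift
\[
V_i^{\bar f}(t,x) := \sum_{j=1}^N w_{ij}\int_{\R^d} K(x-y)\,\bar f_j(t,dy)
\]
is uniformly bounded and Lipschitz in $x$ with constant $\|\nabla K\|_\infty\sum_j|w_{ij}|$, which is $O(1)$ by~\eqref{meanfieldscaling}. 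Hence the flow $\Phi_i^{\bar f}(t,x_0)$ of $V_i^{\bar f}$ exists globally and depends on $x_0$ in a Lipschitz way.

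To solve \eqref{eq1-independ}, I would introduce the map $\Psi:\bar f\mapsto (\Psi_i(\bar f))_i$ defined by $\Psi_i(\bar f)(t) := \Phi_i^{\bar f}(t,\cdot)_{\#}\, f_i^0$, where $f_i^0 := \Law(\bar X_i^0)$, and prove that $\Psi$ is a contraction for a suitable metric on $C([0,T],\mathcal{P}_1(\R^d))^N$. Given two tuples $\bar f,\bar f'$, coupling the corresponding flows through the common initial datum $\bar X_i^0$ and applying a Gr\"onwall estimate on $|\Phi_i^{\bar f}(t,\bar X_i^0)-\Phi_i^{\bar f'}(t,\bar X_i^0)|$, together with the Kantorovich--Rubinstein duality to control differences like $\int K(x-y)(\bar f_j-\bar f_j')(dy)$, yields
\[
W_1\!\left(\Psi_i(\bar f)(t),\Psi_i(\bar f')(t)\right) \le C e^{Ct}\!\int_0^t \sum_{j=1}^N|w_{ij}|\,W_1(\bar f_j(s),\bar f_j'(s))\,ds,
\]
with $C$ depending only on $\|K\|_{W^{1,\infty}}$ and the row-sum bound in~\eqref{meanfieldscaling}. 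Taking $\max_i$ and using that uniform row-sum bound produces a contraction on sufficiently small time intervals, and iteration yields global existence and uniqueness, both trajectorial and in law. Finiteness of the first moment is preserved since the drift is uniformly bounded and $\mathbb{E}[|\bar X_i^0|]<\infty$.

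Propagation of independence then comes essentially for free. Since the limiting laws $\bar f_j$ have been identified as deterministic objects, \eqref{eq1-independ} is really a family of non-autonomous ODEs $\bar X_i(t) = \Phi_i^{\bar f}(t,\bar X_i^0)$, so each $\bar X_i(t)$ is a deterministic Borel function of $\bar X_i^0$ alone, with no coupling through the other initial data. If $(\bar X_i^0)_{i=1}^N$ are independent, the joint law of $(\bar X_i(t))_i$ is the pushforward of the product measure $\bigotimes_i f_i^0$ by the product map $\bigotimes_i \Phi_i^{\bar f}(t,\cdot)$, which is itself a product; hence $(\bar X_i(t))_i$ remain independent for every $t\ge 0$.

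The main subtlety compared with the classical exchangeable setting is closing the Gr\"onwall loop uniformly in $i$ when the weights $w_{ij}$ are neither symmetric nor individually small: this is precisely what the row-sum scaling $\max_i\sum_j|w_{ij}|=O(1)$ from~\eqref{meanfieldscaling} is designed to provide, and once this bookkeeping is in place the argument is standard Cauchy--Lipschitz plus optimal-transport estimates.
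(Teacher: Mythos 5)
Your argument is correct and is essentially the same fixed-point/contraction strategy the paper invokes by citing Sznitman's classical Theorem 1.1, adapted to $N$-tuples of measure curves with the row-sum bound replacing the $1/N$ normalisation. The observation that the fixed laws $\bar f_j$ make the system decouple into deterministic flows of the individual $\bar X_i^0$'s, hence preserving independence, is exactly the point the paper alludes to in the remark following the lemma.
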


The proof is an elementary extension of the classical case with uniform weights, see \cite[Theorem 1.1]{Sznit}.  We notice that the independence of initial data is crucial in the previous result. However, $\bar X_i^0$ are not necessarily identically distributed, although we could have also assumed so. Unfortunately, notice that the presence of non-uniform weights makes particles non-exchangeable and there is a rupture of symmetry. Thus, the necessity for the use of the different laws $\bar f_j$ in the SDE is justified. This was not the case for uniform weights, where symmetry is propagated and for each $\bar X_i$ the McKean SDE can be closed in terms of its law $\bar f_i$ itself.

\begin{pro}[Propagation of independence]\label{propindependence}
Let $(X_1,\ldots,X_N)$ be solution to \eqref{eq1} with $K\in W^{1,\infty}$ and consider the associated laws $f_i(t,\cdot)=\Law (X_i(t))$. Assume that $X_i^0$ are independent random variables such that $\mathbb{E}[\vert X_i^0\vert^2]<\infty$, and the following uniform estimates hold
$$
\sup_{1\leq i\leq N} \sqrt{\E |X_i^0|^2}\leq M,\qquad \sup_{1\leq i\leq N} \sum_{j=1}^N |w_{ij}|\leq C,
$$
for every $N\in \mathbb{N}$ and appropriate $M,C\in \mathbb{R}_+$. Consider the solution $\bar f_i$ to the coupled PDE system
\begin{equation} \label{independ}
\left\{
\begin{array}{l}
\displaystyle \partial_t \bar f_i+\divop_x\left(\bar f_i(t,x)\,\sum_{j=1}^N w_{ij}\,\int_{\mathbb{R}^d} K(x-y)\,\bar f_j(t,dy)\right)=0,\\
\displaystyle \bar f_i(0,x)=f_i(0,x).
\end{array}
\right.
\end{equation}
Then, the following estimate holds
\begin{equation} \label{independ-error}
\sup_{1\leq i\leq N}W_1(f_i(t,\cdot),\bar f_i(t,\cdot))\leq C_1(t)\,\sup_{1\leq i,j\leq N} |w_{ij}|^{1/2},
\end{equation}
for every $t\in \mathbb{R}_+$, where $W_1$ is the $1$-Wasserstein distance. We also have a direct control on the empirical measure of the system, namely
\begin{equation}\label{independ-error-2}
\E\,W_1\left(\frac{1}{N}\,\sum_{i=1}^N \delta_{X_i(t)},\; \frac{1}{N}\,\sum_{i=1}^N \bar f_i(t,\cdot)\right)\leq \frac{\tilde C\,C_2(t)}{N^\theta}+C_1(t)\,\sup_{1\leq i,j\leq N} |w_{ij}|^{1/2},
\end{equation}
for every $t\in \mathbb{R}_+$ and  appropriate constants $\tilde C,\theta>0$ depending only on the dimension $d$. In addition,  $C_1(t)$ and $C_2(t)$ depend on $M,C,\Vert K\Vert_{W^{1,\infty}}$ and $t$, and they can be made explicit by
\begin{align*}
C_1(t)&:=\sqrt{\frac{2}{C}}\,(e^{2C\,t\,\|K\|_{W^{1,\infty}}}-1),\\
C_2(t)&:=(2M^2+2C^2\Vert K\Vert_{L^\infty}t^2)^{1/2}.
\end{align*}
\end{pro}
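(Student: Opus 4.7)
The plan is to use a synchronous coupling between the original system~\eqref{eq1} and the McKean SDEs~\eqref{eq1-independ} by taking $\bar X_i^0 = X_i^0$. Then, by Lemma~\ref{lemindependence}, the $\bar X_i(t)$ remain mutually independent and have laws $\bar f_i(t,\cdot)$, and since both systems share the same initial data, one has $W_1(f_i(t,\cdot),\bar f_i(t,\cdot)) \leq \E|X_i(t)-\bar X_i(t)|$, so it suffices to control $u_i(t) := \E|X_i(t)-\bar X_i(t)|$ uniformly in $i$. I would split $X_i(t)-\bar X_i(t) = A_i(t) + B_i(t)$, with the consistency part
\[
A_i(t) := \int_0^t \sum_{j=1}^N w_{ij}\,\bigl[K(X_i-X_j)-K(\bar X_i-\bar X_j)\bigr]\,ds,
\]
and the fluctuation part
\[
B_i(t) := \int_0^t \sum_{j=1}^N w_{ij}\,Z_{ij}(s)\,ds, \qquad Z_{ij}(s) := K(\bar X_i-\bar X_j) - \int_{\R^d} K(\bar X_i-y)\,\bar f_j(s,dy).
\]
Lipschitz continuity of $K$ combined with the row-sum bound $\sum_j|w_{ij}|\leq C$ gives $\E|A_i(t)| \leq 2C\|K\|_{W^{1,\infty}} \int_0^t \bar u(s)\,ds$, where $\bar u(s) := \sup_i u_i(s)$.

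The crux, and the main obstacle, is estimating the fluctuation term $B_i$ to extract the correct small parameter $\sup_{i,j}|w_{ij}|^{1/2}$ enforced by~\eqref{vanishingweights}. Independence enters decisively here: for $j\neq i$, $\E[Z_{ij}(s)\mid \bar X_i(s)] = 0$ because $\bar X_j$ is independent of $\bar X_i$ with law $\bar f_j(s,\cdot)$; and for $j\neq k$, both different from $i$, conditional independence given $\bar X_i$ forces $\E[Z_{ij}\,Z_{ik}] = 0$ after successive conditioning. Hence only the diagonal $j=k$ terms survive in the second moment (the $j=i$ boundary term contributes a prefactor $|w_{ii}|$ absorbed in $\sup_{i,j}|w_{ij}|$), giving
\[
\E\Bigl|\sum_{j=1}^N w_{ij}\,Z_{ij}(s)\Bigr|^2 \leq \|K\|_{L^\infty}^2 \sum_{j=1}^N w_{ij}^2 \leq \|K\|_{L^\infty}^2\,C\,\sup_{1\leq i,j\leq N}|w_{ij}|.
\]
Cauchy--Schwarz then yields $\E|B_i(t)| \lesssim \sqrt{C}\,\|K\|_{L^\infty}\,t\,\sup_{i,j}|w_{ij}|^{1/2}$. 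Taking $\sup_i$ in the combined estimate on $u_i$ and applying Gronwall to $\bar u$ produces \eqref{independ-error} with a constant of the form $C_1(t)$.

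For the empirical measure bound \eqref{independ-error-2}, I would use the triangle inequality
\[
W_1\Bigl(\tfrac{1}{N}\sum_i \delta_{X_i(t)},\,\tfrac{1}{N}\sum_i \bar f_i(t,\cdot)\Bigr) \leq W_1\Bigl(\tfrac{1}{N}\sum_i \delta_{X_i(t)},\,\tfrac{1}{N}\sum_i \delta_{\bar X_i(t)}\Bigr) + W_1\Bigl(\tfrac{1}{N}\sum_i \delta_{\bar X_i(t)},\,\tfrac{1}{N}\sum_i \bar f_i(t,\cdot)\Bigr).
\]
The first piece is $\leq \frac{1}{N}\sum_i |X_i-\bar X_i|$ by the trivial coupling, whose expectation is already controlled by $\bar u(t) \leq C_1(t)\sup_{i,j}|w_{ij}|^{1/2}$. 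The second piece is a concentration of the empirical measure of independent (but non-identically distributed) samples $\bar X_i(t) \sim \bar f_i(t,\cdot)$ around its mean $\frac{1}{N}\sum_i \bar f_i$, for which a Fournier--Guillin-type estimate yields a rate $N^{-\theta}$ with $\theta = \theta(d)>0$, up to a constant proportional to $\sup_i \sqrt{\E|\bar X_i(t)|^2}$. The latter moment is bounded directly from the integrated form of \eqref{eq1-independ} by $\E|\bar X_i(t)|^2 \leq 2M^2 + 2C^2\|K\|_{L^\infty}^2 t^2$, which is exactly $C_2(t)^2$. Assembling everything gives \eqref{independ-error-2}.
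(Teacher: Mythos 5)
Your proposal is correct and follows essentially the same route as the paper: synchronous coupling with the McKean SDE from Lemma~\ref{lemindependence}, decomposition into a Lipschitz (consistency) part and a martingale-type (fluctuation) part, a second-moment computation in which independence kills all off-diagonal covariances so that only $\sum_j w_{ij}^2 \leq C\,\sup_{i,j}|w_{ij}|$ survives, Gronwall, and then the triangle inequality plus a Fournier--Guillin concentration bound for the empirical measure of the independent $\bar X_i$. Your treatment of the $j=i$ term (absorbing $w_{ii}^2\,\E|Z_{ii}|^2$ into the diagonal sum rather than invoking $K(0)=0$ as the paper does) is a minor but slightly cleaner variant of the same argument; otherwise the two proofs coincide step by step.
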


We notice that if weights relax uniformly to zero, {\em i.e.}, $\lim_{N\rightarrow \infty}\sup_{1\leq i,j,\leq N}\vert w_{ij}\vert^{1/2}=0$, then the above estimate \eqref{independ-error} guarantees that the creation of correlation between particles of system \eqref{eq1} is weak and relax to zero as $N\rightarrow \infty$. Indeed,  the relaxation rate extends the usual $N^{-1/2}$ of the classical propagation of chaos for uniform weights $w_{ij}=\frac{1}{N}$, and the laws $f_i$ of each particle is approximated by the independent laws $\bar f_i$ in the McKean SDE \eqref{eq1-independ} in the mean field limit $N\rightarrow \infty$, see \cite[Theorem 1.4]{Sznit}.

\begin{proof}
  Let $(\bar X_1,\ldots,\bar X_N)$ be the unique solution to \eqref{eq1-independ} with initial data $\bar X_i^0=X_i^0$, according to Lemma \ref{lemindependence}. We note first that we can simply propagate the second moments,
  \[
\E |X_i|^2\leq \E (|X_i^0|+C\,\|K\|_{L^\infty}\,t)^2\leq C_2(t)^2.
  \]
  In addition, let us denote the laws $\bar f_i(t,\cdot)=\Law (\bar X_i(t))$ and consider the sub-$\sigma$-algebras generated by $\bar X_i$, {\em i.e.}, $\mathcal{F}_i(t)=\sigma(\bar X_i(t))$, for each $t\in \mathbb{R}_+$ and $i=1,\ldots,N$. Since the variables $\bar X_i$ and $\bar X_j$ are independent by Lemma \ref{lemindependence}, for any $j=1,\ldots,N$ with $j\neq i$, then we have
$$
\int_{\mathbb{R}^d}K(\bar X_i-y)\,\bar f_j(t,dy)=\mathbb{E}_i[K(\bar X_i-\bar X_j)],
$$
where $\mathbb{E}_i=\mathbb{E}_i[\,\cdot\,\vert \mathcal{F}_i(t)]$ denotes the conditional expectation given $\mathcal{F}_i(t)$. Therefore, taking the difference in equations \eqref{eq1} and \eqref{eq1-independ} we obtain the SDE
\begin{align*}
\frac{d}{dt}(X_i-\bar X_i)&=\sum_{j=1}^N w_{ij}\,(K(X_i-X_j)-\mathbb{E}_i[K(\bar X_i-\bar X_j)])\\
&=\sum_{j=1}^N w_{ij}\,(K(X_i-X_j)-K(\bar X_i-\bar X_j)) +\sum_{j=1}^N w_{ij}\,(K(\bar X_i-\bar X_j)-\mathbb{E}_i[K(\bar X_i-\bar X_j)]),
\end{align*}
for every $t\in \mathbb{R}_+$. Hence, taking total expectation and using the Lipschitz continuity of $K$ we have that
\begin{equation}\label{independ-error-pre}
\frac{d}{dt}\mathbb{E}\,\vert X_i-\bar X_i\vert\leq 2C[K]_{\Lip}\sup_{1\leq j\leq N}\mathbb{E}\,\vert X_j-\bar X_j\vert+\mathbb{E}\left[\left\vert\sum_{j=1}^N w_{ij}\,(K(\bar X_i-\bar X_j)-\mathbb{E}_i[K(\bar X_i-\bar X_j)])\right\vert\right].
\end{equation}
We remark that all the expectations are finite by the assumption on $X_i^0$. Our goal now is to derive a bound from \eqref{independ-error-pre}. Notice that the key point is to control the relaxation of the second term of the right hand side as $N\rightarrow \infty$ in terms of weights $w_{ij}$. Such a step is reminiscent of the classical propagation of chaos with uniform weights $w_{ij}=\frac{1}{N}$, see  \cite[Theorem 1.4]{Sznit}. However, the presence of non-uniform weights makes the argument more subtle. By Jensen's inequality we shall control the second order moment instead. Indeed, expanding the square we obtain that
\begin{multline*}
\mathbb{E}\left[\left\vert \sum_{j=1}^N w_{ij}(K(\bar X_i-\bar X_j)-\mathbb{E}_i[K(\bar X_i-\bar X_j)])\right\vert ^2\right]\\
=\sum_{j,k=1}^N w_{ij}\,w_{ik}\,\mathbb{E}\bigg[(K(\bar X_i-\bar X_j)-\mathbb{E}_i[K(\bar X_i-\bar X_j)])\cdot(K(\bar X_i-\bar X_k)-\mathbb{E}_i[K(\bar X_i-\bar X_k)])\bigg].
\end{multline*}
Since $K(0)=0$, then only the terms with $j\neq i$ and $k\neq i$ persist in the above sum.  We shall prove that all the terms with $j\neq k$ also disappear. This will be a consequence of the statistical independence of $\bar X_i$. Indeed, take $j\neq k$ in the sum and notice that $(\bar X_i,\bar X_j,\bar X_k)$ are independent. Therefore,
\begin{multline*}
\mathbb{E}\bigg[(K(\bar X_i-\bar X_j)-\mathbb{E}_i[K(\bar X_i-\bar X_j)])\cdot (K(\bar X_i-\bar X_k)-\mathbb{E}_i[K(\bar X_i-\bar X_k)])\bigg]\\
=\mathbb{E}\bigg\{\mathbb{E}_i\bigg[(K(\bar X_i-\bar X_j)-\mathbb{E}_i[K(\bar X_i-\bar X_j)])\bigg]\,\mathbb{E}_i\bigg[(K(\bar X_i-\bar X_k)-\mathbb{E}_i[K(\bar X_i-\bar X_k)])\bigg]\bigg\}=0.
\end{multline*}
Here, we have used again the law of total expectation $\mathbb{E}=\mathbb{E}\,\mathbb{E}_i$ and the fact that the random variables $K(\bar X_i-\bar X_j)-\mathbb{E}_i[K(\bar X_i-\bar X_j)]$ and $K(\bar X_i-\bar X_k)-\mathbb{E}_i[K(\bar X_i-\bar X_k)]$ are conditionally independent given $\mathcal{F}_i$. This means that only the terms $j=k$ persist in the sum and
\begin{align*}
\mathbb{E}\left[\left|\sum_{j=1}^N w_{ij}\,(K(\bar X_i-\bar X_j) -\mathbb{E}_i[K(\bar X_i-\bar X_j)])\right|^2\right] 
&=2\,\sum_{j=1}^N w_{ij}^2\,|K(\bar X_i-\bar X_j)-\mathbb{E}_i[K(\bar X_i-\bar X_j)]|^2 \\
&\leq 8\,C\,\|K\|_{L^\infty}^2\,\sup_{1\leq j\leq N} |w_{ij}|,
\end{align*}
for each $i=1,\ldots,N$ and $t\in \mathbb{R}_+$. Finally, putting everything together into \eqref{independ-error-pre}, using Gr\"{o}nwall's inequality and noting that $\mathbb{E}[\vert X_i(0)-\bar X_i(0)\vert]=0$ (by definition), we obtain the following estimate
\[
 \sup_{1\leq i\leq N}\mathbb{E}\,|X_i-\bar X_i|\leq C_1(t)\,\sup_{1\leq i,j\leq N} |w_{ij}|^{1/2},
\]
 for every $t\in \mathbb{R}_+$. 
 
 To conclude the proof, it only remains to check that  $\bar f_i=\Law (\bar X_i)$ are precisely the solution to the PDE system \eqref{independ}, and that the above error estimate for the processes implies the corresponding $W_1$ error bounds \eqref{independ-error} and \eqref{independ-error-2} for the laws. Let us assume the former and let us prove the later first. Since $\bar f_i(t,\cdot)=\Law(\bar X_i(t))$, this directly implies \eqref{independ-error}, since
\begin{align*}
W_1(f_i(t,\cdot),\bar f_i(t,\cdot))&=\sup_{\|\nabla\phi\|_{L^\infty}\leq 1} \int_{\mathbb{R}^d} \phi(x)\,(f_i-\bar f_i)(t,dx)\\
&= \sup_{\|\nabla\phi\|_{L^\infty}\leq 1} \E[\phi(X_i(t))-\phi(\bar X_i(t))]\leq \E\,|X_i(t)-\bar X_i(t)|.
\end{align*}
Similarly, denoting the two empirical measures
\[
\mu_N(t,dx):=\frac{1}{N}\,\sum_{i=1}^N \delta_{X_i(t)}(dx),\quad \bar\mu_N(t,dx):=\frac{1}{N}\,\sum_{i=1}^N \delta_{\bar X_i(t)}(dx),
\]
we have that
\begin{align*}
\E W_1(\mu_N(t,\cdot),\bar\mu_N(t,\cdot))&=\E \left[\sup_{\|\nabla\phi\|_{L^\infty}\leq 1} \frac{1}{N}\,\sum_{i=1}^N (\phi(X_i(t))-\phi(\bar X_i(t)))\right]\\
&\leq \E\left[\frac{1}{N}\,\sum_{i=1}^N |X_i(t)-\bar X_i(t)|\right]\leq \sup_{1\leq i\leq N} \E |X_i(t)-\bar X_i(t)|. 
\end{align*}
Furthermore, by the independence of the $\bar X_i$, we have, through a straightforward extension of the proofs in \cite{DY95} or \cite{Boi11} (see also Theorem~1 in~\cite{FouGui}) which consider the exchangeable case that for some $\tilde C,\theta>0$, 
\[
\E W_1\left(\bar\mu_N(t,\cdot),\;\frac{1}{N}\,\sum_{i=1}^N \bar f_i(t,\cdot) \right)\leq \frac{\tilde C}{N^\theta}\,\sup_{1\leq i\leq N}\E [\vert X_i(t)\vert^2]^{1/2}\leq \frac{\tilde C\,C_2(t)}{N^\theta},
\]
provided for example that the second moments $\E |\bar X_i|^2$ are bounded uniformly in $N$, which we proved earlier. Combining the above estimates and using the triangle inequality allows deducing \eqref{independ-error-2}.

 Finally, for completeness we show that the laws $\bar f_i(t,\cdot)=\Law(\bar X_i(t))$ verify \eqref{independ}. Set any $\varphi\in C^1_c(\mathbb{R}^{2d})$. Then, we have
\[
\frac{d}{dt}\int_{\mathbb{R}^d}\varphi(x_i)\bar f_i(t,x)\,dx=\frac{d}{dt}\mathbb{E}[\varphi(\bar X_i)]=\mathbb{E}\left[\nabla\varphi(\bar X_i)\cdot \frac{d\bar X_i}{dt}\right]=\sum_{j=1}^N w_{ij}\,\mathbb{E}\left[\nabla\varphi(\bar X_i)\cdot \mathbb{E}_i[K(\bar X_i-\bar X_j)]\right].
\]
We notice that $\mathbb{E}_i[K(\bar X_i-\bar X_j)]= \int_{\mathbb{R}^d}K(\bar X_i-y)\,\bar f_j(t,dy)$, by independence of $\bar X_i$ and $\bar X_j$. Then, we have
\begin{align*}
\mathbb{E}[\nabla \varphi(\bar X_i)\cdot \mathbb{E}_i[K(\bar X_i-\bar X_j)]]
&=\mathbb{E}\left[\nabla \varphi(\bar X_i)\cdot \int_{\mathbb{R}^d}K(\bar X_i-y)\,\bar f_j(t,dy)\right]\\
&=\int_{\mathbb{R}^{2d}}\nabla\varphi(x)\cdot K(x-y)\, \bar f_i(t,dx)\,\bar f_j(t,dy).
\end{align*}
Putting everything together and noticing that $\varphi$ is arbitrary, we deduce that $\bar f_i$ verifies \eqref{independ} in weak form, for each $i=1,\ldots,N$.
\end{proof}

The above result is only the beginning to prove our main result because deriving the system \eqref{independ} is not enough to conclude. Indeed, so far we have only replaced a coupled system of ODEs by a couple system of PDEs. We do not even have any compactness with respect to $N$. There is one big exception: 

\begin{rem}[Exchangeable case]\label{R-exchangeable-case}
Assume that $\sum_{j=1}^N w_{ij}=\bar w$ and $\bar f_i^0=\bar f^0$ for every $i=1,\ldots,N$ and some $\bar w\in \mathbb{R}$ and $\bar f^0\in \mathcal{P}(\mathbb{R}^d)$. Then, there is usual propagation of chaos. Specifically, we have that $\bar f_i(t,x)=\bar f(t,x)$ for every $i=1,\ldots,N$, where $\bar f$ solves the usual Vlasov equation:
\[
\partial_t \bar f+\bar w\,\divop_x\left(\bar f(t,x)\,\int_{\R^d} K(x-y)\,\bar f(t,dy)\right)=0.
\]
\end{rem}

\section{A new hierarchy}\label{sec:hierarchy}
To make some of the calculations in this section rigorous, let us introduce a variant of the original system \eqref{independ} for the $\bar f_i(t,\cdot) :=\Law (\bar X_i(t))$ which includes an artificial diffusion term:
\begin{eqnarray} \label{independnu}
\partial_t \bar f_i+\divop_x\left(\bar f_i(t,x)\,\sum_{j=1}^N w_{ij}\,\int_{\R^d} K(x-y)\,\bar f_j(t,dy)\right)=\nu\,\Delta_x \bar f_i.
\end{eqnarray}
 With identical arguments as in Proposition \ref{propindependence} introducing a Wiener process in the McKean process \eqref{eq1-independ} with variance $2\,\nu\, t$ we can deduce  \eqref{independnu}. More specifically, this can be done  starting from
 system \eqref{eq1} with a noise $dW_i$, where the $W_i$ are $N$ independent Wiener processes
\[
\displaystyle dX_i=
  \sum_{j=1}^N w_{ij}\,K(X_i-X_j)\,dt+  \sqrt{2\nu}  \,dW_i.
\]
 Of course \eqref{independ} is just a special case of \eqref{independnu} with $\nu=0$. As in the case without diffusion, in \eqref{independnu} there is an abrupt rupture of symmetry caused by the presence of potentially heterogeneous weights $w_{ij}$. Indeed, even for identical initial data $\bar f_i^0$, the intrinsic dynamics instantaneously yields non-exchangeable distributions $\bar f_i(t)$ for any $t>0$. This prevent us from reducing the system \eqref{independnu} of $N$ coupled PDEs to a single Vlasov-McKean type PDEs in the usual way as in the classical setting with uniform weights. In addition, there is no realistic hope to identify the limit of each individual $\bar f_i$ as $N\rightarrow \infty$. However, we may still be able to study the limit of appropriate averaged statistics. In this section, we shall introduce a natural family of observables that will allow determining a closed averaged description of \eqref{independnu} in terms of an infinite hierarchy of PDEs. For such a novel hierarchy we study uniqueness and stability under the presence of artificial diffusion ($\nu>0$). Although an analogous result is still open for the hierarchy in the absence of artificial diffusion ($\nu=0$), we shall prove that the stability can still be recovered for the full initial system \eqref{independ} under appropriate regularity of the initial data.

\subsection{Hierarchy of observables indexed by trees}
For simplicity of the presentation, in this part we shall restrict to the starting case \eqref{independ} without artificial diffusion ($\nu=0$). Associated with any solution of \eqref{independ} we shall define an infinite family of observables indexed by the set of trees containing an averaged information of $\bar f_i$ and $w_{ij}$. The role of trees is suggested by the construction and will become apparent in a moment. For clarity, we recall some necessary notation of graph theory that will be used throughout the paper.

\begin{defi}[Graphs and trees]\label{D-graphs-trees}
~
\begin{enumerate}[label=(\roman*)]
\item A {\bf (simple) graph} is a pair $G=(V,E)$ where $V$ is a finite set (vertices) and $E\subseteq \{(i,j)\in V\times V:\,i\neq j\}$ is a subset (edges). A graph $G=(V,E)$ is called a {\bf weighted graph} if it is endowed with a (weight) function $W:V\times V\rightarrow \mathbb{R}$ such that $E=\{(i,j)\in V\times V:\, W(i,j)\neq 0\}$. The amount of vertices (or order) will be denoted by $\vert G\vert:=\# V$. 
\item A {\bf tree} is a graph $T=(V,E)$ so that any couple of different vertices $i,j\in V$ are connected by exactly one path (concatenation of edges) and each vertex $i\in V$ cannot be connected with itself.
\item We define the family $\Tree_n$ of {\bf labeled trees} of order $n$ by the following recursive formula
\begin{align*}
\Tree_1&:=\{T_1\},\\
\Tree_{n+1}&:=\{T+i:\,T\in \Tree_n,\,i\in \{1,\ldots,n\}\},\quad n\in \mathbb{Z}_0^+,
\end{align*}
where $T_1$ is the only tree with one vertex $\{1\}$. Here, for any tree $T$ with vertices $\{1,\ldots,n\}$ and any such vertex $i$, we will denote by $T+i$ to the new tree with vertices $\{1,\ldots,n+1\}$ after adding the leaf $n+1$ at node $i$. The family of all labeled trees of arbitrary order is then defined by $\Tree:=\cup_{n=1}^\infty\Tree_n$. Note that, in particular, $\Tree$ contains all possible trees modulo graph isomorphism.
\end{enumerate}
\end{defi}

\begin{rem}[Directed graphs and directed trees.]
Our graphs $G=(V,E)$ are {\em a priori} assumed directed so that $(i,j)\in E$ and $(j,i)\in E$ do not in general represent the same edge. Since we work with labeled trees, those are imbued with a natural orientation starting from the root. \end{rem}

With this notation, we introduce our family of observables $\{\mathcal{O}_N^T:\,T\in \Tree\}$ as follows.

\begin{defi}[Observables]\label{D-observables}
Consider any solution $(\bar f_i)_{i=1,\ldots,N}$ of \eqref{independ} with weights $(w_{ij})_{i,j=1,\ldots,N}$. Then, we define the family of time-dependent Radon measures:
\begin{equation}\label{E-observables}
\mathcal{O}_N^T(t,x_1,\ldots,x_{|T|}):=\frac{1}{N}\,\sum_{i_1,\ldots,i_{|T|}=1}^N \prod_{(k,l)\in E(T)} w_{i_k i_l}\;  \prod_{m\in V(T)} \bar f_{i_m}(t,x_{m}),
\end{equation}
for any $t\geq 0$, every $(x_1,\ldots,x_{|T|})\in \mathbb{R}^{d |T|}$ and each tree $T\in \Tree$.
\end{defi}

Let us note that $\mathcal{O}_N^T(t,\cdot)\in \mathcal{M}(\mathbb{R}^{d |T|})$ are finite Radon measures for each $t\geq 0$. Unfortunately, the heterogeneity of weights still implies that $\mathcal{O}_N^T$ are not symmetric. In the next section we shall show that they indeed satisfy an appropriate hierarchy of PDEs containing an averaged information of \eqref{independ}. Just to illustrate the underlying idea, let us explore some examples. First, consider the only existing tree $T_1\in \Tree_1$ of order $1$ and note that \eqref{E-observables} simply reduces to $\mathcal{O}_N^{T_1}(t,x)=\frac{1}{N}\sum_{i=1}^N \bar f_i(t,x)$. Summing in \eqref{independ} over $i=1,\ldots,N$ yields
\begin{equation}\label{E-observable-1-pre}
\partial_t \mathcal{O}_N^{T_1}(t,x)+\divop_x\left(\int_{\R^d} K(x-y)\,\frac{1}{N}\sum_{i,j=1}^N w_{ij}\,\bar f_i(t,x)\,\bar f_j(t,dy)\right)=0,
\end{equation}
which cannot be closed in terms of $\mathcal{O}_N^{T_1}$ only. In fact, setting the only existing tree $T_2\in \Tree_2$ of order 2 allows identifying the new observable $\mathcal{O}_N^{T_2}(t,x,y)=\frac{1}{N}\sum_{i,j=1}^Nw_{ij}\bar f_i(t,x)\bar f_j(t,y)$ inside the divergence of \eqref{E-observable-1-pre}. In other words, \eqref{E-observable-1-pre} can be readily written as
\begin{equation}\label{E-observable-1}
\partial_t \mathcal{O}_N^{T_1}(t,x)+\divop_x\left(\int_{\R^d} K(x-y)\,\mathcal{O}_N^{T_2}(t,x,dy)\right)=0.
\end{equation}
Since \eqref{E-observable-1} requires a priori knowledge of $\mathcal{O}_N^{T_2}$ we may seek for an equation of such a new observable. Indeed, from \eqref{independ} we obtain again that
\begin{align}\label{E-observable-2-pre}
\begin{aligned}
\partial_t \mathcal{O}_N^{T_2}(t,x,y)&+\divop_x\left(\int_{\mathbb{R}^d}K(x-z)\frac{1}{N}\sum_{i,j,k=1}^Nw_{ij}w_{ik}\bar f_i(t,x)\bar f_j(t,y)\bar f_k(t,dz)\right)\\
&+\divop_y\left(\int_{\mathbb{R}^d}K(y-z)\frac{1}{N}\sum_{i,j,k=1}^Nw_{ij}w_{jk}\bar f_i(t,x)\bar f_j(t,y)\bar f_k(t,dz)\right)=0.
\end{aligned}
\end{align}
Note that in this case two different divergence terms appear. Accordingly, note that $\Tree_3$ also contains two possible trees $T_3^1=T_2+1$ and $T_3^2=T_2+2$ which respectively arise by adding the new leaf $3$ at either of the vertices $1$ and $2$ of $T_2\in \Tree_2$ (see Figure \ref{fig:trees-T3})
\begin{figure}[t]
\centering
\begin{subfigure}[b]{0.2\textwidth}
\centering
\begin{forest}
for tree={grow=north,edge={-}}
[1 [{\color{red}3}] [2]]
\end{forest}
\caption{Tree $T_3^1\in \Tree_3$}
\end{subfigure}
\begin{subfigure}[b]{0.2\textwidth}
\centering
\begin{forest}
for tree={grow=north,edge={-}}
[1 [2 [{\color{red}3}]]]
\end{forest}
\caption{Tree $T_3^2\in\Tree_3$}
\end{subfigure}
\caption{Trees in $\Tree_3$}
\label{fig:trees-T3}
\end{figure}
By construction
\begin{align*}
\mathcal{O}_N^{T_3^1}(t,x,y,x)&=\frac{1}{N}\sum_{i,j,k=1}^N w_{ij}w_{ik}\bar f_i(t,x)\bar f_j(t,y)\bar f_k(t,z),\\
\mathcal{O}_N^{T_3^2}(t,x,y,x)&=\frac{1}{N}\sum_{i,j,k=1}^N w_{ij}w_{jk}\bar f_i(t,x)\bar f_j(t,y)\bar f_k(t,z).
\end{align*}
Again, we identify the above observables in the divergences of \eqref{E-observable-2-pre}, so that the latter can be restated as
\begin{equation}\label{E-observable-2}
\partial_t \mathcal{O}_N^{T_2}(t,x,y)+\divop_x\left(\int_{\mathbb{R}^d}K(x-z)\mathcal{O}_N^{T_3^1}(t,x,y,dz)\right)+\divop_y\left(\int_{\mathbb{R}^d}K(y-z)\mathcal{O}_N^{T_3^2}(t,x,y,dz)\right)=0.
\end{equation}
It is now apparent that an analogous recursive argument would lead to similar equations like \eqref{E-observable-1} and \eqref{E-observable-2} involving observables indexed by all other higher order trees. In the following section we will make this argument rigorous and will recover the full hierarchy of equations for the observables $\mathcal{O}_N^T$ with $T\in \Tree$. To do so, we shall rely on a more general formulation of the system \eqref{independ} and the observables $\mathcal{O}_N^T$ in \eqref{E-observables} inspired in the treatment of graphons which is formally independent of the decomposition into agents. 

 \subsection{A graphon-like representation of the system and its hierarchy \label{subsec:graphonrepresentation}}
Let us remark that in the limit $N\rightarrow\infty$ we must control two different processes simultaneously: a many-particles limit and a large-graph limit. Unfortunately, the explicit dependence on the discrete labels $i=1,\ldots,N$ of agents still persists both in the expression of the observables in Definition \ref{D-observables} and in the systems \eqref{independ} and \eqref{independnu} with and without artificial diffusion. This is reminiscent of the theory of graphons ({\it cf.} \cite{Lo,LS}). Specifically, graphons have proved a suitable tool to identify the limit of dense graph limits by replacing discrete indices by continuous ones. Accordingly, we expect to recover integrals as $N\rightarrow\infty$ that replace the various finite sums over particles' labels. However, for our graphs, we need to develop an appropriate extension of the theory of graphons that allows for the joint treatment of the large-graph limit for non necessarily dense graphs and the many-particles limit.
\begin{defi}[Empirical graphons]\label{D-graphon-representation}
Consider weights $(w_{ij})_{i,j=1,\ldots,N}\subseteq \mathbb{R}$ and probability distributions $(\bar f_i)_{i=1,\ldots,N}\subseteq \mathcal{P}(\mathbb{R}^d)$. Then, we define $w_N$ and $f_N$ as follows
\begin{align}\label{wf}
\begin{aligned}
w_N(\xi,\zeta)&:=\sum_{i,j=1}^N N\,w_{ij}\,\mathbb{I}_{[\frac{i-1}{N},\frac{i}{N})}(\xi)\;\mathbb{I}_{[\frac{j-1}{N},\frac{j}{N})}(\zeta), && \xi,\,\zeta\in [0,\ 1],\\
f_N(x,\xi)&:=\sum_{i=1}^N \bar f_i(x)\,\mathbb{I}_{[\frac{i-1}{N},\frac{i}{N})}(\xi), && x\in \R^d,\,\xi\in [0,\ 1].
\end{aligned}
\end{align}
\end{defi}
Note that Definition \ref{D-graphon-representation} allows representing discrete weights $(w_{ij})_{i,j=1,\ldots,N}$ as scalar functions of two continuous variables $\xi,\eta\in [0,\ 1]$. Similarly, distributions $(\bar f_i)_{i=1,\ldots,N}$ can be represented as a family of probability measures in $\mathcal{P}(\R^d)$ parametrized by the continuous variable $\xi\in [0,\ 1]$. Using such a notation, we can rewrite the coupled system of PDEs \eqref{independ} and \eqref{independnu} with and without artificial diffusion in a more compact form that forgets labels of the specific individuals. Specifically, we equivalently obtain
\begin{align}
&\partial_t  f_N(t,x,\xi)+\mbox{div}_x\,\left( f_N(t,x,\xi)\,\int_0^1 \int_{\R^d} w_N(\xi,\zeta)\,K(x-y)\, f_N(t,y,\zeta)\,dy\,d\zeta\right)=0,\label{independ2-fN}\\
&\partial_t  f_N(t,x,\xi)+\mbox{div}_x\,\left( f_N(t,x,\xi)\,\int_0^1 \int_{\R^d} w_N(\xi,\zeta)\,K(x-y)\, f_N(t,y,\zeta)\,dy\,d\zeta\right)=\nu \Delta_x f_N(t,x,\xi),\label{independ2nu-fN}
\end{align}
in the sense of distributions. Our ultimate goal will be to identify the limit of the above systems as $N\rightarrow\infty$. For later use, we specify minimal assumptions on the space which will contain the limiting structures.

\begin{defi}\label{D-weak-star-Bochner-space}
We define the weak-* Bochner space $L^\infty_\xi\mathcal{M}_\zeta$ as the topological dual of $L^1_\xi C_\zeta$; namely those elements consist of the weak-* measurable essentially bounded maps $\xi\in [0,\ 1]\mapsto w(\xi,d\zeta)\in \mathcal{M}([0,\ 1])$ which make the following seminorm finite
$$\Vert w\Vert_{L^\infty_\xi\mathcal{M}_\zeta}:=\sup_{\Vert \phi\Vert_{C([0,\ 1])}\leq 1}\esssup_{\xi\in [0,\ 1]}\left\vert\int_{0}^1\phi(\zeta)\,w(\xi,d\zeta)\right\vert.$$
Similarly, we define $L^\infty_\zeta\mathcal{M}_\xi$, or more generally $L^\infty_\xi B_\zeta^*$ for any dual Banach space $B^*$. 
\end{defi}

As usual, we will identify $L^\infty_\xi \mathcal{M}_\zeta$ with its Kolmogorov quotient under the above semi-norm $\Vert \cdot\Vert_{L^\infty_\xi \mathcal{M}_\zeta}$, which is a Banach space. Specifically, we shall identify elements $w_1\approx w_2$ when for any $\phi\in C([0,\ 1])$ the scalar functions $\xi\in [0,\ 1]\mapsto \int_{\R^d}\phi(\zeta)\,w_i(\xi,d\zeta)$ are equal almost everywhere for $i=1,2$. As mentioned above, the main feature of such a Banach space is its duality representation $L^\infty_\xi\mathcal{M}_\zeta\equiv (L^1_\xi C_\zeta)^*$, where $L^1_\xi C_\zeta$ stands for the (strong) Bochner space, see \cite{IT-77}. Although this property is desirable, the weak-* Bochner spaces raise several analytical difficulties. For instance, strong-measurability of the representatives is not granted because $\mathcal{M}([0,\ 1])$ fails the Radon-Nikodym property. Fortunately, in our special case $C([0,\ 1])$ is separable so that, at least, $\xi\in [0,\ 1]\mapsto\Vert w(\xi,\cdot)\Vert_{\mathcal{M}([0,\ 1])}$ is measurable and essentially bounded. For the same reason, the above equivalence relation $\approx$ reduces to the usual {\it a.e.} identity of functions. Therefore, a simpler expression of the norm in the separable case is available:
$$\Vert w\Vert_{L^\infty_\xi \mathcal{M}_\zeta}=\esssup_{\xi\in [0,\ 1]}\Vert w(\xi,\cdot)\Vert_{\mathcal{M}([0,\ 1])}.$$

\begin{defi}\label{D-extended-graphons}
We shall say that $w\in L^\infty_\xi\mathcal{M}_\zeta \cap L^\infty_\zeta\mathcal{M}_\xi$ is an (extended) graphon.
\end{defi}
For our purpose we will need to integrate extended graphons against functions $\phi$ that are only in $L^\infty$. This leads to the straightforward issue as to whether $\xi\in [0,\ 1]\mapsto \int_0^1 \phi(\zeta)\,w(\xi,d\zeta)$ can be well defined if $\phi\in L^\infty([0,\ 1])$. In fact if we only had $w\in L^\infty_\xi\mathcal{M}_\zeta$, then the above integral would be highly sensitive to modifying $\phi(\zeta)$ over the atoms of $w(\xi,d\zeta)$, preventing any straightforward definition.

The key point for our investigations is that our extended graphons belongs to both  $L^\infty_\xi\mathcal{M}_\zeta$ and $L^\infty_\zeta\mathcal{M}_\xi$. This yields two weakly-star measurable and bounded families of measures $\xi\in [0,\ 1]\mapsto w(\xi,d\zeta)\in \mathcal{M}([0,\ 1])$ and $\zeta\in [0,\ 1]\mapsto w(d\xi,\zeta)\in \mathcal{M}([0,\ 1])$ that induce the same bi-variate measure $w(\xi,d\zeta)\,d\xi=w(d\xi,\zeta)\,d\zeta\in \mathcal{M}([0,\ 1]^2)$. 
This implies that $w$ is a transition kernel in the full sense and the following continuity of the associated linear operator on $L^\infty$ holds true.
\begin{lem}[Operator representation of kernels]\label{bilinearbound}
Consider the following bounded bilinear operator
\[
\begin{array}{ccl}
(L^\infty_\xi\mathcal{M}_\zeta\cap L^\infty_\zeta\mathcal{M}_\xi)\times C_\zeta & \longrightarrow & L^\infty_\xi,\\
(w,\phi) & \longmapsto & \int_0^1\phi(\zeta)\,w(\cdot,d\zeta).
\end{array}
\]
This operator extends into a bounded operator from $(L^\infty_\xi\mathcal{M}_\zeta\cap L^\infty_\zeta\mathcal{M}_\xi)\times L^\infty_\zeta$ to $L^\infty_\xi$. 
Specifically, we have the following estimates
\begin{align}
\begin{aligned}
&\left\|\int_0^1 \phi(\zeta)\,w(\xi,d\zeta)\right\|_{L^1_\xi}\leq \|w\|_{L^\infty_\zeta \mathcal{M}_\xi}\,\|\phi\|_{L^1},\\ 
&\left\|\int_0^1 \phi(\zeta)\,w(\xi,d\zeta)\right\|_{L^\infty_\xi}\leq \|w\|_{L^\infty_\xi \mathcal{M}_\zeta}\,\|\phi\|_{L^\infty},
  \label{wfL1Linfty}
  \end{aligned}
  \end{align}
  for any $w\in L^\infty_\xi\mathcal{M}_\zeta\cap L^\infty_\zeta\mathcal{M}_\xi$ and $\phi\in L^\infty$. Moreover, the following continuity property holds: if $w_n\overset{*}{\rightharpoonup} w$ weakly-star in $L^\infty_\xi\mathcal{M}_\zeta\cap L^\infty_\zeta\mathcal{M}_\xi$, $\phi_n\rightarrow \phi$ strongly in $L^1$ and the $\phi_n$ are uniformly bounded in $L^\infty$, then
  \begin{equation}\label{wfweakcont}
\int_0^1\phi_n(\zeta)\,w_n(\cdot,d\zeta)\rightarrow \int_0^1 \phi(\zeta)\,w(\cdot,d\zeta),
  \end{equation}
  weakly-star in $L^\infty$.
\end{lem}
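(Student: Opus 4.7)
The plan is to build the extension and its properties in three phases, using density of continuous functions in $L^1$ together with the two separate norm estimates played against each other.

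For $\phi\in C([0,1])$, the $L^\infty_\xi$-bound follows directly from the definition of $\|w\|_{L^\infty_\xi\mathcal{M}_\zeta}$. The $L^1_\xi$-bound is the key first step, and this is where the joint assumption $w\in L^\infty_\xi\mathcal{M}_\zeta\cap L^\infty_\zeta\mathcal{M}_\xi$ becomes crucial: it lets me view $w$ as a single signed bivariate Radon measure $dW(\xi,\zeta)$ of finite total variation, admitting the two consistent disintegrations $w(\xi,d\zeta)\,d\xi$ and $w(d\xi,\zeta)\,d\zeta$. A Fubini argument applied to the Jordan decomposition of $W$ then gives, for any $\psi\in C([0,1])$,
\[
\int_0^1\psi(\xi)\int_0^1\phi(\zeta)\,w(\xi,d\zeta)\,d\xi=\int_0^1\phi(\zeta)\int_0^1\psi(\xi)\,w(d\xi,\zeta)\,d\zeta\leq \|\phi\|_{L^1}\,\|w\|_{L^\infty_\zeta\mathcal{M}_\xi}\,\|\psi\|_{L^\infty}.
\]
Taking the supremum over $\psi$ continuous with $\|\psi\|_{L^\infty}\leq 1$ and using separability of $C([0,1])$ yields the desired $L^1_\xi$-estimate for continuous $\phi$.

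To extend to $\phi\in L^\infty([0,1])$, I would use Lusin's theorem (combined with a truncation preserving the $L^\infty$-bound) to pick continuous $\phi_n\to\phi$ in $L^1$ with $\|\phi_n\|_{L^\infty}\leq\|\phi\|_{L^\infty}$, and set $W_n(\xi):=\int_0^1\phi_n(\zeta)\,w(\xi,d\zeta)$. Applying the $L^1$-bound to $\phi_n-\phi_m$ makes $\{W_n\}$ Cauchy in $L^1_\xi$, hence strongly convergent to some $W$, and the uniform $L^\infty_\xi$-bound on $W_n$ forces $W\in L^\infty_\xi$. Independence of the approximating sequence is automatic from the same $L^1$-bound applied to the difference of two sequences, which defines $\int_0^1\phi(\zeta)\,w(\xi,d\zeta):=W$, and both inequalities pass to the limit.

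The final and most delicate step is the weak continuity. Fix a test function $\psi\in L^1_\xi$ and decompose
\[
\int \psi(\xi)\,\left(\int\phi_n(\zeta)\,w_n(\xi,d\zeta)-\int\phi(\zeta)\,w(\xi,d\zeta)\right)\,d\xi=I_n+II_n,
\]
where $I_n$ involves $(\phi_n-\phi)$ tested against $w_n$, and $II_n$ involves $\phi$ tested against $(w_n-w)$. For $I_n$, since $\psi$ need not be bounded while $\phi_n-\phi$ only converges in $L^1$, I would split $\psi=\psi\,\mathbb{I}_{|\psi|\leq M}+\psi\,\mathbb{I}_{|\psi|>M}$; the bounded part is controlled by the $L^1$-estimate and tends to zero as $n\to\infty$ for any fixed $M$, whereas the tail is controlled by the $L^\infty$-estimate together with uniform boundedness of $\|w_n\|$, becoming arbitrarily small by choosing $M$ large. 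For $II_n$, I would approximate $\phi$ by continuous $\phi^k\to\phi$ in $L^1$ with $\|\phi^k\|_{L^\infty}\leq\|\phi\|_{L^\infty}$, split $II_n$ accordingly, use that $\psi\otimes\phi^k\in L^1_\xi C_\zeta$ together with the weak-star convergence $w_n\overset{*}{\rightharpoonup} w$ for the first piece, and reduce the second piece to the same truncation argument as in $I_n$.

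The main obstacle is precisely this weak continuity step: three different approximations — weak-star in $w_n$, only strong-$L^1$ (not $L^\infty$) convergence of $\phi_n$, and the test $\psi$ lying only in $L^1_\xi$ — must be made compatible. The truncation device resolves this by using the $L^1$-estimate where $\psi$ is bounded and the $L^\infty$-estimate where the $L^1$-tail of $\psi$ is small, which is exactly why both norms on $w$ are needed simultaneously.
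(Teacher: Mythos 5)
Your proposal is correct and uses essentially the same strategy as the paper: the $L^1_\xi$-bound via duality against continuous $\psi$ through the bivariate-measure representation of $w$, extension to $\phi\in L^\infty$ by $L^1$-density of $C([0,\ 1])$ while keeping the $L^\infty$-control, and the weak-star continuity from the two estimates played against each other through the duality pairing. The only organizational difference is in the last step: the paper first establishes weak-star convergence in $\mathcal{M}([0,\ 1])$ by testing against continuous $\psi$ and then upgrades to $L^\infty$-weak-star via the uniform $L^\infty$-bound, whereas your truncation of $\psi$ at level $M$ is a direct, hands-on proof of that same upgrade, so the two arguments coincide at the level of ideas.
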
 
The proof of Lemma~\ref{bilinearbound} relies on a careful but straightforward density argument, which we provide in Appendix~\ref{technicalappendix}.
Lemma \ref{bilinearbound} immediately implies the following version with test functions $\phi$ depending on~$x$.
\begin{lem}\label{bilinearboundBx}
Consider the following bounded bilinear operator
\[
\begin{array}{ccl}
(L^\infty_\xi\mathcal{M}_\zeta\cap L^\infty_\zeta\mathcal{M}_\xi)\times C_\zeta B_x^* & \longrightarrow & L^\infty_\xi B_x^*,\\
(w,\phi) & \longmapsto & \int_0^1\phi(\cdot,\zeta)w(\cdot,d\zeta),
\end{array}
\]
where $B_x^*$ stands for any Banach space. Then it is also bounded from $(L^\infty_\xi\mathcal{M}_\zeta\cap L^\infty_\zeta\mathcal{M}_\xi)\times L^\infty_\zeta B_x^*$ to $L^\infty_\xi B_x^*$, and from  $(L^\infty_\xi\mathcal{M}_\zeta\cap L^\infty_\zeta\mathcal{M}_\xi)\times L^\infty_\zeta B_x^*$ (with $L^\infty_\zeta B_x^*$ endowed with the $L^1_\zeta B_x^*$ norm) to $L^1_\xi B_x^*$. Specifically, we have the following estimates
\begin{align}\label{wfL1LinftyBx}
\begin{aligned}
\left\|\int_0^1 \phi(x,\zeta)\,w(\xi,d\zeta)\right\|_{L^1_\xi B_x^*}&\leq \|w\|_{L^\infty_\zeta \mathcal{M}_\xi}\,\|\phi\|_{L^1_\zeta B_x^*},\\
\left\|\int_0^1 \phi(x,\zeta)\,w(\xi,d\zeta)\right\|_{L^\infty_\xi B_x^*}&\leq \|w\|_{L^\infty_\xi \mathcal{M}_\zeta}\,\|\phi\|_{L^\infty_\zeta B_x^*},
\end{aligned}
\end{align}
for any $w\in L^\infty_\xi\mathcal{M}_\zeta\cap L^\infty_\zeta\mathcal{M}_\xi$ and $\phi\in L^\infty_\zeta B_x^*$, 
\end{lem}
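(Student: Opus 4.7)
The overall strategy is to reduce the vector-valued estimates to the scalar-valued ones already obtained in Lemma~\ref{bilinearbound} by duality against the predual $B_x$. Specifically, for $\psi\in B_x$ and $\phi\in L^\infty_\zeta B_x^*$, the scalar map $g_\psi(\zeta):=\langle \phi(\cdot,\zeta),\psi\rangle_{B_x^*,B_x}$ satisfies $\|g_\psi\|_{L^\infty_\zeta}\leq \|\phi\|_{L^\infty_\zeta B_x^*}\|\psi\|_{B_x}$ and, analogously, $\|g_\psi\|_{L^1_\zeta}\leq \|\phi\|_{L^1_\zeta B_x^*}\|\psi\|_{B_x}$. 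We then define the vector-valued integral weakly by
\[
\left\langle \int_0^1 \phi(\cdot,\zeta)\,w(\xi,d\zeta),\,\psi\right\rangle_{B_x^*,B_x}:=\int_0^1 g_\psi(\zeta)\,w(\xi,d\zeta),
\]
where the right-hand side is the scalar integral already controlled by Lemma~\ref{bilinearbound}.

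First I would treat the case $\phi\in C_\zeta B_x^*$, where the integral may simply be interpreted as a Bochner integral against the finite measure $w(\xi,d\zeta)$ for each admissible $\xi$; in this setting the identity above holds automatically, and so does the continuity in $\psi$. Next I would extend the definition to $\phi\in L^\infty_\zeta B_x^*$ using the very duality displayed above: by the scalar bounds from Lemma~\ref{bilinearbound}, the map $\psi\mapsto \int_0^1 g_\psi(\zeta)\,w(\xi,d\zeta)$ is linear and bounded on $B_x$ with norm $\leq \|w\|_{L^\infty_\xi\mathcal{M}_\zeta}\|\phi\|_{L^\infty_\zeta B_x^*}$, uniformly for a.e.\ $\xi$, and hence defines an element of $B_x^*$ for a.e.\ $\xi$. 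Taking the essential supremum over $\xi$ (respectively integrating over $\xi$) and then the supremum over $\|\psi\|_{B_x}\leq 1$ yields the two claimed estimates \eqref{wfL1LinftyBx}.

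The main obstacle is measurability: one has to ensure that the resulting map $\xi\mapsto \int_0^1 \phi(\cdot,\zeta)w(\xi,d\zeta)$ is weak-star measurable as a $B_x^*$-valued function, so that it genuinely belongs to $L^\infty_\xi B_x^*$ (or $L^1_\xi B_x^*$) in the sense of Definition~\ref{D-weak-star-Bochner-space}. By construction, for every fixed $\psi\in B_x$ the scalar pairing $\xi\mapsto \int_0^1 g_\psi(\zeta)\,w(\xi,d\zeta)$ lies in $L^\infty_\xi$ thanks to Lemma~\ref{bilinearbound}, which is precisely the definition of weak-star measurability in the dual Bochner setting. A small technical care is needed to fix a common full-measure subset of $[0,\ 1]$ on which the functional is defined for all $\psi$ simultaneously; this is resolved by restricting first to a countable norming subset of $B_x$ (which can always be extracted since the norm on $B_x^*$ is given by supremum over such a subset), and then extending by continuity in $\psi$ using the uniform bound above.

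Finally, I would remark that the case $\phi\in C_\zeta B_x^*$ stated in the lemma is just a specialization of the $L^\infty_\zeta B_x^*$ case once the definition is set up, and the $L^1_\zeta B_x^*\to L^1_\xi B_x^*$ bound is obtained in an entirely parallel fashion, simply by integrating in $\xi$ rather than taking an essential supremum, and invoking the first rather than the second inequality of \eqref{wfL1Linfty}.
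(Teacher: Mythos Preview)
Your approach is correct and is precisely the natural way to spell out what the paper leaves entirely implicit: the paper gives no proof at all, merely stating that the result ``immediately'' follows from Lemma~\ref{bilinearbound}. Your reduction to the scalar case via pairing against $\psi\in B_x$ is the right idea, and your handling of weak-star measurability is exactly what Definition~\ref{D-weak-star-Bochner-space} requires.

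Two small points deserve tightening. First, the claim that a countable norming subset of $B_x$ ``can always be extracted'' is not true for arbitrary Banach spaces; it holds when $B_x$ is separable (which covers every application in the paper), and in any case weak-star measurability---which is all the definition of $L^\infty_\xi B_x^*$ asks for---only requires the scalar pairings to be measurable for each $\psi$ separately, which you already have. Second, the $L^1$ estimate is not quite ``parallel'' to the $L^\infty$ one in the way you describe: applying \eqref{wfL1Linfty} to each $g_\psi$ and then integrating in $\xi$ yields $\sup_{\|\psi\|\le 1}\int_0^1|\cdots|\,d\xi$, whereas what you need is $\int_0^1\sup_{\|\psi\|\le 1}|\cdots|\,d\xi$, and the supremum and integral do not commute in that direction. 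The clean fix is to apply the scalar $L^1$ bound of Lemma~\ref{bilinearbound} not to $g_\psi$ but to the single nonnegative function $h(\zeta):=\|\phi(\cdot,\zeta)\|_{B_x^*}$, after first establishing the pointwise triangle inequality $\|\int_0^1\phi(\cdot,\zeta)\,w(\xi,d\zeta)\|_{B_x^*}\le \int_0^1 h(\zeta)\,|w|(\xi,d\zeta)$ for a.e.\ $\xi$ (via the same approximation used in the proof of Lemma~\ref{bilinearbound}).
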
  
Note that $w_N$ and $f_N$ in Definition \ref{D-graphon-representation} must belong to the previous spaces. Then, the study of the limit of those structures as $N\rightarrow\infty$ and the identification of the limiting equations of \eqref{independ2-fN} and \eqref{independ2nu-fN} is in order. Namely, we expect to obtain $w\in L^\infty_\xi\mathcal{M}_\zeta \cap L^\infty_\zeta\mathcal{M}_\xi$ and $f\in L^\infty([0,\ t_*],\,L^\infty_\xi L^1_x)$ with $t_*>0$ verifying either of the following equations (with or without artificial diffusion) in a weak sense:
\begin{align}
\partial_t  f(t,x,\xi)+\mbox{div}_x\,\left( f(t,x,\xi)\,\int_0^1 w(\xi,d\zeta) \int_{\R^d} K(x-y)\, f(t,y,\zeta)\,dy\right)&=0,\label{independ2}\\
\partial_t  f(t,x,\xi)+\mbox{div}_x\,\left( f(t,x,\xi)\,\int_0^1 w(\xi,d\zeta) \int_{\R^d} K(x-y)\, f(t,y,\zeta)\,dy\right)&=\nu \Delta_x f(t,x,\xi).\label{independ2nu}
\end{align}

In \eqref{independ2} and \eqref{independ2nu} we recognize a nonlinear drift, which is fibered as there is no transport on the variable $\xi$. For later use, we define formally the associated velocity field.

\begin{defi}[Velocity field]\label{D-velocity-field}
Consider any $w\in L^\infty_\xi\mathcal{M}_\zeta\cap L^\infty_\zeta\mathcal{M}_\xi$, $f\in L^\infty_{t,\xi} L^1_x$ and $K\in L^\infty$. Let us set $\phi_f(t,x,\zeta):=\int_{\mathbb{R}^d}K(x-y)f(t,y,\zeta)\,dy$ which is well defined in $L^\infty_{\xi,t,x}$ by the usual convolution estimates.  We define the associated velocity field $\mathcal{V}[f]$ as follows,
\[
\mathcal{V}_f(t,x,\xi):=\int_0^1 w(\xi,d\zeta)\,\phi_f(t,x,\zeta)=\int_0^1 w(\xi,d\zeta)\int_{\mathbb{R}^d}K(x-y)\,f(t, \zeta,y)\,dy,\quad (\xi, x)\in  [0,\ 1]\times \mathbb{R}^d, \ t \geq 0,
\]
which is well defined in $L^\infty_{t,\xi,x}$ by Lemma~\ref{bilinearboundBx}.
\end{defi}
We may then define the weak solutions to \eqref{independ2}-\eqref{independ2nu} in the usual way.
\begin{defi}[Weak solutions]\label{D-weak-solution-independ2}
Consider any pair consisting of a $w\in L^\infty_\xi\mathcal{M}_\zeta \cap L^\infty_\zeta\mathcal{M}_\xi$ and any $f\in L^\infty([0,\ t_*],\,L^\infty_\xi L^1_x)$. We say that $(w,f)$ is a weak solution of \eqref{independ2nu} for some $K\in L^\infty$ and for any $\nu\geq 0$ if the following weak formulation is verified
\begin{align}\label{E-weak-solution-independ2}
\begin{aligned}
\int_0^{t_*} \int_0^1\int_{\mathbb{R}^d}&\partial_t \varphi(t,x,\xi)\,f(t,x,\xi)\,dx\,d\xi\,dt\\
&+\int_0^{t_*} \int_0^1\int_{\mathbb{R}^d}\nabla_x\varphi(t,x,\xi)\cdot \mathcal{V}_f(t,x,\xi)\,f(t,x,\xi)\,dx\,d\xi\,dt\\
&+\nu\int_0^{t_*}\int_0^1\int_{\mathbb{R}^d}\Delta_x\varphi(t,x,\xi)\,f(t,x,\xi)\,dx\,d\xi\,dt=0,
\end{aligned}
\end{align}
for any $\varphi\in C^2_c((0,\ t_*)\times [0,\ 1]\times \mathbb{R}^d)$.
\end{defi}
We have the following result about the existence of weak solutions. 
\begin{pro}
Consider any $K\in L^1\cap L^\infty$ with $\divop K\in L^1$, any $w\in L^\infty_\xi \mathcal{M}_\zeta\cap L^\infty_\zeta \mathcal{M}_\xi$ and any $f^0\in L^\infty([0,\ 1],\ W^{1,1}\cap W^{1,\infty}(\R^d))$. Then for any $\nu\geq 0$, there exists $f\in L^\infty([0,\ t_*]\times [0,\ 1],\ W^{1,1}\cap W^{1,\infty}(\R^d))$ such that $(w,f)$ is a weak solution to \eqref{independ2nu} (or \eqref{independ2} if $\nu=0$) for every $t_*>0$  as per Definition~\ref{D-weak-solution-independ2}.\label{existenceweak}
  \end{pro}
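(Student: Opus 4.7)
The plan is to construct $f$ by a Picard iteration that treats both the $\nu>0$ and $\nu=0$ cases in a unified way. Initialize $f^{(0)}(t,x,\xi):=f^0(x,\xi)$ constant in time, and given the $n$-th iterate $f^{(n)}\in L^\infty([0,\ t_*]\times [0,\ 1],\,W^{1,1}\cap W^{1,\infty}(\mathbb{R}^d))$, define the associated velocity field $\mathcal{V}_{f^{(n)}}$ as in Definition~\ref{D-velocity-field} and let $f^{(n+1)}$ solve the \emph{linear} Cauchy problem
\[
\partial_t f^{(n+1)}+\divop_x(f^{(n+1)}\,\mathcal{V}_{f^{(n)}})=\nu\,\Delta_x f^{(n+1)},\qquad f^{(n+1)}|_{t=0}=f^0.
\]
Fiber by fiber in $\xi$, this is a standard linear advection-diffusion equation (or pure transport when $\nu=0$) whose well-posedness in $W^{1,1}\cap W^{1,\infty}$ is classical once the required bounds on the drift are available.

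The crucial analytical step is the propagation of uniform $W^{1,1}\cap W^{1,\infty}$ bounds on $f^{(n)}$, from which the scheme stays in a fixed ball for a time $t_*$ independent of $n$. Here the decisive observation is the integration-by-parts identity
\[
\nabla_x \mathcal{V}_{f^{(n)}}(t,x,\xi)=\int_0^1 w(\xi,d\zeta)\,\bigl(K\ast_x \nabla_x f^{(n)}(t,\cdot,\zeta)\bigr)(x),
\]
whence Lemma~\ref{bilinearboundBx} gives $\|\nabla_x\mathcal{V}_{f^{(n)}}\|_{L^\infty_{\xi,x}}\leq \|w\|_{L^\infty_\xi\mathcal{M}_\zeta}\,\|K\|_{L^\infty}\,\|\nabla_x f^{(n)}\|_{L^\infty_\xi L^1_x}$, so that $\nabla_x \mathcal{V}_{f^{(n)}}$ is bounded without any pointwise regularity of $\nabla K$. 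Analogously $\divop_x\mathcal{V}_{f^{(n)}}=\int_0^1 w(\xi,d\zeta)\,((\divop K)\ast_x f^{(n)})(x)$ and $\nabla_x\divop_x\mathcal{V}_{f^{(n)}}$ are controlled by $\|\divop K\|_{L^1}$ times the $L^\infty$ and $W^{1,\infty}$ norms of $f^{(n)}$. Plugging these into the equations for $f^{(n+1)}$ and $\nabla_x f^{(n+1)}$ fiberwise in $\xi$ and applying Gronwall's inequality propagates the $L^1\cap L^\infty$ and $\dot W^{1,1}\cap\dot W^{1,\infty}$ norms uniformly; for $L^\infty$ one uses either the characteristic representation ($\nu=0$) or the parabolic maximum principle ($\nu>0$), while the $L^1$ bound follows from the conservation form.

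To obtain convergence I set $\delta^{(n)}:=f^{(n+1)}-f^{(n)}$ and subtract the equations:
\[
\partial_t \delta^{(n)}+\divop_x(\delta^{(n)}\mathcal{V}_{f^{(n)}})+\divop_x\!\bigl(f^{(n)}(\mathcal{V}_{f^{(n)}}-\mathcal{V}_{f^{(n-1)}})\bigr)=\nu\,\Delta_x\delta^{(n)}.
\]
Testing against a regularization of $\mathrm{sgn}(\delta^{(n)})$ and noting that $\|\mathcal{V}_{f^{(n)}}-\mathcal{V}_{f^{(n-1)}}\|_{L^\infty_{\xi,x}}\leq \|w\|_{L^\infty_\xi\mathcal{M}_\zeta}\,\|K\|_{L^\infty}\,\|\delta^{(n-1)}\|_{L^\infty_\xi L^1_x}$ by Lemma~\ref{bilinearboundBx}, one derives a differential inequality of Gronwall type that contracts the sequence in $L^\infty_\xi L^1_x$ on a short interval, and iteration extends the contraction up to $t_*$. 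The limit $f$ inherits the $W^{1,1}\cap W^{1,\infty}$ regularity by weak-$*$ lower semicontinuity, and the passage to the limit in the weak formulation~\eqref{E-weak-solution-independ2} is immediate since the nonlinearity $f^{(n)}\mathcal{V}_{f^{(n)}}$ converges in $L^1$ by the strong $L^1$ convergence of $f^{(n)}$ combined with the continuity statement~\eqref{wfweakcont} of Lemma~\ref{bilinearbound}.

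The main obstacle is the propagation of the $W^{1,\infty}$ bound: the low regularity hypothesis $K\in L^1\cap L^\infty$ (with only $\divop K\in L^1$) prevents a direct estimate on $\nabla K$, so the standard route of bounding $\nabla\mathcal{V}_{f^{(n)}}$ by $\|\nabla K\|_{L^\infty}\|f^{(n)}\|_{L^1}$ is unavailable. The integration-by-parts identity above, exploited through the precise operator bound of Lemma~\ref{bilinearboundBx} in $L^\infty_\xi\mathcal{M}_\zeta\cap L^\infty_\zeta\mathcal{M}_\xi$, is what closes the argument and should be regarded as the crucial ingredient of the proof.
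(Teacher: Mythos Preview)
Your proposal is correct and follows essentially the same strategy as the paper: the paper sets up the Banach fixed point for the map $\mathcal{L}$ sending $f$ to the solution of the linearized advection--diffusion equation, works in a ball of $L^\infty_{t,\xi}(W^{1,1}_x\cap W^{1,\infty}_x)$ equipped with the $L^1$ norm, and the key a priori estimates are exactly the ones you isolate---in particular the integration-by-parts identity for $\nabla_x\mathcal V_f$ combined with Lemma~\ref{bilinearboundBx} to bypass any assumption on $\nabla K$. Your explicit Picard iteration is just an equivalent phrasing of the same contraction argument, and your contraction estimate on $\delta^{(n)}$ matches the paper's stability estimate~\eqref{E-independ2-linear}.
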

The proof is given in Appendix~\ref{technicalappendix} and follows a straightforward fixed point argument from classical a priori estimates for advection-diffusion equations. We also note that it would be possible to obtain an even more existence result with $f$ only a measure; for example through the pathwise approaches developed in~\cite[Chapter~10]{RachevR}. 
  
We finish this short analysis of the space $L^\infty_\xi \mathcal{M}_\zeta\cap L^\infty_\zeta \mathcal{M}_\xi$ by noting the following density result
\begin{lem}
For any $w\in L^\infty_\xi \mathcal{M}_\zeta\cap L^\infty_\zeta \mathcal{M}_\xi$, there exists $w_n\in L^\infty_{\xi,\zeta}$, uniformly bounded in $L^\infty_\xi L^1_\zeta\cap L^\infty_\zeta L^1_\xi$,   converging to $w$ in $L^1_\xi H^{-1}_\zeta\cap L^1_\zeta H^{-1}_\xi$.
\label{gooddensity}  \end{lem}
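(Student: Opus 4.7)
The plan is to construct $w_n$ by a direct double mollification of the associated bivariate signed measure. Since $w\in L^\infty_\xi\mathcal{M}_\zeta\cap L^\infty_\zeta\mathcal{M}_\xi$, the two disintegrations $w(\xi,d\zeta)\,d\xi$ and $w(d\xi,\zeta)\,d\zeta$ coincide with a single finite signed Radon measure $W\in\mathcal{M}([0,1]^2)$; I extend $W$ by zero to $\mathbb{R}^2$, fix a symmetric nonnegative mollifier $\rho\in C_c^\infty(\mathbb{R})$ with $\int\rho=1$, set $\rho_n(s):=n\rho(ns)$, and define
$$w_n(\xi,\zeta):=\int_{\mathbb{R}^2}\rho_n(\xi-\xi')\,\rho_n(\zeta-\zeta')\,W(d\xi',d\zeta'),$$
which is manifestly in $L^\infty_{\xi,\zeta}$. (The restriction to $[0,1]^2$ introduces only a boundary layer of width $1/n$ that will be harmless in all estimates.)

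For the uniform bound in $L^\infty_\xi L^1_\zeta$, I use that the total variation $|W|$ disintegrates as $|w(\xi',\cdot)|(d\zeta')\,d\xi'$, whose $\xi'$-marginal density is bounded by $\|w\|_{L^\infty_\xi\mathcal{M}_\zeta}$. Since $\int\rho_n(\zeta-\zeta')\,d\zeta\le 1$, Fubini yields
$$\int_0^1|w_n(\xi,\zeta)|\,d\zeta\le\int_{\mathbb{R}^2}\rho_n(\xi-\xi')\,|W|(d\xi',d\zeta')\le\|w\|_{L^\infty_\xi\mathcal{M}_\zeta},$$
uniformly in $\xi$ and $n$, with a symmetric bound giving $\|w_n\|_{L^\infty_\zeta L^1_\xi}\le\|w\|_{L^\infty_\zeta\mathcal{M}_\xi}$.

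For the convergence in $L^1_\xi H^{-1}_\zeta$, I split $w_n-w=(w_n-\tilde w_n)+(\tilde w_n-w)$ with the partial mollification $\tilde w_n(\xi,\zeta):=\int\rho_n(\zeta-\zeta')\,w(\xi,d\zeta')$. The embedding $H^1([0,1])\hookrightarrow C([0,1])$ is compact (Rellich), so dually $\mathcal{M}([0,1])\hookrightarrow H^{-1}([0,1])$ is compact; combining this with the weak-star convergence $\rho_n*_\zeta\mu\overset{*}{\rightharpoonup}\mu$ in $\mathcal{M}([0,1])$ yields $\|\tilde w_n(\xi,\cdot)-w(\xi,\cdot)\|_{H^{-1}_\zeta}\to 0$ for every $\xi$, with uniform bound $C\|w\|_{L^\infty_\xi\mathcal{M}_\zeta}$, so dominated convergence handles the $\xi$-integral. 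For the remaining piece, convolution with $\rho_n$ in $\zeta$ is a contraction on $H^{-1}_\zeta$, and writing $w_n=\rho_n*_\xi\tilde w_n$ (with $\int\rho_n=1$) gives
$$\|w_n(\xi,\cdot)-\tilde w_n(\xi,\cdot)\|_{H^{-1}_\zeta}\le\int\rho_n(\xi-\xi')\,\|w(\xi',\cdot)-w(\xi,\cdot)\|_{H^{-1}_\zeta}\,d\xi'.$$
Integration in $\xi$ and Fubini reduce the bound to $\int\rho_n(h)\,g(h)\,dh$ with $g(h):=\int_0^1\|w(\xi+h,\cdot)-w(\xi,\cdot)\|_{H^{-1}_\zeta}\,d\xi$. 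Since $H^{-1}([0,1])$ is separable and $\xi\mapsto w(\xi,\cdot)$ is weak-star (hence weakly) measurable into it, Pettis's theorem promotes this to strong measurability, Bochner-$L^1$ shift continuity gives $g(h)\to g(0)=0$, and hence $\int\rho_n\,g\to 0$. The symmetric argument in the other variable yields the convergence in $L^1_\zeta H^{-1}_\xi$.

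The main obstacle lies in this last step. The map $\xi\mapsto w(\xi,\cdot)$ is initially only weak-star measurable with values in the non-separable space $\mathcal{M}_\zeta$, and neither shift continuity nor the usual Bochner machinery is directly available there. The crucial point is that the compact embedding $\mathcal{M}\hookrightarrow H^{-1}$ in one dimension brings the target into a separable Hilbert space, allowing Pettis's theorem to upgrade weak measurability into strong measurability and thereby closing the estimate in $L^1_\xi H^{-1}_\zeta$.
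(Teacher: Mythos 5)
Your construction matches the paper's: both mollify $w$ componentwise to obtain an $L^\infty_{\xi,\zeta}$ approximant with the same $L^\infty_\xi\mathcal{M}_\zeta\cap L^\infty_\zeta\mathcal{M}_\xi$ bounds. The convergence argument, however, is genuinely different. The paper extends $w$ \emph{periodically}, tests $w_n$ against a weakly convergent sequence $\phi_n\rightharpoonup\phi$ in $H^1_\zeta$, and exploits the compact embedding $H^1([0,1])\hookrightarrow L^1([0,1])$ together with the bilinear bound of Lemma~\ref{bilinearbound} to close the estimate. You instead work directly with the Bochner norm, splitting $w_n-w=(w_n-\tilde w_n)+(\tilde w_n-w)$, handling $\tilde w_n-w$ by the compact embedding $\mathcal{M}([0,1])\hookrightarrow H^{-1}([0,1])$, and handling $w_n-\tilde w_n$ by shift continuity after using Pettis's theorem to upgrade weak-star measurability of $\xi\mapsto w(\xi,\cdot)$ into $\mathcal{M}_\zeta$ to strong Bochner measurability into the separable $H^{-1}_\zeta$. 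Your route is arguably cleaner in that it delivers the literal Bochner-$L^1$ convergence without going through a sequence-duality criterion, at the cost of the (standard but nontrivial) Pettis machinery.

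There is one genuine, if repairable, gap: you extend $W$ by zero and then declare the boundary layer of width $1/n$ ``harmless,'' but for a zero extension the weak-star convergence $\rho_n*_\zeta w(\xi,\cdot)\overset{*}{\rightharpoonup}w(\xi,\cdot)$ in $\mathcal{M}([0,1])$ is simply false if $w(\xi,\cdot)$ charges $\{0\}$ or $\{1\}$: testing against $\phi\in C([0,1])$, the mollified and truncated mass at an endpoint converges to half its value, so your claimed pointwise-in-$\xi$ convergence in $H^{-1}_\zeta$ can fail. The estimate does close, but only because the two-sided assumption $w\in L^\infty_\xi\mathcal{M}_\zeta\cap L^\infty_\zeta\mathcal{M}_\xi$ forces the bivariate measure $W$ to have absolutely continuous marginals in each variable, hence $|W|\bigl([0,1]\times\{0,1\}\bigr)=0$, which after disintegration gives $|w(\xi,\cdot)|(\{0,1\})=0$ for a.e.~$\xi$; you should state this explicitly, since it is the only place the ``two-sided'' structure of the intersection space enters your proof. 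Alternatively, switching to a periodic (as the paper does) or reflection extension removes the issue at the source.
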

We also provide the proof of Lemma~\ref{gooddensity} in Appendix~\ref{technicalappendix}.

Unfortunately we do not know how to pass to the limit directly in Eqs. \eqref{independ2-fN} or \eqref{independ2nu-fN}, as we lack some compactness in the $\xi$ variable either for $w_N$ or for $f_N$. Indeed, from our uniform bounds, we could extract converging subsequences $w_N\overset{*}{\rightharpoonup} w$ and $f_N\overset{*}{\rightharpoonup} f$ respectively in $L^\infty_\xi \mathcal{M}_\zeta\cap L^\infty_\zeta \mathcal{M}_\xi$ and $L^\infty_{t,\xi} (L^1_x\cap W^{1,\infty}_x)$. But this gives only weak convergence on $w_N$ and weak convergence in $\xi$ on $f_N$ whereas Lemma~\ref{bilinearbound} for example requires some strong convergence on $f_N$. To further emphasize the point, the existence result that we just stated does rely on strong convergence of $f$ through the contraction argument.

To bypass this issue, we will look at an analogous family of observables providing an averaged information of the system, since it is much easier to obtain compactness on those observables as we will see in the next section. Inspired in Definition \ref{D-observables} we set the following operator acting on abstract elements $w\in L^\infty_\xi\mathcal{M}_\zeta\cap L^\infty_\zeta\mathcal{M}_\xi$ and $f\in L^\infty_\xi L^1_x$.

\begin{defi}[$\tau$ operator]\label{D-tau}
Consider any  $w\in L^\infty_\xi\mathcal{M}_\zeta \cap L^\infty_\zeta\mathcal{M}_\xi$ and any $f\in L^\infty_\xi L^1_x$. Then, we (formally) define the operator
\begin{equation}\label{tau}
\tau(T,w,f)(x_1,\ldots,x_{|T|}):=\int_{[0,\ 1]^{|T|}} \prod_{(k,l)\in E(T)} w(\xi_k,\xi_l)\,\prod_{m\in V(T)} f(x_m,\xi_m)\,d\xi_1\dots d\xi_{|T|},
\end{equation}
for every tree $T\in \Tree$. Time dependence can be included when $f$ also depends on $t$.
\end{defi}
This notation allows removing again the explicit discrete indices for the agents' labels in Definition \ref{D-observables} of the observables $\mathcal{O}_N^T$. Specifically, note that the latter enjoy the following more universal representation
\[
\mathcal{O}_N^T=\tau(T,w_N,f_N),\quad T\in \Tree.
\]
We also note that, at the present, it is not clear why $\tau(T,w,f)$ given by \eqref{tau} is well defined with only $w\in L^\infty_\xi\mathcal{M}_\zeta \cap L^\infty_\zeta\mathcal{M}_\xi$ and $f\in L^\infty_\xi L^1_x$. This requires more work and will be performed in the next section ({\it cf.} Definition \ref{D-tau-extended}). Instead, for the moment it is enough to observe that $\tau(T,w_N,f_N)$ is indeed well defined, and more generally, $\tau(T,w,f)$ is also well defined as soon as $w$ has a density, {\it i.e.} $w\in L^\infty_\xi L^1_\zeta\cap L^\infty_\zeta L^1_\xi$. Moreover, we can then obtain uniform bounds in terms of only the $L^\infty_\xi\mathcal{M}_\zeta \cap L^\infty_\zeta\mathcal{M}_\xi$ norms as follows.
\begin{lem}[Homomorphism densities]\label{L-tau-density-bound}
Consider any $w\in L^\infty_\xi L^1_\zeta \cap L^\infty_\zeta L^1_\xi$ 
and define
\begin{equation}\label{tau-density}
\tau(T,w):=\int_{[0,\ 1]^{|T|}}\prod_{(k,l)\in E(T)} w(\xi_k,\xi_l)\,d\xi_1\ldots d\xi_{|T|},\quad T\in \Tree.
\end{equation}
Then, $\tau(T,w)$ is finite and the following relation holds true
\begin{equation}\label{tau-density-bound}
\vert \tau(T,w)\vert\leq \Vert w\Vert_{L^\infty_\xi \mathcal{M}_\zeta}^{|T|-1}.
\end{equation}
\end{lem}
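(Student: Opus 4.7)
The plan is to argue by induction on $n := |T|$. The base case $n=1$ is trivial, since $T_1$ has no edges and hence $\tau(T_1, w) = \int_0^1 d\xi_1 = 1 = \|w\|_{L^\infty_\xi \mathcal{M}_\zeta}^0$. For the inductive step with $n \geq 2$, I would rely on the observation that any rooted tree with at least two vertices admits at least one non-root leaf $\ell$: indeed, a tree on $n \geq 2$ vertices has at least two graph-theoretic leaves, so at most one of them can be the root. Let $k$ denote the unique neighbor of $\ell$; since $\ell$ lies strictly further from the root than $k$, the oriented edge $(k, \ell)$ appears in the product as the factor $w(\xi_k, \xi_\ell)$. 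Setting $T' := T \setminus \{\ell\}$, which is itself a labeled tree on $n-1$ vertices with the same root, Fubini's theorem would yield
\begin{equation*}
\tau(T, w) = \int_{[0,1]^{n-1}} \prod_{(i,j)\in T'} w(\xi_i, \xi_j) \left(\int_0^1 w(\xi_k, \xi_\ell)\,d\xi_\ell\right) d\xi_1 \cdots d\xi_{n-1}.
\end{equation*}

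Since $w(\xi_k, \cdot) \in L^1([0,1])$ is a density and its $L^1$-norm coincides with its total variation norm as a measure, the inner integral is bounded by $\|w\|_{L^\infty_\xi L^1_\zeta} = \|w\|_{L^\infty_\xi \mathcal{M}_\zeta}$ for a.e.~$\xi_k$. Inserting this pointwise bound and applying the inductive hypothesis to $T'$ produces
\begin{equation*}
|\tau(T, w)| \leq \|w\|_{L^\infty_\xi \mathcal{M}_\zeta}\,|\tau(T', w)| \leq \|w\|_{L^\infty_\xi \mathcal{M}_\zeta}^{n-1},
\end{equation*}
closing the induction. The main conceptual point to emphasize is the necessity of always pruning a \emph{non-root} leaf so that the orientation of the cut edge is parent-to-child; pruning the root leaf instead would force the use of $\|w\|_{L^\infty_\zeta \mathcal{M}_\xi}$, yielding a weaker bound that mixes both norms. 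Finiteness of $\tau(T, w)$ is not a separate issue: running the same argument with $|w|$ in place of $w$ produces a finite bound on $\tau(T, |w|)$, which a posteriori justifies the use of Fubini for the signed integrand.
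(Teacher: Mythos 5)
Your proof is correct and follows essentially the same route as the paper's: induction on $|T|$, peeling off a non-root leaf, bounding the inner integral in $\xi_\ell$ by $\|w\|_{L^\infty_\xi \mathcal{M}_\zeta}$, and invoking the inductive hypothesis on the pruned tree. The only cosmetic difference is that the paper identifies the peeled leaf as the last-added vertex $n+1$ via the recursive construction of $\Tree_{n+1}=\{T_n+i\}$, whereas you argue it exists by the general fact that a tree on $\geq 2$ vertices has a non-root leaf; both observations are equivalent, and your explicit remark about why the orientation $(k,\ell)$ matters is a useful clarification that the paper leaves implicit.
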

\begin{proof}
We argue by induction. First, note the only tree $T_1\in \Tree_1$ actually consists of only one vertex and no edges. Then, we trivially obtain that $\prod_{(k,l)\in E(T_1)}| w(\xi_k,\xi_l)|=1$ so that $\tau(T_1,\vert w\vert)=1$. By the induction hypothesis, let us assume now that we have
$$\tau(|T_n|,w)\leq \Vert w\Vert_{L^\infty_\xi\mathcal{M}_\zeta}^{|T_n|-1},$$
for any $T_n\in \Tree_n$ . Consider any $T_{n+1}\in \Tree_{n+1}$ and find a tree $T_n\in \Tree_n$ and a vertex $i\in \{1,\ldots,n\}$ so that we can write $T_{n+1}=T_n+i$. Hence, we obtain
\begin{align*}
\tau(T_{n+1},\vert w\vert)&=\int_{[0,\ 1]^{n+1}}\prod_{(k,l)\in E(T_{n+1})}\vert w(\xi_k,\xi_l)\vert\,d\xi_1\,\ldots\,d\xi_{n+1}\\
&=\int_{[0,\ 1]^n}\left(\int_0^1 \vert w(\xi_i,\xi_{n+1})\vert\,d\xi_{n+1}\right)\prod_{(k,l)\in E(T_n)}|w(\xi_k,\xi_l)|\,d\xi_1\ldots\,d\xi_n,
\end{align*}
where we have used that $\xi_{n+1}$ does not appears in the above product over edges of $T_n$. Hence, by our hypothesis on $w$ and by definition \eqref{tau-density} of $\tau(T_n,\vert w\vert)$ we obtain
$$\tau(T_{n+1},\vert w\vert)\leq \Vert w\Vert_{L^\infty_\xi\mathcal{M}_\zeta}\,\tau(T_n,\vert w\vert).$$
We then conclude by applying the induction hypothesis to $\tau(T_n,\vert w\vert)$.
\end{proof}
 We can obtain as a straightforward consequence of Lemma \ref{L-tau-density-bound}, 
\begin{lem}\label{L-tau-bound}
Consider any $w\in L^\infty_\xi L^1_\zeta \cap L^\infty_\zeta L^1_\xi$, any $f\in L^\infty_\xi L^1_x$ and $T\in \Tree$. Then $\tau(T,w,f)\in L^1(\R^{d |T|})$ and the following estimate is fulfilled
$$\Vert \tau(T,w,f)\Vert_{L^1(\R^{d|T|})}\leq \Vert w\Vert_{L^\infty_\xi \mathcal{M}_\zeta \cap L^\infty_\zeta \mathcal{M}_\xi}^{|T|-1}\Vert f\Vert_{L^\infty_\xi L^1_x}^{|T|}.$$
A similar result holds  when the space  $ L^\infty_\xi L^1_x$ for $f$ is replaced by any other function Banach space stable under tensorization and admitting a Minkowski integral inequality, but maintaining $w\in L^\infty_\xi L^1_\zeta \cap L^\infty_\zeta L^1_\xi$. In particular, if $f\in L^\infty_\xi W^{k,p}_x$ for some $k\geq 0$ and $p\in [1,\infty]$, then $\tau(T,w,f)\in W^{k,p}(\R^{d|T|})$ and
$$\Vert \tau(T,w,f)\Vert_{W^{k,p}(\R^{d|T|})}\leq \Vert w\Vert_{L^\infty_\xi \mathcal{M}_\zeta \cap L^\infty_\zeta \mathcal{M}_\xi}^{|T|-1}\,\Vert f\Vert_{L^\infty_\xi W^{k,p}_x}^{\vert T\vert}.$$
\end{lem}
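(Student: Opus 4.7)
The plan is to reduce the bound on $\tau(T,w,f)$ to Lemma~\ref{L-tau-density-bound}, by first absorbing the $x$-dependence into a norm and then viewing the remaining $\xi$-integral as a homomorphism density of $|w|$. Concretely, I would start from the pointwise inequality
\begin{equation*}
|\tau(T,w,f)(x_1,\ldots,x_{|T|})|\leq \int_{[0,\ 1]^{|T|}}\prod_{(k,l)\in T}|w(\xi_k,\xi_l)|\,\prod_{m=1}^{|T|}|f(x_m,\xi_m)|\,d\xi_1\cdots d\xi_{|T|},
\end{equation*}
which follows directly from Definition~\ref{D-tau}. For the $L^1(\R^{d|T|})$ case, Fubini--Tonelli lets me integrate first in the $x$-variables; since each $x_m$ only appears in the factor $f(x_m,\xi_m)$, the multiple integral over $(x_1,\ldots,x_{|T|})$ factorizes into a product $\prod_{m=1}^{|T|} \int_{\R^d}|f(x_m,\xi_m)|\,dx_m$, each term of which is bounded by $\|f\|_{L^\infty_\xi L^1_x}$ for a.e.\ $\xi_m\in[0,\ 1]$. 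What remains is exactly $\tau(T,|w|)$ in the sense of \eqref{tau-density}, which Lemma~\ref{L-tau-density-bound} controls by $\|w\|_{L^\infty_\xi\mathcal{M}_\zeta}^{|T|-1}\leq \|w\|_{L^\infty_\xi\mathcal{M}_\zeta\cap L^\infty_\zeta\mathcal{M}_\xi}^{|T|-1}$.

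For the statement with a general Banach function space $B$ that is stable under tensorization and admits a Minkowski integral inequality, the argument is structurally identical. Taking the $B(\R^{d|T|})$-norm in $(x_1,\ldots,x_{|T|})$ of the pointwise inequality and applying Minkowski to move the norm inside the $d\xi$-integral produces
\begin{equation*}
\|\tau(T,w,f)\|_{B(\R^{d|T|})}\leq \int_{[0,\ 1]^{|T|}}\prod_{(k,l)\in T}|w(\xi_k,\xi_l)|\,\Bigl\|\bigotimes_{m=1}^{|T|} f(\cdot,\xi_m)\Bigr\|_{B(\R^{d|T|})}\,d\xi_1\cdots d\xi_{|T|}.
\end{equation*}
The tensorization hypothesis reduces the $B$-norm of the tensor product to $\prod_m \|f(\cdot,\xi_m)\|_{B(\R^d)}\leq \|f\|_{L^\infty_\xi B_x}^{|T|}$ for a.e.\ $\xi$, and one is again left with $\tau(T,|w|)$ bounded via Lemma~\ref{L-tau-density-bound}.

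The Sobolev refinement is handled by the same procedure after differentiating under the integral. For any multi-index $\alpha_m$ of total order at most $k$ in each variable $x_m$, the derivative $\partial_{x_1}^{\alpha_1}\cdots \partial_{x_{|T|}}^{\alpha_{|T|}}\tau(T,w,f)$ only falls on the factors $f(x_m,\xi_m)$ since $w$ does not depend on $x$, yielding an expression of the same form as \eqref{tau} with $f$ replaced by its derivatives. Applying the $L^p$ case above with $B=L^p$ (which is clearly stable under tensorization and obeys Minkowski) and summing over $|\alpha|\leq k$ gives the claimed $W^{k,p}$ bound. The only mild point that needs care is justifying the differentiation under the integral in $\xi$; this is standard provided $f\in L^\infty_\xi W^{k,p}_x$, because the dominating function $\prod|w(\xi_k,\xi_l)|\prod|\partial^{\alpha_m}_{x_m} f(x_m,\xi_m)|$ is integrable by the very estimate just established. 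I expect no substantive obstacle beyond a careful bookkeeping of these Fubini/Minkowski steps.
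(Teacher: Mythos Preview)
Your proposal is correct and follows exactly the approach the paper intends: the paper presents this lemma as ``a straightforward consequence of Lemma~\ref{L-tau-density-bound}'' without further proof, and your argument is precisely that reduction---factor out the $f$-dependence via Fubini/Minkowski and tensorization, then apply Lemma~\ref{L-tau-density-bound} to the remaining $\tau(T,|w|)$. One small remark: for the $W^{k,p}$ case, rather than summing over multi-indices at the end (which could introduce a combinatorial constant), it is cleaner to invoke your general Banach-space version directly, observing that $W^{k,p}$ is stable under tensorization with constant one (since the sum over $|\alpha_1|+\cdots+|\alpha_{|T|}|\leq k$ is dominated termwise by the product of sums over $|\alpha_m|\leq k$).
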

Formula \eqref{tau-density} for $\tau(T,w)$ in Lemma \ref{L-tau-bound} is reminiscent of the usual extension of the homomorphism density for bounded graphons $w\in L^\infty([0,\ 1]^2)$  \cite{Lo,LS}. However, our theory must account for eventually unbounded graphons so that a more delicate treatment is required. Namely, note that in \eqref{tau-density} we have restricted to trees $T\in \Tree$ and  graphons $w\in L^\infty_\xi\mathcal{M}_\zeta \cap L^\infty_\zeta\mathcal{M}_\xi$. Indeed, in such a class of graphons, if $T$ is replaced by a generic graph $G$, then the above proof breaks down due to the eventual presence of cycles in the graph. This leads to eventual infinite values of $\tau(G,w)$. 

We note that we prove in section~\ref{sec:graphons} that the operator $\tau(T,w,f)$ can be rigorously defined for any $w\in L^\infty_\xi \mathcal{M}_\zeta \cap L^\infty_\zeta \mathcal{M}_\xi$. We refer more precisely to Definition~\ref{D-tau-extended} and in the present context to Lemma~\ref{welldefinedM} which implies that if $w_n\in L^\infty_\xi L^1_\zeta \cap L^\infty_\zeta L^1_\xi$ converges to $w$ in $L^1_\xi H^{-1}_\zeta \cap L^1_\zeta H^{-1}_\xi$ then $\tau(T,w,f)$ can be obtained as
\[
\tau(T,w,f)=\lim_{n\rightarrow\infty} \tau(T,w_n,f).
\]
Of course Lemma~\ref{gooddensity} exactly allows constructing an appropriate sequence of $w_n$. But deriving the limit of $\tau(T,w_n,f_n)$ as above requires the precise construction of an algebra describing the sequence of operations involved in obtaining $\tau(T,w,f)$. This algebra plays a key role in Section~\ref{sec:graphons} and, for this reason, the corresponding results are stated there. 

Assuming for the time being that type of property holds, it implies an immediate extension of Lemma~\ref{L-tau-bound} to more generic kernels.
\begin{cor}\label{cor-L-tau-bound}
Consider any $w\in L^\infty_\xi \mathcal{M}_\zeta \cap L^\infty_\zeta \mathcal{M}_\xi$, any $f\in L^\infty_\xi W^{k,p}_x$ for any $k\geq 0$, $p\in [1,\;\infty]$, and $T\in \Tree$. Then $\tau(T,w,f)\in W^{k,p}(\R^{d |T|})$ 
\[
\Vert \tau(T,w,f)\Vert_{W^{k,p}(\R^{d|T|})}\leq   \Vert w\Vert_{L^\infty_\xi \mathcal{M}_\zeta \cap L^\infty_\zeta \mathcal{M}_\xi}^{|T|-1}\,\Vert f\Vert_{L^\infty_\xi W^{k,p}_x}^{\vert T\vert}.
\]
\end{cor}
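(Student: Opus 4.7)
The plan is to obtain Corollary~\ref{cor-L-tau-bound} as a limiting version of Lemma~\ref{L-tau-bound} through approximation of the general extended graphon by functions with densities, combined with the consistency of $\tau$ under the density passage promised by the forthcoming Lemma~\ref{welldefinedM}.

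First, I would invoke Lemma~\ref{gooddensity} to pick a sequence $w_n \in L^\infty_{\xi,\zeta}$ which is uniformly bounded in $L^\infty_\xi L^1_\zeta \cap L^\infty_\zeta L^1_\xi$ and converges to $w$ in $L^1_\xi H^{-1}_\zeta \cap L^1_\zeta H^{-1}_\xi$. Since these $w_n$ have $L^\infty$ densities, the operator $\tau(T,w_n,f)$ fits in the framework already covered by the elementary Lemma~\ref{L-tau-bound}. Applying Lemma~\ref{L-tau-bound} directly yields, for every $n \in \mathbb{N}$, the bound
\[
\Vert \tau(T,w_n,f)\Vert_{W^{k,p}(\R^{d|T|})} \leq \Vert w_n\Vert_{L^\infty_\xi \mathcal{M}_\zeta \cap L^\infty_\zeta \mathcal{M}_\xi}^{|T|-1}\,\Vert f\Vert_{L^\infty_\xi W^{k,p}_x}^{|T|}.
\]
The crucial observation here is that Lemma~\ref{gooddensity} supplies a sequence whose $L^\infty_\xi \mathcal{M}_\zeta \cap L^\infty_\zeta \mathcal{M}_\xi$ norm is controlled by that of $w$ itself (since $L^\infty_\xi L^1_\zeta$ embeds isometrically into $L^\infty_\xi \mathcal{M}_\zeta$ via the identification of a density with its associated measure), so the right-hand side is uniformly bounded by $\Vert w\Vert_{L^\infty_\xi \mathcal{M}_\zeta \cap L^\infty_\zeta \mathcal{M}_\xi}^{|T|-1}\,\Vert f\Vert_{L^\infty_\xi W^{k,p}_x}^{|T|}$.

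Next, I would pass to the limit using the consistency statement from Lemma~\ref{welldefinedM}, which ensures that $\tau(T,w_n,f) \to \tau(T,w,f)$ in an appropriate sense as $n \to \infty$. Since the sequence $\tau(T,w_n,f)$ is uniformly bounded in the reflexive space $W^{k,p}(\R^{d|T|})$ (when $1<p<\infty$), one extracts a weakly convergent subsequence; the limit must coincide with $\tau(T,w,f)$ by uniqueness, and the weak lower semicontinuity of the norm then delivers the claimed estimate. For the endpoint cases $p=1$ and $p=\infty$, one argues via weak-$\ast$ compactness in $W^{k,\infty}$ viewed as the dual of a separable space, respectively via weak-$\ast$ compactness of bounded sequences in $W^{k,1}$ tested against $C_c^\infty$, combined in both cases with the identification of the limit through Lemma~\ref{welldefinedM}.

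The only genuinely delicate step is matching the mode of convergence delivered by Lemma~\ref{welldefinedM} with the topology in which we take limits: the lemma phrases the approximation through the algebra of iterated operations building $\tau$, and one must check that this convergence is compatible with testing $\tau(T,w,f)$ against $C_c^\infty$ test functions on $\R^{d|T|}$ (and with taking derivatives up to order $k$). Once this compatibility is in place, the argument is a routine lower-semicontinuity passage and the statement follows. No new ideas beyond those already developed in Section~\ref{sec:graphons} are needed.
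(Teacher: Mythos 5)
Your proposal reconstructs exactly the argument the paper intends but does not write out: approximate $w$ by the smooth $w_n$ of Lemma~\ref{gooddensity}, apply Lemma~\ref{L-tau-bound} to get uniform $W^{k,p}$ bounds with constants controlled by $\|w\|_{L^\infty_\xi\mathcal{M}_\zeta\cap L^\infty_\zeta\mathcal{M}_\xi}$, and pass to the limit through the $L^1$ consistency of Lemma~\ref{welldefinedM}.

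One small point deserving care, which the paper also glosses over: for $p=1$, weak-$\ast$ compactness of $\partial^\alpha\tau(T,w_n,f)$ as measures only yields that the limit has derivatives in $\mathcal{M}$ (i.e.\ BV-type control), not membership in $W^{k,1}$. The cleaner route at this endpoint is to observe that $\partial^\alpha_{x_m}\tau(T,w_n,f)$ is itself an object of the same form, with $\partial^\alpha_{x_m} f$ inserted at one vertex and $f$ at the others, so the \emph{strong} $L^1$ convergence furnished by (a straightforward generalization of) Lemma~\ref{welldefinedM} applies directly to the derivatives and identifies the distributional derivatives of $\tau(T,w,f)$ as genuine $L^1$ functions. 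With that in hand the limit norm inequality is immediate for all $p\in[1,\infty]$. Also note your "respectively'' appears to pair $p=1$ with $W^{k,\infty}$ and $p=\infty$ with $W^{k,1}$; this looks like a slip and should be swapped.
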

In the following result, we derive the full hierarchy of observables presented in the previous section under our graphon representation. Specifically, we now provide the system solved by $\tau(T,w,f)$  whenever $(w,f)$ is a weak solution to either \eqref{independ2} or \eqref{independ2nu} in the sense of Definition \ref{D-weak-solution-independ2}.

\begin{pro}\label{hierarchyeq}
Consider any  $w\in L^\infty_\xi \mathcal{M}_\zeta \cap L^\infty_\zeta \mathcal{M}_\xi$ and $f\in L^\infty([0,\ t_*],\,L^\infty_\xi L^1_x)$ so that $(w,f)$ is a weak solution to \eqref{independ2nu} in the sense of Definition \ref{D-weak-solution-independ2} for some $K\in L^\infty$ and for any $\nu\geq 0$. Then, $\tau(T,w,f)$ solves the generalized, linear, non-exchangeable Vlasov hierarchy
\begin{equation}\label{E-hierarchyeq}
\partial_t \tau(T,w,f)+\sum_{i=1}^{|T|} \divop_{x_i}\left(\int_{\R^d} K(x_i-z)\,\tau(T+i,w,f)(t,x_1,\ldots,x_{|T|},z)\,dz\right)=\nu\sum_{i=1}^{|T|}\Delta_{x_i}\tau(T,w,f),
\end{equation}
for any tree $T\in \Tree$ in the sense of distributions.
\end{pro}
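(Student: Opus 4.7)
The proof will rest on a direct Leibniz-type calculation on the multiplicative structure of $\tau(T,w,f)$; the crucial algebraic observation is that attaching a fresh leaf to the $m$-th vertex of $T$ corresponds analytically to convolving $w(\xi_m,d\zeta)$ with one additional factor $f(t,z,\zeta)$ and integrating against $K(x_m-z)$.

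First I would differentiate the defining formula \eqref{tau} under the $\xi$-integral and apply the product rule to the $|T|$ factors of $f$:
\begin{equation*}
\partial_t \tau(T,w,f) = \sum_{m=1}^{|T|}\int_{[0,1]^{|T|}}\prod_{(k,l)\in T} w(\xi_k,\xi_l)\;\partial_t f(t,x_m,\xi_m)\prod_{i\neq m} f(t,x_i,\xi_i)\,d\xi_1\cdots d\xi_{|T|}.
\end{equation*}
Substituting \eqref{independ2nu} for $\partial_t f(t,x_m,\xi_m)$ produces two pieces. The diffusion piece commutes with the $\xi$-integration and returns $\nu\sum_m \Delta_{x_m}\tau(T,w,f)$. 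In the transport piece, the factor $f(t,x_m,\xi_m)\,\mathcal{V}_f(t,x_m,\xi_m)$ introduces the extra measure $w(\xi_m,d\zeta)$ and the extra mass $\int_{\mathbb{R}^d}K(x_m-z)\,f(t,z,\zeta)\,dz$; after relabeling $\zeta=\xi_{|T|+1}$ and $z=x_{|T|+1}$, the augmented weight $\prod_{(k,l)\in T} w(\xi_k,\xi_l)\cdot w(\xi_m,\xi_{|T|+1})$ is exactly $\prod_{(k,l)\in T+m} w(\xi_k,\xi_l)$, since by Definition \ref{D-graphs-trees} the tree $T+m$ is obtained from $T$ by gluing a new leaf labeled $|T|+1$ at vertex $m$. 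The transport piece therefore collapses into $-\sum_m \divop_{x_m}\int_{\mathbb{R}^d} K(x_m-z)\,\tau(T+m,w,f)(t,x_1,\ldots,x_{|T|},z)\,dz$, reproducing \eqref{E-hierarchyeq}.

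The main technical obstacle is upgrading this formal computation to a distributional identity when $w\in L^\infty_\xi \mathcal{M}_\zeta\cap L^\infty_\zeta \mathcal{M}_\xi$ is only a measure-valued kernel and $f$ only a weak solution of \eqref{independ2nu}. I would proceed by testing \eqref{E-hierarchyeq} against $\varphi\in C^\infty_c((0,t_*)\times\mathbb{R}^{d|T|})$, unfolding $\tau$ via Fubini (legitimate by Corollary \ref{cor-L-tau-bound}), and showing that for almost every fixed $(\xi_1,\ldots,\xi_{|T|})\in[0,1]^{|T|}$ the tensor product $G(t,x_1,\ldots,x_{|T|}):=\prod_{i} f(t,x_i,\xi_i)$ satisfies, in $\mathcal{D}'((0,t_*)\times\mathbb{R}^{d|T|})$, the linear transport-diffusion equation
\begin{equation*}
\partial_t G + \sum_{m=1}^{|T|}\divop_{x_m}\bigl(G\,\mathcal{V}_f(t,x_m,\xi_m)\bigr) = \nu\sum_{m=1}^{|T|}\Delta_{x_m} G.
\end{equation*}
This tensor-product identity follows from the weak formulation \eqref{E-weak-solution-independ2} of \eqref{independ2nu} fibre by fibre, after a standard time mollification of $f$ that legitimizes applying the product rule classically. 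Integrating the preceding distributional identity against $\varphi$ in $(t,x)$ and against $\prod_{(k,l)\in T} w(\xi_k,\xi_l)$ in $\xi$, and invoking Lemma \ref{bilinearboundBx} to reassemble the $\xi_m$-integration against $w(\xi_m,d\zeta)$ as the leaf-insertion $T\mapsto T+m$, will then yield \eqref{E-hierarchyeq} distributionally. Any residual measure-theoretic difficulty caused by $w$ being only weakly-star measurable in $\xi$ can be circumvented by first approximating $w$ by bounded kernels $w_n\in L^\infty_{\xi,\zeta}$ via Lemma \ref{gooddensity} and invoking the continuity of $\tau$ under such approximations stated in Section \ref{sec:graphons}.
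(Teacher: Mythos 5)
Your proposal is correct and takes essentially the same approach as the paper: differentiate $\tau(T,w,f)$ in time via the product rule, substitute the equation for $\partial_t f$, and observe that the appearance of the extra kernel $w(\xi_m,d\zeta)$ and extra factor of $f$ reconstitutes $\tau(T+m,w,f)$ after relabeling. The paper carries out exactly this computation and, like you, defers the distributional justification with the remark that "a straightforward adaptation yields the rigorous argument"; your additional sketch (testing against $\varphi$, fibre-by-fibre tensor identity, Lemma~\ref{bilinearboundBx}, and if necessary approximating $w$ by bounded kernels from Lemma~\ref{gooddensity}) fleshes out precisely what that adaptation would look like, which is a mild elaboration rather than a different route.
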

\begin{proof}
First of all, we notice that from the properties of $w$ and $f$ we can guarantee that $\tau(T,w,f)\in L^\infty([0,\ t_*],\,L^1(\R^{d |T|}))$ thanks to Corollary~\ref{cor-L-tau-bound}. Since $K\in L^\infty$, then there is no issue to define the various terms in the equation and justify our various calculations in the distributional sense. For simplicity of the presentation, we avoid using weak formulations here but a straightforward adaptation yields the rigorous argument. Also, we shall shorten notation by denoting $\tau(T)\equiv \tau(T,w,f)$, for any $T\in \Tree$, when there is no confusion. First, we differentiate $\tau(T,w,f)$ in time
\[
\partial_t\tau(T)=\sum_{n=1}^{|T|}\int_{[0,\ 1]^{|T|}} \prod_{(k,l)\in E(T)} w(\xi_k,\xi_l)\,\partial_t f (t,x_n,\xi_n)\,\prod_{m\neq n} f(t,x_m,\xi_m)\,d\xi_1\dots d\xi_{|T|}.
 \]
Using \eqref{independ2nu} on $ f(t,x_n,\xi_n)$ yields
\begin{align*}
\partial_t\tau(T)
&=-\sum_{n=1}^{|T|}\int_{[0,\ 1]^{|T|}} \prod_{(k,l)\in E(T)} w(\xi_k,\xi_l)\,\divop_{x_n} \Bigg(\int_0^1\int_{\R^d} K(x_n-y)\,w(\xi_n,\zeta)\, f(t,x_n,\xi_n)\,f(t,y,\zeta)\,dy\,d\zeta\Bigg)\\
&\qquad \times \prod_{m\neq n} f(t, x_m,\xi_m)\,d\xi_1\dots d\xi_{|T|}       \\
&\quad+\nu\sum_{n=1}^{|T|}\,\int_{[0,\ 1]^{|T|}} \prod_{(k,l)\in E(T)} w(\xi_k,\xi_l)\,\Delta_{x_n} f (t,x_n,\xi_n)\,\prod_{m\neq n} f(t, x_m,\xi_m)\,d\xi_1\dots d\xi_{|T|}\\
=&-\sum_{n=1}^{|T|}\divop_{x_{n}}\Bigg(\int_{[0,\ 1]^{|T|}}\prod_{(k,l)\in E(T)}w(\xi_k,\xi_l)\int_0^1\int_{\R^d} K(x_n-y)\,w(\xi_n,\zeta)\, f (t,x_n,\xi_n)\,f(t,y,\zeta)\,dy\,d\zeta\\
&\qquad\times \prod_{m\neq n}  f(t,x_m,\xi_m)\,d\xi_1\dots d\xi_{|T|}\Bigg)+\nu\sum_{n=1}^{|T|}\Delta_{x_n} \tau(T),
\end{align*}
where in the last line we have used that no other $f(t,x_m,\xi_m)$ depends on $x_n$ as we exclude $m=n$ from the product. Hence, we find
\begin{align*}
\partial_t\tau(T)=&-\sum_{n=1}^{|T|}\divop_{x_{n}}\int_{\R^d} K(x_{n}-z)\int_{[0,\ 1]^{|T|}}\int_0^1\Bigg(w(\xi_n,\zeta) \prod_{(k,l)\in E(T)} w(\xi_k,\xi_l)\\
&\qquad\times f(t,z,\zeta)\prod_{m\neq n}  f(t,x_m,\xi_m)\Bigg)\,d\zeta\,d\xi_1\ldots\,d\xi_{|T|}\,dz+\nu\sum_{n=1}^{|T|}\Delta_{x_n} \tau(T).
\end{align*}
We recall that for any $n=1,\ldots,|T|$ the tree $T+n$ contains exactly the same edges as $T$ plus a new edge from the vertex $n$ to the new vertex $\vert T\vert+1$. Thus, recalling that we only consider edges $(k,l)\in E(T+n)$ that run from the root to the leaves, we have that
\[
\tau(T+n)=\int_{[0,\ 1]^{|T|+1}} \, w(\xi_n, \xi_{|T|+1})\prod_{(k,l)\in E(T)} w(\xi_k,\xi_l)\;f(t,x_{|T|+1},\xi_{|T|+1})\prod_{m\in V(T)} f(t,x_m,\xi_m)\,d\xi_1\ldots\,d\xi_{|T|+1}.
\]
Changing variables $\xi_{|T|+1}$ with $\zeta$ and $x_{|T|+1}$ with $z$, we arrive at
\[
\partial_t\tau(T)=-\sum_{n=1}^{|T|}\divop_{x_n}\int_{\R^d} K(x_n-z)\,\tau(T+n)(x_1,\ldots,x_{|T|},z)\,dz+\nu\sum_{n=1}^{|T|}\Delta_{x_n} \tau(T),
\]
thus, concluding the proof.
\end{proof}

Proposition ~\ref{hierarchyeq} shows that we can study directly the propagation of the observables $\tau(T,w,f)$.
Our next step is naturally to analyze the stability on the hierarchy with diffusion~\eqref{E-hierarchyeq}.

  \subsection{Stability on the hierarchy with artificial diffusion}
Along this section, we will study a stability property of generic solutions to the previous hierarchy of non-exchangeable Vlasov-type equations \eqref{E-hierarchyeq} which are not necessarily parametrized as $\tau(T,w,f)$ for a weak solution $(w,f)$ of \eqref{independ2nu}. Specifically, we shall consider any sequence $h=(h_T)_{T\in \Tree}$ with $h_T\in L^\infty([0,\ t_*],\,L^1(\R^{d\vert T\vert}))$ for any $T\in \Tree$ that solves the analogous hierarchy of non-exchangeable Vlasov equations
\begin{equation}\label{hierarchydiff}
\partial_t h_T+\sum_{i=1}^{|T|} \mbox{div}_{x_i}\left(\int_{\R^d} K(x_i-z)\,h_{T+i}(t,x_1,\ldots,x_{|T|},z)\,dz\right)=\nu\,\sum_{i=1}^{|T|} \Delta_{x_i} h_T,
\end{equation}
in the sense of distributions, for some $K\in L^\infty$ and any $\nu>0$.

We do need to emphasize that this section does not provide a well-posedness theory for~\eqref{hierarchydiff}. While we do obtain uniqueness on solutions with enough a priori estimates, this does not directly yield the existence of any solutions in that class. This does not impact the present paper as we only need the uniqueness result for the special type of solutions parametrized as $h_T=\tau(T,w,f)$ with $(w,f)$ a weak solution of \eqref{independ2nu} ({\it cf.} Proposition \ref{hierarchyeq}).

We also emphasize that the presence of artificial diffusion is needed in the method of proof in this subsection and a similar result with $\nu=0$ is still unknown. To analyze stability we shall define the following norms.
%
\begin{defi}[Norm of the hierarchy]\label{D-norm-hierarchy}
Consider any family $h=(h_T)_{T\in \Tree}$ such that $h_T\in L^2(\R^{d |T|})$ for every $T\in \Tree$. Then, we define the following norms:
\begin{equation}\label{E-norm-hierarchy}
 \|h\|_{\lambda}=\sup_{T\in \Tree} \lambda^{|T|/2}\,\|h_T\|_{L^2(\R^{d\,|T|})},
\end{equation}
for any $\lambda>0$.
\end{defi}
The norm \eqref{E-norm-hierarchy} is natural since, for any $f\in L^2(\R^d)$ and the special sequence $h=(h_T)_{T\in \Tree}$ given by $h_T=\tau(T,w,f)$ for any $T\in \Tree$, we obtain the following bound
\[
\Vert h\Vert_\lambda\leq \sup_{T\in \Tree} \lambda^{|T|/2}\Vert w\Vert_{L^\infty_\xi\mathcal{M}_\zeta}^{|T|-1}\Vert f\Vert_{L^2}^{|T|},
\]
for any $\lambda>0$ thanks to Corollary~\ref{cor-L-tau-bound}. Hence, by choosing $\lambda > 0$ small enough relative to the inverses of the preceding norms  of $w$ and $f$, {\it i.e.} less than $\|w\|^{-2\frac{|T|-1}{|T|}}_{L^\infty_\xi\mathcal{M}_\zeta} \|f\|^{-2}_{L^2}$, we obtain a finite quantity. Indeed, it provides the following stability result for solutions of \eqref{hierarchydiff}.
%
\begin{theo} \label{stabilitydiff}
Consider any solution $h=(h_T)_{T\in \Tree}$ in the sense of distribution to \eqref{hierarchydiff} for some $\nu>0$ and $K\in L^2$ such that $h_T\in L^\infty([0,\ t_*],\,(L^1\cap L^2)(\R^{d |T|}))$ for any $T\in \Tree$. Assume that there exists some $\lambda>0$ such that $C_\lambda:=\sup_{t\in [0,\ t_*]}\|h(t,\cdot)\|_{\lambda}<\infty$. Then, for any $p>1$ and any $\theta\in (0,\ 2^{-p'})$ there exists a constant $C_{p,\theta}\in \mathbb{R}_+$ such that 
\begin{equation}\label{E-stabilitydiff}
\Vert h(t,\cdot)\Vert_{\theta\lambda}\leq C_\lambda C_{p,\theta}\exp\left(p^{-\frac{\Vert K\Vert_{L^2}^2}{2\theta\lambda\nu}\,t}\,\log\frac{\Vert h^0\Vert_{\theta \lambda}}{C_\lambda}\right),
\end{equation}
for any $t\in [0,\ t_*]$, where $p'$ is the conjugate of $p$.
\end{theo}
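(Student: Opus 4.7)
The plan is to derive a weighted $L^2$ energy estimate on each $h_T$, use the a priori bound $\|h\|_\lambda\le C_\lambda$ to interpolate via H\"older, and then iterate the resulting nonlinear integral inequality in a stepwise manner to extract the double-exponential form of the bound.

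First, testing \eqref{hierarchydiff} against $h_T$ and integrating by parts yields
\[
\tfrac{1}{2}\tfrac{d}{dt}\|h_T\|_{L^2}^2+\nu\sum_{i=1}^{|T|}\|\nabla_{x_i}h_T\|_{L^2}^2=\sum_{i=1}^{|T|}\int\nabla_{x_i}h_T\cdot F_i\,dx,
\]
where $F_i:=K\ast_{x_i}h_{T+i}$ denotes convolution in the $x_i$-variable only. Cauchy-Schwarz in the convolution variable gives $\|F_i\|_{L^2(\R^{d|T|})}\le\|K\|_{L^2}\|h_{T+i}\|_{L^2}$, and Young's inequality absorbs the gradient into the diffusion term, leaving the hierarchical bound
\[
\tfrac{d}{dt}\|h_T\|_{L^2}^2\le\tfrac{\|K\|_{L^2}^2}{2\nu}\sum_{i=1}^{|T|}\|h_{T+i}\|_{L^2}^2.
\]
Setting $\psi_n(t):=\sup_{|T|=n,\,s\le t}(\theta\lambda)^n\|h_T(s)\|_{L^2}^2$, integration in time yields $\psi_n(t)\le\psi_n(0)+\tfrac{n\|K\|_{L^2}^2}{2\nu\theta\lambda}\int_0^t\psi_{n+1}(s)\,ds$, while the a priori hypothesis translates into the cap $\psi_n\le\theta^n C_\lambda^2$.

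Next, I split by H\"older, $\psi_{n+1}=\psi_{n+1}^{1/p}\psi_{n+1}^{1/p'}$, and apply the a priori cap to the second factor, producing the $n$-dependent factor $\theta^{(n+1)/p'}C_\lambda^{2/p'}$. Taking $\sup_n$ and absorbing $M_{p,\theta}:=\sup_n n\theta^{(n+1)/p'}<\infty$ (finite since $\theta<1$), the normalized quantity $y(t):=\sup_{s\le t}\|h(s)\|_{\theta\lambda}^2/C_\lambda^2$ obeys the nonlinear Gr\"onwall
\[
y(t)\le y(0)+\gamma\int_0^t y(s)^{1/p}\,ds,\qquad \gamma:=\tfrac{\|K\|_{L^2}^2\,M_{p,\theta}}{2\nu\theta\lambda},
\]
together with the a priori cap $y\le\theta$ and initial value $y(0)=(\|h^0\|_{\theta\lambda}/C_\lambda)^2$.

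Finally, the double-exponential form is extracted from this inequality by an iterative stepwise improvement. The key step is to show that on a suitably chosen short time slab $[s,\,s+\Delta s]$ one has $y(s+\Delta s)\le\kappa_{p,\theta}\,y(s)^{1/p}$ for a constant $\kappa_{p,\theta}$ depending only on $p$ and $\theta$; iterating this improvement $N\sim t\,\|K\|_{L^2}^2/(2\nu\theta\lambda\log p)$ times over $[0,\,t]$ then gives $y(t)\le C_{p,\theta}^{2}\,y(0)^{1/p^{N}}$, which after taking square roots is precisely the advertised rate with $c=\|K\|_{L^2}^2/(2\theta\lambda\nu)$. The constraint $\theta<2^{-p'}$ enters here as the sharp condition that closes this iteration: at each H\"older stage, the tree-branching factor $|T|=n$ produces combinatorial coefficients of order $2^n$ that must be dominated by the geometric decay $\theta^{n/p'}$ coming from the a priori cap, and this balance requires $(2\theta)^{p'}<1$ so that the multiplicative constants $\kappa_{p,\theta},\,C_{p,\theta}$ remain uniform. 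The main technical obstacle is precisely closing this stepwise iteration: a single direct application of the nonlinear Gr\"onwall only yields polynomial-in-$t$ bounds, and one must carefully balance the depth of H\"older iteration against the elapsed time to recover the true double-exponential improvement in the initial smallness $\|h^0\|_{\theta\lambda}/C_\lambda$.
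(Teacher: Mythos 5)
Your energy estimate $\frac{d}{dt}\|h_T\|_{L^2}^2\le\frac{\|K\|_{L^2}^2}{2\nu}\sum_i\|h_{T+i}\|_{L^2}^2$ and the ensuing inequality for $\phi_n(t):=(\theta\lambda)^n\sup_{|T|=n}\|h_T(t)\|_{L^2}^2$ are exactly right and match the paper up to that point. The argument breaks when you pass from the countable family $(\phi_n)_n$ to the scalar quantity $y(t)=\sup_n\phi_n(t)/C_\lambda^2$ after a \emph{single} step of H\"older interpolation at level $n+1$. The resulting scalar inequality
\[
y(t)\le y(0)+\gamma\int_0^t y(s)^{1/p}\,ds,\qquad y\le\theta,
\]
is genuinely weaker than the hierarchical structure and cannot yield the stepwise iterate $y(s+\Delta)\le\kappa_{p,\theta}\,y(s)^{1/p}$ that you invoke to finish. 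Indeed the Osgood condition fails for the nonlinearity $z\mapsto z^{1/p}$ with $p>1$: the ODE $z'=\gamma z^{1/p}$, $z(s)=0$, has the nontrivial solution $z(t)=\left(\gamma(t-s)/p'\right)^{p'}$, so from the scalar inequality alone one may have $y(s)=0$ and $y(s+\Delta)>0$, which destroys the claimed iterate and, with it, the double-exponential conclusion. A tell-tale sign is that your derivation never actually needs $\theta<2^{-p'}$ (your constant $M_{p,\theta}=\sup_n n\,\theta^{(n+1)/p'}$ is finite for any $\theta<1$), whereas the threshold $2^{-p'}$ is intrinsic to the correct argument.

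What the paper does instead, and what is missing here, is to iterate the integral inequality \emph{up the hierarchy} before ever taking a supremum in $n$: integrating from level $n$ to an arbitrary level $m$ produces the formula
\[
\|h(t)\|_n^2\le\Big(\tfrac{\|K\|_{L^2}^2}{2\nu}\Big)^{m-n}\!\int_s^t\!\tfrac{(m-1)!\,(t-r)^{m-n-1}}{(n-1)!\,(m-n-1)!}\,\|h(r)\|_m^2\,dr+\sum_{k=n}^{m-1}\Big(\tfrac{\|K\|_{L^2}^2}{2\nu}\Big)^{k-n}\binom{k-1}{n-1}(t-s)^{k-n}\|h(s)\|_k^2,
\]
and the deep remainder at level $m$ is controlled by the a priori cap and the weight $\theta^m$ so that, after choosing the slab length $\delta=2\theta\lambda\nu/\|K\|_{L^2}^2$ and interpolating each $\|h(s)\|_k^2$, the remainder contributes $(2\theta)^m\to 0$ as $m\to\infty$. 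This is what produces the binomial coefficients $\binom{k-1}{n-1}\le 2^{k-1}$ and forces $2\theta^{1/p'}<1$, i.e.\ $\theta<2^{-p'}$, to sum the series; only then does one obtain a clean iterate $\sup_{[t_i,t_{i+1}]}\|h\|_{\theta\lambda}\le\kappa\,\|h(t_i)\|_{\theta\lambda}^{1/p}$ with $\kappa$ depending solely on $p,\theta$. Restoring this infinite-depth iteration (and only afterwards taking $\sup_n$ and stepping in time) is the missing ingredient; the rest of your outline then goes through.
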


\begin{rem} 
Taking $p>1$ sufficiently large, note that any $\theta\in (0,\ \frac{1}{2})$ provides an estimate \eqref{E-stabilitydiff}. However, such a constraint on the size of $\theta$ is clearly an artifact of the method of proof since a similar estimate can be obtained for any $\theta\in (0,\ 1)$ by interpolation. Namely, set any $p_0>1$ and any $\theta_0\in (0,\ 2^{-p_0'})$ so that \eqref{E-stabilitydiff} is fulfilled. Consider any arbitrary $\theta\in [\frac{1}{2},\ 1)$. Then, note that $\theta_0\lambda<\theta\lambda<\lambda$ and
$$\Vert h(t,\cdot)\Vert_{\theta\lambda}\leq \Vert h(t,\cdot)\Vert_{\theta_0\lambda}^\alpha\,\Vert h(t,\cdot)\Vert_{\lambda}^{1-\alpha},$$
for any $t\in [0,\ t_*]$, where we have set $\alpha:=\log\theta/\log\theta_0$. Then, we obtain
\begin{equation}\label{E-stabilitydiff-full}
\Vert h(t,\cdot)\Vert_{\theta\lambda}\leq C_\lambda C_{p_0,\theta_0}^\alpha\exp\left(\alpha p_0^{-\frac{\Vert K\Vert_{L^2}^2}{2\theta_0\lambda\nu}\,t}\,\log\frac{\Vert h^0\Vert_{\theta \lambda}}{C_\lambda}\right),
\end{equation}
for any $t\in [0,\ t_*]$.
\end{rem}

\begin{rem}\label{R-no-semigroup}
Thanks to the linearity of the hierarchy, Theorem \ref{stabilitydiff} provides uniqueness among weak solutions $h=(h_T)_{T\in\Tree}$ to \eqref{hierarchydiff} for some $\nu>0$, such that $h_T\in L^\infty([0,\ t_*],\,(L^1\cap L^2)(\R^{d |T|})$ for any $T\in\Tree$ and $\sup_{t\in [0,\ t_*]}\|h(t,\cdot)\|_\lambda<\infty$ for some $\lambda>0$. Indeed, assume that $h^0=0$. Then, $\|h^0\|_{\theta\lambda}=0$ for any $\theta\in (0,\ 1)$ so that \eqref{E-stabilitydiff} trivially implies that  $\|h(t,\cdot)\|_{\theta\lambda}=0$ ({\it i.e.}, $h(t,\cdot)=0$) at any later time $t\in (0,\ t_*]$. Unfortunately, it does not however provide a bounded or Markov semi-group for those norms and in fact semi-group like estimates blow-up in finite time as it will be apparent in the proof.
\end{rem}

\begin{proof}[Proof of Theorem \ref{stabilitydiff}]
We restrict to the special case $C_\lambda=1$, all the other cases following by linearity of the hierarchy \eqref{hierarchydiff} and homogeneity of the norms \eqref{E-norm-hierarchy}. First, for a given $T\in \Tree$, notice that \eqref{hierarchydiff} implies
\begin{align*}
\frac{d}{dt}\int_{\R^{d\,|T|}} |h_T|^2\,dx_1\dots dx_{|T|}&= 2\,\sum_{i=1}^{|T|}\int_{\R^{d\,(|T|+1)}} \nabla_{x_i} h_{T}\,K(x_i-z)\,h_{T+i}\, dx_1\dots dx_{|T|} \,dz\\
&\quad-2\,\nu\,\sum_{i=1}^{|T|}\int_{\R^{d\,|T|}} |\nabla_{x_i}h_T|^2\,dx_1\dots dx_{|T|}.
\end{align*}
To justify the above formal calculation, we note that
\[
\partial_t h_T=\nu\Delta h_T-\divop j_T,
\]
where $j_T=\int_{\R^d}K(x_i-z)h_{T+i}\,dz$. From our assumption on $K$ and $h_{T+i}$, we immediately have that $j_T\in L^\infty([0,\ t_*],\,L^2)$. Namely, we obtain
  \[\Vert j_T(t,\cdot)\Vert_{L^2(\R^{d|T|})}\leq \Vert K\Vert_{L^2}\Vert h_{T+i}(t,\cdot)\Vert_{L^2(\R^{d(|T|+1)})}.
  \]
Standard properties of the heat kernel then prove that $h_T \in L^2([0,\ t_*],\, H^1(\mathbb{R}^{d|T|}))$.

By the Cauchy--Schwartz inequality and Young's inequality we further obtain
 \[
\frac{d}{dt}\|h_T(t,\cdot)\|_{L^2(\R^{d\,|T|})}^2\leq \frac{\|K\|_{L^2}^2}{2\nu}\, \sum_{i=1}^{|T|}\|h_{T+i}(t,\cdot)\|_{L^2(\R^{d\,(|T|+1)})}^2.
\]

For this reason, we introduce the family of intermediary norms
\[
\|h(t,\cdot)\|_{n}:=\sup_{|T|=n} \|h_T(t,\cdot)\|_{L^2(\R^{d\,|T|})},
\]
for any tree order $n\in \mathbb{N}$, which readily satisfies
\[
\frac{d}{dt} \|h(t,\cdot)\|_{n}^2\leq n \frac{\Vert K\Vert_{L^2}^2}{2\nu}\,\|h(t,\cdot)\|_{n+1}^2.
\]
By induction this estimate yields 
\[\begin{split}
\|h(t,\cdot)\|_{n}^2\leq &\left(\frac{\Vert K\Vert_{L^2}^2}{2\nu}\right)^{m-n}\,\int_{s}^t\,\frac{(m-1)!\,(t-r)^{m-n-1} }{(n-1)!\,(m-n-1)!}\, \|h(r,\cdot)\|_{m}^2\,dr\\
&+\sum_{k=n}^{m-1}  \left(\frac{\Vert K\Vert_{L^2}^2}{2\nu}\right)^{k-n}\, \frac{(k-1)!\,(t-s)^{k-n}}{(n-1)!\,(k-n)!}\, \|h(s,\cdot)\|_{k}^2.
\end{split}
\]
for any $m>n$. Using the assumption on $h$ one has 
\[
\|h(r,\cdot)\|^2_m\leq \lambda^{-m},
\]
for any $r\in [s,\ t]$,  recall that we have assumed that $ C_\lambda = 1 $. We then infer
\begin{equation}
  \begin{split}
\|h(t,\cdot)\|_{n}^2\leq &\left(\frac{\Vert K\Vert_{L^2}^2}{2\nu}\right)^{m-n}\,\binom{m-1}{n-1}\,(t-s)^{m-n}\, \lambda^{-m}\\
&+\sum_{k=n}^{m-1}  \left(\frac{\Vert K\Vert_{L^2}^2}{2\nu}\right)^{k-n}\,\binom{k-1}{n-1} \,(t-s)^{k-n}\, \|h(s,\cdot)\|_{k}^2.
\end{split}\label{intermediarycrazy}
\end{equation}
Obviously, \eqref{intermediarycrazy} can only provide smallness for short time intervals $(t-s)\sim \nu$,  if used directly. As we advanced in Remark \ref{R-no-semigroup}, we cannot bound a semi-group corresponding to the hierarchy for the norm $\|h(t,\cdot)\|_\lambda$ on all times. Instead we fix now the time step
\[
\delta:=\frac{2\theta\lambda\nu}{\Vert K\Vert_{L^2}^2},
\]
and consider the associated sequence of discrete times $t_i=\delta\,i$ with $i=1,\ldots,N$. Applying \eqref{intermediarycrazy} to $t=t_{i+1}$ and $s=t_i$, one obtains
\[
\sup_{t\in [t_i,\ t_{i+1}]}(\theta\lambda)^n\|h(t,\cdot)\|_{n}^2\leq \theta^m\,\binom{m-1}{n-1}+\sum_{k=n}^{m-1}  (\theta\lambda)^k\, \binom{k-1}{n-1}\, \|h(t_i,\cdot)\|_{k}^2.
\]
We now interpolate the last factors
\[
(\theta^{1/p}\lambda)^k\,\|h(t_i,\cdot)\|_{k}^2= \left((\theta\lambda)^k\,\|h(t_i,\cdot)\|_{k}^2\right)^{1/p}\, \left(\lambda^k\,\|h(t_i,\cdot)\|_{k}^2\right)^{1/p'}\leq \Vert h(t_i,\cdot)\Vert_{\theta\lambda}^{2/p}\,\Vert h(t_i,\cdot)\Vert_{\lambda}^{2/p'}\leq  \Vert h(t_i,\cdot)\Vert_{\theta\lambda}^{2/p},
\]
where $1/p+1/p'=1$ and we have used the assumption $\|h(t_i,\cdot)\|_{\lambda}\leq 1$. This yields
\[
\sup_{t\in [t_i,\ t_{i+1}]}(\theta\lambda)^n\,\|h(t,\cdot)\|_{n}^2\leq \theta^m\,\binom{m-1}{n-1}+\sum_{k=n}^{m-1}  \theta^{k/p'}\, \binom{k-1}{n-1}\, \|h(t_i,\cdot)\|_{\theta\lambda}^{2/p}.
\]
Since $\sum_{n=1}^m \binom{m-1}{n-1}=2^{m-1}$, we have that $\binom{m-1}{n-1}\leq 2^{m-1}$. Similarly, summing over $k$, we obtain
\[
\sup_{t\in [t_i,\ t_{i+1}]}(\theta\lambda)^n\,\|h(t,\cdot)\|_{n}^2\leq \frac{1}{2}(2\theta)^m+\frac{1}{2}\sum_{k=n}^{m-1} (2\theta^{1/p'})^k\, \|h(t_i,\cdot)\|_{\theta\lambda}^{2/p}\leq \frac{1}{2}(2\theta)^m+\frac{1}{2(1-2\theta^{1/p'})}\,\|h(t_i,\cdot)\|_{\theta\lambda}^{2/p},
\]
where we have used that $2\theta^{1/p'}<1$ by hypothesis to guarantee the summability of the last factor. Indeed, since $2\theta<1$ then taking limits $m\to \infty$ shows that
\[
\sup_{t\in [t_i,\ t_{i+1}]}\|h(t,\cdot)\|_{\theta\lambda}\leq \left(\frac{1}{2(1-2\theta^{1/p'})}\right)^{1/2}\, \|h(t_i,\cdot)\|_{\theta\lambda}^{1/p}.
\]
Then, for $t\in[t_i, t_{i+1}]$ and $C_p = \frac{1}{2(1 - 2\theta^{1/p'})}$, we have
\[
\begin{split}
\|h(t,\cdot)\|_{\theta \lambda} &\leq C_p^{\frac 1 2} \|h(t_i,\cdot)\|_{\theta \lambda}^{1/p} \leq C_p^{\frac 1 2} C_p^{\frac{1}{2p}} \|h(t_{i-1},\cdot)\|_{\theta \lambda}^{1/p^2} \leq C_p^{\frac 1 2} C_p^{\frac{1}{2p}} C_p^{\frac{1}{2p^2}} \|h(t_{i-2},\cdot)\|_{\theta \lambda}^{1/p^3}\\
& \leq \cdots \leq C_p^{\frac 1 2 \sum_{j=0}^{i-1} \frac{1}{p^j}} \|h(0,\cdot)\|_{\theta \lambda}^{1/p^i}.
\end{split}
\]
From this, by employing the fact that $\sum_{j=0}^{i-1} \frac{1}{p^j} = \frac{1}{1 - 1/p} = p'$
 we find
\[
\sup_{t\in [t_i,\ t_{i+1}]}\|h(t,\cdot)\|_{\theta\lambda}\leq \left(\frac{1}{2(1-2\theta^{1/p'})}\right)^{p'/2}\,\|h^0\|_{\theta\lambda}^{p^{-i}}.
\]
For any $t\in [0,\ t_*)$ set $i$ with $t\in [t_i,\ t_{i+1})$. Since $\Vert h^0\Vert_{\theta\lambda}\leq \Vert h^0\Vert_{\lambda}\leq 1$, the relation
\[
i\leq \frac{t}{\delta}=\frac{\Vert K\Vert_{L^2}^2}{2\theta\lambda\nu}\,t
\]
allows concluding the proof. \end{proof}

We emphasize that Theorem~\ref{stabilitydiff} only requires $K\in L^2$ instead of $K\in W^{1,\infty}$, but it only provides stability on the {\em viscous} hierarchy \eqref{hierarchydiff}. Indeed, we do not know if any similar result could hold on the non-viscous hierarchy ({\it i.e.} $\nu=0$). However Theorem~\ref{stabilitydiff} does imply a comparable stability result on the solutions $f$ of the starting graphon equation \eqref{independ2} without diffusion, provided that some added regularity is available on the $f$ or on $K$. The strategy is straightforward and consists in adding some artificial viscosity, which is performed in the next subsection. 
\subsection{Stability on the system without artificial diffusion}
Our stability result relies on stronger regularity for weak solutions $(w,f)$ to \eqref{independ2}. Before stating it, it is useful to observe that smoothness indeed propagates in time  by a similar argument as in Proposition \ref{existenceweak}.
%
\begin{lem}\label{regularityfi}
Consider any $K\in W^{1,1}$ with $\divop K\in  L^\infty$,  any $w\in L^\infty_\xi\mathcal{M}_\zeta\cap L^\infty_\zeta\mathcal{M}_\xi$ and any that $f^0\in L^\infty_\xi (L^1_x\cap L^\infty_x\cap H^1_x)$. Assume that $(w,f)$ is a weak solution to \eqref{independ2}. Then, $f\in L^\infty([0,\ t_*],\,L^\infty_\xi (L^1_x\cap L^\infty_x\cap H^1_x))$, for any $t_*<\infty$, and it satisfies
  \[
\|f(t,\cdot,\cdot)\|_{L^\infty_\xi (L^1_x\cap L^\infty_x\cap H^1_x)}\leq {C\,e^{e^{C\,t}-1}},
  \]
for some $C\in \mathbb{R}_+$ depending only on $\|w\|_{L^\infty_\xi \mathcal{M}_\zeta}$, $\|K\|_{W^{1,1}}$, $\|\divop K\|_{L^\infty}$ and $\|f^0\|_{L^\infty_\xi (L^1_x\cap L^\infty_x\cap H^1_x)}$.
\end{lem}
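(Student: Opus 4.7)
The plan is to exploit the fact that \eqref{independ2} has no transport in the $\xi$-variable, so that for a.e.\ $\xi\in[0,\ 1]$ the function $f_\xi(t,x):=f(t,x,\xi)$ satisfies a pure continuity equation in $x$ driven by the velocity field $\mathcal{V}_f(t,x,\xi)$ from Definition~\ref{D-velocity-field}. All estimates therefore reduce to characteristic and energy arguments applied pointwise in $\xi$, with Lemma~\ref{bilinearboundBx} serving to bound the $\zeta$-integrations against the extended graphon $w$.

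For the $L^1_x$ bound, $\mathcal{V}_f$ is Lipschitz in $x$ since $\|\nabla_x\mathcal{V}_f\|_{L^\infty_\xi L^\infty_x}\leq \|w\|_{L^\infty_\xi\mathcal{M}_\zeta}\,\|\nabla K\|_{L^1}\,\|f\|_{L^\infty_\xi L^\infty_x}$ by Lemma~\ref{bilinearboundBx} and $K\in W^{1,1}$; the classical flow of $\mathcal{V}_f$ thus preserves the $L^1_x$ norm by change of variables. For the $L^\infty_x$ bound, the method of characteristics along the same flow gives $\|f_\xi(t,\cdot)\|_{L^\infty_x}\leq \|f^0(\cdot,\xi)\|_{L^\infty_x}\exp(t\,\|\divop_x\mathcal{V}_f\|_{L^\infty_{x,\xi}})$, and since
\[
\|\divop_x\mathcal{V}_f\|_{L^\infty_{x,\xi}}\leq \|w\|_{L^\infty_\xi\mathcal{M}_\zeta}\,\|\divop K\|_{L^\infty}\,\|f\|_{L^\infty_\xi L^1_x}
\]
is bounded uniformly in $t$ by conservation of the $L^1_x$ norm, the $L^\infty_x$ bound grows only single-exponentially in $t$.

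The core of the proof is the $H^1_x$ estimate. Writing $g_i:=\partial_{x_i}f_\xi$ and commuting $\partial_{x_i}$ with the continuity equation yields $\partial_t g_i+\divop_x(g_i\,\mathcal{V}_f(\cdot,\xi))=-\divop_x(f_\xi\,\partial_{x_i}\mathcal{V}_f(\cdot,\xi))$. Testing against $g_i$, integrating by parts, applying Cauchy--Schwarz and summing over $i$ produces
\[
\frac{d}{dt}\|\nabla f_\xi\|_{L^2_x}\leq \Bigl(\|\nabla_x\mathcal{V}_f(\cdot,\xi)\|_{L^\infty_x}+\tfrac12\|\divop_x\mathcal{V}_f(\cdot,\xi)\|_{L^\infty_x}\Bigr)\|\nabla f_\xi\|_{L^2_x}+\|f_\xi\|_{L^\infty_x}\,\|\nabla_x\divop_x\mathcal{V}_f(\cdot,\xi)\|_{L^2_x}.
\]
The delicate term is the last one, which a priori seems to require a second derivative of $K$. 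The crucial observation is that after integrating by parts in $y$ one has $\partial_{x_j}\divop_x\mathcal{V}_f(t,x,\xi)=\sum_i\int_0^1 w(\xi,d\zeta)\int_{\mathbb{R}^d}(\partial_j K_i)(x-y)\,\partial_{y_i}f(t,y,\zeta)\,dy$, so that Young's convolution inequality combined with Lemma~\ref{bilinearboundBx} yields
\[
\|\nabla_x\divop_x\mathcal{V}_f\|_{L^\infty_\xi L^2_x}\leq \|w\|_{L^\infty_\xi\mathcal{M}_\zeta}\,\|\nabla K\|_{L^1}\,\|\nabla f\|_{L^\infty_\xi L^2_x},
\]
consuming only $K\in W^{1,1}$. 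Setting $y(t):=\|\nabla f(t,\cdot,\cdot)\|_{L^\infty_\xi L^2_x}$ and noting that $\|\nabla_x\mathcal{V}_f\|_{L^\infty_{x,\xi}}$ grows at most like $e^{Ct}$ through the previous $L^\infty$ bound on $f$, Gronwall's inequality applied to $y(t)$ yields the claimed double exponential $y(t)\leq C\,e^{e^{Ct}-1}$.

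To justify these formal manipulations rigorously starting from only a weak solution in the sense of Definition~\ref{D-weak-solution-independ2}, one argues by approximation: mollify $f^0$ in $x$ and smoothen $w$ via Lemma~\ref{gooddensity}, apply Proposition~\ref{existenceweak} to obtain smoother approximating solutions, verify that the a priori estimates above hold uniformly in the regularization parameter, and pass to the limit, the limiting velocity field being Lipschitz so that uniqueness for the continuity equation identifies the limit with $f$. The principal technical obstacle is precisely the $L^2_x$ (rather than $L^\infty_x$) control of $\nabla_x\divop_x\mathcal{V}_f$: it forces one to couple the $\|\nabla f_\xi\|_{L^2_x}$ estimates uniformly across $\xi$ through the Bochner norm $\|\nabla f\|_{L^\infty_\xi L^2_x}$, rather than closing the estimate at each $\xi$ independently.
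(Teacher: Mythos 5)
Your proof is correct and follows essentially the same strategy as the paper: propagate $L^1_x$ and $L^\infty_x$ pointwise in $\xi$ using mass conservation and the bound $\|\divop_x\mathcal{V}_f\|_{L^\infty_{x,\xi}}\leq \|w\|_{L^\infty_\xi\mathcal{M}_\zeta}\|\divop K\|_{L^\infty}\|f^0\|_{L^\infty_\xi L^1_x}$, then derive the $H^1_x$ estimate by differentiating the equation, integrating by parts in $y$ to shift one derivative off $K$ (so that only $K\in W^{1,1}$ is consumed), applying Young's convolution inequality and Lemma~\ref{bilinearboundBx}, and closing via Gronwall on the single-exponentially growing coefficient. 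Your decomposition into $\nabla_x\mathcal{V}_f$, $\divop_x\mathcal{V}_f$ and $\nabla_x\divop_x\mathcal{V}_f$ terms is bookkeeping-equivalent to the paper's $I_1,I_2,I_3$. One point in your favour: you explicitly flag that the term containing $\nabla_x\divop_x\mathcal{V}_f$ forces closing the Gronwall estimate on the coupled norm $\|\nabla_x f\|_{L^\infty_\xi L^2_x}$ rather than fibre-by-fibre in $\xi$ (since after IBP in $y$ that term depends on $\nabla f(\cdot,\zeta)$ for all $\zeta$ through $w(\xi,d\zeta)$), whereas the paper's bound for $I_2$ and $I_3$ writes $\|\nabla_x f(t,\cdot,\xi)\|_{L^2_x}^2$ and leaves this coupling implicit in the final $\esssup_\xi$.
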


\begin{proof}
The following estimates can be made rigorous through a regularization process, for example adding viscosity as in the equation \eqref{independ2nu}, together with a classical iterative delay in time leading to a linear system. We first observe that by integration by parts we obtain the {\it a priori} bounds
\begin{align*}
\frac{d}{dt} \|f(t,\cdot,\xi)\|_{L^p_x}^p&=-(p-1)\int_0^1w(\xi,d\zeta)\int_{\R^{2d}}\divop K(x-y)\,f(t,x,\xi)^p f(t,y,\zeta)\,dx\,dy\\
&\leq (p-1)\,\Vert w\Vert_{L^\infty_\xi\mathcal{M}_\zeta}\,\|\divop K\|_{L^\infty}\,\| f(t,\cdot,\cdot)\|_{L^\infty_\xi L^1_x}\,\|f(t,\cdot,\xi)\|_{L^p_x}^p,
\end{align*}
for $t\in [0,\ t_*]$ and $\xi\in [0,\ 1]$, where above we have used again Lemma \ref{bilinearboundBx}. Since $f(t,\cdot,\xi)$ preserves mass for each $\xi\in [0,\ 1]$, one has that $\| f(t,\cdot,\cdot)\|_{L^\infty_\xi L^1_x}=\Vert f^0\Vert_{L^\infty_\xi L^1_x}$ for each $t\in [0,\ t_*]$ and this allows easily propagating any $L^\infty_\xi L^p_x$ bound of $f$ by Gronwall's inequality, leading to the following estimate
$$\Vert f(t,\cdot,\cdot)\Vert_{L^\infty_\xi L^p_x}\leq C_p\,e^{C_p\,t}.$$
Similarly, differentiating \eqref{independ} with respect to $x$ we now get
 \begin{align*}
    \partial_t \nabla_x f(t,x,\xi)&+\divop_{x}\left(\nabla_x f(t,x,\xi)\otimes \int_0^1 w(\xi,d\zeta)\,\int_{\R^d} K(x-y)\,f(t,y,\zeta)\,dy\right)\\
    & +\divop_{x}\left(f(t,x,\xi)\,\int_0^1 w(\xi,d\zeta)\,\int_{\R^d} \nabla K(x-y)^\top\, f(t,y,\zeta)\,dy\right)=0.
\end{align*}
Again, by integrating by parts we easily get the following decomposition
$$
\frac{d}{dt} \frac{1}{2}\|\nabla_x f(t,\cdot,\xi)\|_{L^2_x}^2=I_1+I_2+I_3,
$$
where each term takes the form
\begin{align*}
I_1&:=\frac{1}{2}\int_0^1w(\xi,d\zeta)\int_{\R^{2d}}\divop K(x-y)\,|\nabla_x f(t,x,\xi)|^2\,f(t,y,\zeta)\,dx\,dy,\\
I_2&:=-\int_0^1 w(\xi,d\zeta)\int_{\R^{2d}}\divop K(x-y)\,\nabla_x f(t,x,\xi) \cdot \nabla_y f(t,y,\zeta)\,f(t,x,\xi)\,dx\,dy,\\
I_3&:=-\int_0^1w(\xi,d\zeta)\int_{\R^{2d}} \nabla_x f(t,x,\xi)^\top\cdot \nabla K(x-y)\cdot\nabla_x f(t,x,\xi)\, f(t,y,\zeta)\,dx\,dy.
\end{align*}
By the hypothesis on $K$, we have
\begin{align*}
\vert I_1\vert&\leq \frac{1}{2}\Vert w\Vert_{L^\infty_\xi\mathcal{M}_\zeta}\,\Vert \divop K\Vert_{L^\infty}\,\Vert f(t,\cdot,\cdot)\Vert_{L^\infty_\xi L^1_x}\,\Vert \nabla_x f(t,\cdot,\xi)\Vert_{L^2_x}^2,\\
\vert I_2\vert &\leq \Vert w\Vert_{L^\infty_\xi\mathcal{M}_\zeta}\,\Vert \divop K\Vert_{L^1}\,\Vert f(t,\cdot,\cdot)\Vert_{L^\infty_\xi L^\infty_x}\,\Vert \nabla_x f(t,\cdot,\xi)\Vert_{L^2_x}^2,\\
\vert I_3\vert &\leq \Vert w\Vert_{L^\infty_\xi\mathcal{M}_\zeta}\,\Vert \nabla K\Vert_{L^1}\,\Vert f(t,\cdot,\cdot)\Vert_{L^\infty_\xi L^\infty_x}\,\Vert \nabla_x f(t,\cdot,\xi)\Vert_{L^2_x}^2,
\end{align*}
where we have used again Lemma \ref{bilinearboundBx}. Putting everything together, using the previous $L^p$ estimates and the fact that $\|f(t,\cdot,\cdot)\|_{L^\infty_{\xi} L^1_x} = \Vert f^0\Vert_{L^\infty_\xi L^1_x}$ for all $t\in [0,\ t_*]$, one obtains
\[
\frac{d}{dt}\Vert \nabla_x f(t,\cdot,\xi)\Vert_{L^2_x}^2\leq C\,e^{C\,t}\Vert \nabla_x f(t,\cdot,\xi)\Vert_{L^2_x}^2.
\]
Therefore, we conclude by means of Gronwall's lemma.
\end{proof}
 
With this additional regularity, in the context of the preceding subsections, and from Theorem \ref{stabilitydiff} and Proposition \ref{hierarchyeq}, we may derive the following uniqueness result.

We do note that in the following theorem, we use the $\tau(T,w,f)$ for generic
$w,\,\tilde w\in L^\infty_\xi\mathcal{M}_\zeta\cap L^\infty_\zeta\mathcal{M}_\xi$. While the operator $\tau$ is trivially well defined if $w,\,\tilde w\in L^\infty_\xi L^1_\zeta\cap L^\infty_\zeta L^1_\xi$, we rigorously justify the definition of $\tau(T,w,f)$ for kernels in $L^\infty_\xi\mathcal{M}_\zeta\cap L^\infty_\zeta\mathcal{M}_\xi$ in the next Section~\ref{sec:graphons} around Definition~\ref{D-tau-extended}. We refer to the more extensive discussion just after Lemma~\ref{L-tau-bound}.

\begin{theo}\label{stabilitynonviscous}
Consider any couple of weak solution to \eqref{independ2} $f,\;\tilde f\in L^\infty([0,\ t_*],\,L^\infty_\xi (L^1_x\cap L^\infty_x\cap H^1_x))$ with associated $w,\;\tilde w\in L^\infty_\xi\mathcal M_\zeta \cap L^\infty_\zeta \mathcal M_\xi$, for some $K\in L^\infty\cap W^{1,1}$ with $\divop K\in L^\infty$. Then, we have
  \[
\left\|  \int_0^1 ( f -\tilde f) (t,\cdot, \xi)\,d\xi\,\right\|_{L^2_x}\leq {\frac{C}{(\log |\log \|\tau(\cdot,w, f^0)-\tau(\cdot,\tilde w,\tilde f^0)\|_{\lambda}|)_+^{1/2}},}
  \]
for any $t\in [0,\ t_*]$, and some constants $C,\,\lambda>0$, which only depend on $t_*$, the various norms of $K$, the norm of $w,\,\tilde w$ in $L^\infty_\xi\mathcal M_\zeta \cap L^\infty_\zeta \mathcal M_\xi$ and the norm of the initial data $f^0,\,\tilde f^0$ in $L^\infty_\xi(L^1_x\cap L^\infty_x\cap H^1_x)$.
 \end{theo}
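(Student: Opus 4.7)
The plan is to introduce an artificial diffusion $\nu>0$, apply the viscous stability Theorem~\ref{stabilitydiff} to the resulting difference hierarchy, and then optimize $\nu$ against a standard vanishing-viscosity comparison. The double logarithm arises because Theorem~\ref{stabilitydiff} degenerates as $\nu\to 0$ (its exponent $p^{-ct/\nu}$ collapses to zero super-polynomially), whereas the viscous regularization costs $O(\sqrt{\nu})$; the two constraints can only be balanced on a narrow window $\nu\sim 1/\log|\log\varepsilon|$, which determines the rate.

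For fixed $\nu\in(0,1]$, I would solve \eqref{independ2nu} via Proposition~\ref{existenceweak} with initial data $f^0,\tilde f^0$ and weights $w,\tilde w$ to obtain viscous approximations $f_\nu,\tilde f_\nu$. Since the extra dissipation $-\nu\,\|\nabla_x f_\nu\|_{L^2_x}^2$ is favorable, a simple adaptation of Lemma~\ref{regularityfi} (in which the viscous term only helps) yields uniform bounds on $f_\nu,\tilde f_\nu$ in $L^\infty([0,t_*],\,L^\infty_\xi(L^1_x\cap L^\infty_x\cap H^1_x))$, with constants independent of $\nu$. A standard vanishing-viscosity argument for the advection-diffusion operator (for instance, bounding the source $-\nu\Delta f$ in $L^2_tH^{-1}_x$ by $\nu\,\|\nabla_x f\|_{L^2_tL^2_x}$ via the $H^1_x$-regularity from Lemma~\ref{regularityfi}, then performing the energy estimate and absorbing the dissipation) then produces
\begin{equation*}
\|f-f_\nu\|_{L^\infty([0,t_*],L^2_{x,\xi})}+\|\tilde f-\tilde f_\nu\|_{L^\infty([0,t_*],L^2_{x,\xi})}\leq C_1\sqrt{\nu},
\end{equation*}
and by Cauchy--Schwarz in $\xi$ the same bound holds for $\|\int_0^1(f-f_\nu)(t,\cdot,\xi)\,d\xi\|_{L^2_x}$.

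Next, by Proposition~\ref{hierarchyeq} and linearity of \eqref{hierarchydiff}, the differences $h^\nu_T:=\tau(T,w,f_\nu)-\tau(T,\tilde w,\tilde f_\nu)$ solve the viscous hierarchy with initial data $h^{\nu,0}_T=\tau(T,w,f^0)-\tau(T,\tilde w,\tilde f^0)$, which is \emph{independent} of $\nu$. Combining Corollary~\ref{cor-L-tau-bound} with the uniform $L^\infty_\xi L^2_x$ bounds on $f_\nu,\tilde f_\nu$ one can fix $\lambda>0$ small enough so that $\sup_{t\in[0,t_*]}\|h^\nu(t)\|_\lambda\leq C$ uniformly in $\nu$. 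Theorem~\ref{stabilitydiff}, applied with $p=e$ and any admissible $\theta$, then produces
\begin{equation*}
\|h^\nu(t)\|_{\theta\lambda}\leq C\,(\varepsilon/C)^{p^{-ct/\nu}},\qquad c=\|K\|_{L^2}^2/(2\theta\lambda),
\end{equation*}
where $\varepsilon:=\|\tau(\cdot,w,f^0)-\tau(\cdot,\tilde w,\tilde f^0)\|_\lambda$. Specializing to the one-vertex tree $T_1$ and using $\tau(T_1,w,f_\nu)(t,\cdot)=\int_0^1 f_\nu(t,\cdot,\xi)\,d\xi$ bounds $\|\int_0^1(f_\nu-\tilde f_\nu)(t,\cdot,\xi)\,d\xi\|_{L^2_x}$ by $(\theta\lambda)^{-1/2}\|h^\nu(t)\|_{\theta\lambda}$.

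By the triangle inequality the left-hand side of the theorem is bounded by $2C_1\sqrt{\nu}+(\theta\lambda)^{-1/2}C(\varepsilon/C)^{p^{-ct/\nu}}$. Assume $\varepsilon<1$ (otherwise the estimate is trivial from the crude $L^2_x$ bounds on $f,\tilde f$). Choosing $\nu=2ct/\log|\log\varepsilon|$ yields $p^{-ct/\nu}=|\log\varepsilon|^{-1/2}$ (with $p=e$), so that the second term is dominated by $\exp(-|\log\varepsilon|^{1/2})$, decaying faster than any power of $\varepsilon$, while the first behaves as $\sqrt{\nu}\sim C/(\log|\log\varepsilon|)^{1/2}$, matching exactly the announced rate. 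The main technical obstacle is this delicate balance itself: Theorem~\ref{stabilitydiff} loses any quantitative content once $\nu/t$ is too small because of the super-polynomial collapse of $p^{-ct/\nu}$, so the viscous estimate cannot be transferred to $\nu=0$ by any naive limiting procedure, which also explains why one recovers only a double-logarithmic rate rather than an algebraic dependence on $\varepsilon$.
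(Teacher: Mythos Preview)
Your proposal is correct and follows essentially the same route as the paper: introduce viscous approximations $f_\nu,\tilde f_\nu$ via Proposition~\ref{existenceweak}, obtain the $O(\sqrt{\nu})$ comparison with $f,\tilde f$ by an $L^2$ energy estimate exploiting the $H^1_x$ regularity, apply Theorem~\ref{stabilitydiff} to the difference hierarchy $h^\nu_T=\tau(T,w,f_\nu)-\tau(T,\tilde w,\tilde f_\nu)$, specialize to $T_1$, and optimize with $\nu\sim \|K\|_{L^2}^2 t/(\theta\lambda\,\log|\log\varepsilon|)$. Your identification of the mechanism behind the double logarithm---the tension between the $\sqrt{\nu}$ regularization cost and the super-polynomial collapse of $p^{-ct/\nu}$---is exactly the point, and your choice of $\nu$ coincides with the paper's.
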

%
\begin{rem}
A direct $L^p$ estimate would only provide the following trivial estimate
  \[
\| f-\tilde f\|_{L^\infty_t L^\infty_\xi L^2_x} \leq  e^{Ct}\,\left(\| f^0-\tilde f^0\|_{L^\infty_\xi L^2_x}+C\, \|w-\tilde w\|_{L^\infty_\xi {\mathcal M}_\zeta} \right).
\]
Note that such estimate would require a very precise strong control on the difference $w-\tilde w$, which is not available to us. However, as we will see in the next section, we can obtain good estimates on the weaker objects $\tau(T, w,  f^0)-\tau(T, \tilde w, \tilde f^0)$.
  \end{rem}

\begin{rem}
  If we assume more regularity on $K$, for example $K\in W^{1,\infty}$, then it is possible to obtain stability with instead less regularity on $f$ and $\tilde f$ (typically $f^0\in L^\infty_\xi L^\infty_x$, for example). 
  \end{rem}

Before turning to the proof of Theorem~\ref{stabilitynonviscous}, let us emphasize some of its main consequences. The key point is that it offers a direct way to obtain some form of compactness on our solutions. More specifically,  we can extract strong convergence in $L^2$ on each of the $\tau(T,w_N,f_N^0)$ through a classical diagonal extraction process since each of them is bounded in $L^1\cap W^{1,\infty}$ by Corollary~\ref{cor-L-tau-bound}. From Theorem~\ref{stabilitynonviscous}, this implies that $\int_0^1 f_N(t,x,\xi)\,d\xi$ is strongly compact in $L^2_x$.  However, we are still not able to identify the corresponding limit in terms of a solution to \eqref{independ2}. This will require an appropriate extension of graphons that is fully developed in the next section.

\begin{proof}[Proof of Theorem \ref{stabilitynonviscous}]
For some $\nu>0$, let us consider the solution $f^\nu$ to the system \eqref{independ2nu} with artificial diffusion with the same initial data $f^0$ and same weights $w$ as $f$. We define similarly $\tilde f^\nu$ solution to \eqref{independ2nu} with the same initial data $\tilde f^0$ and weights $\tilde w$ as $\tilde f$. The existence of both $f^\nu$ and $\tilde f^\nu$ is provided by  Proposition \ref{existenceweak}.

The strategy of the proof is classical: First use the regularity of $f$ (resp. $\tilde f$) to compare them with $f^\nu$ (resp. $\tilde f^\nu$). Then use Theorem~\ref{stabilitydiff} to compare the observables from $f^\nu$ and $\tilde f^\nu$. To compare $f$ with $f^\nu$, we first derive $H^1$ estimates for $f^\nu$ in the usual way
\begin{align*}
\frac{d}{dt} \frac{1}{2}\,\| f^\nu(t, \cdot, \xi)\|_{L^2_x}^2 &=  -\nu\,\int_{\R^d} |\nabla_x  f^\nu (t, x, \xi)|^2\,dx\\ 
&\quad + \int_0^1 \ w(\xi, d\zeta)\int_{\R^{2d}}  f^\nu (t, x, \xi)\,\nabla  f^\nu(t, x, \xi)\cdot K(x-y) \, f^\nu(t, y, \zeta)\,dy\,dx.
\end{align*}
Integrating by parts in the second term yields
\begin{align*}
   \frac{d}{dt} \frac{1}{2}\,\| f^\nu(t, \cdot, \xi)\|_{L^2_x}^2 &+{\nu}\,\int_{\R^d} |\nabla_x  f^\nu (t, x, \xi)|^2\,dx \\ 
   & =-\frac{1}{2}\,\int_0^1w(\xi, d\zeta)\int_{\R^{2d}} \divop K(x-y)\,| f^\nu (t, x, \xi)|^2\, f^\nu(t,y, \zeta)\,dy\,dx\\
   &\leq \frac{1}{2}\, \| w\|_{L^\infty_\xi\mathcal{M}_\zeta}\,\|\divop K\|_{L^\infty}  \| f^\nu (t, \cdot, \cdot)\|_{L^\infty_\xi L^1_x} \,\| f^\nu(t,\cdot, \xi)\|_{L^2_x}^2,
\end{align*}
where we have used Lemma \ref{bilinearboundBx} in the last step. Again, notice that  $\| f^\nu(t,\cdot,\cdot)\|_{L^\infty_{\xi} L^1_x}=\Vert f^0\Vert_{L^\infty_\xi L^1_x}$ for all $t\in [0,\ t_*]$. Using Gronwall's lemma we obtain
   \begin{equation}
  \|f^\nu(t,\cdot,\cdot)\|_{L^\infty_\xi L^2_x}^2+\nu\,\int_0^t \int_{\R^d} |\nabla_x  f^\nu (s, x, \xi)|^2\,dx\,ds\leq  \| f^0\|_{L^\infty_\xi L^2_x }^2\,e^{\frac{t}{2}\, \| w\|_{L^\infty_\xi \mathcal{M}_\zeta} \|\divop K\|_{L^\infty}\,\Vert f^0\Vert_{L^\infty_\xi L^1_x}}.\label{H1finu}
     \end{equation}
Now, observe that the difference $f^\nu-f$ satisfies the following equation
  \[
  \begin{split}
    \partial_t ( f^\nu- f)(t, x, \xi)&+\divop_{x}\left( ( f^\nu- f)(t, x, \xi)\,\int_0^1 w(\xi, d\zeta) \int_{\R^d} K(x-y) \, f^\nu(t,y, \zeta)\,dy\right)\\
    & +\divop_{x}\left(  f (t, x, \xi)\int_0^1 w(\xi, d\zeta) \int_{\R^d} K(x-y)\, \left( f^\nu- f\right)(t, y, \zeta)\,dy \right)\\ &-\nu\,\Delta_x  ( f^\nu- f)(t, x, \xi)=\nu\,\Delta_x  f (t, x, \xi).
\end{split}
  \]
Therefore, similar arguments as above lead to analogous $H^1$ estimates 
 \begin{align*}
    &\hspace{-1cm}\frac{d}{dt} \frac{1}{2}\left\| (f^\nu - f)(t, \cdot, \xi)\right\|_{L^2_x}^2 +\nu\,\int_{\R^d} |\nabla_x  (f^\nu- f) (t, x, \xi)|^2\,dx\\
    &\leq \frac{1}{2}\Vert w\Vert_{L^\infty_\xi\mathcal{M}_\zeta}\,\|\divop K\|_{L^\infty}\,\Vert f^\nu(t,\cdot,\cdot)\Vert_{L^\infty_\xi L^1_x}\,\|(f^\nu- f)(t, \cdot, \xi)\|_{L^2_x}\\
    & \quad + \| w\|_{L^\infty_\xi \mathcal{M}_\zeta}   \|K\|_{L^2}\,\|f (t, \cdot,\cdot)\|_{L^\infty_\xi H^1_x } \,\|(f^\nu- f)(t, \cdot, \xi)\|_{L^2_x}\,\|(f^\nu- f)(t, \cdot,\cdot)\|_{L^\infty_\xi L^2_x}\\
    & \quad + \| w \|_{L^\infty_\xi \mathcal{M}_\zeta}   \|\divop K\|_{L^1} \,\| f (t, \cdot,\cdot)\|_{L^\infty_\xi L^\infty_x}\,\|(f^\nu- f)(t, \cdot, \xi)\|_{L^2_x}\,\|(f^\nu- f)(t, \cdot,\cdot)\|_{L^\infty_\xi L^2_x }\\
    &\quad +\nu\,\|\nabla_x(  f^\nu- f) (t, \cdot,\xi)\|_{L^2_x}\,\|  f (t, \cdot,\cdot)\|_{L^\infty_\xi H^1_x }.
\end{align*}
Using the additional regularity $f,\,\tilde f\in L^\infty([0,\ t_*],\,L^\infty_\xi(L^1_x\cap L^\infty_x\cap H^1_x))$ from Lemma \ref{regularityfi}, applying Young's inequality in the last term, and recalling that $ f(0,\cdot,\cdot)=f^\nu(0,\cdot,\cdot)$ we obtain by Gronwall's lemma that
  \begin{equation}\label{L2fifinu}
\| (f^\nu - f)(t,\cdot,\cdot)\|_{L^\infty_\xi L^2_x} \leq C(t)\,\sqrt{\nu},
  \end{equation}
for some continuous and non-decreasing function $C=C(t)\in \R_+$ that only depends on the various norms of $K$, the norm of $w$ in $L^\infty_\xi\mathcal{M}_\zeta$ and the norm of the initial datum $f^0$ in $L^\infty_\xi(L^1_x\cap L^\infty_x\cap H^1_x)$. Note that a similar estimate can be obtained for $\tilde f^\nu -\tilde f$. The function $C$ is actually a triple exponential. 

By Proposition \ref{hierarchyeq}, the observables $\tau(T, w, f^\nu)$ and $\tau(T, \tilde w,\tilde f^\nu)$ with $T\in \Tree$ both solve the same hierarchy \eqref{hierarchydiff} (since $w,\,\tilde{w}$ do not appear explicitly in that formulation). Therefore, $h=(h_T)_{T\in \Tree}$, with $h_T=\tau(T, w, f^\nu) - \tau(T, \tilde w,\tilde f^\nu)$, again solves \eqref{hierarchydiff} by linearity. In addition, Corollary~\ref{cor-L-tau-bound} implies that
\begin{align*}
\Vert h_T(t,\cdot,\cdot)\Vert_{L^2(\R^{d |T|})}&\leq \Vert \tau(T,w,f^\nu(t,\cdot,\cdot))\Vert_{L^2(\R^{d |T|})}+\Vert \tau(T,w,f^\nu(t,\cdot,\cdot))\Vert_{L^2(\R^{d |T|})}\\
&\leq \Vert w\Vert_{L^\infty_\xi\mathcal{M}_\zeta}^{|T|-1}\,\Vert f^\nu(t,\cdot,\cdot)\Vert_{L^\infty_\xi L^2_x}^{|T|}+\Vert \tilde w\Vert_{L^\infty_\xi\mathcal{M}_\zeta}^{|T|-1}\,\Vert \tilde f^\nu(t,\cdot,\cdot)\Vert_{L^\infty_\xi L^2_x}^{|T|}\\
&\leq \frac{2}{w_{\text{min}}}\left(w_{\text{max}}\frac{\Vert f^0 \Vert_{L^\infty_\xi L^2_x}+\Vert \tilde f^0\Vert_{L^\infty_\xi L^2_x}}{2}e^{\frac{t_*}{4}w_{\text{max}}\Vert \divop K\Vert_{L^\infty}\,\Vert f^0\Vert_{L^\infty_\xi L^1_x}}\right)^{|T|},
\end{align*}
where $w_{\text{min}}=\min\{\Vert w\Vert_{L^\infty_\xi\mathcal{M}_\zeta},\Vert \tilde w\Vert_{L^\infty_\xi\mathcal{M}_\zeta}\}$ and $w_{\text{max}}=\max\{\Vert w\Vert_{L^\infty_\xi\mathcal{M}_\zeta},\Vert \tilde w\Vert_{L^\infty_\xi\mathcal{M}_\zeta}\}$. Consequently, we obtain that $\sup_{t\in [0,\ t_*]}\|h(t,\cdot)\|_\lambda<1$, for any $\lambda>0$ such that
\[
\sqrt{\lambda}<\min\left\{\frac{2}{w_{\text{max}}},\frac{w_{\text{min}}}{w_{\text{max}}}\right\}\frac{e^{\frac{-t_*}{4}w_{\text{max}}\Vert \divop K\Vert_{L^\infty}\,\Vert f^0\Vert_{L^\infty_\xi L^1_x}}}{(\Vert f^0 \Vert_{L^\infty_\xi L^2_x}+\Vert \tilde f^0\Vert_{L^\infty_\xi L^2_x})}.
\]
  We can now apply Theorem~\ref{stabilitydiff} for this $\lambda$, any $p>1$ and $\theta\in (0,2^{-p'})$, and obtain
  \[
\sup_{t\in [0,\ t_*]}\|h(t,\cdot,\cdot)\|_{\theta\lambda}\leq C_{p,\theta}\,\exp\left(p^{-\frac{\Vert K\Vert_{L^2}^2}{2\theta\lambda\nu}\,t}\log \Vert h^0\Vert_{\theta\lambda}\right),
  \]
for any $T\in \Tree$ and some constant $C_{p,\theta}>0$ depending only on $p$ and $\theta$. In the special case of the trivial tree $T_1\in \Tree_1$ with only one vertex, we obtain 
  \[
  \|h\|_{\theta\lambda}\geq \lambda \|h_{T_1}\|_{L^2}=\sqrt{\lambda} \left\|\tau(T_1,w, f^\nu)-\tau(T_1,\tilde w,\tilde f^\nu)\right\|_{L^2_x}=\sqrt{\lambda}\,\left\| \int_0^1 ( f^\nu -\tilde f^\nu) (t,\cdot, \xi)\,d\xi\,\right\|_{L^2_x}.
  \]
  Therefore,  we deduce that
  \begin{equation}
\left\| \int_0^1 ( f^\nu -\tilde f^\nu) (t,\cdot, \xi)\,d\xi\,\right\|_{L^2_x} \leq \frac{C_{p,\theta}}{\sqrt{\lambda}}\,\exp\left(p^{-\frac{\Vert K\Vert_{L^2}^2}{2\theta\lambda\nu}\,t}\, \log \|h^0\|_{\theta\lambda}\right),\label{difftaunu}
  \end{equation}
  for any $t\in [0,\ t_*]$. By \eqref{L2fifinu} and Minkowski's integral inequality, we also have that
  \[
\left\| \int_0^1 ( f^\nu - f) (t,\cdot, \xi)\,d\xi\,\right\|_{L^2_x}  \leq  \| f^\nu- f \|_{L^\infty_\xi L^2 _x}\leq C(t)\,\sqrt{\nu},  
  \]
and similarly for $\tilde f^\nu-\tilde f$, where the function $C=C(t)$ is given above. This combined with \eqref{difftaunu} immediately yields that
\[
\left\| \int_0^1 ( f -\tilde f) (t,\cdot, \xi)\,d\xi\,\right\|_{L^2_x}  \leq \frac{C_{p,\theta}}{\sqrt{\lambda}}\,\exp\left(p^{-\frac{\Vert K\Vert_{L^2}^2}{2\theta\lambda\nu}\,t}\, \log \|h^0\|_{\theta\lambda}\right)+C(t)\,\sqrt{\nu}.
\]
Since the above is valid for any $\nu>0$, it only remains to optimize in $\nu$ appropriately. For example, considering $p=e$ for simplicity and setting
\[
\nu=\frac{\Vert K\Vert_{L^2}^2}{\theta\lambda}\,t\,(\log |\log \Vert h^0\Vert_{\theta\lambda}|)_+^{-1},
\]
ends the proof.
  \end{proof}

 \section{Extending graphons}\label{sec:graphons}
 \subsection{Our goals}
    This section is centered on the process of taking limits on the $\tau(T,w_N,f_N)$ defined by the operator \eqref{tau} for the pairs $(w_N,f_N)$ given by Definition \ref{D-graphon-representation}, at some fixed time $t\in \mathbb{R}_+$. Since the time variable plays no role in this part, we omit it from the calculations. The first aim of this section is to analyze the following question already raised in the previous section: If $\lim_{N\rightarrow \infty} \tau(T,w_N,f_N)$ exists for any $T\in \Tree$, can we find appropriate $w$ and $\,f$ such that we recover the representation
    \[
\tau(T,w,f)=\lim_{N\rightarrow \infty} \tau(T,w_N,f_N), \quad \forall\,T\in \Tree ?
\]
As we will see below, this can be rephrased as the question of how to take appropriate limits of $w_N$ and $f_N$ so that they allow passing to the limit in $\tau(T,w_N,f_N)$. The following result answers this question.

\begin{theo}\label{lg}
The definition of $\tau(T,w,f)$ can be uniquely extended for any $w\in L^\infty_\zeta\mathcal{M}_\xi \cap L^\infty_\xi\mathcal{M}_\zeta$ and $f\in L^\infty_{\xi}L^\infty_x$. Furthermore, consider any sequence $\{w_N\}_{N\in \mathbb{N}}$ and $\{f_N\}_{N\in \mathbb{N}}$ such that the following hypothesis hold true
    \begin{enumerate}[label=(\roman*)]
    \item $\quad \displaystyle \sup_{N\in \mathbb{N}} \sup_{\xi\in [0,\ 1]} \int_0^1 |w_N(\xi,\zeta)|\,d\zeta\, <\infty, \quad \sup_{N\in \mathbb{N}} \sup_{\zeta\in [0,\ 1]} \int_0^1 |w_N(\xi,\zeta)|\,d\xi\, <\infty,$\\
    \item $\quad \displaystyle\sup_{N\in \mathbb{N}} \|f_N\|_{L^\infty_\xi (W^{1,1}_x\cap W^{1,\infty}_x)}<\infty$.\\
   \end{enumerate}
 Then, there exists an extracted subsequence of $N$ (that we still denote as $N$ for simplicity) and $w\in L^\infty_\zeta\mathcal{M}_\xi \cap L^\infty_\xi\mathcal{M}_\zeta$  together with $f\in L^\infty_\xi (W^{1,1}_x\cap W^{1,\infty}_x)$ such that
    \[
\tau(T,w_N,f_N)\rightarrow \tau(T,w,f)\quad \mbox{in}\quad L^p_{loc}(\mathbb{R}^{d\,|T|}),
    \]
as $N\rightarrow \infty$, for each $T\in \Tree$ and any $1\leq p<\infty$.
  \end{theo}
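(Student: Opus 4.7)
The plan involves three main ingredients: (i) a rigorous definition of $\tau(T,w,f)$ for measure-valued kernels $w$ via an iterated leaves-to-root integration; (ii) extraction of a subsequence along which the limits $h_T:=\lim_N\tau(T,w_N,f_N)$ exist simultaneously for all trees; and (iii) identification of a pair $(w,f)$ such that $\tau(T,w,f)=h_T$ for every $T$, which is where the modified regularity lemma (Lemma~\ref{kl}) is invoked.

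For (i), pick any labeled tree $T$ and a leaf $m$ with parent $p(m)$. The $\xi_m$-dependence in the integrand of \eqref{tau} involves only the two factors $w(\xi_{p(m)},\xi_m)$ and $f(x_m,\xi_m)$; integrating $\xi_m$ out first produces
\[
F_m(x_m,\xi_{p(m)}):=\int_0^1 f(x_m,\xi_m)\,w(\xi_{p(m)},d\xi_m),
\]
which, by Lemma~\ref{bilinearboundBx} with $B_x^*=W^{1,\infty}_x$, is a well-defined element of $L^\infty_{\xi_{p(m)}} W^{1,\infty}_{x_m}$ with norm controlled by $\|w\|_{L^\infty_\xi\mathcal{M}_\zeta}\,\|f\|_{L^\infty_\xi W^{1,\infty}_x}$. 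Pruning the leaf $m$ reduces $T$ to a smaller tree in which $F_m$ plays the role of a new $f$-factor attached at $p(m)$; applying Lemma~\ref{bilinearboundBx} repeatedly along any leaves-to-root order yields $\tau(T,w,f)\in W^{1,\infty}(\R^{d|T|})$. This is the abstract algebra alluded to in the introduction. For uniqueness of the construction, when $w\in L^\infty_{\xi,\zeta}$ all iterated integrals coincide with classical Fubini integrals, so any two candidate extensions agree there; the general case follows by the density of $L^\infty_{\xi,\zeta}$ in $L^\infty_\xi\mathcal{M}_\zeta\cap L^\infty_\zeta\mathcal{M}_\xi$ provided by Lemma~\ref{gooddensity}, combined with the continuity property of Lemma~\ref{bilinearbound}.

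For (ii), Corollary~\ref{cor-L-tau-bound} together with the hypotheses of the theorem yields
\[
\sup_N\|\tau(T,w_N,f_N)\|_{W^{1,1}\cap W^{1,\infty}(\R^{d|T|})}<\infty
\]
for each fixed tree $T$. Rellich--Kondrachov then provides a subsequence converging strongly in $L^p_{loc}(\R^{d|T|})$ for every $1\le p<\infty$. Since $\Tree$ is countable, a Cantor diagonal extraction produces a single subsequence (still denoted by $N$) along which $\tau(T,w_N,f_N)\to h_T$ in $L^p_{loc}$ for every $T\in\Tree$ simultaneously.

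Step (iii) is the core difficulty. The naive approach of extracting weak-$*$ limits of $w_N$ and $f_N$ and plugging them into $\tau$ fails because the iterated algebra in (i) requires strong convergence of $f_N$ in the $\xi$-variable at each step in order to convert weak-$*$ convergence of $w_N$ into weak-$*$ convergence of the partial integral, via the continuity statement of Lemma~\ref{bilinearbound}. To circumvent this, I appeal to the modified regularity lemma Lemma~\ref{kl}: it should produce measure-preserving maps $\phi_N$ on $[0,\ 1]$ such that the rearranged sequences $w_N(\phi_N(\cdot),\phi_N(\cdot))$ and $f_N(\cdot,\phi_N(\cdot))$ possess the required compactness, namely weak-$*$ convergence of the rearranged $w_N$ in $L^\infty_\xi\mathcal{M}_\zeta\cap L^\infty_\zeta\mathcal{M}_\xi$ to some $w$, and strong convergence of the rearranged $f_N$ in $L^1_\xi W^{1,\infty}_x$ to some $f$. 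Invariance of $\tau$ under measure-preserving rearrangements (an immediate change of variable in \eqref{tau}) implies
\[
\tau(T,w_N(\phi_N(\cdot),\phi_N(\cdot)),f_N(\cdot,\phi_N(\cdot)))=\tau(T,w_N,f_N)\to h_T,
\]
while the leaves-to-root algebra of (i), combined iteratively with the continuity in Lemma~\ref{bilinearbound}, gives $\tau(T,w_N(\phi_N(\cdot),\phi_N(\cdot)),f_N(\cdot,\phi_N(\cdot)))\to\tau(T,w,f)$; comparing the two expressions identifies $h_T=\tau(T,w,f)$ for every $T$. The hard part is designing $\phi_N$ so that this works uniformly across all trees: this is precisely what Lemma~\ref{kl} is built to do, and adapting a Szemer\'edi-type block decomposition to the weak topology of extended graphons, rather than the classical cut norm that would not even be finite here, is the principal technical obstacle.
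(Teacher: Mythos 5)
Parts (i) and (ii) of your plan track the paper closely: the leaves-to-root integration is exactly the countable algebra $\mathcal{T}$ of Definition~\ref{defalgebra} together with Lemma~\ref{lemtreeF} and Definition~\ref{D-tau-extended}; the uniqueness of the extension via Lemma~\ref{gooddensity} and Lemma~\ref{bilinearbound} is Lemma~\ref{uniqueF}; the diagonal extraction is as in the paper. The problem is in step (iii), and it is a genuine gap rather than a missing detail. You propose to rearrange so that $\tilde f_N:=f_N(\cdot,\Phi_N(\cdot))$ converges strongly in $L^1_\xi$ and $\tilde w_N$ converges weak-$*$, and then to pass to the limit in $\tau(T,\tilde w_N,\tilde f_N)$ by \emph{iterating} the continuity property~\eqref{wfweakcont} of Lemma~\ref{bilinearbound} along the leaves-to-root order. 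This fails after the first iteration: Lemma~\ref{bilinearbound} turns a strongly $L^1_\xi$-convergent input $\phi_N$ and a weak-$*$ convergent $\tilde w_N$ into an output $\int_0^1\phi_N(\zeta)\,\tilde w_N(\xi,d\zeta)$ that converges only weak-$*$ in $L^\infty_\xi$, not strongly. The next application of the $\star$-rule then needs strong $L^1_\zeta$ convergence of that output, which you do not have. Concretely, already for the chain $T_3^2\in\Tree_3$ one must pass to the limit in
\[
\int_0^1 \tilde w_N(\xi,d\zeta)\,\Bigl(\tilde f_N(\zeta,x_2)\,\int_0^1 \tilde f_N(\eta,x_1)\,\tilde w_N(\zeta,d\eta)\Bigr),
\]
where the inner bracket converges only weak-$*$ in $L^\infty_\zeta$; one more weak-$*$ limit cannot be composed with another. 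The operator norm $L^\infty\to L^\infty$ topology that \emph{would} make the $\star$-operation sequentially continuous is precisely the cut-type topology the paper argues is unavailable for sparse graphs.

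The paper's resolution, and the reason the algebra $\mathcal T$ is set up as a first-class countable object, is that Lemma~\ref{kl} (via Corollary~\ref{kc}) is applied not to $f_N$ alone but to the whole countable family $\{F_k(w_N,f_N)(\cdot,x_1^l,\ldots,x_{m_k}^l)\}_{k,l}$, with $F_k$ ranging over \emph{all} transforms in $\mathcal T$ and $(x_1^l,\ldots,x_{m_k}^l)$ a countable dense set; the uniform $L^\infty$ bounds of Lemma~\ref{L-tau-bound} give the $N$-uniform envelopes the lemma needs. A single $\Phi_N$ then makes every rearranged algebra element $F_k(\tilde w_N,\tilde f_N)$ strongly precompact in $L^p_{loc}$ \emph{simultaneously} (Lemma~\ref{lem:compactness-limits}). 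The identification $\phi_k=F_k(\tilde w,\tilde f)$ then proceeds by a clean induction on the number of operations in $F_k$: at each step the strong compactness of $F_{k'}(\tilde w_N,\tilde f_N)$ is supplied directly by the compactness lemma and not deduced from the previous induction step, so only a single weak-$*$ pairing with $\tilde w_N$ is needed at each level. Your blind proposal correctly assembles all the supporting lemmas, but the mechanism by which the regularity lemma enters is misidentified: it must be fed the entire algebra, not only $f_N$, and as stated your plan stalls at trees of order three.
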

  
We note that Theorem \ref{lg} only imposes that the sequences $\{w_N\}_{N\in \mathbb{N}}$ and $\{f_N\}_{N\in \mathbb{N}}$  fulfill the above uniform estimates $(i)$ and $(ii)$. Those sequences can, hence, be more general than the empirical graphons in Definition \ref{D-graphon-representation} associated to discrete objects $(w_{ij})_{i,j=1,\ldots,N}$ and $(\bar f_i)_{i=1,\ldots,N}\subseteq \mathcal{P}(\mathbb{R}^d)$. However, when restricting to empirical $w_N$ and $f_N$, note that we have equivalently the following uniform bounds
\begin{enumerate}[label=$(\roman*')$]
\item $\quad \displaystyle \sup_{N\in \mathbb{N}}\max_{1\leq i\leq N}\sum_{j=1}^N \vert w_{ij}\vert<\,\infty,\quad \sup_{N\in \mathbb{N}}\max_{1\leq j\leq N}\sum_{i=1}^N \vert w_{ij}\vert<\,\infty,$\\
\item $\quad \displaystyle\sup_{N\in \mathbb{N}}\max_{1\leq i\leq N}\Vert \bar f_i\Vert_{W^{1,\infty}_x}<\infty.$
\end{enumerate}

Let us remark some critical differences  of our generalized graphons with standard graphons:
    \begin{itemize}
  \item As discussed in Section \ref{sec:introduction}, the notion of graphon arises naturally under the scaling condition \eqref{densegraph}, {\it i.e.}, $w_{ij}\lesssim 1/N$, since it leads to $w_N$ uniformly bounded in $L^\infty_{\xi,\zeta}$.
  Compactness is then obtained by considering $w_N$ as the kernel of an operator from $L^1$ to $L^\infty$.  This corresponds to the natural topology on graphons characterized by the {\it cut metric}, which can be reframed in terms of the homomorphism densities $\tau(G,w)$ in Lemma \ref{L-tau-density-bound} for general simple graphs $G$, see the pioneer works of Lov\'asz and  Lov\'asz-Szegedy \cite{Lo,LS}.
  \item Instead, in this work, we focus on the generalized mean-field scaling given by \eqref{meanfieldscaling}-\eqref{vanishingweights}. This leads to $w_N$ which are not uniformly bounded in $L^\infty_{\xi,\zeta}$, but only in $L^\infty_{\xi} \mathcal{M}_{\zeta} \cap L^\infty_{\zeta} \mathcal{M}_{\xi}$.  As we mentioned in Section \ref{sec:introduction}, graphons could still be able to capture graph limits in the absence of the scaling $w_{ij}\lesssim 1/N$ by suitably renormalizing the weights, but these results do not appear to be able to handle such a general scaling assumption as we have here. Our scaling still allows to consider $w_N$ as the kernel of an operator, for example from $L^\infty$ to $L^\infty$ and from $L^1$ to $L^1$ as it has been depicted in Lemma \ref{bilinearbound}. But the connection with the cut metric and the natural compactness that derives from it are lost. Instead Theorem \ref{lg} suggests an alternative natural topology on pairs $(w,f)$ with $w\in L^\infty_\xi\mathcal{M}_\zeta\cap L^\infty_\zeta\mathcal{M}_\xi$ and $f\in L^\infty_\xi L^\infty_x$.
\item One straightforward result that highlights these differences is the following: As we have seen in the previous section ({\it cf.} Lemma \ref{L-tau-bound}), we have
\[
\|\tau(T,w,f)\|_{L^\infty}\leq \|w\|_{ L^\infty_\xi\mathcal{M}_\zeta}^{|T|-1}\,\|f\|_{L^\infty_{\xi}L^\infty_x}^{|T|},
\]
for any tree $T$ and any $w\in L^\infty_\xi\mathcal{M}_\zeta$, while this estimate is obviously false if we consider $\tau(G,w)$ for general simple graphs $G$ as it is done in the standard theory of graphons.
    \end{itemize}

Of direct impact to our work here is the fact that we cannot easily define $\tau(T,w,f)$ through formula \eqref{tau} in Definition \ref{D-tau} when we only have $w\in L^\infty_{\xi} \mathcal{M}_{\zeta} \cap L^\infty_{\zeta} \mathcal{M}_{\xi}$ and $f\in L^\infty_{\xi}L^\infty_x$. We explain how this can be done in the next subsection through the construction of an appropriate algebra. The rest of the section is devoted to the proof of Theorem~\ref{lg} which is based on obtaining simultaneous compactness on all elements of the algebra.

\subsection{Defining $\tau(T,w,f)$ for unbounded $w$}
The key to the definition of $\tau(T,w,f)$ (and later of the compactness argument) for only $f\in L^\infty_{\xi}(L^1_x\cap L^\infty_x)$ and $w\in L^\infty_\xi  \mathcal{M}_\zeta\cap L^\infty_\zeta  \mathcal{M}_\xi$, is an iterative way to construct it from the leaves of the tree. This leads us to the following definition.

\begin{defi}[A countable algebra] \label{defalgebra}
We will denote by $\mathcal{T}$ the countable algebra of transforms over spaces of arbitrarily large dimensions which is built as follows: For each transform $F\in {\mathcal T}$ there exists $n\in \mathbb{N}$ (the rank of $F$) so that $F$ maps each couple $(w,f)$ into a scalar function $F(w,f)$ on $[0,\ 1]\times \R^{dn}$. The full algebra $\mathcal{T}$ is obtained in a recursive way according to the following rules:
\begin{enumerate}[label=(\roman*)]
\item The elementary $1$-rank transform $F_0:\,(w,f)\mapsto f$ belongs to the algebra $\mathcal{T}$.
\item Let $F_1\in \mathcal{T}$ and $F_2\in \mathcal{T}$ be $n_1$-rank and $n_2$-rank transforms respectively. Then, the following $(n_1+n_2)$-rank transform also belongs to $\mathcal{T}$:
\[
F_1\otimes F_2:\,(w,f)\mapsto F_1(w,f)(\xi,x_1,\ldots,x_{n_1})\,F_2(w,f)(\xi,x_{n_1+1},\ldots,x_{n_1+n_2}).
\]
\item Let $F\in \mathcal{T}$ be a $n$-rank transform. Then, the following $n$-rank transform also belongs to $\mathcal{T}$:
\[
F^\star:\,(w,f)\mapsto \int_0^1 F(w,f)(\zeta,x_1,\ldots,x_n)\,w(\xi,d\zeta).
\]
\end{enumerate}
\end{defi}
For a given choice of $(w,f)$, we also denote by $M(w,f)$ the countable algebra consisting of functions over spaces of arbitrary large dimensions given by $F(w,f)$ for any transform $F\in\mathcal{T}$. Of course $M(w,f)$ can also be obtained directly by transposing the rules $(i)$, $(ii)$ and $(iii)$ above:
\begin{enumerate}[label=$(\roman*)$]
\item We have $f(x_1,\xi)\in M(w,f)$.
\item If $\phi(\xi,x_1,\ldots,x_{n_1}),\,\psi(\xi,x_1,\ldots, x_{n_2})\in M(w,f)$, then we have:
\[
\phi(\xi,x_1,\ldots,x_{n_1})\,\psi(\xi,x_{n_1+1},\ldots,x_{n_1+n_2})\in M(w,f).
\]
\item If $\phi(\xi,x_1,\ldots,x_n)\in M(w,f)$, then we have:
\[
\int_0^1 \phi(\zeta,x_1,\ldots,x_n)\,w(\xi,d\zeta)\in M(w,f).
\]
\end{enumerate}
The first point is to observe that $M(w,f)$ (and thus the algebra $\mathcal{T}$) is well defined with the only hypotheses $f\in L^\infty_{\xi}(L^1_x\cap L^\infty_x)$ and $w\in L^\infty_\xi \mathcal{M}_\zeta\cap L^\infty_\zeta  \mathcal{M}_\xi$.

\begin{lem}
  Consider any $f\in L^\infty_{\xi}(L^1_x\cap L^\infty_x)$ and any $w\in L^\infty_\xi  \mathcal{M}_\zeta\cap L^\infty_\zeta  \mathcal{M}_\xi$. Then all functions of $M(w,f)$ are well defined and belong to $L^\infty_\xi(L^1_{x_1,\ldots,x_n}\cap L^\infty_{x_1,\ldots,x_n})$, for some $n\in \mathbb{N}$. Moreover, 
\[
F(w_N,f_N)\rightarrow F(w,f)\quad \mbox{in}\quad L^1([0,\ 1]\times \R^{dn}),
\]
an $N\rightarrow\infty$ for any fixed $F\in \mathcal{T}$, any sequence $\{f_N\}_{N\in \mathbb{N}}$ uniformly bounded in $L^\infty_\xi (L^1_x\cap L^\infty_x)$ and converging to some $f$ in $L^\infty_{\xi}(L^1_x \cap L^\infty_x)$, and any sequence of $\{w_N\}_{N\in \mathbb{N}}$ uniformly bounded in $L^\infty_\xi  \mathcal{M}_\zeta\cap L^\infty_\zeta  \mathcal{M}_\xi$ and converging to $w$ in $L^1_\xi H^{-1}_\zeta\cap L^1_\zeta H^{-1}_\xi$. \label{welldefinedM}
\end{lem}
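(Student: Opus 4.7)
My plan is to prove both statements of the lemma simultaneously by induction on the structure of the transform $F\in\mathcal{T}$, using the three-rule recursive construction of Definition \ref{defalgebra}. At each step, the induction hypothesis will provide: (a) $F(w,f)\in L^\infty_\xi(L^1_x\cap L^\infty_x)(\mathbb{R}^{dn})$ with norms controlled by a polynomial in $\|w\|_{L^\infty_\xi\mathcal{M}_\zeta\cap L^\infty_\zeta\mathcal{M}_\xi}$ and $\|f\|_{L^\infty_\xi(L^1_x\cap L^\infty_x)}$ (whence $F(w,f)\in L^1([0,1]\times\mathbb{R}^{dn})$); (b) the convergence $F(w_N,f_N)\to F(w,f)$ in $L^1([0,1]\times\mathbb{R}^{dn})$ under the stated hypotheses. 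The proof of (a) propagates in a straightforward way: the base case $F_0$ is the hypothesis on $f$; the tensor case follows from Fubini since the two factors of $F_1\otimes F_2$ involve disjoint $x$-variables; and the star case is exactly Lemma \ref{bilinearboundBx} applied with $B_x^\ast=L^p_x$ for $p\in\{1,\infty\}$.

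For the convergence statement in part (b), the base case is immediate since $f_N\to f$ strongly in $L^\infty_\xi(L^1_x\cap L^\infty_x)\hookrightarrow L^1([0,1]\times\mathbb{R}^d)$. For the tensor case, I would use the standard telescoping decomposition
\[
F_1(w_N,f_N)\,F_2(w_N,f_N)-F_1(w,f)\,F_2(w,f)=\bigl(F_1(w_N,f_N)-F_1(w,f)\bigr)F_2(w_N,f_N)+F_1(w,f)\bigl(F_2(w_N,f_N)-F_2(w,f)\bigr),
\]
integrate in $(\xi,x,y)$, and apply Fubini together with the uniform $L^\infty_\xi L^1_y$ and $L^\infty_\xi L^\infty_x$ bounds from part (a) to reduce each term to the $L^1$ convergence of a single factor, which holds by induction.

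The delicate case is $F^\ast$. I would decompose
\[
F^\ast(w_N,f_N)-F^\ast(w,f)=\underbrace{\int_0^1\bigl[F(w_N,f_N)-F(w,f)\bigr](\zeta,x)\,w_N(\xi,d\zeta)}_{I_N}+\underbrace{\int_0^1 F(w,f)(\zeta,x)\,[w_N-w](\xi,d\zeta)}_{II_N}.
\]
The term $I_N$ is easy: by the $L^1_\xi B_x^\ast$ estimate of Lemma \ref{bilinearboundBx} with $B_x^\ast = L^1_x$ and the uniform $L^\infty_\zeta\mathcal{M}_\xi$ bound on $w_N$, we get $\|I_N\|_{L^1_\xi L^1_x}\le C\,\|F(w_N,f_N)-F(w,f)\|_{L^1_\zeta L^1_x}\to 0$ by the induction hypothesis.

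The main obstacle lies in $II_N$: the only available convergence of $w_N-w$ in the $\zeta$-direction is in the weak $H^{-1}_\zeta$ topology, whereas the test function $F(w,f)(\zeta,x)$ inherits from the algebra essentially no regularity in $\zeta$. I would overcome this by a density argument. Since $F(w,f)\in L^1([0,1]\times\mathbb{R}^{dn})$ by part (a), pick $\phi^\delta\in C_c^\infty([0,1]\times\mathbb{R}^{dn})$ with $\|F(w,f)-\phi^\delta\|_{L^1_\zeta L^1_x}\le \delta$, and split $II_N=\int \phi^\delta[w_N-w]+\int(F(w,f)-\phi^\delta)[w_N-w]$. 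For the first piece, $\phi^\delta(\cdot,x)\in H^1_\zeta$ uniformly in $x$ with compact $x$-support, so
\[
\|\textstyle\int\phi^\delta[w_N-w]\|_{L^1_\xi L^1_x}\le \|\phi^\delta\|_{L^1_x H^1_\zeta}\,\|w_N-w\|_{L^1_\xi H^{-1}_\zeta}\xrightarrow[N\to\infty]{}0
\]
by the pairing $|\int g(\zeta)\mu(d\zeta)|\le \|g\|_{H^1_\zeta}\|\mu\|_{H^{-1}_\zeta}$, integrated successively in $\xi$ and $x$. For the error piece, Lemma \ref{bilinearboundBx} gives the uniform-in-$N$ bound $C\|F(w,f)-\phi^\delta\|_{L^1_\zeta L^1_x}\le C\delta$. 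A standard $\limsup_N$ followed by $\delta\to 0$ closes the argument. This density step is the real content; everything else is routine unpacking of the induction and Lemma \ref{bilinearboundBx}.
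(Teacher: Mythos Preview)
Your proposal is correct and follows essentially the same approach as the paper: both argue by induction on the recursive structure of $\mathcal{T}$, handle the tensor case via the telescoping product and uniform bounds, and treat the star case via the identical two-term decomposition $I_N+II_N$, with $I_N$ controlled by Lemma~\ref{bilinearboundBx} and $II_N$ handled by a density argument approximating $F(w,f)$ by smooth $\phi^\delta$ to exploit the $L^1_\xi H^{-1}_\zeta$ convergence of $w_N-w$. Your write-up is in fact slightly more explicit than the paper's in spelling out the pairing estimate and the $\limsup_N$-then-$\delta\to0$ closure.
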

\begin{proof}
We of course use an induction argument based on the recursive rules above defining the countable algebra $M(w,f)$.

\medskip

$\diamond$ {\it Step 1: Good definition of elements of $M(w,f)$}.

\medskip

Obviously, for the elementary transform $F_0$ in rule $(i)$ we have that $F_0(w,f)=f$ is well defined and belongs to $L^\infty([0,\ 1]\times \mathbb{R}^d)$ by hypothesis. The second rule $(ii)$ also poses no issue. Indeed, let us set $F=F_1\otimes F_2$  and assume $F_1,F_2\in \mathcal{T}$ are transform with rank $n_1,n_2\in \mathbb{N}$ for which we already now that $F_1(w,f)$ and $F_2(w,f)$ are well defined and belong to $L^\infty([0,\ 1]\times \mathbb{R}^{dn_1})$ and $L^\infty([0,\ 1]\times \mathbb{R}^{dn_2})$ respectively. Then, $F(w,f)$ is well defined and belongs to $L^\infty([0,\ 1]\times \mathbb{R}^{d(n_1+n_2)})$ as the product of two bounded functions. Finally, it only remains to check the third rule $(iii)$. Consider any $F\in \mathcal{T}$ of rank $n\in \mathbb{N}$ for which we already now that $F(w,f)$ is well defined and belongs $L^\infty([0,\ 1]\times \mathbb{R}^{dn})$. Then, $F^\star (w,f)$ is also well defined and belongs to $L^\infty([0,\ 1]\times \mathbb{R}^{dn})$ by the estimate $\eqref{wfL1LinftyBx}_2$ in Lemma \ref{bilinearboundBx}.

Moreover, since $L^1$ norms are also stable under tensor products, by the same argument we also have that if $f\in L^\infty_\xi(L^1_x\cap L^\infty_x)$ and $w\in L^\infty_\xi\mathcal{M}_\zeta\cap L^\infty_\zeta\mathcal{M}_\xi$, then all functions in $M(w,f)$ are well defined and belong to $L^\infty_\xi (L^1_{x_1,\ldots, x_n}\cap L^\infty_{x_1,\ldots,x_n})$ for some $n\in \mathbb{N}$.

\medskip

$\diamond$ {\it Step 2: Convergence of elements of $M(w_N,f_N)$.}

\medskip

First note that taking the elementary transform $F_0\in \mathcal{T}$ in rule $(i)$ we have that 
\[
F_0(w_N,f_N)=f_N\rightarrow f=F_0(w,f)\quad \mbox{in}\quad L^1([0,\ 1]\times \mathbb{R}^d),
\] 
by the convergence hypothesis on $\{f_N\}_{N\in \mathbb{N}}$. Let us set $F=F_1\otimes F_2$  as in the rule $(ii)$ for a couple of transforms $F_1,F_2\in \mathcal{T}$ and assume that we already now that $F_1(w_N,f_N)$ and $F_2(w_N,f_N)$ are both convergent in $L^1$ and uniformly bounded in $L^\infty$. Therefore, it is clear that
\[
F(w_N,f_N)\rightarrow F(w,f)\quad \mbox{in}\quad  L^1([0,\ 1]\times \mathbb{R}^{d(n_1+n_2)}),
\]
and it is uniformly bounded in $L^\infty$ as the product of uniformly bounded sequences in $L^\infty$, each of them convergent in $L^1$. Finally, consider a $n$-rank transform $F\in \mathcal{T}$ and assume that we already now that $F(w_N,f_N)$ converges to $F(w,f)$ in $L^1$ and is uniformly bounded in $L^\infty$. Our last goal is to show that $F^\star(w_N,f_N)$ given by the rule $(iii)$ is also convergent in $L^1$ and uniformly bounded in $L^\infty$. Note that Lemma \ref{bilinearbound} yields a partial answer in the weak topology of $L^\infty$ though. We shall improve it here at the expense of the stronger convergence assumed on $\{w_N\}_{N\in \mathbb{N}}$. For simplicity of notation, we set $\phi_N:=F(w_N,f_N)$, which by the induction hypothesis converges to $\phi:=F(w,f)$ in $L^1$ and uniformly bounded in $L^\infty$. Then, we find that
\[
F^\star(w_N,f_N)(\xi,x_1,\ldots,x_n)=\int_0^1 \phi_N(\zeta,x_1,\ldots,x_N)\,w_N(\xi,d\zeta),
\]
is uniformly bounded in $L^\infty$ by estimate $\eqref{wfL1LinftyBx}_2$ in Lemma \ref{bilinearboundBx} thanks to the uniform bound of $\{w_N\}_{N\in \mathbb{N}}$ in $L^\infty_\xi\mathcal{M}_\zeta$ and of $\phi_N$ in $L^\infty$. Regarding convergence, we have
\[
F^\star(w_N,f_N)-F^\star(w,f)=I_N^1+I_N^2,
\]
where each factor reads
\begin{align*}
I_N^1(\xi,x_1,\ldots,x_n)&:=\int_0^1 (\phi_N-\phi)(\zeta,x_1,\ldots,x_n)\,w_N(\xi,d\zeta),\\
I_N^2(\xi,x_1,\ldots,x_n)&:=\int_0^1 \phi(\zeta,x_1,\ldots,x_n)\,(w_N(\xi,d\zeta)-w(\xi,d\zeta)).
\end{align*}
for $N\in \mathbb{N}$. On the one hand, it is clear by estimate $\eqref{wfL1LinftyBx}_1$ in Lemma \ref{bilinearboundBx} that we have
\[
\Vert I_N^1\Vert_{L^1}\leq \|w_N\|_{L^\infty_\zeta  \mathcal{M}_\xi}\,\|\phi_N-\phi\|_{L^1},
\]
and, therefore, $I_N^1\rightarrow 0$ in $L^1$ by the convergence of $\phi_N$ to $\phi$ in $L^1$ and the uniform estimate of $w_N$ in $L^\infty_\zeta\mathcal{M}_\xi$. On the other hand, consider any sequence $\{\phi_\varepsilon\}_{\varepsilon>0}\subset C^\infty_c([0,\ 1]\times \mathbb{R}^{dn})$ such that $\phi_\varepsilon\rightarrow \phi$ in $L^1$ as $\varepsilon\rightarrow 0$ and define
\[
I_{N,\varepsilon}^2(\xi,x_1,\ldots,x_n):=\int_0^1 \phi_\varepsilon(\zeta,x_1,\ldots,x_n)\,(w_N(\xi,d\zeta)-w(\xi,d\zeta)),
\]
for $N\in \mathbb{N}$ and $\varepsilon>0$. Then, by $\eqref{wfL1LinftyBx}_1$ in Lemma \ref{bilinearboundBx} we have
\begin{align*}
\Vert I_N^2\Vert_{L^1}&\leq \Vert w_N\Vert_{L^\infty_\zeta\mathcal{M}_\xi}\Vert \phi-\phi_\varepsilon\Vert_{L^1}+\Vert  I_{N,\varepsilon}^2\Vert_{L^1}\\
&\leq \Vert w_N\Vert_{L^\infty_\zeta\mathcal{M}_\xi}\Vert \phi-\phi_\varepsilon\Vert_{L^1}+\Vert \phi_\varepsilon\Vert_{L^1_{x_1,\ldots,x_n}H^1_\zeta}\Vert w_N-w\Vert_{L^1_\xi H^{-1}_\zeta},
\end{align*}
for every $N\in \mathbb{N}$ and $\varepsilon>0$. Taking $\limsup$ as $N\rightarrow \infty$ we have that
\[
\limsup_{N\rightarrow 0}\Vert  I_N^2\Vert_{L^1}\leq \sup_{N\in \mathbb{N}}\Vert w_N\Vert_{L^\infty_\zeta \mathcal{M}_\xi}\Vert \phi_\varepsilon-\phi\Vert_{L^1}.
\]
Since $\varepsilon>0$ is arbitrary, taking $\varepsilon\rightarrow 0$ and recalling that $w_N$ is uniformly bounded in $L^\infty_\zeta\mathcal{M}_\xi$ and $\phi_\varepsilon\rightarrow \phi$ in $L^1$ allow concluding.
\end{proof}
%
The critical reason for the introduction of the algebras $M(w,f)$ and $\mathcal{T}$ is that they allow easily recovering all $\tau(T,w,f)$, at least for bounded $w$. Namely, for any tree $T\in \Tree$, there exists a transform $F\in \mathcal{T}$ so that $\tau(T,w,f)$ can be described in terms of the transform $F(w,f)$ of the pair $(w,f)$.
%
\begin{lem}
For any  tree $T\in \Tree$ there exists a transform $F\in {\mathcal T}$ such that
\[
\tau(T,w,f)(x_1,\ldots,x_{|T|})=\int_0^1 F(w,f)(\zeta,x_1,\ldots,x_{|T|})\,d\zeta,
\]
for any $w\in L^\infty_{\xi,\zeta}$, and any $f\in L^\infty_{\xi}(L^1_x\cap L^\infty_x)$.
\label{lemtreeF}
\end{lem}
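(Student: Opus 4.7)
I would argue by induction on $|T|$, using a decomposition of the tree according to the children of the root. The key observation is that each $F\in\mathcal{T}$ produces a function that retains an extra variable $\xi\in[0,\ 1]$ (playing the role of a ``current root variable''), so the natural statement to induct on is the stronger \emph{rooted} claim: for each labeled tree $T\in\Tree$ (rooted, by construction, at vertex $1$), there exists $G_T\in\mathcal{T}$ of rank $|T|$ with
\[
G_T(w,f)(\xi,x_1,\ldots,x_{|T|})=f(x_1,\xi)\int_{[0,\ 1]^{|T|-1}}\prod_{(k,l)\in T}w(\xi_k,\xi_l)\prod_{m=2}^{|T|}f(x_m,\xi_m)\,d\xi_2\cdots d\xi_{|T|},
\]
where we set $\xi_1:=\xi$ on the right-hand side. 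Taking $F:=G_T$ and integrating in $\xi$ against Lebesgue measure on $[0,\ 1]$ recovers exactly $\tau(T,w,f)(x_1,\ldots,x_{|T|})$.

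The base case $T=T_1$ is immediate: set $G_{T_1}:=F_0\in\mathcal{T}$ from rule $(i)$, which gives $G_{T_1}(w,f)(\xi,x_1)=f(x_1,\xi)$.

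For the inductive step with $|T|\geq 2$, let $c_1,\ldots,c_k$ denote the children of the root $1$ in $T$, and let $T_{c_j}$ be the subtree formed by $c_j$ together with all its descendants (rooted at $c_j$). Each $T_{c_j}$ has strictly fewer vertices than $T$, so the inductive hypothesis produces $G_{T_{c_j}}\in\mathcal{T}$ with the analogous representation; applying rule $(iii)$ we obtain $G_{T_{c_j}}^\star\in\mathcal{T}$ given by
\[
G_{T_{c_j}}^\star(w,f)(\xi,\vec x_{T_{c_j}})=\int_0^1 G_{T_{c_j}}(w,f)(\zeta,\vec x_{T_{c_j}})\,w(\xi,d\zeta).
\]
Then iterated use of rule $(ii)$ yields
\[
G_T:=F_0\otimes G_{T_{c_1}}^\star\otimes\cdots\otimes G_{T_{c_k}}^\star\in\mathcal{T}.
\]
Unfolding the tensor product and using that the vertices of $T$ partition as $\{1\}\sqcup\bigsqcup_j V(T_{c_j})$ while the edges partition as $\{(1,c_j):j=1,\ldots,k\}\sqcup\bigsqcup_j E(T_{c_j})$ (the new edges being exactly those produced by the $\star$-operations), one checks directly that $G_T$ has the required integral representation, thereby closing the induction.

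The one subtle point is that each $T_{c_j}$ is not literally an element of $\Tree$ under its inherited labeling (its root is $c_j$, not $1$, and its labels are not $\{1,\ldots,|T_{c_j}|\}$). This is harmless because the algebra $\mathcal{T}$ is defined via operations that only depend on the combinatorial structure of the transform and treat the $x$-arguments as abstract positions; one may therefore relabel each $T_{c_j}$ to a genuine element of $\Tree$, apply the inductive hypothesis, and transfer the transform back. Beyond this bookkeeping, the argument is a straightforward structural induction, which is exactly why the algebra $\mathcal{T}$ was set up with rules $(i)$--$(iii)$.
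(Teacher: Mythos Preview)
Your argument is correct in substance and follows a genuinely different decomposition from the paper's. The paper inducts along the recursive construction $T_{n+1}=T_n+i$ of $\Tree$, introducing an auxiliary $\hat\tau(T,w,f)(\xi,\cdot)$ in which the variable attached to the \emph{last-added leaf} (vertex $|T|$) is left free, and setting $F_{n+1}=F_n^\star\otimes F_0$ one leaf at a time. You instead keep the \emph{root} variable $\xi_1$ free and split $T$ into the subtrees $T_{c_1},\ldots,T_{c_k}$ hanging from the children of the root, assembling $G_T=F_0\otimes G_{T_{c_1}}^\star\otimes\cdots\otimes G_{T_{c_k}}^\star$. Your route makes each $\star$-operation manifestly responsible for exactly one root-to-child edge factor $w(\xi_1,\xi_{c_j})$, at the price of the relabeling step; the paper's leaf-by-leaf route tracks the canonical labeling of $\Tree$ directly and therefore needs no relabeling.

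One bookkeeping point your ``transfer the transform back'' remark glosses over: rule~(ii) concatenates the $x$-slots in a fixed order, so $G_T$ receives its arguments as $(x_1,\vec x_{T_{c_1}},\vec x_{T_{c_2}},\ldots)$. When the vertex sets of the subtrees are interleaved in $\{2,\ldots,|T|\}$ --- for instance $T=T_3^1+2$, where the root has children $2,3$ and vertex $4$ lies below $2$ --- no ordering of the children in the tensor product produces the slot order $(x_1,x_2,x_3,x_4)$, so your displayed identity for $G_T$ holds only up to a permutation of the $x_m$. This is harmless for every downstream use in the paper (the estimates in Lemma~\ref{welldefinedM} and the extension in Definition~\ref{D-tau-extended} are invariant under permuting the $x$-variables), and the gap closes at once by adjoining a slot-permutation rule to $\mathcal T$ or by stating the conclusion up to a fixed permutation; but it is precisely here that the relabeling is not quite as free as you suggest, and the paper's leaf-by-leaf construction sidesteps this by always appending the new $x$-slot at the end.
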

%
\begin{proof}
For any tree $T_{n}\in \Tree_{n}$, index the vertices so that the root has index $1$. We define the modified $\tau$ operator as follows
\[
\hat{\tau}(T_{n},w,f)(\xi,x_1,\ldots,x_{n}):=\int_{[0,\ 1]^n} \prod_{(k,l)\in E(T_n)}w(\xi_k,\xi_l)\,\prod_{m\in (T_n)} f(\xi_m,x_m)\,d\xi_2\ldots\,d\xi_n\Big|_{\xi_1=\xi}.
\]
Note that since $w\in L^\infty_{\xi,\zeta}$ and $f\in L^\infty_\xi L^\infty_x$ then all the computations above make sense and this will also apply in the manipulations below. We also note that we readily obtain the following representation 
\[
\tau(T,w,f)=\int_0^1\hat{\tau}(T,w,f)(\xi,\cdot)\,d\xi,
\]
for every tree $T\in \Tree$. Then, we prove the more precise result that there must exists $F\in \mathcal{T}$ such that $\hat{\tau}(T,w,f)=F(w,f)$. We proceed by induction on the size of the tree $T$. 

Let $T_2\in \Tree_2$ be the only tree with two vertices. Then, we have
\[
\hat{\tau}(T_2,w,f)(\xi,x_1,x_2)=f(\xi,x_1)\int_0^1 w(\xi,\zeta)\,f(\zeta,x_2)\,d\zeta.
\]
In other words, $\hat{\tau}(T_2,w,f)=F_2(w,f)$ with $F_2:=F_0^\star\otimes F_0$. Of course, $F_0\in \mathcal{T}$ by item $(i)$ in Definition \ref{defalgebra}. Hence $F_0^\star\in \mathcal{T}$ by item $(iii)$, and finally $F_2$ so does by item $(ii)$.

Assume by induction that the above holds for any tree in $\Tree_n$. We need to show that it is also verified for any tree in $\Tree_{n+1}$. Assume that the root is index by $1$ and has degree $k$. Index the corresponding vertices connected to the root by $2,\ldots,k+1$. Denote by $T^2,\ldots,T^{k+1}$ the subtrees starting from each vertex $2$ to $k+1$. Then, we have that
\[
\begin{split}
\hat \tau(T_{n+1},w,f)&=f(\xi,x_1)\,\int_{[0,\ 1]^k} \Pi_{i=2}^{k+1} w(\xi,\xi_i)\,\hat\tau(T^i,w,f)(\xi_i)\,d\xi_i\\
&=f(\xi,x_1)\,\Pi_{i=2}^{k+1}\int_{[0,\ 1]} w(\xi,\xi_i)\,\hat\tau(T^i,w,f)(\xi_i)\,d\xi_i.
\end{split}
\]
By our induction hypothesis, we have $F_i$ such that 
$\hat{\tau}(T^i,w,f)=F_i(w,f)$. By rule $(iii)$, for each $i$, 
\[
\int_{[0,\ 1]} w(\xi,\xi_i)\,\hat\tau(T^i,w,f)(\xi_i)\,d\xi_i=\int_{[0,\ 1]} w(\xi,\xi_i)\,F_i(w,f)(\xi_i)\,d\xi_i=F_i^*
\]
belongs to $\mathcal{T}$. Of course, $f(\xi,x_1)=F_0$ by rule $(i)$. Hence finally $\hat \tau(T_{n+1},w,f)=F_0\otimes F_2^*\dots\otimes F_{k+1}^*$ by rule $(ii)$.
\end{proof}
%
The combination of Lemmas \ref{welldefinedM} and \ref{lemtreeF} naturally lead to the following extended definition of $\tau(T,w,f)$
%
\begin{defi}[Extension of the $\tau$ operator]\label{D-tau-extended}
 Consider a transform $F\in \mathcal{T}$, for any $T\in \Tree$,  as provided by Lemma~\ref{lemtreeF}. We then define 
  \[
\tau(T,w,f)(x_1,\ldots,x_{|T|})=\int_0^1 F(w,f)(\zeta, x_1, \ldots, x_{|T|})\,d\zeta,
  \]
  for any $f\in L^\infty_{\xi}(L^1_x\cap L^\infty_x)$ and any $w\in L^\infty_\xi  \mathcal{M}_\zeta\cap L^\infty_\zeta  \mathcal{M}_\xi$.
\end{defi}
In Lemma \ref{lemtreeF} we built a special transform $F\in \mathcal{T}$ associated to each tree $T\in\Tree$. It relies on a recursive construction involving the operations $(i)$, $(ii)$ and $(iii)$ in Definition \ref{lg}, starting from the leafs of $T$ and descending towards the root. As there could a priori be several transforms $F$ that fit a given tree, the last point to check is that Definition \ref{D-tau-extended} is independent of the particular choice of $F$.
%
\begin{lem}
  Consider any $F_1,\;F_2\in \mathcal{T}$ such that 
  \begin{equation}
\int_0^1 F_1(w,f)(\zeta,.)\,d\zeta=\int_0^1 F_2(w,f)(\zeta,.)\,d\zeta ,\label{equniqueF}
  \end{equation}
  for all $f\in L^\infty_{\xi}(L^1_x\cap L^\infty_x)$ and all $w\in L^\infty_{\xi,\zeta}$.
Then, the equality \eqref{equniqueF} also holds for all $f\in L^\infty_{\xi}(L^1_x\cap L^\infty_x)$ and all $w\in L^\infty_\xi  \mathcal{M}_\zeta\cap L^\infty_\zeta  \mathcal{M}_\xi$.  \label{uniqueF}
\end{lem}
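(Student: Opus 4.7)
The plan is a direct density argument that leverages the two tools already developed in this section. Given $w \in L^\infty_\xi \mathcal{M}_\zeta \cap L^\infty_\zeta \mathcal{M}_\xi$, I would first invoke Lemma~\ref{gooddensity} to obtain an approximating sequence $w_n \in L^\infty_{\xi,\zeta}$ which is uniformly bounded in $L^\infty_\xi L^1_\zeta \cap L^\infty_\zeta L^1_\xi$ and converges to $w$ in $L^1_\xi H^{-1}_\zeta \cap L^1_\zeta H^{-1}_\xi$. Since $L^1 \hookrightarrow \mathcal{M}$ continuously, the sequence is also uniformly bounded in $L^\infty_\xi \mathcal{M}_\zeta \cap L^\infty_\zeta \mathcal{M}_\xi$, so both convergence assumptions in Lemma~\ref{welldefinedM} are satisfied (taking the constant sequence $f_n = f$ for the second argument, which is trivially convergent and uniformly bounded).

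Applying Lemma~\ref{welldefinedM} to both transforms $F_1, F_2 \in \mathcal{T}$, I obtain
\[
F_i(w_n, f) \;\longrightarrow\; F_i(w, f) \quad \text{in } L^1([0,\ 1]\times \mathbb{R}^{dn}),
\]
for $i = 1, 2$, where $n$ denotes the common rank (which must agree for the identity \eqref{equniqueF} even to make sense). By Fubini's theorem, integration over the $\zeta$-slot is continuous from $L^1([0,\ 1]\times \mathbb{R}^{dn})$ to $L^1(\mathbb{R}^{dn})$, so
\[
\int_0^1 F_i(w_n, f)(\zeta, \cdot)\,d\zeta \;\longrightarrow\; \int_0^1 F_i(w, f)(\zeta, \cdot)\,d\zeta \quad \text{in } L^1(\mathbb{R}^{dn}).
\]

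Since each $w_n$ belongs to $L^\infty_{\xi,\zeta}$, the hypothesis \eqref{equniqueF} applies and provides the identity
\[
\int_0^1 F_1(w_n, f)(\zeta, \cdot)\,d\zeta = \int_0^1 F_2(w_n, f)(\zeta, \cdot)\,d\zeta
\]
for each $n$. Passing to the limit as $n \to \infty$ on both sides yields the equality for $w$, concluding the proof. The only delicate point is ensuring that the mode of convergence produced by Lemma~\ref{gooddensity} matches what Lemma~\ref{welldefinedM} demands, but these two lemmas were crafted in tandem precisely to pair here; no extra obstacle arises beyond checking the elementary continuous embedding $L^\infty_\xi L^1_\zeta \hookrightarrow L^\infty_\xi \mathcal{M}_\zeta$.
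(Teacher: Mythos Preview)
Your proposal is correct and follows essentially the same approach as the paper: the paper constructs the approximating sequence $w_N$ directly by convolution (which is precisely the content of Lemma~\ref{gooddensity}) and then invokes Lemma~\ref{welldefinedM} to pass to the limit, exactly as you do. The only cosmetic difference is that you cite Lemma~\ref{gooddensity} explicitly while the paper reproduces its construction inline.
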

\begin{proof}
Define $w_N=K_N\star_{\xi,\zeta} w$  for some smooth convolution kernel
$K_N$ with $K_N\to \delta_0$, and for any  $w\in L^\infty_\xi  \mathcal{M}_\zeta\cap L^\infty_\zeta  \mathcal{M}_\xi$.

First of all $w_N\in L^\infty_{\xi,\zeta}$ since $w\in L^\infty_\xi  \mathcal{M}_\zeta$.
We also have that $w_N$ converges to $w$ in $L^1_\xi H^{-1}_\zeta\cap L^1_\zeta H^{-1}_\xi$.

Therefore, we first have that for $w_N$
\[
\int_0^1 F_1(w_N,f)(\zeta,.)\,d\zeta=\int_0^1 F_2(w_N,f)(\zeta,.)\,d\zeta.
\]
But, furthermore, by applying Lemma~\ref{welldefinedM}, we have that $F_1(w_N,f)$ and $F_2(w_N,F)$ converge strongly in $L^1$ to  $F_1(w,f)$ and $F_2(w,f)$, respectively. Hence, we obtain the desired equality.
\end{proof}

Being able to correctly define $\tau(T,w,f)$ is of course only the first and simplest step in the proof of Theorem~\ref{lg}. Passing to the limit in $\tau(T,w_N,f_N)$ is considerably more intricate and in particular we cannot apply Lemma~\ref{welldefinedM} as of course we cannot have the required compactness on $f_N$ and $w_N$ from the assumptions of Theorem~\ref{lg}.

Instead we have to derive compactness through the clever use of measure-preserving transforms which are the object of the next subsections.

\subsection{An extended multilinear framework}
We briefly explain in this subsection how the analysis and estimates presented here can also be applied to a larger framework of multilinear operators that extends on the transforms introduced earlier. We define a family of multilinear operators
\[
\begin{split}
M:\,&(L^\infty_\xi \mathcal{M}_\zeta  \cap L^\infty_\zeta \mathcal{M}_\xi)^k \times (L^\infty_\xi (L^1_x \cap L^\infty_x ))^l \to L^\infty_\xi ((L^1 \cap L^\infty(\RR^{dn}))\\
& (w_1,\ldots,w_k)\times (f_1,\ldots, f_l)\mapsto M(w_1,\ldots,w_k,f_1,\ldots, f_l)(\xi,x_1,\ldots,x_n).
\end{split}
\]
This family $\mathcal{M}$ is constructed exactly as the algebra of transform before through the following rules
\begin{enumerate}[label=(\roman*)]
\item The map $M_0:\,f\mapsto f$ belongs to  $\mathcal{M}$.
\item Let $M_1\in \mathcal{M}$ and $M_2\in \mathcal{M}$ be posed on $(L^\infty_\xi \mathcal{M}_\zeta  \cap L^\infty_\zeta \mathcal{M}_\xi)^{k_1} \times (L^\infty_\xi (L^1_x \cap L^\infty_x ))^{l_1}$ and $(L^\infty_\xi \mathcal{M}_\zeta  \cap L^\infty_\zeta \mathcal{M}_\xi)^{k_2} \times (L^\infty_\xi (L^1_x \cap L^\infty_x ))^{l_2}$.  Then, the following multilinear operator also belongs to $\mathcal{M}$:
\[
\begin{split}
M_1\otimes M_2:\,&(L^\infty_\xi \mathcal{M}_\zeta  \cap L^\infty_\zeta \mathcal{M}_\xi)^{k_1+k_2} \times (L^\infty_\xi (L^1_x \cap L^\infty_x ))^{l_1+l_2} \to L^\infty_\xi ((L^1 \cap L^\infty(\RR^{dn}))\\
& (w_1,\ldots,w_{k_1},w_{k_1+1},\ldots, w_{k_1+k_2})\times (f_1,\ldots, f_{l_1},f_{l_1+1},\ldots, f_{l_1+l_2})\\
&\qquad\qquad\mapsto M_1(w_1,\ldots,w_{k_1},f_1,\ldots, f_{l_1})\,M_2(w_{k_1+1},\ldots, w_{k_1+k_2},f_{l_1+1},\ldots,f_{l_1+l_2}).
\end{split}
\]
\item Let $M\in \mathcal{M}$ be posed on $(L^\infty_\xi \mathcal{M}_\zeta  \cap L^\infty_\zeta \mathcal{M}_\xi)^{k} \times (L^\infty_\xi (L^1_x \cap L^\infty_x ))^{l}$. Then, the following operator posed on $(L^\infty_\xi \mathcal{M}_\zeta  \cap L^\infty_\zeta \mathcal{M}_\xi)^{k+1} \times (L^\infty_\xi (L^1_x \cap L^\infty_x ))^{l}$ also belongs to $\mathcal{M}$:
\[
(w_1,\ldots,w_k)\times (f_1,\ldots,f_l)\mapsto \int_0^1 M(w_1,\ldots,w_k,f_1,\ldots,f_l)(\zeta,x_1,\ldots,x_n)\,w_{k+1}(\xi,d\zeta).
\]
\end{enumerate}
As one can readily see, these exactly mimic our previous construction: the only difference is that we can actually take different kernels $w_i$ and functions $f_i$ at every step when building the family~$\mathcal{M}$. This extended freedom does not seem to play a significant role in this paper but could be useful in the future. 

As one would expect, there is a one-to-one correspondance between the $F\in \mathcal{T}$ and the $M\in \mathcal{M}$ with the straightforward relation
\[
F(w,f)=M(w,\ldots,w,f,\ldots,f).
\]
As we mentioned above, all estimates and properties of $\mathcal{T}$ could be extended with the exact same arguments to $\mathcal{M}$.

 \subsection{The key compactness lemma}
A key tool to prove Theorem \ref{lg} is the following result that encompasses the inherent regularity on graphons.
%
 \begin{lem}\label{kl}
    Consider any sequence $g_n$ in $L^\infty([0,\ 1])$ with $0\leq g_n(x)\leq 2^{-n+1}$. Then, there exists $\Phi: [0,\ 1]\to [0,\ 1]$, {\it a.e.} injective, measure-preserving, such that  the following uniform regularity estimate is verified  
      \[
\sup_{n\in \mathbb{N}}\,\int_0^{1} |(g_n\circ\Phi)(\xi)-(g_n\circ\Phi)(\xi+h)|\,d\xi\leq 2^{-C\,\sqrt{\log\frac{1}{\vert h\vert}}},
      \]
for any $0<\vert h\vert <1$ and some universal constant $C$.
  \end{lem}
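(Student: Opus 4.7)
The plan is to build $\Phi$ via a nested hierarchy of partitions of $[0,1]$ obtained from joint dyadic quantizations of $(g_n)$, exploiting the decay $\|g_n\|_{L^\infty}\le 2^{-n+1}$ to keep the number of pieces moderate at each scale. For each $k\ge 1$, I would introduce $\tilde g_n^{(k)}(\xi):=2^{-k}\lfloor 2^k g_n(\xi)\rfloor$ and let $\mathcal{Q}_k$ be the partition of $[0,1]$ into the level sets of $(\tilde g_1^{(k)},\ldots,\tilde g_k^{(k)})$. Since $g_n$ takes values in $[0,2^{-n+1}]$, each coordinate $\tilde g_n^{(k)}$ takes at most $2^{k-n+2}$ values, producing the crucial count
\[
|\mathcal{Q}_k|\le \prod_{n=1}^{k} 2^{k-n+2}=2^{k^2/2+3k/2}.
\]
Since finer dyadic rounding refines coarser rounding, $\mathcal{Q}_k$ refines $\mathcal{Q}_{k-1}$, and by construction every $g_n$ with $n\le k$ oscillates by at most $2^{-k}$ on each piece of $\mathcal{Q}_k$.

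Next I would totally order the pieces of $\mathcal{Q}_k$ consistently with the refinement tree (children of the same parent consecutive), list them as $Q_{k,1},\ldots,Q_{k,N_k}$, and place them on $[0,1]$ as consecutive half-open intervals $I_{k,j}$ of length $|Q_{k,j}|$ via cumulative measure. The Borel isomorphism theorem for atomless standard probability spaces then delivers a measure-preserving, \emph{a.e.}\ injective $\Phi:[0,1]\to[0,1]$ with $\Phi^{-1}(Q_{k,j})=I_{k,j}$ simultaneously for every $k$ and $j$.

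With $\Phi$ in hand, the modulus-of-continuity estimate follows from a ``same piece versus different piece'' dichotomy at each scale $k$ with respect to $\mathcal{P}_k:=\{I_{k,j}\}_j$. The subset of $\xi$ for which $\xi$ and $\xi+h$ lie in different intervals of $\mathcal{P}_k$ has measure at most $h\cdot(|\mathcal{Q}_k|-1)\le h\cdot 2^{k^2/2+3k/2}$, and there $|g_n\circ\Phi(\xi)-g_n\circ\Phi(\xi+h)|\le 2\|g_n\|_{L^\infty}\le 2^{-n+2}$; on the complement the oscillation bound gives $\le 2^{-k}$ for $n\le k$, while for $n>k$ the trivial bound $2\|g_n\|_{L^\infty}\le 2^{-k+2}$ is already acceptable. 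Choosing $k=k(h):=\lfloor c\sqrt{\log_2(1/|h|)}\rfloor$ with $c>0$ small enough that $k^2/2+5k/2+1\le \log_2(1/|h|)$ balances the two contributions and yields
\[
\sup_n\int_0^1|g_n\circ\Phi(\xi)-g_n\circ\Phi(\xi+h)|\,d\xi\le 4\cdot 2^{-k(h)}\le 2^{-C\sqrt{\log(1/|h|)}},
\]
after absorbing constants into $C$ and using the trivial bound (under periodic extension, say) to handle $|h|$ bounded away from $0$.

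The hard part I anticipate is not the combinatorial counting, which is essentially forced by the $\ell^\infty$ decay of the $g_n$, but the simultaneous realisation of the entire nested family $(\mathcal{Q}_k)$ by a \emph{single} $\Phi$ mapping each $\mathcal{Q}_k$ to an interval partition of the target. This is exactly what the ``children consecutive'' ordering rule guarantees and what pins the whole argument to the one-dimensional Lebesgue structure of $[0,1]$. The characteristic $\sqrt{\log}$ rate then emerges automatically from the balance $h\cdot|\mathcal{Q}_k|\sim 2^{-k}$ with $|\mathcal{Q}_k|=2^{\Theta(k^2)}$.
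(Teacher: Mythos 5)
Your proposal is correct in its essentials and takes a genuinely different route from the paper's proof. The paper splits each piece at level $k$ into $2^{k+1}$ sub-pieces of \emph{equal measure} by recursive median splits of the level sets of $g_1,\dots,g_{k+1}$, producing exactly $n_k=2^{k(k+1)/2}$ pieces, each of measure $1/n_k$. This has two consequences: the dyadic target intervals $I_k^i$ all have length $1/n_k\to 0$, so the map $\Psi$ (with $\Phi=\Psi^{-1}$) can be defined \emph{everywhere} as the unique point in $\bigcap_k I_k^{i_k(x)}$, and the oscillation of $g_m$ on a piece is controlled only \emph{on average} (the estimate \eqref{sizedecrease}), which then has to be upgraded to an a.e. statement via the sets $J_{k,m}$ and the poor sets $P_m$ before a.e.\ injectivity and the modulus of continuity can be extracted. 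Your construction replaces adaptive median quantization by fixed dyadic quantization: the level sets of $(\tilde g_1^{(k)},\dots,\tilde g_k^{(k)})$ give a \emph{pointwise} oscillation bound $\le 2^{-k}$ on every piece for free, and the crucial $\ell^\infty$ decay of $g_n$ enters only to keep the piece count at $2^{O(k^2)}$ (your $2^{k^2/2+3k/2}$ versus the paper's $2^{k^2/2+k/2}$, which changes only the universal constant $C$) and, essentially, to handle $n>k$ by the trivial bound, exactly as the paper does. The trade-off is that your pieces have unequal measure, so the target intervals $I_{k,j}$ need not shrink, and the pointwise definition of $\Phi$ via nested intersections breaks down on the atoms of the limit $\sigma$-algebra; indeed $\bigcap_k Q_{k,j_k}$ may even be empty. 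You correctly flag this as ``the hard part,'' but your invocation of ``the Borel isomorphism theorem'' is too quick: what is needed is the Lebesgue-space (Rokhlin/von Neumann) isomorphism theorem respecting a given nested measurable partition with matching measures, applied together with an a.e.\ argument on the non-atomic part and an explicit fiber isomorphism on each positive-measure atom. This can all be carried out, and in some sense your route is cleaner conceptually (the $2^{-k}$ oscillation is built in rather than derived), but the realization of $\Phi$ is more delicate than in the paper, where the equal-measure construction makes $\Psi$ transparent at the cost of heavier combinatorial bookkeeping. Both proofs then balance $h\cdot(\#\text{pieces})\sim 2^{-k}$ to obtain $k\sim\sqrt{\log(1/|h|)}$, so the $\sqrt{\log}$ rate is the same.

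Two minor points worth fixing. First, you should state explicitly that for a.e.\ $y$ in the non-atomic part the interval lengths $|I_{k,j_k(y)}|=|Q_{k,j_k(y)}|\to 0$, which is what guarantees the intersection in the target is a singleton; and that on an atom of positive measure both the source atom and the target interval are atomless standard spaces of the same mass, hence isomorphic. Second, your ``different pieces'' estimate $h\cdot(|\mathcal Q_k|-1)$ is correct even with unequal piece sizes since each boundary contributes $\min(h,|Q_{k,j}|)\le h$, but you should say so; the paper's version is immediate because all $I_k^i$ have the same length.
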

 \begin{rem}
By {\it a.e.} injective, we mean that there exists a full measure subset $F'\subset [0,\ 1]$, $|[0,\ 1]\setminus F'|=0$, such that $\Phi:F'\longrightarrow [0,\ 1]$ injective.
   \end{rem}
%
We note that independently on the measure-preserving map $\Phi$, by hypothesis we get that $g_n\circ \Phi\rightarrow 0$ in $L^1$ when $n\rightarrow \infty$. In particular, $g_n\circ \Phi$ is compact in $L^1$. Therefore, by the Fr\'echet-Kolmogorov theorem we infer that the sequence $g_n\circ \Phi$ must be uniformly equicontinuous so that the left hand side in the above inequality always converges to zero when $h\rightarrow 0$. The novelty of the previous result is that we can choose a special measure-preserving map $\Phi$ so that such an equicontinuity condition is made explicit in terms of a modulus of continuity. As usual, we extend by zero for evaluations outside $[0,\ 1]$, namely $(g_n\circ\Phi)(\xi+h)=0$, if $\xi+h\notin [0,\ 1]$. 
  
\begin{proof}[Proof of Lemma \ref{kl}]
  Our proof is performed in several steps and relies on a hierarchical construction associated to a suitable hierarchical decomposition of the interval $[0,\ 1]$. Throughout the proof we shall assume that $g_{n}>0$ and that the $g_n$ do not charge any point, {\em i.e.}, $|\{x\in [0,\ 1]:\ g_{n}(x)=t\}|=0$ for every $t\in \mathbb{R}$ which will make decomposing $[0,\ 1]$ according to the level sets of the $g_n$ easier. For this, we observe that if $g_n$ charges some points (at most in a countable way), then there exists $g_{n,\varepsilon}$, for every $\varepsilon>0$,  that does not charge any point and with $\|g_n-g_{n,\varepsilon}\|_{L^\infty}< \varepsilon$,  for all $n\in \mathbb{N}$. We may hence obtain the claimed regularity on each $g_{n,\epsilon}$ independently of $\varepsilon$ which implies the desired result on $g_n$.

\medskip    
$\diamond$ {\em Step~1: The hierarchical decomposition.}

\medskip

We built a sequence of covering of $[0,\ 1]$ determined through a hierarchical decomposition. For simplicity of the construction, we use indices ranging over the following special sets of words with $k$ letters
$$\Words_k:=\{i_1i_2\cdots i_k:\,i_m\in \{1,\ldots,2^m\},\ m=1,\ldots,k\},$$
for any $k\in \mathbb{Z}_+$ (see Figure \ref{fig:words}). Note that $\Words_0=\{\emptyset\}$ and for $k\geq 1$ we have that $\Words_k$ contains all possible words with $k$ letters, where the letter at position $m\in \{1,\ldots, k\}$ is allowed to take values in $\{1,\ldots,2^m\}$. For any $i\in \Words_k$ we shall define $j_k(i):=\{i1,\ldots,i2^{k+1}\}$ the subset of $\Words_{k+1}$ obtained by juxtaposition of a letter at the end of the word $i$. If we denote $n_k:=\# W_k$, then the following induction follows
\begin{equation}
\Words_{k+1}=\bigcup_{i\in \Words_k}j_k(i),\quad n_{k+1}=2^{k+1}\,n_k.\label{inductionnk}
\end{equation}
Hence, we actually obtain that $n_k=\prod_{m=0}^k 2^{m}=2^{\sum_{m=0}^k m}=2^{\frac{k(k+1)}{2}}$.

For every level $k\in \mathbb{Z}_+$ we shall consider a covering of $[0,\ 1]$ into $n_k$ sub-intervals of identical size as follows. At level $0$ we set $I_0^\emptyset=[0,\ 1]$ and given a covering $[0,\ 1]=\bigcup_{i\in \Words_k} I_k^i$ at level $k$, we define the covering at level $k+1$ by decomposing each $I_k^i$ with $i\in \Words_k$ into $2^{k+1}$ identical sub-intervals and labeling them in lexicographical order with indices in $j_k(i)$. For instance, at level $1$ we obtain $2$ sub-intervals $I_1^1=[0,\ 1/2]$, $I_1^2=[1/2,\ 1]$, and at level $2$ we obtain $8$ sub-intervals: $4$ associated with $I_1^1$
$$I_2^{11}=[0,\ 1/8],\  I_2^{12}=[1/8,\ 1/4],\  I_2^{13}=[1/4,\ 3/8],\  I_2^{14}=[3/8,\ 1/2],$$
and other $4$ associated with $I_1^2$
$$I_2^{21}=[1/2,\ 5/8],\  I_2^{22}=[5/8,\ 3/4],\  I_2^{23}=[3/4,\ 7/8],\  I_2^{24}=[7/8,\ 1].$$
We proceed in an analogous way at higher levels. Note that $I_{k+1}^j\subset I_k^i$ for every $j\in j_k(i)$ and $| I_k^i|=\frac{1}{n_k}$ since there are exactly $n_k$ intervals $I_k^i$ with $i\in \Words_k$.

If we had only one function $g_n$, the proof would be straightforward: we would just perform a decreasing (or increasing) re-arrangement to obtain a $BV$ function. The main idea of the proof is to try to mimic this by re-arranging the level sets of  $g_n$ to decrease their oscillations. Because there are a countable number of $g_n$, this is complicated. We introduce a specific decomposition of the $g_n$ along hierarchical level sets through the $O_k^i$ below. At each step, we want to cut the level set into two pieces that have same mass.
\begin{figure}[t]
\centering
\begin{forest}
for tree={grow=north,edge={-}}
[$\emptyset$ [2 [24] [23] [22] [21]] [1 [14] [13] [12] [11]]]
\end{forest}
\caption{Words in $\Words_0$, $\Words_1$ and $\Words_2$}
\label{fig:words}
\end{figure}
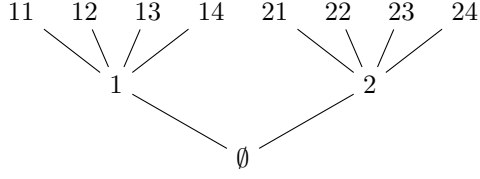

Now, we define new coverings $[0,\ 1]=\bigcup_{i\in \Words_k}O_k^i$ with measurable subsets $O_k^i\subset [0,\ 1]$ such that $O_{k+1}^j\subset O_k^i$ for every $j\in j_k(i)$ and $|O_k^i|=|I_k^i|=\frac{1}{n^k}$. Specifically, they will take the form
\begin{equation}\label{E-covering-O}
O_k^i:=\{x\in [0,\ 1]:\,s_{k,m}^i< g_m(x)\leq t_{k,m}^i,\ m=1,\ldots,k\},
\end{equation}
for some values $0\leq s_{k,m}^i<t_{k,m}^i\leq 2^{-m+1}$ with $m=1,\ldots,k$ to be determined below by recursion on $k$. Again, $O_0^\emptyset=[0,\ 1]$. We define $O_1^1$ and $O_1^2$ at level $1$ as follows. Since $t\mapsto |\{x\in [0,\ 1]:\,g_1(x)\geq t\}|$ is continuous (because $|\{x\in [0,\ 1]:\,g_1(x)=t\}|=0$) then there must exist some $\tilde t\in [0,\ 1]$ such that
$$|\{x\in [0,\ 1]:\,g_1(x)>\tilde t\}| = |\{x\in [0,\ 1]:\,g_1(x)\leq \tilde t\}| = \frac{1}{2}.$$
Therefore, we can set the values $s_{1,1}^1=0$, $t_{1,1}^1=\tilde t$  for $O_1^1$ and $s_{1,1}^2=\tilde t$, $t_{1,1}^2=1$ for $O_1^2$ so that
\begin{align*}
O_1^1&=\{x\in [0,\ 1]:\,s_{1,1}^1< g_1(x)\leq t_{1,1}^1\}=\{x\in [0,\ 1]:0< g_1(x)\leq \tilde t\},\\
O_1^2&=\{x\in [0,\ 1]:\,s_{1,1}^2< g_1(x)\leq t_{1,1}^2\}=\{x\in [0,\ 1]:\,\tilde t< g_1(x)\leq 1\}.
\end{align*}
So defined note that $[0,\ 1]=O_1^1\cup O_1^2$ thanks to the hypothesis $0< g_1\leq 1$, and $| O_1^1|=|O_1^2|=\frac{1}{2}$. Let us now assume that at level $k$ the values $0\leq s_{k,m}^i<t_{k,m}^i\leq 2^{-m+1}$ have been set so that the $O_k^i$ given in \eqref{E-covering-O} determine a covering $[0,\ 1]=\bigcup_{i\in \Words_k}O_k^i$ and $|O_k^i|=\frac{1}{n_k}$. Let us now proceed in the recursion by defining at level $k+1$ the values $0\leq s_{k+1,m}^j<t_{k+1,m}^j\leq 2^{-m+1}$ which will determine the $O_{k+1}^j$. 

For any fixed $i\in \Words_k$ we will built the $O_{k+1}^j$ with $j\in j_k(i)$ by appropriately decomposing $O_k^i$ into $2^{k+1}$ measurable pieces with identical measure. In this construction we will use words $\widetilde \Words_m$ containing $m$ letters with only allowed values $\{1,2\}$, {\em i.e.},
$$\widetilde \Words_m:=\{i_1i_2\cdots i_m:\,i_l\in \{1,2\},\,l=1,\ldots,m\},$$
for any $m=0,\ldots,k+1$. By a new recursion on $m$, we shall proceed by defining a family of auxiliary subsets $\widetilde O_m^\alpha$ for $\alpha\in \widetilde \Words_m$ and $m=0,1,\ldots,k+1$ verifying $\widetilde O_m^\alpha=\widetilde O_{m+1}^{\alpha 1}\cup \widetilde O_{m+1}^{\alpha 2}$ and $|\widetilde O_{m+1}^{\alpha 1}|=|\widetilde O_{m+1}^{\alpha 2}|=\frac{1}{2}|\widetilde O_m^\alpha|$ starting at $\widetilde O_0^\emptyset=O_k^i$. For $m=1$ we note that $t\mapsto |\{x\in O_k^i:\,t<g_1(x)\leq t_{k,1}^i\}|$ is continuous. Then, there must exist some $\tilde t_0^\emptyset\in (s_{k,1}^i,\ t_{k,1}^i)$ such that
$$|\{x\in O_k^i:\,s_{k,1}^i<g_1(x)\leq \tilde t_0^\emptyset\}|=|\{x\in O_k^i:\,\tilde t_0^\emptyset<g_1(x)\leq t_{k,1}^i\}|=\frac{1}{2}|O_k^i|.$$
Then, we set
\begin{align*}
\widetilde O_1^1&:=\{x\in O_k^i:\,s_{k,1}^i<g_1(x)\leq \tilde t_0^\emptyset\},\\
\widetilde O_1^2&:=\{x\in O_k^i:\,\tilde t_0^\emptyset<g_1(x)\leq t_{k,1}^i\}.
\end{align*}
Note that in $\widetilde O_1^1$ and $\widetilde O_1^2$ we have simply split the interval $[s_{k,1}^i,\ t_{k,1}^i]$ for the $g_1$ into $[s_{k,1}^i,\ \tilde t_0^\emptyset]$ and $[\tilde t_0^\emptyset,\ t_{k,1}^i]$ respectively to divide $O_k^i$ into two pieces with exactly half the mass. Assume that $\widetilde O_m^\alpha$ have been defined for $\alpha\in \widetilde\Words_m$ and $m<k$ and let us define $\widetilde O_{m+1}^{\alpha 1}$ and $\widetilde O_{m+1}^{\alpha 2}$. The argument is similar: since $t\mapsto |\{x\in \widetilde O_m^{\alpha}:\,t<g_{m+1}(x)\leq t_{k,m+1}^i\}|$ is continuous, there exists $\tilde t_m^\alpha\in (s_{k,{m+1}}^i,\ t_{k,m+1}^i)$ with
$$ |\{x\in \widetilde O_m^{\alpha}:\,s_{k,m+1}^i<g_{m+1}(x)\leq \tilde t_m^\alpha\}|=|\{x\in \widetilde O_m^{\alpha}:\,\tilde t_m^\alpha<g_{m+1}(x)\leq t_{k,m+1}^i\}|=\frac{1}{2}|\widetilde O_m^{\alpha}|.$$
Then, we define
\begin{align*}
\widetilde O_{m+1}^{\alpha 1}&=\{x\in \widetilde O_m^{\alpha}:\,s_{k,m+1}^i<g_{m+1}(x)\leq \tilde t_m^\alpha\},\\
\widetilde O_{m+1}^{\alpha 2}&=\{x\in \widetilde O_m^{\alpha}:\,\tilde t_m^\alpha<g_{m+1}(x)\leq t_{k,m+1}^i\}.
\end{align*}
Again note that in $\widetilde O_{m+1}^{\alpha 1}$ and $\widetilde O_{m+1}^{\alpha 2}$ we have simply split the interval $[s_{k,m}^i,\ t_{k,m}^i]$ for the $g_m$ into two pieces $[s_{k,m}^i,\ \tilde t_m^\alpha]$ and $[\tilde t_m^\alpha,\  t_{k,m}^i]$ respectively to divide $\widetilde O_m^{\alpha}$ into pieces with exactly half the mass. Once the auxiliary family has been set up to $m=k$, note that we can repeat the above argument once more for each $\alpha\in \widetilde\Words_k$ and find $\tilde t_k^\alpha\in (0,\ 2^{-k})$ (by the bound $0<g_{k+1}\leq 2^{-k}$) such that the following subsets divide $\widetilde O_k^{\alpha}$ into two halves with exact mass
\begin{align*}
\widetilde O_{k+1}^{\alpha 1}&:=\{x\in \widetilde O_k^{\alpha}:\,0<g_{k+1}(x)\leq \tilde t_k^\alpha\},\\
\widetilde O_{k+1}^{\alpha 2}&:=\{x\in \widetilde O_k^{\alpha}:\,\tilde t_k^\alpha<g_{k+1}(x)\leq 2^{-k}\}.
\end{align*}
By construction the last level $\{\widetilde O_{k+1}^{\alpha}:\,\alpha\in \widetilde\Words_{k+1}\}$ contain exactly $2^{k+1}$ pieces with same mass so that $O_k^i=\bigcup_{\alpha\in \widetilde \Words_{k+1}}\widetilde O_{k+1}^{\alpha}$. We then define $O_{k+1}^j$ with $j\in j_k(i)$ by labeling them in lexicographical order and we set the resulting values $0\leq s_{k+1,m}^j<t_{k+1,m}^j\leq 2^{-m+1}$ accordingly.

From this construction, we easily infer that fixing any $m\leq k$ and any $i\in \Words_k$, then at least half of the new intervals $[s_{k+1,m}^j,\ t_{k+1,m}^j]$ with $j\in j_k(i)$ must have at most half the length of $[s_{k,m}^i,\ t_{k,m}^i]$, {\em i.e.}
\begin{equation}
  \#\left\{j\in j_k(i):\ \left(t_{k+1,m}^j-s_{k+1,m}^j\right)>\frac{1}{2}\,\left(t_{k,m}^i-s_{k,m}^i\right)\right\}<2^k.
  \label{mostlyhalf}
\end{equation}
We can even be more precise. Note that going from $[s_{k,m}^i,\ t_{k,m}^{i}]$ to the next level $k+1$, we introduced $2^{k+1}$ new intervals $[s_{k+1,m}^j,\ t_{k+1,m}^{j}]$. They correspond to either of the two pieces $[s_{k,m}^i,\ \tilde t_{m-1}^\alpha]$ or $[\tilde t_{m-1}^\alpha,\ t_{k,m}^i]$ for $\alpha\in \widetilde\Words_{m-1}$ in which we split the initial interval. By construction note that each of the pieces is repeated in exactly $2^{k-m+1}$ of the $O_{k+1}^j$ with $j\in j_k(i)$. Therefore, we obtain
\begin{align*}
\frac{1}{2^{k+1}}&\,\sum_{j\in j_k(i)} (t_{k+1,m}^j-s_{k+1,m}^j)\\
&=\frac{1}{2^{k+1}}\,2^{k-m+1}\sum_{\alpha\in \widetilde \Words_{m-1}}\left((t_{k,m}^i-\tilde t_{m-1}^\alpha)+(\tilde t_{m-1}^\alpha-s_{k,m}^i)\right)\\
&=\frac{1}{2^m}\sum_{\alpha\in \widetilde\Words_{m-1}}(t_{k,m}^i-s_{k,m}^i)=\frac{1}{2}(t_{k,m}^i-s_{k,m}^i).
\end{align*}
Summing over $i\in \Words_k$, dividing by $n_k$ and recalling \eqref{inductionnk} yields
\[
\frac{1}{n_{k+1}}\,\sum_{j\in \Words_{k+1}} (t_{k+1,m}^{j}-s_{k+1,m}^{j})=\frac{1}{2\,n_{k}}\,\sum_{i\in \Words_k}  (t_{k,m}^{i}-s_{k,m}^{i}).
\]
for any $k\geq m$. By induction we then infer that
\[
\frac{1}{n_{k}}\,\sum_{i\in \Words_k}  (t_{k,m}^{i}-s_{k,m}^{i})\leq \frac{1}{2^{k-m}}\,\frac{1}{n_m}\,\sum_{i\in \Words_m} (t_{m,m}^i-s_{m,m}^i),
\]
for any $k\geq m$. Notice that by construction and our assumption on the $g_m$ we obtain that the intervals $(s_{m,m}^i,t_{m,m}^i]$ they all reduce to $(0,\frac{1}{2^{m-1}}]$. Therefore, we obtain
\begin{equation}
\frac{1}{n_{k}}\,\sum_{i\in \Words_k}  (t_{k,m}^{i}-s_{k,m}^{i})\leq \frac{1}{2^k},\label{sizedecrease}
  \end{equation}
for any $k\in \mathbb{N}$ and every $m\leq k$. Note that the fact that $0<g_m\leq \frac{1}{2^m}$ has crucially used in the above cancellations, thus leading to a uniform in $m$ bound.

\medskip

$\diamond$ {\em Step~2: The measure-preserving map $\Phi$.}

\medskip

We begin by defining $\Psi: [0,\ 1]\to [0,\ 1]$, which will correspond to the inverse of $\Phi:[0,\ 1]\to [0,\ 1]$. Set any point $x\in [0,\ 1]$ and use the previous covering by the disjoint $\{O_k^i\}_{i \in \Words_k}$ to find a unique nested sequence $i_{k+1}(x)\in j_k(i_k(x))$ such that $x\in \bigcap_{k\in \mathbb{N}} O_k^{i_k(x)}$. Since $\{I_k^{i_k(x)}\}_{k\in \mathbb{N}}$ is a nested sequence of compact sets, by Cantor's intersection theorem it is evident that their intersection is a non-empty compact interval again. In fact, since $|I_k^{i_k(x)}|\rightarrow 0$ as $k\rightarrow \infty$ then $\bigcap_{k\in \mathbb{N}} I_k^{i_k(x)}$ must consist in a singleton $\{y\}$, which we use to define $\Psi(x):=y$.

Now we prove that $\Psi$ is a measure-preserving map. Specifically, we shall prove that for every Borel $B\subset [0,\ 1]$ we have that $\Psi^{-1}(B)$ is measurable and $|\Psi^{-1}(B)|=|B|$. First, we argue for $B=I_k^i$ for any fixed $k\in \mathbb{N}$ and $i\in \Words_k$. Note that the inclusion $\Psi^{-1}(I_k^i)\supset O_k^i$ is clear by definition, but the converse does not necessarily hold. Indeed, we have that $\Psi^{-1}(I_k^i)=O_k^i\cup N_k^i$, where
\[
N_k^i=\big(\bigcap_{m\in \mathbb{N}}O_m^{l_m}\big)\bigcup \big(\bigcap_{m\in \mathbb{N}}O_k^{r_m}\big).
\]
Here, $l_{m+1}\in j_m(l_m)$ and $r_{m+1}\in j_m(r_m)$ describe the only two nested sequences of intervals $\{I_m^{l_m}\}_{m\in \mathbb{N}}$ and $\{I_m^{r_m}\}_{m\in \mathbb{N}}$ which stay always adjacent to (and respectively at the left or the right of) $I_k^i$ for $m\geq k$. Obviously, if $I_k^i$ is the first or last interval at level $k$ in lexicographical order, then there is only one such sequence. In particular, $\Psi^{-1}(I_k^i)$ is measurable because so are $O_k^i$ and $N_k^i$ (as countable union and intersection of measurable sets). In fact, $N_k^i\subset [0,\ 1]$ is negligible (as the union of two negligible subsets), then $|\Psi^{-1}(I_k^i)|=|O_k^i|=|I_k^i|$. To extend the above property to general Borel sets $B$, we note that it is enough to verify the property for any semialgebra generating the Borel $\sigma$-algebra ({\em cf.} \cite[Theorem 1.1]{W-82}), {\em e.g.}, the algebra of intervals with endpoints in the dyadic rationals $J$:
\[
\mathcal{A}:=\left\{[a,\ b):\,a,b\in J,\ 0\leq a\leq b\leq 1\right\}\cup\left\{[a,\ 1]:\,a\in J,\ 0\leq a< 1\right\}.
\]
Set any such $B\in \mathcal{A}$, for instance $B=[a,\ b)$ for $a,b\in J$ and $0\leq a< b\leq 1$ (the other case follows similarly). Then, there exists some (countable) subset $\mathcal{I}$ of indices $(k,l)$ so that $[a,\ b)=\bigcup_{(k,l)\in \mathcal{I}}I_k^i$ because the $I_k^i$ are a basis of neighborhoods of $[0,\ 1]$. By appropriately removing eventual nested elements in $\mathcal{I}$ to avoid redundant information, we may assume that all the involved $I_k^i$ are not contained in each other so that they can only intersect at points $J$ of their boundaries. Therefore, we obtain
$$\Psi^{-1}(B)=\bigcup_{(k,i)\in \mathcal{I}}\Psi^{-1}(I_k^i)=\bigcup_{(k,i)\in \mathcal{I}}(O_k^i\cup N_k^i).$$
so that $\Psi^{-1}(B)$ is measurable. Since $N_k^i$ are negligible and all the involved $O_k^i$ are disjoint then
\[
\vert \Psi^{-1}(B)\vert =\sum_{(k,i)\in \mathcal{I}}\vert O_k^i\vert=\sum_{(k,i)\in \mathcal{I}}\vert I_k^i\vert= \vert B\vert,
\]
where in the last equality we have used again that the $I_k^i$ only intersect at most at points $J$ of their boundaries and this has a null contribution to the sum.

Let us now study the injectivity and surjectivity {\em a.e.} of $\Psi$. Consider any point $y\in [0,\ 1]\setminus J$ and set unique nested sequence $i_{k+1}'(y)\in j_k(i_k'(y))$ so that $\{y\}= \bigcap_{k\in \mathbb{N}} I_k^{i_k'(y)}$. We can then consider the associated negligible measurable set $\bigcap_{k\in \mathbb{N}}O_k^{i_k'(y)}$. Notice that the {\em a.e.} surjectivity of $\Psi$ amounts to proving that the latter intersection is non-empty for {\em a.e.} $y$, whilst the {\em a.e.} injectivity of $\Psi$ amounts to proving that the intersection contains at most one point for {\em a.e.} $y$. For the injectivity, let us assume that $g_{1}$ is strictly decreasing and continuous (by adding it to the sequence $\{g_{n}\}_{n\in \mathbb{N}}$ if needed). Then note that
\[
\bigcap_{k\in \mathbb{N}}O_k^{i_k'(y)}\subset \bigcap_{k\in \mathbb{N}}g_1^{-1}\left([s_{k,1}^{i_k'(y)},t_{k,1}^{i_k'(y)}]\right)=g_1^{-1}\left(\bigcap_{k\in \mathbb{N}}[s_{k,1}^{i_k'(y)},t_{k,1}^{i_k'(y)}]\right),
\]
for each $y\in [0,\ 1]\setminus J$. Since the right hand side consists in an intersection of a nested sequence of compact intervals, then it reduces to a compact interval again. The condition for it to actually contain a single point is that $t_{k,1}^{i_k'(y)}-s_{k,1}^{i_k'(y)}\to 0$ when $k\rightarrow \infty$. Although it does not necessarily happen for each $y\in [0,\ 1]\setminus J$, we shall prove that it does happen for {\em a.e.} $y$. Our argument relies heavily on \eqref{mostlyhalf} or, its more quantitative version \eqref{sizedecrease} above. Namely, for any $m\in \mathbb{N}$ let us define the sets
\begin{equation}
J_{k,m}:=\left\{i\in \Words_k:\, t_{k,m}^i-s_{k,m}^i\geq \frac{2^k}{3^k}\right\}.\label{badindices}
\end{equation}
for any level $k\geq m$. As it will be observed below, the choice of $\frac{2}{3}$ in $J_{k,m}$ is rather arbitrary and could be replaced by any fixed rate $r\in (\frac{1}{2},1)$. Therefore,
\[
\frac{1}{n_k}\sum_{i\in \Words_k} \left(t_{k,m}^i-s_{k,m}^i \right)\geq \frac{1}{n_k}\sum_{i\in J_{k,m}} \left(t_{k,m}^i-s_{k,m}^i \right)\geq \frac{2^k\,|J_{k,m}|}{3^k\,n_k}.
\]
Hence, using \eqref{sizedecrease} implies
\begin{equation}
|J_{k,m}|\leq \frac{3^k}{4^k}\,n_k,\label{boundJkm}
\end{equation}
for any $k\geq m\geq 1$. In particular, if one defines the poor sets
\begin{equation}\label{poorset}
P_m:=\{y\in [0,\ 1]\setminus J:\, t_{k,m}^{i_k'(y)}-s_{k,m}^{i_k'(y)}\not \to 0\quad \mbox{when}\quad k\rightarrow \infty\},
\end{equation}
for any $m\in \mathbb{N}$, then one has that 
\begin{equation}
P_m\subset \bigcap_{k_0\in \mathbb{N}} \bigcup_{k\geq k_0} \bigcup_{j\in J_{k,m}} I_k^j.\label{boundPm}
\end{equation}
Since $|I_k^j|=\frac{1}{n_k}$, using \eqref{boundJkm} and \eqref{boundPm} we find that $|P_m|=0$ since
\[
|P_m|\leq \sum_{k\geq k_0} \frac{3^k}{4^k},
\]
for any $k_0\in \mathbb{N}$. Define the associated global poor set
\begin{equation}\label{poorset-global}
P:=\bigcup_{m\in \mathbb{N}}P_m.
\end{equation}
which is again negligible, and consider pair of full measure sets
\[
F_0':=[0,\ 1]\setminus (J\cup P),\quad F:=\Psi^{-1}(F_0').
\]
Then, it is clear that $\Psi: F\rightarrow [0,\ 1]$ becomes an injective (measurable) measure-preserving map. Since we are dealing with the (completed) Lebesgue $\sigma$-algebra, which determines a standard probability space, then $\Psi(F)$ is measurable ({\em cf.} \cite[Theorem 3-2]{dLR}). Unfortunately, we only have the inclusion $\Psi(F)\subset F_0'$. Hence, we define the (eventually) smaller full measure set 
\[
F':=\Psi(F).
\]
Therefore, $\Psi: F\rightarrow F'$ becomes indeed a bijective (measurable) measure-preserving map. Therefore, $\Psi^{-1}:F'\rightarrow F$ is also a measure-preserving map. Extending $\Psi^{-1}$ to all $[0,\ 1]$ we obtain our measure-preserving map $\Phi$. In particular, note that by construction $\Phi$ and $\Psi$ are characterized by the fact that 
\begin{equation}\label{E-Phi-image-dyadic}
\Phi(I_k^i\cap F')=O_k^i\cap F,\quad \Psi(O_k^i\cap F)=I_k^i\cap F',
\end{equation}
for each $k\in \mathbb{N}$ and $i\in \Words_k$.

\medskip

$\diamond$ {\em Step 3: $L^1$ modulus of continuity of $\{g_n\circ \Phi\}_{n\in \mathbb{N}}$.}

\medskip

For simplicity of notation, we define $h_{n}:=g_{n}\circ \Phi$ for any $n\in \mathbb{N}$. First, we notice that for any $y\in F'\subset F_0'$ the value $h_{m}(y)$ is determined by the sequence $t_{k,m}^{i_k'(y)}$ (or $s_{k,m}^{i_k'(y)}$), where $i_{k+1}'(y)\in j_k(i_k'(y))$ is defined as before as the unique nested sequence with $\{y\}=\bigcap_{k\in \mathbb{N}} I_k^{i_k'(y)}$. Specifically, we shall prove
\begin{equation}\label{hm-limit}
h_m(y)=\lim_{k\rightarrow \infty} t_{k,m}^{i_k'(y)}=\lim_{k\rightarrow \infty} s_{k,m}^{i_k'(y)},
\end{equation}
for each $y\in F'\subset F_0'$. Indeed, by \eqref{E-Phi-image-dyadic} and \eqref{E-covering-O} note that
\[
s_{k,m}^{i_k'(y)}< h_m(y)\leq t_{k,m}^{i_k'(y)},
\]
for any $k\geq m$ and any $y\in F'\subset F_0'$. By the definitions \eqref{poorset} and \eqref{poorset-global} of $P_m$ and $P$ the nested intervals $[s_{k,m}^{i_k'(y)},t_{k,m}^{i_k'(y)}]$ verify $t_{k,m}^{i_k'(y)}-s_{k,m}^{i_k'(y)}\to 0$ when $k\to \infty$ so that we conclude \eqref{hm-limit}.

To finish the proof, we need to show that 
\[
 \int_0^1 |h_{m}(y)-h_{m}(y+\tau)|\,dy\leq 2^{-C\,\sqrt{\log\frac{1}{|\tau|}}},
\]
for any $0<|\tau|<1$ and some universal constant $C$. Let us fix any $0<\tau <1$ (a similar argument yields $-1<\tau<0$) and any level $k\in \mathbb{N}$ so that $\tau<\frac{1}{n_k}$. That is, the size $\tau$ of the shift is smaller than the length of the dyadic intervals $I_k^i$ at the level $k$. Then, we can split each interval $I_k^i$ as follows
\[
I_k^i\cap F'=G_{k,\tau}^i\cup B_{k,\tau}^i,
\]
for every $i\in \Words_k$, where $G_{k,\tau}^i$ consist of the points in $I_k^i$ which stay in $I_k^i$ up to a shift of size $\tau$ and $B_{k\tau}^i$ are all the other points in $I_k^i$. Specifically,
\begin{align*}
G_{k,\tau}^i&:=\{y\in I_k^i\cap F':\,y+\tau\in I_k^i\cap F'\},\\
B_{k,\tau}^i&:=\{y\in I_k^i\cap F':\,y+\tau\notin I_k^i\cap F'\}.
\end{align*}
Note that a point $y\in B_k^i$ must stay within $\tau$ of the right boundary of $I_k^i$. Therefore, we infer
\begin{equation}\label{E-Iki-tau-split}
|G_{k,\tau}^i|=\frac{1}{n_k}-\tau\leq \frac{1}{n_k},\quad |B_{k,\tau}^i|=\tau.
\end{equation}
Using such a decomposition, we can split the above integrals as follows
\begin{align}\label{E-integral-hm-split}
\begin{aligned}
\int_0^1|h_m(y)-h_m(y+\tau)|\,dy&=\sum_{i\in \Words_k}\int_{G_{k,\tau}^i}|h_m(y)-h_m(y+\tau)|\,dy\\
&+\sum_{i\in \Words_k}\int_{B_{k,\tau}^i}|h_m(y)-h_m(y+\tau)|\,dy.
\end{aligned}
\end{align}
For points $y\in G_{k,\tau}^i$, since $y,y+\tau\in I_k^i\cap F'$ then $\Phi(y),\,\Phi(y+\tau)\in O_k^i$ so that
\[
|h_{m}(y)-h_{m}(y+\tau)|\leq t_{k,m}^i-s_{k,m}^i.
\]
For all other points $y\in B_{k,\tau}^i$ we only have that
\[
|h_m(y)-h_m(y+\tau)|\leq |h_m(y)|+|h_m(y+h)|\leq \frac{2}{2^{m-1}}\leq 2.
\]
Using the above bounds along with \eqref{E-Iki-tau-split} in \eqref{E-integral-hm-split}, which exploit the decay of $g_m$, we obtain
\begin{equation}\label{E-integral-hm}
\int_0^1|h_m(y)-h_m(y+\tau)|\,dy\leq \frac{1}{n_k}\sum_{i\in \Words_k}(t_{k,m}^i-s_{k,m}^i)+\sum_{i\in \Words_k} 2\tau\leq \frac{1}{2^k}+2\tau n_k.
\end{equation}

\medskip

Here, $k\in \mathbb{N}$ has only been set under the restriction $\tau<\frac{1}{n_k}$ but we could optimize our choice of $k$. Specifically, let us set $k=k_\tau$ more precisely so that $\frac{1}{n_{k+1}^2}<\tau\leq \frac{1}{n_k^2}$. Then, we obtain that
\[
\frac{1}{2^k}+2\tau n_k\leq \frac{3}{2^k}\leq 2^{-C\sqrt{\log\frac{1}{\tau}}},
\]
for universal $C$, which by \eqref{E-integral-hm} yields our result. For the last inequality, note that by our choice of $k$ in terms of $\tau$ we get $\frac{1}{\tau}\leq n_{k+1}^2=2^{(k+1)(k+2)}\leq 2^{6k^2}$ so that $\sqrt{\log \frac{1}{\tau}}\leq \sqrt{6\log 2}\,k$.
\end{proof}
%
We note that the dyadic character of the hierarchical construction in the proof of Lemma \ref{kl} required a specific geometric uniform decay $0\leq g_n\leq 2^{-n+1}$ in order for the $L^1$ modulus of continuity of $g_n\circ \Phi$ to be independent of $k$. However, applying the above to the rescaled sequence
\[
\tilde g_n:= \frac{g_n+\Vert g_n\Vert_{L^\infty}}{2^n\Vert g_n\Vert_{L^\infty}},
\]
allows considering a generic sequence $g_n$ in $L^\infty([0,\ 1])$, and obtaining a $n$-dependent $L^1$ modulus of continuity (eventually growing with $n$) as follows.

\begin{cor}\label{kc}
Consider any sequence $g_n$ in $L^\infty([0,\ 1])$. Then, there exists $\Phi: [0,\ 1]\to [0,\ 1]$, {\it a.e.} injective, measure-preserving, such that the following estimate is verified
      \[
\int_0^{1} |(g_n\circ\Phi)(\xi)-(g_n\circ\Phi)(\xi+h)|\,d\xi\leq 2^n\,\Vert g_n\Vert_{L^\infty}\,2^{-C\,\sqrt{\log\frac{1}{\vert h\vert}}},
      \]
for any $0<\vert h\vert <1$, each $n\in \mathbb{N}$ and some universal constant $C$.
\end{cor}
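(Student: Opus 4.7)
The plan is to reduce directly to Lemma \ref{kl} by applying it to the normalized sequence
\[
\tilde g_n := \frac{g_n + \|g_n\|_{L^\infty}}{2^n \|g_n\|_{L^\infty}},
\]
defined whenever $\|g_n\|_{L^\infty} > 0$ (if $\|g_n\|_{L^\infty} = 0$ the inequality is trivial). First I would verify that $\tilde g_n$ satisfies the hypothesis of Lemma \ref{kl}: the bound $-\|g_n\|_{L^\infty} \leq g_n \leq \|g_n\|_{L^\infty}$ gives $0 \leq g_n + \|g_n\|_{L^\infty} \leq 2\|g_n\|_{L^\infty}$, and after dividing by $2^n \|g_n\|_{L^\infty}$ one obtains $0 \leq \tilde g_n \leq 2^{-n+1}$ as required.

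Applying Lemma \ref{kl} to $\{\tilde g_n\}_{n \in \mathbb{N}}$ then produces an {\it a.e.}\ injective, measure-preserving map $\Phi:[0,\ 1] \to [0,\ 1]$ and a universal constant $C$ such that
\[
\sup_{n \in \mathbb{N}} \int_0^1 |(\tilde g_n \circ \Phi)(\xi) - (\tilde g_n \circ \Phi)(\xi + h)|\, d\xi \leq 2^{-C\sqrt{\log(1/|h|)}}
\]
for every $0 < |h| < 1$. The next step is to unscale. For $\xi \in [0,\ 1]$ with $\xi + h \in [0,\ 1]$, the additive constant cancels in the difference, so
\[
(g_n \circ \Phi)(\xi) - (g_n \circ \Phi)(\xi+h) = 2^n \|g_n\|_{L^\infty}\,\bigl[(\tilde g_n \circ \Phi)(\xi) - (\tilde g_n \circ \Phi)(\xi+h)\bigr].
\]
Integrating this identity over $\{\xi \in [0,\ 1]:\xi + h \in [0,\ 1]\}$ and invoking the bound above gives a contribution of at most $2^n \|g_n\|_{L^\infty}\,2^{-C\sqrt{\log(1/|h|)}}$ to the desired integral.

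The remaining and mildly subtle step is the treatment of the boundary strip $\{\xi \in [0,\ 1]: \xi + h \notin [0,\ 1]\}$, where $(g_n \circ \Phi)(\xi+h)$ is set to zero by convention and the relation between $g_n$ and $\tilde g_n$ is no longer a simple affine change. This strip has Lebesgue measure $|h|$, and on it one estimates trivially by $|(g_n \circ \Phi)(\xi)| \leq \|g_n\|_{L^\infty}$, contributing at most $|h|\cdot \|g_n\|_{L^\infty}$ to the integral. Combining both contributions yields
\[
\int_0^1 |(g_n \circ \Phi)(\xi) - (g_n \circ \Phi)(\xi+h)|\, d\xi \leq 2^n \|g_n\|_{L^\infty}\,2^{-C\sqrt{\log(1/|h|)}} + |h|\,\|g_n\|_{L^\infty}.
\]
Since for $|h|<1$ one has $|h|\leq 2^{-C'\sqrt{\log(1/|h|)}}$ for any sufficiently small universal $C'$, after possibly decreasing the constant $C$ one can absorb the boundary contribution into the main term (with a harmless adjustment in the multiplicative constants and a separate trivial treatment of $|h|$ close to $1$ where the stated bound exceeds $2\|g_n\|_{L^\infty}$). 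This gives the claimed estimate, with the main difficulty being purely notational bookkeeping rather than substantial, since all the delicate work has already been carried out in Lemma \ref{kl}.
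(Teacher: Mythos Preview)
Your proposal is correct and follows exactly the approach indicated in the paper: apply Lemma~\ref{kl} to the rescaled sequence $\tilde g_n = (g_n + \|g_n\|_{L^\infty})/(2^n\|g_n\|_{L^\infty})$ and then undo the scaling. Your treatment is in fact more careful than the paper's one-line reduction, since you explicitly handle the boundary strip $\{\xi:\xi+h\notin[0,1]\}$ where the additive constant does not cancel; the paper leaves this implicit.
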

 
As a further remark, let us highlight the relationship of this lemma with the classical results of regularity on graphons.  If $w$ is a graphon, then the Lemma \ref{kl} implies the classical regularity lemma by Lov\'asz-Szegedy \cite{LS}, and in turn the famous regularity lemma on graphs by Szemer\'edi.

  \begin{lem}[Regularity lemma on graphons \cite{LS}]
    If $w\in L^\infty([0,\ 1]^2)$ is symmetric with $0\leq w\leq 1$, then there exists  $\Phi: [0,\ 1]\to [0,\ 1]$,  measure-preserving, such that in the  cut-distance
      \[
\delta_{\square}(w(\Phi(\cdot),\Phi(\cdot)),w(\Phi(\cdot+h),\Phi(\cdot)))\leq \frac{C}{\sqrt{\log \frac{1}{h}}},
\]
    where the cut-distance is given by
      \[\begin{split}
      \delta_{\square}(w,\tilde w)&=\sup_{\Phi,\,\tilde\Phi} \ \sup_{S,T} \ \left|\int_{S\times T} (w(\Phi(\xi),\Phi(\zeta))-\tilde w(\tilde\Phi(\xi),\tilde\Phi(\zeta))) \,d\xi\,d\zeta\right|,
      \end{split}
      \]
where the supremum is taken over any pair $\Phi, \, \tilde\Phi:[0,\ 1]\rightarrow [0,\ 1]$ of measure-preserving maps, and any pair $S, \, T\subset [0,\ 1]$ of measurable subsets.
    \end{lem}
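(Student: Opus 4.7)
My plan is to derive this from Lemma~\ref{kl} by reducing the cut-distance bound to an $L^1$ translation modulus for a countable family of one-dimensional slice functions extracted from $w$. First I would note that, for any measurable $S,T\subset[0,\ 1]$, after a change of variables the cut-distance integrand can be written as
\[
\int_{S\times T}[w(\Phi(\xi+h),\Phi(\zeta))-w(\Phi(\xi),\Phi(\zeta))]\,d\xi\,d\zeta = \int_S[F_T(\xi+h)-F_T(\xi)]\,d\xi,
\]
where $F_T(\xi):=\int_T w(\Phi(\xi),\Phi(\zeta))\,d\zeta$. Since $|S|\leq 1$, it suffices to bound $\|F_T(\cdot+h)-F_T(\cdot)\|_{L^1}$ uniformly in $T$. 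Writing $F_T=H_{\Phi(T)}\circ\Phi$ with $H_B(x)=\int_B w(x,\eta)\,d\eta$, and exploiting $\|H_{B_1}-H_{B_2}\|_\infty\leq |B_1\triangle B_2|$ coming from $0\leq w\leq 1$, reduces the problem to controlling $\|H_B\circ\Phi(\cdot+h)-H_B\circ\Phi(\cdot)\|_{L^1}$ over a suitable countable dense family of sets $B$.

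Next I would enumerate the family $\{I_n\}_{n\in\mathbb{N}}$ of all dyadic intervals and set $g_n(\xi):=H_{I_n}(\xi)$; these satisfy $0\leq g_n\leq|I_n|$. Feeding this sequence (after reindexing and rescaling to fit the bound $g_n\leq 2^{-n+1}$) into Lemma~\ref{kl} produces a single measure-preserving $\Phi$ with
\[
\int_0^1|g_n\circ\Phi(\xi+h)-g_n\circ\Phi(\xi)|\,d\xi\leq 2^{-C\sqrt{\log(1/|h|)}},\quad\forall\,n.
\]
To upgrade this to arbitrary $B$, I approximate $\Phi(T)$ by a disjoint union $\tilde T_k$ of at most $2^k$ dyadic intervals at level $k$ with $|\Phi(T)\triangle\tilde T_k|\leq 2^{-k}$. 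Summing slice-wise and using the $\|\cdot\|_\infty$ approximation bound yields
\[
\|F_T(\cdot+h)-F_T(\cdot)\|_{L^1}\leq 2^{1-k}+2^k\cdot 2^{-C\sqrt{\log(1/|h|)}}.
\]
Optimizing by choosing $k\sim \sqrt{\log(1/|h|)}$ gives the advertised rate $C'/\sqrt{\log(1/|h|)}$.

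The main obstacle is reconciling the rigid scaling constraint $0\leq g_n\leq 2^{-n+1}$ in Lemma~\ref{kl} with the natural family $\{H_{I_n}\}$, whose norms decay only polynomially in $n$ when dyadic intervals are enumerated level-by-level. One resolution is to apply Corollary~\ref{kc}, which accommodates arbitrary bounds at the cost of an extra $2^n\|g_n\|_{L^\infty}$ factor; this is exactly what the balancing step above absorbs. Alternatively, and perhaps more naturally, one may re-run the hierarchical decomposition of Step~1 in the proof of Lemma~\ref{kl} directly on the two-dimensional graphon $w$, using symmetry to refine simultaneously in $\xi$ and $\zeta$. Either way, the conversion from the sharper single-function rate $2^{-C\sqrt{\log(1/|h|)}}$ to the weaker uniform-over-rectangles rate $C/\sqrt{\log(1/|h|)}$ precisely matches the classical Lovász--Szegedy (and hence Szemerédi) bound.
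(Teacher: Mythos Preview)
The paper does not actually supply a proof of this lemma: it is stated with a citation to Lov\'asz--Szegedy, preceded only by the one-line assertion that Lemma~\ref{kl} ``implies'' it. So there is no paper argument to compare against, and your task is to furnish the missing derivation.

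Your reduction to the one-variable slice functions $H_B\circ\Phi$ is the right first move, but the execution has two real gaps. First, the approximation claim ``$|\Phi(T)\triangle\tilde T_k|\leq 2^{-k}$ for some union $\tilde T_k$ of level-$k$ dyadic intervals'' is false for general measurable $T$ (a fat Cantor set gives a counterexample); Lebesgue density does yield $|\Phi(T)\triangle\tilde T_k|\to 0$, but with no uniform rate in $k$. Second, and more damaging, the family of all dyadic intervals cannot be enumerated so as to satisfy the hypothesis $0\le g_n\le 2^{-n+1}$ of Lemma~\ref{kl}: there are $2^{\ell}$ intervals at level $\ell$, each with $\|H_I\|_{L^\infty}\le 2^{-\ell}$, so under any enumeration the level-$\ell$ intervals sit at indices $n\sim 2^{\ell}$, giving $\|g_n\|_{L^\infty}\sim 1/n$ rather than $2^{-n}$. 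Your fallback to Corollary~\ref{kc} then introduces the prefactor $2^n\|g_n\|_{L^\infty}$, so summing over the $2^k$ level-$k$ intervals (indices $n$ up to order $2^k$) produces a term of size roughly $2^{2^k}\cdot 2^{-C\sqrt{\log(1/|h|)}}$, not the $2^{k}\cdot 2^{-C\sqrt{\log(1/|h|)}}$ you wrote; optimizing in $k$ then gives at best a $1/\log\log(1/|h|)$ rate, far from the claimed $1/\sqrt{\log(1/|h|)}$. Your alternative (b)---re-running the hierarchical construction directly on the two-dimensional $w$---is essentially the original Lov\'asz--Szegedy proof and would succeed, but it is an independent argument rather than a deduction \emph{from} Lemma~\ref{kl}, which is what both you and the paper set out to provide.
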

  
  We once again emphasize that in our case $w$ is not necessarily symmetric nor a bounded function.
  
  \subsection{Proof of Theorem \ref{lg}}
We are now ready to prove the main theorem of this section. By hypothesis $(i)$ and $(ii)$  in Theorem \ref{lg} we have that there exist $C_1,C_2\in \mathbb{R}_+$ so that
\[
\Vert w_N\Vert_{L^\infty_\xi \mathcal{M}_\zeta \cap L^\infty_\zeta \mathcal{M}_\xi}\leq C_1,\quad \ \Vert f_N\Vert_{L^\infty_\xi (W^{1,1}_x\cap W^{1,\infty}_x)}\leq C_2,
\]
for any $N\in \mathbb{N}$. For each fixed $T\in \Tree$, by Lemma~\ref{L-tau-bound}, we have that $\tau(T,w_N,f_N)$ is uniformly bounded in $W^{1,1}\cap W^{1,\infty}$ and is hence locally compact in $L^p_x$ for all $p<\infty$. By a standard diagonal extraction procedure, we may hence assume that for some subsequence of $N$ (still denoted as $N$ for simplicity)
  \[
\lim_{N\to\infty} \tau(T,w_N, f_N)\quad\mbox{exists in}\quad L^p_{loc}, \quad \forall p<\infty,\quad \forall\,T\in \Tree.
  \]
  Our goal is to identify the above limits for each $T\in \Tree$.
  
We also recall the above countable algebras $M(w_N,f_N)$ in Definition \ref{defalgebra} endowed with the family of transforms $\mathcal{T}$ according to the rules $(i)$, $(ii)$ and $(iii)$. From Lemma~\ref{lemtreeF}, we already know that this algebra represents the hierarchy accurately. The rest of the proof is handled in several steps. 

\medskip

$\diamond$ {\em Step 1: Stability by re-arrangements.}\\
In order to apply our compactness Lemma \ref{kl} note that we have to perform appropriate re-arrangements in the $\xi$ variable by measure-preserving maps. The following result show that the representation through the algebra $M(w_N,f_N)$ is stable under re-arrangements.

\begin{lem}
  Consider any  $w\in L^\infty_\xi {\mathcal M}_\zeta\cap L^\infty_\zeta {\mathcal M}_\xi$, $f\in L^\infty_{\xi}(L^1_x\cap L^\infty_x)$ and any {\it a.e.} injective  measure-preserving map $\Phi: [0,\ 1]\to [0,\ 1]$. Define the re-arranged objects 
\begin{equation}\label{eq:rearranged-objects}
\tilde w(\xi,d\zeta):=\Phi^{-1}_\#w(\Phi(\xi),\cdot)(d\zeta),\quad \tilde f(x,\xi):=f(x,\Phi({\xi})).
\end{equation}
Then we have that
  \[
F(\tilde w,\tilde f)(\xi,x_1,\ldots,x_m)=F(w,f)(\Phi({\xi}),x_1,\ldots,x_m),
\]
for any transform $F\in {\mathcal T}$, and for {\it a.e.} $\xi\in [0,\ 1]$ and each $x_1,\ldots, x_m\in \mathbb{R}^d$, where $\Phi^{-1}$ is any {\it a.e.} defined left inverse of $\Phi$. Moreover, $\tau(T,\tilde w,\tilde f)=\tau(T,w,f)$ for any tree $T$.
\label{proprearrange} 
\end{lem}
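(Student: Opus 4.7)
The plan is to establish the intertwining identity $F(\tilde w,\tilde f)(\xi,x_1,\ldots,x_m)=F(w,f)(\Phi(\xi),x_1,\ldots,x_m)$ by induction on the recursive construction of the algebra $\mathcal{T}$ as given in Definition~\ref{defalgebra}, following its three generating rules. The identity concerning $\tau(T,\cdot,\cdot)$ then follows from Lemma~\ref{lemtreeF} together with a change of variables using that $\Phi$ is measure-preserving. A preliminary remark is that $\tilde w$ still belongs to $L^\infty_\xi \mathcal{M}_\zeta \cap L^\infty_\zeta \mathcal{M}_\xi$: indeed, since $\Phi$ is \emph{a.e.} injective and measure-preserving, one has $\|\tilde w(\xi,\cdot)\|_{\mathcal{M}} = \|w(\Phi(\xi),\cdot)\|_{\mathcal{M}}$ for a.e.\ $\xi$, while the bivariate measure $\tilde w(\xi,d\zeta)\,d\xi$ is the pushforward of $w(\xi,d\zeta)\,d\xi$ under $(\xi,\zeta)\mapsto(\Phi^{-1}(\xi),\Phi^{-1}(\zeta))$, which preserves the $L^\infty_\zeta \mathcal{M}_\xi$ bound by the same argument. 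Hence $F(\tilde w,\tilde f)$ is well defined through Definition~\ref{D-tau-extended}.

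For the base case ($F_0$ in rule $(i)$), the identity reduces to the very definition $\tilde f(x_1,\xi)=f(x_1,\Phi(\xi))$. For the tensor case ($F=F_1\otimes F_2$ in rule $(ii)$), the inductive hypothesis applied to $F_1,F_2$ yields the identity for $F$ factor by factor.

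The key step is rule $(iii)$: assuming the identity holds for $F$, we verify it for $F^\star$. By definition,
\[
F^\star(\tilde w,\tilde f)(\xi,x_1,\ldots,x_n)=\int_0^1 F(\tilde w,\tilde f)(\zeta,x_1,\ldots,x_n)\,\tilde w(\xi,d\zeta),
\]
and the inductive hypothesis replaces $F(\tilde w,\tilde f)(\zeta,\cdot)$ by $F(w,f)(\Phi(\zeta),\cdot)$ inside the integral. Using the pushforward definition $\tilde w(\xi,\cdot)=\Phi^{-1}_\#\, w(\Phi(\xi),\cdot)$, i.e.\ $\int\psi(\zeta)\,\tilde w(\xi,d\zeta)=\int\psi(\Phi^{-1}(\eta))\,w(\Phi(\xi),d\eta)$ for any bounded $\psi$, and the fact that $\Phi\circ\Phi^{-1}=\mathrm{id}$ a.e., we get
\[
\int_0^1 F(w,f)(\Phi(\zeta),x_1,\ldots,x_n)\,\tilde w(\xi,d\zeta)=\int_0^1 F(w,f)(\eta,x_1,\ldots,x_n)\,w(\Phi(\xi),d\eta),
\]
which equals $F^\star(w,f)(\Phi(\xi),x_1,\ldots,x_n)$. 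The subtle point here is that $F(w,f)(\cdot,x_1,\ldots,x_n)$ only belongs to $L^\infty$ and not $C^0$, so one should justify the pushforward identity for merely $L^\infty$ integrands; this is done by approximating with continuous functions via Lemma~\ref{gooddensity} (or, more directly, by first verifying the identity for $w\in L^\infty_{\xi,\zeta}$ and then passing to the limit as in Lemma~\ref{uniqueF}).

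Finally, to conclude $\tau(T,\tilde w,\tilde f)=\tau(T,w,f)$, pick via Lemma~\ref{lemtreeF} a transform $F_T\in\mathcal{T}$ with $\tau(T,w,f)(x_1,\ldots,x_{|T|})=\int_0^1 F_T(w,f)(\zeta,\cdot)\,d\zeta$. Applying the intertwining identity just proved and then the change of variables $\eta=\Phi(\zeta)$, valid since $\Phi$ is measure-preserving on $[0,\ 1]$, gives
\[
\tau(T,\tilde w,\tilde f)=\int_0^1 F_T(\tilde w,\tilde f)(\zeta,\cdot)\,d\zeta=\int_0^1 F_T(w,f)(\Phi(\zeta),\cdot)\,d\zeta=\int_0^1 F_T(w,f)(\eta,\cdot)\,d\eta=\tau(T,w,f),
\]
which finishes the proof.
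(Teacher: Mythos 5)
Your proof is correct and follows essentially the same structure as the paper's: induction on the recursive construction of $\mathcal{T}$, with the base case and tensor case trivial, the $F^\star$ case handled by the pushforward definition of $\tilde w$ and the relation $\Phi\circ\Phi^{-1}=\mathrm{id}$ a.e., and the final statement about $\tau$ obtained via Lemma~\ref{lemtreeF} and a measure-preserving change of variables. Your explicit flagging of the subtlety that $F(w,f)(\cdot,x)$ is only $L^\infty$ (so the pushforward identity needs a density or approximation argument) is a nice addition — the paper handles this implicitly via the discussion preceding the proof on insensitivity of $\tilde w$ to negligible modifications of $\Phi^{-1}$ — but the overall argument is the same.
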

If $w\in L^\infty_\xi L^1_\zeta\cap L^\infty_\zeta L^1_\xi$, then the re-arranged object $\tilde w$ in \eqref{eq:rearranged-objects} clearly belongs to $L^\infty_\xi L^1_\zeta\cap L^\infty_\zeta L^1_\xi$ and we have the straightforward formula
\[
\tilde w(\xi,\zeta)=w(\Phi(\xi),\Phi(\zeta)).
\]
For general $w\in L^\infty_\xi\mathcal{M}_\zeta\cap L^\infty_\zeta\mathcal{M}_\xi$, we have that $\tilde w$ is defined as the pull-back of $w$ through $\Phi$. Since $\Phi$ is injective,  the pull-back though $\Phi$ agrees with the push-forward trough $\Phi^{-1}$.  We also emphasize that the definition is not sensitive to modifications of $\Phi^{-1}$ over the atoms of $w(\Phi(\xi),d\zeta)$ within a Lebesgue-negligible set, as we show in the following straightforward argument based on Lemma \ref{bilinearbound}. Assume that $\Phi_1^{-1}$ and $\Phi_2^{-1}$ are two different choices of the {\it a.e.} left inverse of $\Phi$ ({\it i.e.}, $\Phi_1^{-1}\circ \Phi =Id=\Phi_2^{-1}\circ\Phi$ {\it a.e.}) and define 
\begin{align*}
\begin{aligned}
&\tilde w_1(\xi,d\zeta):=\Phi_{1\#}^{-1} w(\Phi(\xi),\cdot)(d\zeta), & & \tilde w_2(\xi,d\zeta):=\Phi_{2\#}^{-1}\tilde w(\Phi(\xi),\cdot)(d\zeta),\\
& \phi_1(\zeta):=\phi(\Phi_1^{-1}(\zeta)), & & \phi_2(\zeta):=\phi(\Phi_2^{-1}(\zeta)),
\end{aligned}
\end{align*}
for any $\phi\in L^\infty([0,\ 1])$. Then, by estimate $\eqref{wfL1Linfty}_2$ we have
\[
\begin{split}
  \left\Vert \int_0^1 \phi(\zeta)\,(\tilde w_1(\xi,d\zeta)-\tilde w_2(\xi,d\zeta))\right\Vert_{L^\infty_\zeta} & =\left\Vert \int_0^1 (\phi(\Phi_1^{-1}(\zeta))\,w(\Phi_1(\xi),d\zeta)-\phi(\Phi_2^{-1}(\zeta))\, w(\Phi_2(\xi),d\zeta)) \right\Vert_{L^\infty_\zeta}\\
  & \leq \Vert w\Vert_{L^\infty_\xi \mathcal{M}_\zeta}\Vert \phi_1-\phi_2\Vert_{L^\infty}=0,
\end{split}
\]
where we have used that $\phi_1=\phi_2$ {\it a.e.} by definition. Since $\phi\in L^\infty([0,\ 1])$ is arbitrary, we can restrict to $\phi\in C([0,\ 1])$ and by a straightforward separability argument we conclude that $\tilde w_1=\tilde w_2$.
%
%
\begin{proof}[Proof of Lemma \ref{proprearrange}]
  The proof is done by induction on the number of operations performed by $F$. Of course, for the basic transformation $F_0$ in item $(i)$ of Definition \ref{defalgebra}, the property is obvious since 
\[
F_0(\tilde w,\tilde f)(\xi,x)=\tilde f(\xi,x)=f(\Phi(\xi),x)=F_0(w,f)(\Phi(\xi),x),
\]
by definition \eqref{eq:rearranged-objects} of the re-arrangements. Now consider any $F\in \mathcal{T}$ with at least one operation, either of the type $(ii)$ or $(iii)$ in Definition \ref{defalgebra}. On the one hand, let us first assume that $F=F_1\otimes F_2$ is obtained through operations of type $(ii)$ by taking the tensor product of some $F_1$ and $F_2$. Since both the transforms $F_1$ and $F_2$ must contain at least one less operation each, by the induction hypothesis we have
  \[
  F_i(\tilde w,\tilde f)(\xi,x_1,\ldots,x_{m_i})=F_i(w,f)(\Phi(\xi),x_1,\ldots,x_{m_i}),
  \]
for $i=1,2$. Then, we obviously have
\begin{align*}
F(\tilde f,\tilde w)&(\xi,x_1,\ldots,x_{m_1+m_2})\\
&=F_1(\tilde w,\tilde f)(\xi,x_1,\ldots,x_{m_1})\,F_2(\tilde w,\tilde f)(\xi,x_{m_1+1},\ldots,x_{m_1+m_2})\\
&=F_1(w,f)(\Phi(\xi),x_1,\ldots,x_{m_1})\,F_2(w, f)(\Phi(\xi),x_{m_1+1},\ldots,x_{m_1+m_2})\\
&=F(w, f)(\Phi(\xi),x_1,\ldots,x_{m_1+m_2}). 
\end{align*}
On the other hand, assume now that the identity holds for some $F\in \mathcal{T}$ and let us consider $F^\star$ via an operation of $(iii)$, {\em i.e.},
\[
F^\star(w,f)(\xi,x_1,\ldots,x_{m})=\int_0^1 F(w,f)(\zeta,x_1,\ldots,x_{m})\,w(\xi,d\zeta).
\]
Then,
we have again that
\[\begin{split}
F^\star(\tilde w,\tilde f)(\xi,x_1,\ldots,x_{m})&=\int_0^1 F(\tilde w,\tilde f)(\zeta,x_1,\ldots,x_{m})\,\tilde w(\xi,d\zeta)\\
&=\int_0^1 F(w, f)(\Phi(\zeta),x_1,\ldots,x_{m})\,\tilde w(\xi,d\zeta)\\
&=\int_0^1 F(w, f)(\zeta,x_1,\ldots,x_{m})\, w(\Phi(\xi),d\zeta),
\end{split}
\]
where in the second line we have used that $F(\tilde w,\tilde f)(\xi,x_1,\ldots,x_m)=F(f,w)(\Phi(\xi),x_1,\ldots,x_m)$ and in the third line we have used the definition of $\tilde w$ as a push-forward measure.
Hence, we find
\[
F^\star(\tilde w,\tilde f)(\xi,x_1,\ldots,x_m)=F^\star(w,f)(\Phi(\xi),x_1,\ldots,x_m).
\]
Finally,  by Lemma \ref{lemtreeF}, for any $T\in \Tree$ there exists $F\in {\mathcal T}$ such that  
\begin{align*}
\tau(T,w,f)(x_1,\ldots,x_{|T|})&=\int_0^1 F(w,f)(\xi,x_1,\ldots,x_{|T|})\,d\xi,\\
\tau(T,\tilde w,\tilde f)(x_1,\ldots,x_{|T|})&=\int_0^1 F(\tilde w,\tilde f)(\xi,x_1,\ldots,x_{|T|})\,d\xi.
\end{align*}
Since $\Phi$ is measure-preserving, by the theorem of change of variables we obtain
\[\begin{split}
\tau(T,\tilde w,\tilde f)&=\int_0^1 F(\tilde w,\tilde f)(\xi,x_1,\ldots,x_{|T|})\,d\xi=\int_0^1 F(w,f)(\Phi(\xi),x_1,\ldots,x_{|T|})\,d\xi\\
&=\int_0^1 F(w, f)(\xi,x_1,\ldots,x_{|T|})\,d\xi=\tau(T,w,f),
\end{split}
\]
which concludes the proof.
\end{proof}

\medskip

$\diamond$ {\em Step~2: Obtaining compactness.}\\
 Since $\mathcal T$ is countable, we may label its elements with countable indices as follows $\mathcal{T}=\{F_k:\,k\in \mathbb{N}\}$, where we choose as the first element $F_0$ the elementary transform in rule $(i)$ of Definition \ref{defalgebra}. Assume that $F_k$ involves $m_k$ variables $x_1,\ldots,x_{m_k}$ and denote $n_k\in \mathbb{N}$ the amount of operations of type $(iii)$ involved in the transform $F_k$. Then, by Definition \ref{defalgebra} of the algebra $M(w_N,f_N)$ and arguing like in Lemma~\ref{L-tau-bound} by induction on the number of operations in $F_k$ implies
\begin{equation}\label{eq:uniform-bound-transformations}
\|F_k(w_N,f_N)\|_{L^\infty_\xi (W^{1,1}\cap W^{1,\infty})}\leq \|w_N\|^{n_k}_{L^\infty_\xi L^1_{\zeta}}\,\|f_N\|^{m_k}_{L^\infty_\xi (W^{1,1}_x\cap W^{1,\infty}_x)}\leq C_1^{m_k}\,C_2^{n_k},
\end{equation}
for any $N\in \mathbb{N}$ and each $k\in \mathbb{N}$. We remark that by our hypothesis, the above estimate is independent on $N$. Our goal in this part is to use Lemma~\ref{kl} (more specifically its Corollary \ref{kc}) to obtain compactness with respect to $N$ after suitable re-arrangements of the sequence $F_k(w_N,f_N)$ for any $k\in \mathbb{N}$. Unfortunately, $F_k(w_N,f_N)$ does not only depend on $\xi$ but also on the extra variables $x_1,\ldots,x_{m_k}$, so that it is not totally clear how compactness in the joint variables $(\xi,x_1,\ldots,x_{m_k})$ arises from Corollary \ref{kc}. Note thought that dependency on $x_1,\ldots,x_{m_k}$ is actually smooth.

\begin{lem}\label{lem:compactness-limits}
Under the assumptions in Theorem \ref{lg}, there exist $\Phi_N:[0,\ 1]\rightarrow [0,\ 1]$  measure-preserving maps for each $N\in \mathbb{N}$ and $\phi_k(\xi,x_1,\ldots,x_{m_k})\in L^\infty_\xi (W^{1,1}_x\cap W^{1,\infty}_x)$ for each $k\in \mathbb{N}$ so that we have
\[
F_k(w_N,f_N)(\Phi_N(\xi),x_1,\ldots,x_{m_k})\rightarrow \phi_k(\xi,x_1,\ldots,x_{m_k})\quad \mbox{in}\quad L^p_{loc}([0,\ 1]\times \mathbb{R}^{d\,m_k}),
\]
when $N\rightarrow \infty$ (up to a subsequence on $N$) for each $k\in \mathbb{N}$, and any $1\leq p<\infty$.
\end{lem}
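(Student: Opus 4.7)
The plan is to reduce the joint compactness problem to countably many scalar compactness problems in $\xi$ via testing against a dense family of functions in the $x$ variables, apply Corollary \ref{kc} to obtain a single rearrangement $\Phi_N$ that works simultaneously for all indices $k$, and then upgrade back to joint compactness using the uniform $W^{1,\infty}_x$ regularity.

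First I will fix, for each $k\in\mathbb{N}$, a countable dense family $\{\alpha^{(k)}_j\}_{j\in\mathbb{N}} \subseteq C_c(\mathbb{R}^{dm_k})$ chosen to include smooth bump functions concentrated at every point of a countable dense subset of $\mathbb{R}^{dm_k}$ at every dyadic scale. Define the scalar observables
$$g_{k,j,N}(\xi) := \int_{\mathbb{R}^{dm_k}} F_k(w_N,f_N)(\xi,x_1,\ldots,x_{m_k})\,\alpha^{(k)}_j(x_1,\ldots,x_{m_k})\,dx_1\cdots dx_{m_k}.$$
By the uniform bound \eqref{eq:uniform-bound-transformations}, one has $\|g_{k,j,N}\|_{L^\infty([0,\ 1])}\leq M_{k,j}$ uniformly in $N$. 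Enumerate the countable double-indexed family as a single sequence $\{g_{n,N}\}_n$ with $\|g_{n,N}\|_{L^\infty}\leq M_n$, and, for each $N$, apply Corollary \ref{kc} to obtain a measure-preserving map $\Phi_N:[0,\ 1]\to[0,\ 1]$ yielding
$$\int_0^1 |(g_{n,N}\circ\Phi_N)(\xi)-(g_{n,N}\circ\Phi_N)(\xi+h)|\,d\xi \leq 2^n M_n\,2^{-C\sqrt{\log(1/|h|)}},$$
whose right-hand side vanishes as $|h|\to0$ independently of $N$.

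For each fixed $n$, the family $\{g_{n,N}\circ\Phi_N\}_N$ is then pre-compact in $L^1([0,\ 1])$ by the Fr\'echet--Kolmogorov theorem. A standard diagonal extraction in $N$ produces a subsequence along which, for every pair $(k,j)$, $g_{k,j,N}\circ\Phi_N$ converges strongly in $L^1([0,\ 1])$ to some limit $\psi_{k,j}$.

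The main obstacle is the last step: upgrading this testing-against-dense-family convergence to strong $L^p_{loc}$-convergence of the full objects $F_{k,N}(\xi,x):=F_k(w_N,f_N)(\Phi_N(\xi),x)$ in the joint variables. By Lemma \ref{proprearrange}, the uniform bound $F_{k,N}\in L^\infty_\xi(W^{1,1}\cap W^{1,\infty})$ is preserved through the rearrangement, so on any ball $B\subset\mathbb{R}^{dm_k}$ the family is uniformly Lipschitz in $x$ and hence has a uniform modulus of continuity in $x$. To also produce a uniform modulus in $\xi$, I will cover $B$ by finitely many balls of radius $\epsilon$ centered at points $x_\ell$ of the countable dense subset, approximate $F_{k,N}(\xi,x)$ for $x$ near $x_\ell$ by $F_{k,N}(\xi,x_\ell)$ with error $O(\epsilon)$ via the Lipschitz bound, and then approximate $F_{k,N}(\xi,x_\ell)$ with further error $O(\epsilon)$ by its pairing against a bump $\chi_{\epsilon,x_\ell}$ included in the dense family, whose associated $g_{k,j,N}\circ\Phi_N$ enjoys the uniform $\xi$-modulus from the first step. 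This yields uniform equicontinuity of $\{F_{k,N}\}_N$ in $L^1([0,\ 1]\times B)$, hence pre-compactness. Uniqueness of the limit through testing against the dense family identifies it as a single $\phi_k\in L^1_{loc}$, so the entire subsequence converges; interpolating with the uniform $L^\infty$ bound upgrades to $L^p_{loc}$-convergence for any $p<\infty$, and the regularity $\phi_k\in L^\infty_\xi(W^{1,1}\cap W^{1,\infty})$ follows from the uniform estimate by weak-$*$ lower semicontinuity.
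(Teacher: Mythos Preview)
Your proposal is correct and follows essentially the same strategy as the paper: reduce to countably many scalar problems in $\xi$, apply Corollary~\ref{kc} to obtain a single rearrangement $\Phi_N$, extract via Fr\'echet--Kolmogorov and diagonalization, and then use the uniform Lipschitz bound in $x$ to recover joint compactness. The only notable difference is that the paper samples the $x$-dependence by direct point evaluations $F_k(w_N,f_N)(\xi,x_1^l,\ldots,x_{m_k}^l)$ at a countable dense set (legitimate since these functions are Lipschitz in $x$), then builds $\phi_k$ by Lipschitz extension from the dense set and concludes via pointwise-a.e.\ plus dominated convergence; you instead test against bump functions and argue directly for uniform $L^1$-equicontinuity on $[0,1]\times B$. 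Your route introduces one extra approximation layer (bump integral $\approx$ point value) that the paper avoids, but both arguments are valid and of comparable length.
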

%
\begin{proof}
For every $k\in \mathbb{N}$, let $\mathcal{D}_k=\{(x_1^l,\ldots,x_{m_k}^l):\,l\in \mathbb{N}\}\subset \mathbb{R}^{d\,m_k}$ be any countable dense subset ({\em e.g.}, $\mathcal{D}_k=\mathbb{Q}^{d\,m_k}$), and define the functions $g_{k,l}^N(\xi):=F_k(w_N,f_N)(\xi,x_1^l,\ldots,x_{m_k}^l)$ for each $\xi\in [0,\ 1]$ and any $k,\,l,\,N\in \mathbb{N}$. Let us fix $N$ and apply Corollary \ref{kc} to the countable family $\{g_{k,l}^N\}_{k,l\in \mathbb{N}}\subset L^\infty([0,\ 1])$: that is the role of $n$ in Corollary \ref{kc} is played by $(k,l)$. We obtain that for each $N\in \mathbb{N}$ there exists and a measure-preserving map $\Phi_N:[0,\ 1]\longrightarrow [0,\ 1]$ so that the re-arrangements $\tilde g_{k,l}^N=g_{k,l}^N\circ \Phi_N$ fulfill the following estimate
\[
\sup_{N\in \mathbb{N}}\int_0^1 \vert \tilde g_{k,l}^N(\xi+h) -\tilde g_{k,l}^N(\xi)\vert \,d\xi\leq C_{k,l}\,2^{-C\sqrt{\log \frac{1}{|h|}}},
\]
for any $0<|h|<1$, an universal constant $C\in \mathbb{R}_+$, and some constants $C_{k,l}\in \mathbb{R}_+$. We remark that the constants $C_{k,l}$ are indeed independent of $N$ thanks to the above uniform bound \eqref{eq:uniform-bound-transformations}. By the Fr\'echet-Kolmogorov theorem and using a diagonal extraction there exists some subsequence of $N$'s (which we still denote $N$ for simplicity) and there exists $\tilde g_{k,l}\in L^1([0,\ 1])$ so that $\tilde g_{k,l}^N\rightarrow \tilde g_{k,l}$ in $L^1([0,\ 1])$ as $N\rightarrow \infty$ for any $k,l\in \mathbb{N}$. Our final step will be to lift the above convergence on $\mathcal{D}_k$ to all $\mathbb{R}^{d\,m_k}$. To such an end, we define the functions $\tilde g_k:[0,\ 1]\times \mathcal{D}_k\longrightarrow \mathbb{R}$ by $\tilde g_k(\xi,x_1^l,\ldots,x_{m_k}^l):=\tilde g_{k,l} (\xi)$. Since $L^1$ convergence implies convergence {\em a.e.} of an appropriate subsequence, then the uniform Lipschitz bounds on \eqref{eq:uniform-bound-transformations} imply
\begin{align*}
&\vert \tilde g_k(\xi,x_1^l,\ldots,x_{m_k}^l)\vert \leq C_1^{m_k}\,C_2^{n_k},\\
&\vert \tilde g_k(\xi,x_1^{l_1},\ldots,x_{m_k}^{l_1})-\tilde g_k(\xi,x_1^{l_2},\ldots,x_{m_k}^{l_2})\vert \leq C_1^{m_k}\,C_2^{n_k}\,\vert (x_1^{l_1},\ldots,x_{m_k}^{l_1})-(x_1^{l_2},\ldots,x_{m_k}^{l_2})\vert,
\end{align*}
for any $k,\,l,\,l_1,\,l_2\in \mathbb{N}$ and {\em a.e.} $\xi\in [0,\ 1]$. Then, $\tilde g_k:[0,\ 1]\times \mathcal{D}_k\longrightarrow \mathbb{R}$ can be extended by continuity in a unique way into a function $\phi_k\in L^\infty_\xi W^{1,\infty}_{x_1,\ldots,x_{m_k}}$ with $\Vert \phi_k\Vert_{L^\infty_\xi W^{1,\infty}_{x_1,\ldots,x_{m_k}}}\leq C_1^{m_k}\,C_2^{n_k}$. From the uniform Lipschitz bounds on the $\tilde g_{k,l}^N$ and $\phi_k$, we obtain that
\[F_k(w_N,f_N)(\Phi_N(\xi),x_1,\ldots,x_{m_k})\rightarrow \phi_k(\xi,x_1,\ldots,x_{m_k}),\]
as $N\rightarrow \infty$, for each $k\in \mathbb{N}$, $a.e.\;\xi\in [0,\ 1]$, and any $x_1,\ldots,x_{m_k}\in \mathbb{R}^d$.

Note we also know that $F_k(w_N,f_N)(\cdot, \Phi_N(\cdot),\cdot,\ldots,\cdot)$ and $\phi_k(\cdot,\cdot,\ldots,\cdot)$ are bounded in $L^\infty$ uniformly with respect to $N$ thanks to the uniform bounds \eqref{eq:uniform-bound-transformations}. Therefore, the above pointwise convergence can indeed be improved into $L^p_{loc}$ convergence for any $1\leq p<\infty$, on any compact set, through the dominated convergence theorem. Indeed, since $F_k(w_N,f_N)(\cdot, \Phi_N(\cdot),\cdot,\ldots,\cdot)$ are also uniformly bounded in $L^\infty_\xi W^{1,1}_{x_1,\ldots,x_{m_k}}\subseteq L^\infty_\xi BV_{x_1,\ldots,x_{m_k}}$ with respect to $N$ by \eqref{eq:uniform-bound-transformations}, then we can also take the subsequence of $N$'s so that it converges locally weakly-star in $L^\infty_\xi BV_{x_1,\ldots,x_{m_k}}$, thus guaranteeing that $\phi_k\in L^\infty_\xi W^{1,1}_{x_1,\ldots,x_{m_k}}$.
Note we also know that $F_k(w_N,f_N)(\cdot, \Phi_N(\cdot),\cdot,\ldots,\cdot)$ and $\phi_k(\cdot,\cdot,\ldots,\cdot)$ are bounded in $L^\infty$ uniformly with respect to $N$ thanks to the uniform bounds \eqref{eq:uniform-bound-transformations}. Therefore, the above pointwise convergence can indeed be improved into $L^p_{loc}$ convergence for any $1\leq p<\infty$, on any compact set, through the dominated convergence theorem. Indeed, since $F_k(w_N,f_N)(\cdot, \Phi_N(\cdot),\cdot,\ldots,\cdot)$ are also uniformly bounded in $L^\infty_\xi W^{1,1}_{x_1,\ldots,x_{m_k}}\subseteq L^\infty_\xi BV_{x_1,\ldots,x_{m_k}}$ with respect to $N$ by \eqref{eq:uniform-bound-transformations}, then we can also take the subsequence of $N$'s so that it converges locally weakly-star in $L^\infty_\xi BV_{x_1,\ldots,x_{m_k}}$, thus guaranteeing that $\phi_k\in L^\infty_\xi W^{1,1}_{x_1,\ldots,x_{m_k}}$.
\end{proof}
%

\begin{rem}\label{rem:compactness-limits}
Lemma \ref{lem:compactness-limits} could also be extended to time-dependent objects, but we had omitted this full generality of the proof in a first approach to avoid complicating the notation. More specifically, assume that $\{w_N\}_{N\in \mathbb{N}}$ and $\{f_N\}_{N\in \mathbb{N}}$ (now time-dependent) verify the following hypothesis
    \begin{enumerate}[label=(\roman*)]
    \item $\quad \displaystyle \sup_{N\in \mathbb{N}} \sup_{\xi\in [0,\ 1]} \int_0^1 |w_N(\xi,\zeta)|\,d\zeta\, <\infty, \quad \sup_{N\in \mathbb{N}} \sup_{\zeta\in [0,\ 1]} \int_0^1 |w_N(\xi,\zeta)|\,d\xi\, <\infty,$\\
    \item $\quad \displaystyle\sup_{N\in \mathbb{N}} \|f_N\|_{L^\infty_\xi (W^{1,1}_{t,x}\cap W^{1,\infty}_{t,x})}<\infty$.\\
   \end{enumerate}
Then, there exist $\Phi_N:[0,\ 1]\rightarrow [0,\ 1]$  measure-preserving maps for each $N\in \mathbb{N}$, the same $\Phi_N$ for all $t$, and there exist $\phi_k(t, \xi,x_1,\ldots,x_{m_k})\in L^\infty_\xi (W^{1,1}_{t,x}\cap W^{1,\infty}_{t,x})$ for each $k\in \mathbb{N}$ so that we have
\[
F_k(w_N,f_N)(t, \Phi_N(\xi),x_1,\ldots,x_{m_k})\rightarrow \phi_k(t, \xi,x_1,\ldots,x_{m_k})\quad \mbox{in}\quad L^p_{loc}([0, \ t^*] \times [0,\ 1]\times \mathbb{R}^{d\,m_k}),
\]
when $N\rightarrow \infty$ (up to a subsequence on $N$) for each $k\in \mathbb{N}$, any $t^*>0$ and any $1\leq p<\infty$.

\medskip

Indeed, the proof of the time-dependent version is identical to the one of Lemma \ref{lem:compactness-limits} and simply considers the time variable $t$ along with the space variables $x_1, \ldots, x_{m_k}$ that are already included. In the proof, instead of discretizing in space we can also discretize in time by setting for every $k\in \mathbb{N}$ a countable dense set $\mathcal{D}_k=\{(t^l,x_1^l,\ldots,x_{m_k}^l):\,l\in \mathbb{N}\}\subset [0,\ t_*]\times \mathbb{R}^{dm_k}$ and defining $g_{k,l}^N(\xi):=F_k(w_N,f_N)(t^l,\xi,x_1^l,\ldots,x_{m_k}^l)$ for each $\xi\in [0,\ 1]$ and any $k,l,N\in \mathbb{N}$. Applying Corollary \ref{kc} to the countable family $\{g_{k,l}^N\}_{k,l\in \mathbb{N}}\subset L^\infty([0,\ 1])$ we obtain time-independent measure-preserving maps $\Phi_N:[0,\ 1]\longrightarrow [0,\ 1]$ such that for all $k,l\in \mathbb{N}$ the rearranged $\tilde g_{k,l}^N:=g_{k,l}^N\circ \Phi_N$ converge when $N\to\infty$ (up-to subsequence of $N$) toward some limiting $\tilde g_{k,l}$ in $L^1([0,\ 1])$. The rest of the proof follows the same train of thoughts in order to build the extension by continuity of this discrete (in time and space) limit to $\phi_k\in L^\infty_\xi (W^{1,\infty}_{t,x_1,\ldots,x_{m_k}}\cap W^{1,1}_{t,x_1,\ldots,x_{m_k}})$ so that the above convergence of the observables takes place in $L^p_{\rm loc}([0,\ t_*]\times [0,\ 1]\times \mathbb{R}^{dm_k})$.
\end{rem}

%
Before entering into the last step in the proof of Theorem \ref{lg} (namely, the identification of the limits $\phi_k$ in Lemma \ref{lem:compactness-limits}),  let us first note that at this point we already have the necessary information to prove the following result.
%
  \begin{theo}\label{th:tildeNeq}
Let $(X_1,\ldots,X_N)$ be solution to \eqref{eq1} with $K\in W^{1,1}\cap W^{1,\infty}$ and consider the associated laws $f_i(t,\cdot):=\Law (X_i(t))$. Assume that $X_i^0$ are independent with $\mathbb{E}[\vert X_i^0\vert^2]<\infty$ and
\[
\sup_{1\leq i\leq N}\sqrt{\mathbb{E}\vert X_i^0\vert^2}\leq M,\quad \sup_{1\leq i\leq N}\Vert f_i^0\Vert_{W^{1,1}\cap W^{1,\infty}}\leq L,\quad \sup_{1\leq i\leq N} \sum_{j=1}^N |w_{ij}|\leq C,\quad \sup_{1\leq j\leq N} \sum_{i=1}^N |w_{ij}|\leq C,
\]
for every $N\in \mathbb{N}$ and appropriate $M,L,\,C\in \mathbb{R}_+$. Then, there exists $\tilde w_N\in L^\infty_\xi L^1_\zeta\cap L^\infty_\zeta L^1_\xi$ and $\tilde f_N\in L^\infty_\xi (W^{1,1}_x\cap W^{1,\infty}_x)$ for every $N\in \mathbb{N}$ satisfying the graphon-type Vlasov PDE \eqref{independ2}, {\em i.e.},
\begin{equation}\label{tildefNeq}
\partial_t \tilde f_N(t,x,\xi)+\divop_x\,\left(\tilde f_N(t,x,\xi)\,\int_0^1\int_{\mathbb{R}^d} K(x-y)\, \tilde w_N(\xi,\zeta)\,\tilde f_N(t,y,\zeta)\,dy\,d\zeta\right)=0,
\end{equation}
in the distributional sense. Moreover, for any finite time interval $[0,\ t_*]$ and each compact set $\Omega\subset \mathbb{R}^d$ we have that $\tilde w_N$ and $\tilde f_N$ verify
\begin{align}\label{eq:independence+compactness}
\begin{aligned}
&\int_0^{t_*}\int_{\Omega} \int_0^1 \left|\int_0^1 (\tilde w_N(\xi,\zeta)-\tilde w_N(\xi+h,\zeta))\,\tilde f_N(t,x,\zeta)\,d\zeta\right|\,d\xi\,dx\,dt \leq \varepsilon(|h|),\\
&\sup_{t\in [0,\ t_*]}W_1\left(\mathbb{E}\,\mu_N(t,\cdot), \int_0^1 \tilde f_N(t,\cdot,\xi)\,d\xi\right) \leq  \,\tilde C\sup_{1\leq i,j\leq N} |w_{ij}|^{1/2},
\end{aligned}
\end{align}
for any $h\in \mathbb{R}$, each $N\in \mathbb{N}$, some constant $\tilde C\in \mathbb{R}_+$ and some continuous function $\varepsilon(|h|)$ with $\varepsilon(0)=0$. Both $\tilde C$ and $\varepsilon(h)$ are independent on $N$: $\tilde C$ only depends on $C$, $\Vert K\Vert_{W^{1,\infty}}$ and $t_*$, and $\varepsilon(|h|)$ only depends on $C$, $L$, $t_*$ and $\Omega$. Here, $\mu_N$ is the empirical measure $\mu_N(t, {x}):=\frac{1}{N}\sum_{i=1}^N\delta_{{X}_i(t)}(x)$.
\end{theo}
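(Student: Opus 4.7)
The plan is to combine the propagation of independence from Proposition~\ref{propindependence} with a suitable re-arrangement of the empirical graphons via measure-preserving maps obtained from the compactness Lemma~\ref{kl}, in order to simultaneously solve the Vlasov equation \eqref{tildefNeq} and gain the equicontinuity in $\xi$ claimed in the first estimate of \eqref{eq:independence+compactness}. The empirical measure estimate will follow for free from the measure-preserving character of the re-arrangement combined with Proposition~\ref{propindependence}.

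First, I would invoke Proposition~\ref{propindependence} to produce the family $\{\bar f_i\}_{i=1,\ldots,N}$ solving the coupled system \eqref{independ} and approximating each law $f_i$ up to the error $C_1(t)\sup|w_{ij}|^{1/2}$, and then form the empirical graphons $w_N$ and $f_N$ of Definition~\ref{D-graphon-representation}, which satisfy \eqref{independ2-fN}. The $L^\infty_\xi(W^{1,1}_x\cap W^{1,\infty}_x)$ regularity of $f_N$ propagates in time uniformly in $N$ thanks to Lemma~\ref{regularityfi}. Next, for each fixed $N$, I would apply Corollary~\ref{kc} to the countable family $\{H_N(t_l,x_l,\cdot)\}_{l\in\mathbb{N}}$, where $\{(t_l,x_l)\}_{l\in\mathbb{N}}$ is a fixed countable dense subset of $[0,t_*]\times\Omega$ and
\[
H_N(t,x,\eta):=\int_0^1 w_N(\eta,\zeta)\,f_N(t,x,\zeta)\,d\zeta,
\]
to extract a measure-preserving, a.e.\ injective map $\Phi_N:[0,1]\to[0,1]$. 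I would then set $\tilde w_N(\xi,\zeta):=w_N(\Phi_N(\xi),\Phi_N(\zeta))$ and $\tilde f_N(t,x,\xi):=f_N(t,x,\Phi_N(\xi))$. A direct substitution in \eqref{independ2-fN}, combined with the change of variables $\zeta\mapsto\Phi_N(\zeta)$ in the inner integral (legitimate since $w_N$ is a bounded step function and $\Phi_N$ is a.e.\ bijective and measure-preserving), yields that $(\tilde w_N,\tilde f_N)$ solves \eqref{tildefNeq}. The same change of variables gives
\[
\int_0^1 \tilde f_N(t,x,\xi)\,d\xi=\int_0^1 f_N(t,x,\eta)\,d\eta=\frac{1}{N}\sum_{i=1}^N \bar f_i(t,x),
\]
and the second estimate in \eqref{eq:independence+compactness} then follows from Proposition~\ref{propindependence}, Kantorovich--Rubinstein duality, and the joint convexity of $W_1$, noting that $\mathbb{E}\mu_N(t,\cdot)=\frac{1}{N}\sum_{i=1}^N f_i(t,\cdot)$. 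The uniform bounds $\|\tilde w_N\|_{L^\infty_\xi L^1_\zeta\cap L^\infty_\zeta L^1_\xi}\leq C$ are inherited from $w_N$ by measure-preservation.

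The main obstacle lies in proving the equicontinuity estimate with a modulus $\varepsilon(|h|)$ that is continuous, vanishing at zero, and independent of $N$. Observe first that, by the same change of variable, $\int_0^1 \tilde w_N(\xi,\zeta)\tilde f_N(t,x,\zeta)\,d\zeta=H_N(t,x,\Phi_N(\xi))$. Corollary~\ref{kc} then provides the pointwise estimate
\[
\int_0^1 \big|H_N(t_l,x_l,\Phi_N(\xi+h))-H_N(t_l,x_l,\Phi_N(\xi))\big|\,d\xi\leq 2^l\,\|H_N(t_l,x_l,\cdot)\|_{L^\infty}\,2^{-C\sqrt{\log(1/|h|)}},
\]
which is uniform in $N$ for each fixed $l$ since $\|H_N(t,x,\cdot)\|_{L^\infty}\leq \|w_N\|_{L^\infty_\xi\mathcal{M}_\zeta}\|f_N(t,x,\cdot)\|_{L^\infty_\zeta}$ is bounded independently of $N$ by Lemma~\ref{bilinearboundBx}. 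To upgrade this pointwise bound to an $L^1$-in-$(t,x)$ estimate with $N$-uniform modulus, I would exploit the uniform Lipschitz regularity of $H_N$ in $(t,x)$, which follows from the $L^\infty_\xi W^{1,\infty}_x$ bound on $f_N$ propagated in Lemma~\ref{regularityfi} and from the PDE \eqref{independ2-fN}, which in turn provides a uniform $L^\infty$ bound on $\partial_t f_N$. I would then partition $[0,t_*]\times\Omega$ into $\mathcal{N}_\delta\sim\delta^{-(d+1)}$ cells of diameter at most $\delta$ with representatives $(t_l,x_l)$, $l\leq\mathcal{N}_\delta$, and use the triangle inequality to replace $(t,x)$ by the nearest representative at the cost of $2L\delta$. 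Optimizing $\delta=\delta(|h|)$ so that the exponential growth $2^{\mathcal{N}_\delta}$ is balanced against the decay $2^{-C\sqrt{\log(1/|h|)}}$, for instance by choosing $\delta(|h|)\sim(\log(1/|h|))^{-1/(2(d+1))}$, yields a (slowly) continuous modulus $\varepsilon(|h|)\to 0$ as $|h|\to 0$ with the required dependence on constants only through $C$, $L$, $t_*$, $\Omega$ and $\|K\|_{W^{1,\infty}}$, completing the argument.
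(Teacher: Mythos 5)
Your proposal is correct and follows essentially the same route as the paper: re-arrangement by a measure-preserving map obtained from Corollary~\ref{kc}, propagation of the PDE and $W_1$ estimate under re-arrangement, and an explicit dyadic-grid interpolation in $(t,x)$ against the modulus from the lemma — the latter is precisely what the paper sketches in the closing paragraph of its proof to make $\varepsilon(|h|)$ explicit. The only (harmless) departure is that you apply Corollary~\ref{kc} directly to the single family $\{H_N(t_l,x_l,\cdot)\}_l$ rather than to the full algebra $\mathcal{T}$ as in Lemma~\ref{lem:compactness-limits}; this suffices for Theorem~\ref{th:tildeNeq}, though the broader version is needed later for Theorem~\ref{lg}.
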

%
\begin{rem}
By interpolation with Lemma~\ref{kl}, the function $\varepsilon(h)$ can be calculated explicitly by considering an explicit choice of a dense sequence of points $x_1^l$, \ldots, $x^l_{m_k}$ in the argument above (by constructing a dyadic grid for example), as we briefly explain in the proof. 
\end{rem}
%
\begin{proof}[Proof of Theorem \ref{th:tildeNeq}]
First, by Proposition \ref{propindependence} we know that the laws $f_i$ and the solution $\bar f_i$ to the coupled system \eqref{independ} with the same initial data $\bar f_i^0= f_i^0$ satisfy the estimate \eqref{independ-error}, {\em i.e.},
\[
W_1(f_i(t,\cdot),\bar f_i(t,\cdot))\leq \tilde C\sup_{1\leq i,j\leq N}\vert w_{ij}\vert^{1/2},
\]
for each $t\in [0,\ t_*]$, any $i=1,\ldots,N$, and the constant $\tilde C:=\sqrt{2/C}\left(e^{2C\,t_*\Vert K\Vert_{W^{1,\infty}}}-1\right)$. Let us set $f_N$ and $w_N$ according to the graphon-type representation in Definition \ref{D-graphon-representation} associated with $\bar f_i$ and $w_{ij}$. Therefore, noting that $\mathbb{E}\,\mu_N(t,\cdot)=\frac{1}{N}\sum_{i=1}^N f_i(t,\cdot)$ and $\int_0^1 f_N(t,x,\xi)\,d\xi=\frac{1}{N}\sum_{i=1}^N \bar f_i(t,x,\xi)$ we infer 
\begin{equation}\label{eq:independence+compactness_pre}
\sup_{t\in [0,\ t_*]}W_1\left(\mathbb{E}\,\mu_N(t,\cdot), \int_0^1 f_N(t,\cdot,\xi)\,d\xi\right) \leq  \,\tilde C\sup_{1\leq i,j\leq N} |w_{ij}|^{1/2}.
\end{equation}

Second, as already studied in Section \ref{sec:hierarchy}, we recall that $f_N$ solves the same transport equations as in \eqref{tildefNeq} in the sense of distribution.  Arguing like in the proof of Proposition \ref{existenceweak} we can propagate the initial $L^\infty_\xi(W^{1,1}_x\cap W^{1,\infty}_x)$ norms of the solution.  This  implies that on any bounded time interval $[0,\ t_*]$, we have that $f_N\in L^\infty_\xi W^{1,1}_{t,x}\cap W^{1,\infty}_{t,x}$ and, in addition, $\Vert f_N\Vert_{L^\infty_\xi W^{1,1}_{t,x}\cap W^{1,\infty}_{t,x}}$ is bounded uniformly with respect to $N$ in terms of $\Vert K\Vert_{W^{1,\infty}}$, the constants $L,\,C$ and $t_*$. 

We note that Lemma \ref{lem:compactness-limits} was initially proved for time-independent objects. However, as mentioned in Remark \ref{lem:compactness-limits} since $f_N$ have appropriate Lipschitz dependence in the joint variable $(t,x)$ and the norms are bounded uniformly on $N$, then a similar result holds true for the time dependent objects. Specifically, there exists a measure-preserving map $\Phi_N:[0,\ 1]\longrightarrow [0,\ 1]$ for every $N\in \mathbb{N}$ such that the sequence $\{F_k(w_N,f_N)(\Phi_N(\cdot),\cdot,\ldots,\cdot)\}_{N\in \mathbb{N}}$ is compact in $L^1_{loc}([0,\ t_*]\times[0,\ 1]\times \mathbb{R}^{d\,m_k})$ for each $k\in \mathbb{N}$. For simplicity of notation, we denote again the re-arranged objects 
\[
\tilde w_N(\xi,\zeta):=w_N(\Phi_N(\xi),\Phi_N(\zeta)),\quad \tilde f_N(t,x,\xi):=f_N(t,x,\Phi_N(\xi)),
\]
as given in \eqref{eq:rearranged-objects}. In the sequel, we shall restrict to the special transform $F_0^\star\in \mathcal{T}$ consisting of only one operation of the type $(iii)$ on the elementary transform $F_0$ of rule $(i)$ in Definition \ref{defalgebra}. Specifically, we choose $F_0^\star(w_N,f_N)(t,\xi,x)=\int_0^1 w_N(\xi,\zeta) f_N(t,x,\zeta)\,d\zeta$. Indeed, by Lemma \ref{proprearrange} we can reformulate it as
\[
F_0^\star(w_N,f_N)(t,\Phi_N(\xi),x)=F_0^\star(\tilde w_N,\tilde f_N)(t,\xi,x)=\int_0^1 \tilde w_N(\xi,\zeta)\,\tilde f_N(t,x,\zeta)\,d\zeta,
\]
which is compact in $L^1_{loc}([0,\ t_*]\times [0,\ 1]\times \mathbb{R}^d)$ in particular. Then, by the Fr\'echet-Kolmogorov theorem, for every compact set $\Omega\subset \mathbb{R}^d$ there must exist then some $N$-independent continuous function $\varepsilon_{\Omega}(|h|)$ with $\varepsilon_{\Omega}(|0|)=0$  such that
\[
\int_0^1\int_0^{t_*}\int_{{\Omega}} \left|\int_0^1 (\tilde w_N(\xi,\zeta)\,\tilde f_N(t,x,\zeta)-\tilde w_N(\xi+h,\zeta)\,\tilde f_N(t+h,x+h\,v,\zeta))\,d\zeta\right|\,d\xi\,dx\,dt\leq \varepsilon_{{\Omega}}(|h|),
\]
for every $h\in \mathbb{R}$, each $v\in \mathbb{S}^{d-1}$ and each $N\in \mathbb{N}$.

Using the Lipschitz-continuity of $f_N$ (thus $\tilde f_N$) on the joint variables $(t,x)$ implies
\[
\int _0^1\int_0^{t_*}\int _{\Omega}\left|\int_0^1 (\tilde w_N(\xi,\zeta)-\tilde w_N(\xi+h,\zeta))\,\tilde f_N(t,x,\zeta)\,d\zeta\right|\,d\xi\,dx\,dt \leq \varepsilon_{{\Omega}}(|h|)+C\,t_*\,\vert \Omega\vert\,\Vert \tilde f_N\Vert_{L^\infty_\xi W^{1,\infty}_{t,x}}\vert h\vert,
\]
for every $h\in \mathbb{R}$ and any $N\in \mathbb{N}$. Since $f_N$ (thus $\tilde f_N$) are bounded in $L^\infty_\xi W^{1,\infty}_{t,x}$ uniformly in $N$, then the above estimate implies $\eqref{eq:independence+compactness}_1$. Finally, note that \eqref{tildefNeq} and the above estimate \eqref{eq:independence+compactness_pre} for $f_N$ are stable under re-arrangements on $f_N$. Hence, we conclude that $\tilde f_N$ also satisfies \eqref{tildefNeq} and $\eqref{eq:independence+compactness}_2$ by a simple change of variables.

We observe here that a more intricate interpolation argument can show that $\varepsilon$ depend only on the uniform bounds on $w_N$ and $f_N$, and on $\Omega$ and could even be made explicit. As in the proof of Lemma~\ref{lem:compactness-limits}, we construct a dense sequence $(t_l,x_l)\in [0,\ t_*]\times \Omega$, for example through dyadic grid such that for some $\theta>0$, we have that there exists $l\leq C_{\Omega}\,\varepsilon^{-\theta}$ with $|(t,x)-(t_l,x_l)|\leq \varepsilon$, for any $(t,x)$ and any $\varepsilon>0$. Now Corollary~\ref{kc} implies that 
\[
\sup_{N\in\mathbb{N}} \int_0^1 |F_0^\star(w_N,f_N)(t_l,\Phi_N(\xi+h),x_l)- F_0^\star(w_N,f_N)(t_l,\Phi_N(\xi),x_l)|\,d\xi\leq C_{{\Omega}}\,2^{l}\,2^{-C\,\sqrt{\log \frac{1}{|h|}}},
\]
for some constant $C_{\Omega}$. Thanks to the Lipschitz bound in $t$ and $x$, we also have
\[
\begin{split}
  &\sup_{N\in\mathbb{N}} \int_0^1 |F_0^\star(w_N,f_N)(t,\Phi_N(\xi+h),x)- F_0^\star(w_N,f_N)(t,\Phi_N(\xi),x)|\,d\xi\\
  &\qquad\leq \sup_{N\in\mathbb{N}} \int_0^1 |F_0^\star(w_N,f_N)(t_l,\Phi_N(\xi+h),x_l)- F_0^\star(w_N,f_N)(t_l,\Phi_N(\xi),x_l)|\,d\xi\\
  &\qquad+C\,t_*\,|\Omega|\,\Vert \tilde f_N\Vert_{L^\infty_\xi W^{1,\infty}_{t,x}}\,|(t,x)-(t_l,x_l)|,
\end{split}
\]
for any $(t,x)$ and any $(t_l,x_l)$. 
Recalling the property on the grid $(t_l,x_l)$, we find that
\[
\begin{split}
  &
  \sup_{N\in \mathbb{N}} \int_0^1 |F_0^\star(w_N,f_N)(t,\Phi_N(\xi+h),x)- F_0^\star(w_N,f_N)(t,\Phi_N(\xi),x)|\,d\xi\\
  &\qquad\leq \inf_l\left(C_{\Omega}\,2^{l}\,2^{-C\,\sqrt{\log \frac{1}{|h|}}}+C\,t_*\,|\Omega|\,\Vert \tilde f_N\Vert_{L^\infty_\xi W^{1,\infty}_{t,x}}\,l^{1/\theta}\right),
\end{split}
\]
which yields an explicit $\varepsilon_{{\Omega}}(|h|)$, indeed depending only on $\Omega$, $t_*$ and $\Vert \tilde f_N\Vert_{L^\infty_\xi W^{1,\infty}_{t,x}}$.
  \end{proof}

\medskip
$\diamond$ {\em Step~3: Identifying limits}.\\
We are finally ready to identify the limits obtained by the compactness result in Lemma \ref{lem:compactness-limits}. We recall that, according to such a result, there exists a measure-preserving map $\Phi_N:[0,\ 1]\longrightarrow [0,\ 1]$ for every $N\in \mathbb{N}$ and $\phi_k(\xi,x_1,\ldots,x_{m_k})\in L^\infty_\xi (W^{1,1}_x\cap W^{1,\infty}_x)$ for every $k\in \mathbb{N}$ so that for an appropriate subsequence in $N$ (still denoted by $N$ for simplicity) we obtain
\[
F_k(w_N,f_N)(\Phi_N(\xi),x_1,\ldots,x_{m_k})\rightarrow \phi_k(\xi,x_1,\ldots,x_{m_k})\quad \mbox{in}\quad L^p_{loc}([0,\ 1]\times \mathbb{R}^{d\,m_k}),
\]
as $N\rightarrow\infty$ for all $k\in \mathbb{N}$ and each $1\leq p<\infty$. For simplicity of notation, we shall denote again the re-arranged objects by $\tilde f_N$ and $\tilde w_N$ as given in \eqref{eq:rearranged-objects}. Using the stability under re-arrangements in Lemma \ref{proprearrange}, the above convergence can then be simply  reformulated as follows:
\begin{equation}\label{eq:convergence-Fk}
F_k(\tilde w_N,\tilde f_N)\rightarrow \phi_k\quad \mbox{in}\quad L^p_{loc}([0,\ 1]\times \mathbb{R}^{d\,m_k}),
\end{equation}
as $N\rightarrow \infty$ for all $k\in \mathbb{N}$ and each $1\leq p<\infty$. As mentioned above, our goal here reduces to identifying $\phi_k$ as $F(\tilde w, \tilde f)$ for some appropriate limiting objects $\tilde f$ and $\tilde w$ of the re-arranged objects $\tilde f_N$ and $\tilde w_N$.

On the one hand, since $\tilde f_N$ is the first element in the algebra $M(\tilde w_N,\tilde f_N)$, the above implies that there exists $\tilde f\in L^\infty_\xi (W^{1,1}_x\cap W^{1,\infty}_x$) (which is indeed the first of the above $\phi_k$) so that we have
\begin{equation}\label{eq:convergence-fN}
\tilde f_N\rightarrow \tilde f\quad \mbox{in}\quad L^p_{loc}([0,\ 1]\times \mathbb{R}^d),
\end{equation}
when $N\rightarrow \infty$ for each $1\leq p<\infty$. On the other hand, by the uniform bound of $w_N$ in item $(i)$ of Theorem \ref{lg}, we readily obtain that $\Vert \tilde w_N\Vert_{L^\infty_\xi L^1_\zeta}\leq C$ and $\Vert \tilde w_N\Vert_{L^\infty_\zeta L^1_\xi}\leq C$ for every $N\in \mathbb{N}$. Since we have (see \cite{IT-77})
\[
L^\infty([0,\ 1],\,L^1(0,1))\subset L^\infty([0,\ 1],\,\mathcal{M}([0,\ 1]))=(L^1([0,\ 1],\,C([0,\ 1])))^*,
\] 
then the Alaoglu-Bourbaki theorem claims that there exist a subsequence on $N$ (again denoted by $N$ for simplicity) and some $\tilde w\in L^\infty_\xi\mathcal{M}_\zeta$ (which is in fact exchangeable because so are the $\tilde w_N$) so that
\begin{equation}\label{eq:convergence-wN}
\tilde w_N\overset{*}{\rightharpoonup}\tilde w\quad \mbox{in}\quad L^\infty_\xi\mathcal{M}_\zeta\cap L^\infty_\zeta\mathcal{M}_\xi,
\end{equation}
when $N\rightarrow \infty$. Our last step is to prove that indeed $\phi_k=F_k(\tilde w,\tilde f)$ for every $k\in \mathbb{N}$. Again, we proceed by induction on the number of operations in $F_k$.

First, let us assume that $F_k$ is built via an operation of type $(ii)$ in Definition \ref{defalgebra}. Specifically, assume that $F_k=F_{k_1}\otimes F_{k_2}$. Since each $F_{k_1}$ and $F_{k_2}$ must contain at least one less operation that $F_k$, then the induction hypothesis implies
\[
F_{k_i}(\tilde w,\tilde f)=\phi_{k_i}=\lim_{N\rightarrow\infty} F_{k_i}(\tilde w_N,\tilde f_N), 
\]
for $i=1,\,2$. We recall that the above is a limit in $L^p_{loc}$ for any $1\leq p<\infty$. Therefore,
\begin{align*}
\phi_k(\xi,x_1,\ldots,x_{m_k})&=\lim_{N\rightarrow \infty} F_k(\tilde w_N,\tilde f_N)(\xi,x_1,\ldots,x_{m_k})\\
&=\lim_{N\rightarrow \infty} F_{k_1}(\tilde w_N,\tilde f_N)(\xi,x_1,\ldots,x_{m_{k_1}})\,\lim_{N\rightarrow\infty} F_{k_2}(\tilde w_N,\tilde f_N)(\xi,x_{m_{k_1}+1},\ldots,x_{m_{k}})\\
&=\phi_{k_1}(\xi,x_1,\ldots,x_{m_{k_1}})\,\phi_{k_2}(\xi,x_{m_{k_1}+1},\ldots,x_{m_{k}})\\
&=F_{k_1}(\tilde w,\tilde f)(\xi,x_1,\ldots,x_{m_{k_1}})\,F_{k_2}(\tilde w,\tilde f)(\xi,x_{m_{k_1}+1},\ldots,x_{m_{k}})\\
&=F_k(\tilde w,\tilde f) (\xi,x_1,\ldots,x_{m_k}).
\end{align*}
Second, let us assume that $F_k$ is obtained through an operation of type $(iii)$ in Definition \ref{defalgebra}. Specifically, we have there exists some transform $F_{k'}\in \mathcal{T}$ with $m_k=m_k'$ such that $F_k=F_{k'}^\star$, that is
\begin{equation}\label{eq:decomposition-Fk0}
F_k(\tilde w_N,\tilde f_N)(\xi,x_1,\ldots,x_{m_k})=\int_0^1 \tilde w_N(\xi,\zeta)\,F_{k'}(\tilde w_N,\tilde f_N)(\zeta,x_1,\ldots,x_{m_k})\,d\zeta,
\end{equation}
holds. Since $F_{k'}$ contains at least one operation less than $F_k$, then the induction hypothesis shows that
\begin{equation}\label{eq:convergence-Fk0}
F_{k'}(\tilde w,\tilde f)=\phi_{k'}=\lim_{N\rightarrow\infty} F_{k'}(\tilde w_N,\tilde f_N),
\end{equation}
again in $L^p_{loc}$ for any $1\leq p<\infty$. Take any $\varphi\in C_c([0,\ 1]\times \mathbb{R}^{d\,m_k})$ and define
\begin{align*}
\psi_N(\xi,\zeta)&:=\int_{\mathbb{R}^{d\,m_k}}\varphi(\xi,x_1,\ldots,x_{m_k})\,F_{k'}(\tilde w_N,\tilde f_N)(\zeta,x_1,\ldots,x_{m_k})\,dx_1\ldots\,dx_{m_k},\\
\psi(\xi,\zeta)&:=\int_{\mathbb{R}^{d\,m_k}}\varphi(\xi,x_1,\ldots,x_{m_k})\,F_{k'}(\tilde w,\tilde f)(\zeta,x_1,\ldots,x_{m_k})\,dx_1\ldots\,dx_{m_k}.
\end{align*}
Then, by \eqref{eq:convergence-Fk0} it is clear that $\psi_N(\xi,\zeta)\rightarrow \psi(\xi,\zeta)$ in $L^1_{\zeta} C_{\xi}$. Multiplying \eqref{eq:decomposition-Fk0} by $\varphi$ and integrating note that we can write
\begin{align*}
\int_0^1\int_{\mathbb{R}^{d\,m_k}} &\varphi(\xi,x_1,\ldots,x_{m_k})\,F_k(\tilde w_N,\tilde f_N)(\xi,x_1,\ldots,x_{m_k})\,dx_1\ldots\,dx_{m_k}\,d\xi\\
&=\int_0^1\int_0^1 \tilde w_N(\xi,\zeta) \psi_N(\xi,\zeta)\,d\xi\,d\zeta.
\end{align*}
Using that $\psi_N(\xi,\zeta)\rightarrow \psi(\xi,\zeta)$ in $L^1_{\zeta} C_{\xi}$ and the weak-* convergence of $\tilde w_N$ in $L^\infty_\zeta\mathcal{M}_\xi$ as given in \eqref{eq:convergence-wN} imply that
\begin{align*}
\lim_{N\rightarrow \infty}\int_0^1\int_{\mathbb{R}^{d\,m_k}} &\varphi(\xi,x_1,\ldots,x_{m_k})\,F_k(\tilde f_N,\tilde w_N)(\xi,x_1,\ldots,x_{m_k})\,dx_1\ldots\,dx_{m_k}\,d\xi\\
&=\int_0^1\int_0 ^1 \tilde w(\xi,d\zeta)\,\psi(\xi,\zeta)\,d\xi,
\end{align*}
where in the last step we have used Remark~\ref{R-exchangeable-case}. Undoing the definition of $\psi$ we have proven (by arbitrariness of $\varphi\in C_c([0,\ 1]\times \mathbb{R}^{d\,m_k})$) that
\[
F_k(\tilde w_N,\tilde f_N)\overset{*}{\rightharpoonup} F_k(\tilde w,\tilde f)\quad \mbox{in}\quad \mathcal{M}([0,\ 1]\times \mathbb{R}^{d\,m_k}),
\]
as $N\rightarrow\infty$. Since $F_k(\tilde w_N,\tilde f_N)\rightarrow \phi_k$ in $L^p_{loc}$ by \eqref{eq:convergence-Fk}, then we conclude that $\phi_k=F_k(\tilde w,\tilde f)$.

Finally,  we conclude the identifications of the limits of $\tau(T,w_N,f_N)$ by using Lemmas \ref{lemtreeF} \ref{proprearrange}. Specifically, for any $T\in \Tree$, there exists some $F\in {\mathcal T}$ such that 
\[
\tau(T,w_N,f_N)(x_1,\ldots,x_{|T|})=\tau(T,\tilde w_N,\tilde f_N)(x_1,\ldots,x_{|T|})=\int_0^1 F(\tilde w_N,\tilde f_N)(\xi,x_1,\ldots,x_{|T|})\,d\xi.
\]
 We recall the above convergence of the $F(\tilde f_N,\tilde w_N)$ to $F(\tilde f,\tilde w)$ in $L^p_{loc}$ as $N\rightarrow\infty$. Then, we can pass to the limit in the right hand side of the above relation to achieve
\[
\tau(T,w_N,f_N)(x_1,\ldots,x_{|T|})\rightarrow \int_0^1 F(\tilde f,\tilde w)(\xi,x_1,\ldots,x_{|T|})\,d\xi=\tau(T,\tilde w,\tilde f),
\]
as $N\rightarrow\infty$ in $L^p_{loc}(\mathbb{R}^{d\,|T|})$, for any $1\leq p<\infty$ and each $T\in \Tree$,  where in the last step we have used Definition \ref{D-tau-extended} for the extension of the operator $\tau$.
 \subsection{An alternative formulation of the proof}

We mention here a possible variant of the proof above, which consists in re-arranging  the arguments in our proof along the following 3 lemmas (separated to make the discussion easier here)

\begin{lem}
For any sequence $w_N$ uniformly bounded in $L^\infty_\xi \mathcal{M}_\zeta \cap L^\infty_\zeta \mathcal{M}_\xi$ and any sequence $f_N$ uniformly bounded in $L^\infty_\xi W^{1,\infty}_{t,x}$, there exists a measure-preserving map $\Phi_N$ such that $\tilde f_N(t,x,\xi)=f_N(t,x,\Phi_N(\xi))$ converges strongly in $L^1_{t,x,\xi}$, $\tilde w_N(\xi,\zeta)=w_N(\Phi_N(\xi),\Phi_N(\zeta)$ converges weakly to $w \in L^\infty_\xi \mathcal{M}_\zeta \cap L^\infty_\zeta \mathcal{M}_\xi$, and
\[ 
\int^1_0
(\tilde w_{N}(\xi, d\zeta) - w(\xi, d\zeta))\, h(\zeta) \to 0
\]
strongly in $L^1_\xi$ for all $h$ in a countable dense family of $C([0,\ 1])$. 
\label{lemmanew1}
\end{lem}

The proof of Lemma~\ref{lemmanew1} is a straightforward consequence of Lemma~\ref{kl} and Corollary~\ref{kc} for $\tilde w_N$. For $f_N$, it follows from a time-dependent version o Lemma~\ref{lem:compactness-limits} as mentioned in Remark~\ref{rem:compactness-limits}. We remark here that the proof of Lemma~\ref{lem:compactness-limits} could equivalently be performed by using a basis of the function space.

The next step is as follows:

\begin{lem}
Consider a sequence $\tilde w_N$ uniformly bounded in $L^\infty_\xi \mathcal{M}_\zeta \cap L^\infty_\zeta \mathcal{M}_\xi$. Assume that $\tilde w_N(\xi,\zeta)$ converges weakly to $L^\infty_\xi \mathcal{M}_\zeta \cap L^\infty_\zeta \mathcal{M}_\xi$, and
\[ 
\int^1_0
(\tilde w_{N}(\xi, d\zeta) - \tilde w(\xi, d\zeta)) h(\zeta) \to 0
\]
strongly in $L^1_\xi$ for all $h$ in a countable dense family of $C([0,\ 1])$. Then for any function $g_N(t,x_1,\ldots,x_k,\xi)$ uniformly bounded in $L^\infty$ and converging to $g$ strongly in $L^1$, we have that
\[
\int^1_0
(\tilde w_{N}(\xi, d\zeta) - \tilde w(\xi, d\zeta)) g_N(t,x_1,\ldots,x_k,\zeta) \to 0
\]
strongly in $L^1_{t,x_1,\ldots,x_d,\xi}$.
\label{lemmanew2}
\end{lem}
Lemma~\ref{lemmanew2} follows easily from a density argument, together with the estimate in Lemma~\ref{bilinearboundBx} (notice that this is already implicitly used in step 3 of the proof of Theorem~\ref{lg}).

The final step is as follows
\begin{lem}
Consider any sequence $\tilde w_N$ uniformly bounded in $L^\infty_\xi \mathcal{M}_\zeta \cap L^\infty_\zeta \mathcal{M}_\xi$ and any sequence $\tilde f_N$ uniformly bounded in $L^\infty_\xi W^{1,\infty}_{t,x}$,  such that $\tilde f_N(t,x,\xi)=f_N(t,x,\Phi_N(\xi))$ converges strongly in $L^1_{t,x,\xi}$, $\tilde w_N(\xi,\zeta)=w_N(\Phi_N(\xi),\Phi_N(\zeta)$ converges weakly to $w \in L^\infty_\xi \mathcal{M}_\zeta \cap L^\infty_\zeta \mathcal{M}_\xi$, and assume that for any function $g_N(t,x_1,\ldots,x_k,\xi)$ uniformly bounded in $L^\infty$ and converging to $g$ strongly in $L^1$, we have that
\[
\int^1_0
(\tilde w_{N}(\xi, d\zeta) - \tilde w(\xi, d\zeta)) g_N(t,x_1,\ldots,x_k,\zeta) \to 0
\]
strongly in $L^1_{t,x_1,\ldots,x_d,\xi}$. Then for all transform $F$ (or equivalently all multilinear operators), we have that $F(\tilde w_N, \tilde f_N)$ converges strongly in $L^1$ to $F(w,f)$.
\label{lemmanew3}
\end{lem}
The proof of Lemma~\ref{lemmanew3} is exactly the induction argument in step 3 of the proof of Theorem~\ref{lg}.

This approach skips Theorem~\ref{th:tildeNeq}, which is however interesting on itself. But it does have some advantage and in particular, it allows to disentangle more the limits of $f_N$ and of $w_N$ through Lemma~\ref{lemmanew1}.

\section{Proof of the main result: Theorem~\ref{maintheorem}}\label{sec:proof-maintheorem}
We are now ready to perform the full proof of Theorem~\ref{maintheorem}.

First of all, denote by $f_i^0$ the density of each $X_i^0$. Then, we define $w_N(\xi,\zeta)$ and $f^0_N(x,\xi)$ according to equation~\eqref{wf} in  Definition~\ref{D-graphon-representation}, which we recall
\begin{align*}
\begin{aligned}
  &w_N(\xi,\zeta)=\sum_{i,j=1}^N N\,w_{ij}\,\mathbb{I}_{[\frac{i-1}{N},\frac{i}{N})}(\xi)\, \mathbb{I}_{[\frac{j-1}{N},\frac{j}{N})}(\zeta), & & \xi,\zeta\in [0,\ 1],\\
  &f_N^0(x,\xi)=\sum_{i=1}^N f_i^0(x)\,\mathbb{I}_{[\frac{i-1}{N},\frac{i}{N})}(\zeta), & & x\in \R^d,\xi\in [0,\ 1].
  \end{aligned}
\end{align*}

From the uniform bounds on  $f_i^0$ and $w_{ij}$, it is immediate to check that $w_N$ and $f_N^0$ satisfy the assumptions $(i)-(ii)$ of Theorem~\ref{lg}. We note as well that $\tau(T,w_N,f_N)$ is uniformly bounded in $W^{1,\infty}$, for any fixed $T$. Since trees are countable, by the usual diagonal extraction technique, we can obtain a subsequence (still denoted by $N$) such that  $\tau(T,w_N,f_N)$ converges strongly in $L^\infty$, for every $T$. Hence,  this subsequence also satisfies assumption $(iii)$ of Theorem~\ref{lg}.

Applying now Theorem~\ref{lg}, we find $w\in L^\infty_\xi \mathcal{M}_\zeta\cap L^\infty_\zeta \mathcal{M}_\xi$ and $f^0\in L^\infty_\xi (W^{1,1}_x\cap W^{1,\infty}_x)$ such that
\[
\tau(T,w,f^0)=\lim_{N \to \infty} \tau(T,w_N,f_N^0),\quad\forall T,
\]
in any $L^p_{loc}$ with $p<\infty$. Furthermore, we note that, for each $T$, $\tau(T,w,f^0)$ is uniformly bounded in $L^1_x\cap L^\infty_x$. Fixing now any $R>0$, a straightforward extension of Lemma~\ref{L-tau-bound} leads to
\[
\|\tau(T,w,f^0_N)\|_{L^1(\R^{d\,|T|})\setminus B(0,R)^{|T|}}\leq |T|\,\|w\|_{L^\infty_\xi M_\zeta}^{|T|-1}\,\|f^0_N\|_{L^\infty_\xi L^1(\R^{d})}^{|T|-1}\,\|f^0_N\|_{L^\infty_\xi L^1(\R^{d}\setminus B(0,R))}.
\]
On the other hand, from the assumptions the main theorem, we have that
\[
\int_{\R^d} |x|^2\,f_i^0(x)\,dx\leq C,
\]
for some constant $C$ independent of $N$ and $i$. By the definition of $f^0_N$, this implies as well that
\[
\int_{\R^d} |x|^2\,f_N^0(x)\,dx\leq C.
\]
This shows that
\[
\|\tau(T,w,f^0_N)\|_{L^1(\R^{d\,|T|})\setminus B(0,R)^{|T|}}\leq \frac{C}{R^2}\,|T|\,\|w\|_{L^\infty_\xi M_\zeta}^{|T|-1}\,\|f^0_N\|_{L^\infty_\xi L^1(\R^{d})}^{|T|-1}.
\]
Combined with the local convergence in $L^p$, it implies that $\tau(T,w_N,f_N^0)$ converges strongly to $\tau(T,w_N,f^0)$ in every $L^p(\R^{d\,|T|})$ and in particular in $L^2(\R^{d\,|T|})$.

\medskip

We further note that also by Corollary~\ref{cor-L-tau-bound}, we have that
\[
\|\tau(T,w,f^0)\|_{L^2_x}\leq \|w\|_{L^\infty_\xi \mathcal{M}_\zeta}^{|T|-1}\,\|f^0\|_{L^\infty_\xi L^2_x}^{|T|}.
\]
Consequently, there exists some $\lambda>0$ small enough such that
\begin{equation}
\|\tau(\cdot,w,f^0)-\tau(\cdot,w_N,f^0_N)\|_{\lambda}\to 0,\quad\mbox{as}\ N\to\infty.\label{convergencetau}
  \end{equation}
By Proposition~\ref{existenceweak}, we have existence of weak solution $f\in L^\infty_{t,\xi} ( W^{1,1}_x\cap W^{1,\infty}_x)$ to \eqref{independ2}  associated with $w$ in the sense of Definition~\ref{D-weak-solution-independ2},  namely
\[
\left\{\begin{array}{l}
\displaystyle\partial_t f(t,x,\xi)+\divop_x\left(f(t,x,\xi) \,\int_0^1 w(\xi,d\zeta)\int_{\R^d} K(x-y)\,f(t,y,\zeta)\,dy\right)=0,\\
\displaystyle f(0,\cdot,\cdot)=f^0.
\end{array}\right.
\]
Of course, $f$ is the function that we will use to compare to the empirical measure, meaning that we have to prove that
\begin{equation}
\mathbb{E}\,W_1\left(\int_0^1 f(t,\cdot,\xi)\,d\xi,\;\frac{1}{N}\,\sum_{i=1}^N \delta_{X_i(t)}\right)\to 0,\quad \mbox{as}\ N\to \infty.\label{desiredresult}
\end{equation}
First of all, we apply Proposition~\ref{propindependence}: Since $\sup_{1\leq i,j\leq N}\vert w_{ij}\vert\rightarrow 0$ as $N\rightarrow\infty$, then this shows that the solutions $\bar f_i$ to \eqref{independ} satisfy that
\begin{equation}
\mathbb{E}\,W_1\left( \frac{1}{N}\sum_{i=1}^N \bar f_i(t,\cdot),\;\frac{1}{N}\,\sum_{i=1}^N \delta_{X_i(t)}\right)\to 0,\quad \mbox{as}\ N\to \infty.\label{empiricalbarfi} 
\end{equation}
Then, we define $f_N$ as per \eqref{wf}, namely
\[
f_N(t,x,\xi)=\sum_{i=1}^N \bar f_i(t,x)\,\mathbb{I}_{[\frac{i-1}{N},\frac{i}{N})}(\xi),\qquad t\in\R_+,\;x\in \R^d,\;\xi\in [0,\ 1].
\]
We recall that $(w_N,f_N)$ is also a weak solution to \eqref{independ2}, with $f_N\in L^\infty_{t,\xi} (W^{1,1}_x\cap W^{1,\infty}_x)$. Therefore, both $f_N$ and $f$ satisfy the assumptions of Theorem~\ref{stabilitynonviscous}. Consequently, using \eqref{convergencetau}, we find
\[
\left\|\int_0^1 (f-f_N)(t,\cdot,\xi)\,d\xi\right\|_{L^2_x}\leq \frac{C(t)}{{(\log |\log \|\tau(\cdot,w_N,f^0_N)-\tau(\cdot,w,f^0)\|_{\lambda}|)^{1/2}_+}}\to 0,\quad \mbox{as}\ N\to \infty.
\]
Since the $L^2$ norm locally dominates the Wasserstein distance and we have propagation of any moments, this implies that
\[
W_1\left(\int_0^1 f_N(t,\cdot,\xi)\,d\xi,\int_0^1 f(t,\cdot,\xi)\,d\xi\right)\to 0,\quad \mbox{as}\ N\to \infty.
\]
Recalling that $\int_0^1 f_N(t,x,\xi)\,d\xi=\frac{1}{N}\,\sum_{i=1}^N \bar f_i(t,x)$, and combining this convergence with \eqref{empiricalbarfi}, finally we obtain  \eqref{desiredresult}, which therefore concludes the proof of Theorem~\ref{maintheorem}. 

\appendix
\section{Models of biological neurons \label{neurons-a}}
\subsection{Smooth neuron dynamics of the Fitzhugh-Nagumo or Hodgkin-Huxley type}
Models of biological neurons offer a natural example which requires the more complex structure of the connectivities that we handle in this article. The purpose of this appendix is to give a very brief overview of some of the models, how they motivate our current study and what are the remaining open questions. There exists an extensive literature on the subject that we cannot do full justice to in this appendix. We first briefly refer to~\cite{GerKis, sporns},  for a comprehensive point of view and to~\cite{CoBrGoWa,Compte} for the cortical network.

Typical models of neuron dynamics include the now famous Fitzhugh-Nagumo \cite{FitzHugh,Nagumo}, or Hodgkin-Huxley \cite{HH}, which naturally fit in the framework of \eqref{eq1}. In the case of Hodgkin-Huxley for example, one would take $X_i(t)=(V_i(t),n_i(t), m_i(t),h_i(t))$ where $V_i$ is the membrane potential, $(n_i,m_i)$ are connected to the activation for the potassium and sodium channels and $h_i$ is connected to the inactivation for the Sodium channel. $(n_i(t),m_i(t),h_i(t))$ solve uncoupled ODE's that can be represented by adding some self-interaction in our model, for example
\begin{equation}\label{eqnmh}
\left\{
\begin{array}{l}
\displaystyle\frac{d}{dt} n_i(t)=\alpha_n(V_i(t))\,(1-n_i(t))-\beta_n(V_i(t))\,n_i(t),\\
\displaystyle\frac{d}{dt} m_i(t)=\alpha_m(V_i(t))\,(1-m_i(t))-\beta_m(V_i(t))\,m_i(t),\\
\displaystyle\frac{d}{dt} h_i(t)=\alpha_h(V_i(t))\,(1-h_i(t))-\beta_h(V_i(t))\,h_i(t),
\end{array}
\right.
\end{equation}
for some given functions $\alpha_{n,m,h}$ and $\beta_{n,m,h}$.

The neurons are coupled through their membrane potentials solving 
\begin{equation}
  C_m\,\frac{dV_i}{dt}+g_K\,n_i^4\,(V_i-V_K)+g_{Na}\,m_i^3\,h_i(V_i-V_{Na}) +g_l\,(V_i-V_l)+\sum_{j\neq i} w_{ij} (V_i-V_j)=0,\label{eqVi}
\end{equation}
for given constants $C_m,\;g_K,\;g_{Na},\;g_l,\;V_K,\;V_{Na},\;V_l$.

The coupled system composed of \eqref{eqnmh}-\eqref{eqVi} obviously fits within the general framework of systems like~\eqref{eq1}. Provided all coefficients are smooth, it also satisfies the assumptions of Theorem~\ref{maintheorem} so that all our results are immediately applicable.

The simulation of such large systems is challenging, and mean-field limits are again an attractive alternative; see~\cite{MG1,O,MG2,RBW}. For identical weights $w_{ij}=1/N$, the mean-field limit can be rigorously derived by classical methods and has proved useful in understanding some of the large scale behavior of such systems, see for example~\cite{DeInRuTa,DeInRuTa2,DemGalLocPre,FPZ,FouLoc} on the stochastic side or~\cite{CCP,CP,CaGoGuSc,CPSS,FPZ,PaPeSa,PS} on the deterministic side.

Those studies do not account for the structure of connectivity in the neural network, whose effect has been shown to be critical, as in \cite{PhPaChVi,PJ, PiShPaShLiChSi,W-S,WC,WJ}. Of course, a first practical modeling issue, when considering more complex networks, simply is that the connectivity map $w_{ij}$  has long remain mysterious. Random weights $w_{ij}$ are popular for this reason and naturally lead to random networks, see~\cite{bara}. The corresponding graph of interactions is typically dense and some notion of mean-field limit, based on graphons, has been derived for example in~\cite{CMM,CM1,CM2} for smooth interaction kernels $K$, and actually for the so-called Kuramoto~\cite{Ku1,Ku2} model without learning (see~\eqref{KM1} below with $\eta=0$).

Those follow an interesting approach to the problem when the interacting network  is described as a random graph, usually based on  symmetric Erd\H os-R\'enyi type  graphs. The family of Erd\H os-R\'eenyi graphs  is one example of a convergent family of random graphs. The limiting behavior of such families is determined by a symmetric measurable function on
the unit square: a classical graphon. In the case of the Erd\H os-R\'eenyi graphs, the limiting graphon is even a constant function. This also leads to investigating bifurcations in the Kuramoto models  on a variety of graphs ranging from symmetric and random small-world and power law graphs. The graph structure plays a key role in the transition to synchronization in the model. In \cite{CMM}, Kuramoto models on Erd\H{o}s-R\'enyi and Paley graphs have been proven to have the same critical value  that defines synchronization, an the authors explain  the  onset  of  synchronization  in  the Kuramoto  model  on  a  broad  class  of  dense  and  sparse random  graphs by establishing an explicit link between the network structure and the onset of synchronization. 
 Furthermore, the mean-field limits of the Kuramoto models on these graphs can be proved to coincide with that for the Kuramoto models on weighted complete graphs \cite{CM1,CM2}.

However, from a scaling point of view, it is expected that the {\em graph of connection maps} $w_{ij}$ should be {\em sparse} for applications in  neurosciences. Mammalian brains contain between {$10^8 - 10^{11}$ neurons} ($86\cdot 10^9$ actually in humans). Although, of course, the models generally apply only to subdomains or certain neural clusters in charge of some specific mission, the dimensions of these clusters is very large {$ N \gg 1 $} and even these clusters tend to be correlated with others, which also implies very large dimensions.
For example, in the human brain each neuron has on average $ 7000 $ synaptic connections to other neurons. This implies that synaptic weights must have a scale according to this number of the order of { $\sup_{i,j} w_{ij}\sim 10^{-3}-10^{-4}\ll 1$}. 

We also emphasize the importance of not imposing symmetry conditions to keep $w_{ij}\neq w_{ji}$ in general. In the neuronal framework, the breakdown in symmetry of network connectivity can be  correlated with a time scale hierarchy in some special situations. Small deviations from symmetric connectivity will cause small deviations from the invariant dynamics and introduce a temporal hierarchy, ensuring the consequence of an emergent separation on the time scale. Exploring network mechanisms due to asymmetric couplings to create more complex processes suggests that asymmetric coupling is not always a variation that needs to be averaged, since complex dynamics can be formed by simpler dynamics through a combination of asymmetric couplings. For example, stimulating a local part of neural tissue can trigger temporarily extended paths of motion by inciting the spread and sustained activation of motor networks. Asymmetries in connectivity can reshape the emerging dynamics of a neural network, see \cite{PJ,WJ}.

The derivation of mean-field limits for neuron models with sparse, non-symmetric connection maps had remained fully open and is critical for biological applications. The question was made even more pressing by recent progress that have lead to the first fully detailed representation of the connections between neurons, for some simple animals such as the {\it Drosophila}, see for example~\cite{HulHabFraTurTakal}.
The present article is the first to allow handling of arbitrary connection maps. However our investigations still leave a certain number of open questions which we briefly discuss below. 

\subsection{Integrate and fire models}
Many variants of models~\eqref{eqnmh}-\eqref{eqVi} have been studied to make the individual dynamics of a neuron more accurate. Those include adding noise to the system in various forms that often still fully compatible with our results.

However, so-called integrate and fire models are considered to better capture the dynamics of large networks; see for example~\cite{Bru,Bur} or \cite{Abb} for a historical perspective. Among many elaborate variants, a basic model is described here for simplicity. The state of each neuron is again represented by its membrane potential $V_i(t)$ which follows a simple SDE at almost all time,
\begin{equation}
dV_i(t)=b(V_i(t))\,dt+\sigma_N\,dW_i,\label{integratefire}
  \end{equation}
but with a probability to spike
\begin{equation}
 \mathbb{P}(\mbox{spike between}\ t\ \mbox{and}\ t+dt)\approx f(V(t))\,dt.\label{integratefire2}  
\end{equation}
In the case of a spike at time $t$, $V_i$ is reset at $0$ while all connected neurons are activated or inhibited
\begin{equation}
V_i(t_+)=0,\quad V_j(t_+)=V_j(t_-)+\alpha\,w_{ij}\quad\forall j\neq i. \label{integratefire3} 
\end{equation}
There exist many variations around those models, for example to include more complex equations instead of~\eqref{integratefire}, or deterministic spiking when $V_i$ reaches a given threshold. While \eqref{integratefire}-\eqref{integratefire3} would formally fit in a modified version of \eqref{eq1}, it includes {\em singular interactions} that prevent our results to apply. The nature of this singularity is however different from many of the other applications in this proposal, as it comes from the jumps in the $V_i$ at spiking times, instead of unbounded interactions at $X_i=X_j$. 

Still a full result on the derivation of integrate and fire models for general connectivities has been performed in~\cite{J-Z}, by building on the approach introduced here.
\subsection{Open problem: Learning mechanisms in neuron dynamics}
Long-term perspectives also include understanding how to incorporate learning mechanisms into system~\eqref{eq1}. This is a key question for biological and artificial neuron dynamics as learning is of course a critical function of the network.

Learning can simply translate in having time dependent connection weights $w_{ij}(t)$ that evolve according to the dynamics of the agents themselves, see~\cite{A-A} for instance; this corresponds for example to the well-known Hebb rule that neurons wire together if they fire together. 
  Considering again $N$ neurons {with activation $X_i$ and time-dependent connectivity or synaptic weights $w_{ij}(t)$ between neuron $i$ and $j$, a straightforward extension of~\eqref{eq1} simply reads
    \begin{equation}\label{N1}
   \left\{ \begin{array}{lll}
      &\displaystyle \frac{dX_i}{dt}=\sum_{j=1}^N w_{ij}(t)\, K(X_i,X_j)& \mbox{(+noise)},\\
      & \displaystyle \frac{dw_{ij}}{dt}=\eta\,(\Gamma(X_i,X_j)-w_{ij}) & \mbox{(+noise)},\\
      & \displaystyle X_i(0)=X_i^0,\quad w_{ij}(0)=w_{ij}^0, &
      \end{array} 
      \right.
    \end{equation}
for some plasticity function $\Gamma$, where $\eta$ is regarded as the learning rate parameter. 

How the mechanisms of learning can be reinforced or relaxed was specified in the neuronal context by Hebb \cite{H}. Hebb  proposed a theory of the adaptation of neurons in the brain during the learning process. The basic idea of his rule is based on the fact that neurons wire together if they fire together. Such a mechanism is based on the hypothesis that synchronous activation of cells (firing of neurons) leads to selectively pronounced increases in synaptic strength between these cells. From this process emerges the self-organized patterns. These postulates provide the neural basis for unsupervised complex learning of cognitive function in neural networks and may explain some processes that occur in the development of the nervous system.

The modeling is however made especially difficult in the present setting: changes in the $w_{ij}(t)$ need to be able to altering the structure of sparse networks while preserving bounds such as~\eqref{meanfieldscaling}. If all $w_{ij}$ are of the same order, which is typical of dense graphs, then system~\eqref{N1} likely has a similar behavior to~\eqref{eq1}. However for sparse graphs, we expect the values of the $w_{ij}$ to vary wildly and a straightforward system like \eqref{N1} just cannot correctly capture very different orders of magnitude in connectivities.

On the analytical side,  weights $w_{ij}$ that adapt in time can lead to stronger synchronization between agents, strengthening correlations between them and putting in doubt the propagation of independence. Useful hints can be derived from studies on the Kuramoto model~\cite{Ku1,Ku2} of coupled oscillators that was briefly mentioned above. The system is posed on the frequencies $\theta_i $ and can read, with learning,
\begin{equation}\label{KM1}
  \left\{
  \begin{array}{lll}
\displaystyle\frac{d \theta_i }{dt} = \Omega_i + \frac{1}{N}\sum_{j=1}^N  w_{ij} \sin(\theta_j - \theta_i),\\
\displaystyle\frac{dw_{ij}}{dt}=\eta\,(\Gamma(\theta_i,\theta_j)-w_{ij})\  \mbox{(+noise)},\\
\displaystyle\theta_i(0)=\theta_i^0,\quad w_{ij}(0)=w_{ij}^0,
  \end{array}\right.
\end{equation}
where the $\Omega_i$ are the natural frequencies of oscillators. 

For large $\eta$, the other variable in the model that is the choice of $ \Gamma $ takes on great relevance since, at the limit $\eta\to\infty$, we have that $w_{ij}(t)=\Gamma(\theta_i,\theta_j)$.  If for example $\Gamma$ is assumed to be $\Gamma(\theta_i,\theta_j) = \cos (\theta_i - \theta_j)$, the attraction between near oscillators is reinforced whereas repulsive interaction arises between apart phases. On the other hand if one takes~$\Gamma(\theta_i, \theta_j) = C|\sin (\theta_i - \theta_j)|$,  synchronization only slowly emerges due to the reduction in connectivities for nearby oscillators.

Hebbian-type dynamics can be considered for the Kuramoto model. This learning law ensures that the weight of the adaptive coupling increases if the phases of the oscillators are close. One possibility discussed in \cite{PPS} is to assume that the Hebbian-like plasticity function $\Gamma $ is given by
\begin{eqnarray}\label{C-2}
\Gamma(\theta,\theta'):=\frac{1}{c_{\alpha,\zeta}^\alpha |\theta-\theta'|_o^{2\alpha}},
\end{eqnarray}
where $|\theta-\theta'|_o$ is the orthodromic distance (to zero) over the unit circle and the parameter $c_{\alpha,\zeta}:=1-\zeta^{-1/\alpha}$, $\zeta\in (0,1]$ , has been chosen so that whenever two phases $\theta_i$ and $\theta_j$ stay at orthodromic distance $\sigma$ or larger, then the adaptive function $\Gamma$ predicts a maximum degree of connectedness not larger than $\zeta$ between such oscillators. This is another way to create and amplify correlations between neurons.

However, at this stage, time-evolving connectivities do not fit within our framework. At least formally, note that when $t\gg 1$, weights behave as $w_{ij}\approx \Gamma(\theta_j,\theta_j)$. If $\Gamma$ were bounded, it appears that our extended mean-field scaling should be satisfied (note that~\eqref{KM1} contains weights $\frac{w_{ij}}{N}$ instead of $w_{ij}$); but that does not apply anymore when using unbounded $\Gamma$ as the Hebbian-like plasticity mentioned above.
  \section{Technical proofs from subsection~\ref{subsec:graphonrepresentation} \label{technicalappendix}}
For the sake of completeness, we collect here the more technical proofs from subsection~\ref{subsec:graphonrepresentation}, starting with the proof of Lemma~\ref{bilinearbound}.  
\begin{proof}[Proof of Lemma~\ref{bilinearbound}.]
Set any $w\in L^\infty_\xi\mathcal{M}_\zeta\cap L^\infty_\zeta\mathcal{M}_\xi$, any $\phi\in L^\infty([0,\ 1])$ and any couple of sequences $\{w_n\}_{n\in \mathbb{N}}$ and $\{\phi_n\}_{n\in \mathbb{N}}$ as in the statement of the lemma. On the one hand, since $\phi\in L^1([0,\ 1])$, then
\[
\int_0^1 \psi(\xi)\left(\int_0^1\phi(\zeta)\,w(\xi,d\zeta)\right)\,d\xi=\int_0^1 \phi(\zeta)\left(\int_0^1 \psi(\xi)\,w(d\xi,\zeta)\right)\,d\zeta\leq \Vert w\Vert_{L^\infty_\zeta \mathcal{M}_\xi}\Vert \phi\Vert_{L^1}\Vert \psi\Vert_{L^\infty},
\]
for any $\psi\in C([0,\ 1])$. This implies that $\int_0^1\phi(\zeta)\,w(\cdot,d\zeta)\in \mathcal{M}([0,\ 1])$ and
\begin{equation}\label{E-wfL1Linfty-1-pre}
\left\Vert \int_0^1 \phi(\zeta)\,w(\xi,d\zeta)\right\Vert_{\mathcal{M}_\xi}\leq \Vert w\Vert_{L^\infty_\zeta\mathcal{M}_\xi}\Vert \phi\Vert_{L^1}.
\end{equation}
Moreover, given any $\psi\in C([0,\ 1])$ we have that $\psi\otimes \phi_n\rightarrow \psi\otimes \phi$ strongly in $L^1_\zeta C_\xi$. Since in addition $w_n\overset{*}{\rightharpoonup}w$ weakly-star in $L^\infty_\zeta\mathcal{M}_\xi=(L^1_\zeta C_\xi)^*$, then we also obtain
\[
\int_0^1 \psi(\xi)\left(\int_0^1\phi_n(\zeta)\,w_n(\xi,d\zeta)\right)\,d\xi\rightarrow \int_0^1\psi(\xi)\left(\int_0^1\phi(\zeta)\,w(\xi,d\zeta)\right)\,d\xi.
\]
Due to the arbitrariness of $\psi$, the above amounts to
\begin{equation}\label{E-wfweakcont-pre}
\int_0^1\phi_n(\zeta)\,w_n(\cdot,d\zeta)\overset{*}{\rightharpoonup} \int_0^1\phi(\zeta)\,w(\cdot,d\zeta),
\end{equation}
weakly-star in $\mathcal{M}([0,\ 1])$. On the other hand, set any $\psi\in C([0,\ 1])$ and another sequence $\{\tilde\phi_n\}_{n\in \mathbb{N}}$ as above consisting of continuous functions. It clearly exists by density of $C([0,\ 1])$ in $L^1([0,\ 1])$ and we can indeed assume that $\{\tilde \phi_n\}_{n\in \mathbb{N}}\in C([0,\ 1])$, $\tilde \phi_n\rightarrow \phi$ strongly in $L^1$ and $\Vert \tilde \phi_n\Vert_{L^\infty}\leq \Vert \phi\Vert_{L^\infty}$. Then,
\[
\int_0^1\psi(\xi)\left(\int_0^1 \tilde\phi_n(\zeta)\,w(\xi,d\zeta)\right)\,d\xi\leq \Vert w\Vert_{L^\infty_\xi\mathcal{M}_\zeta}\Vert \psi\Vert_{L^1}\Vert \tilde \phi_n\Vert_{L^\infty}\leq \Vert w\Vert_{L^\infty_\xi\mathcal{M}_\zeta}\Vert \psi\Vert_{L^1}\Vert \phi\Vert_{L^\infty}.
\]
By the above weak-* convergence, we obtain
\[
\int_0^1 \psi(\xi)\left(\int_0^1\phi(\zeta)\,w(\xi,d\zeta)\right)\,d\xi\leq \Vert w\Vert_{L^\infty_\xi\mathcal{M}_\zeta}\Vert \psi\Vert_{L^1}\Vert \phi\Vert_{L^\infty},
\]
for every $\psi\in C([0,\ 1])$. Noting that $C([0,\ 1])$ is dense in $L^1([0,\ 1])$, we find that the above Radon measure $\int_0^1\phi(\zeta)\,w(\cdot,d\zeta)$ actually belongs to $L^1([0,\ 1])^*=L^\infty([0,\ 1])$ and
\[
\left\Vert \int_0^1 \phi(\zeta)\,w(\xi,d\zeta)\right\Vert_{L^\infty_\xi}\leq \Vert w\Vert_{L^\infty_\xi\mathcal{M}_\zeta}\Vert \phi\Vert_{L^\infty},
\]
which implies $\eqref{wfL1Linfty}_2$. Using \eqref{E-wfL1Linfty-1-pre} we also infer $\eqref{wfL1Linfty}_1$ again because the Radon measure $\int_0^1 \phi(\zeta)\,w(\cdot,d\zeta)$ is absolutely continuous with respect to the Lebesgue measure and we already had an estimate in the total variation norm. To end the proof of \eqref{wfweakcont} note that $\int_0^1 \phi_n(\zeta)\,w_n(\cdot,d\zeta)$ are uniformly bounded in $L^1$ and $L^\infty$ thanks to the already proven estimates \eqref{wfL1Linfty}. Since we already had that they converge weakly-star in $\mathcal{M}([0,\ 1])$  to $\int_0^1 \phi(\zeta)\,w(\cdot,d\zeta)$ by \eqref{E-wfweakcont-pre}, we conclude that they also converge weakly-star in $L^\infty$.
  \end{proof}
We now turn to the proof of Proposition~\ref{existenceweak}.
\begin{proof}[Proof of Proposition~\ref{existenceweak}.]
The argument relies on a direct Picard fixed point. Specifically, let us define
\[
E:=L^\infty([0,\ t_*]\times [0,\ 1],\, W^{1,1}\cap W^{1,\infty}(\mathbb{R}^d)),\quad E_R:=\left\{f\in E:\,\Vert f\Vert_{L^\infty_{t,\xi}W^{1,1}_x\cap W^{1,\infty}_x}<R\right\},
\]
for any $R>0$ to be determined later. It is straightforward to check that $E_R$ is a closed subset of $L^1([0,\ t_*]\times [0,\ 1]\times \mathbb{R}^d)$ under the $L^1$ norm in all variables $t$, $\xi$ and $x$. Then, endowed with the $L^1$ norm, $E_R$ becomes a complete and convex metric space, which we shall use in the fixed point argument below.

Notice that the nonlinear PDE \eqref{independ2} can be reformulated as a fixed point equation in the usual way. Specifically, for any $f\in E$ let us set $\mathcal{L}f\in E$ to be the unique solution of the linear Cauchy problem
\begin{equation}\label{independ2-linear}
\left\{
\begin{array}{l}
\displaystyle \partial_t \mathcal{L}f(t,x,\xi)+\divop_x\left(\mathcal{L}f(t,x,\xi)\,\int_0^1 w(\xi,d\zeta) \int_{\R^d} K(x-y)\, f(t,y,\zeta)\,dy\right)=\nu\,\Delta_x \mathcal{L}f(t,x,\xi),\\
\mathcal{L}f(0,\cdot,\cdot)=f^0.
\end{array}
\right.
\end{equation}
Hence, the fixed points of the operator $\mathcal{L}$ amount to weak solutions of \eqref{independ2}. The rest of the proof will focus on proving that the operator $\mathcal{L}:E\longrightarrow E$ is well defined, and we have that $\mathcal{L}(E_R)\subset E_R$ and it is contractive in the $L^1$ norm for appropriate $R>0$.

\medskip

$\diamond$ {\it Step~1: Well-posedness of \eqref{independ2-linear}}.\\
We note that \eqref{independ2-linear} is a linear transport equation in $\mathcal{L}f$ parametrized by $\xi$ and whose velocity field $\mathcal{V}_f$ is given in Definition \ref{D-velocity-field}. Notice that integrating by parts we have
\[\nabla_x \mathcal{V}_f(t,x,\xi)=\int_0^1 w(\xi,d\zeta)\int_{\mathbb{R}^d}K(x-y)\otimes\nabla_y f(t,y,\zeta)\,dy,
\]
which is bounded uniformly in all variables by the regularity of $f\in E$. Therefore, the velocity field verifies $\mathcal{V}_f\in L^\infty([0,\ t_*]\times [0,\ 1],\,W^{1,\infty}(\mathbb{R}^d,\mathbb{R}^d))$. For $\nu=0$ and for {\it a.e.} value $\xi\in [0,\ 1]$ we can associate a unique and well-defined flow solving the characteristic system
\begin{equation}\label{E-characteristic-system}
\left\{
\begin{array}{l} 
\displaystyle \frac{dX_f}{dt}(t,s,x;\xi)=\mathcal{V}_f(t,X_f(t,s,x;\xi);\xi),\\
X_f(s,s,x;\xi)=x,
\end{array}
\right.
\end{equation}
in the sense of Caratheodory. Hence, the classical results ensure that the unique solution of \eqref{independ2-linear} can be obtained for example by the method of characteristics, namely, $\mathcal{L}f(t,\cdot,\xi)=X_f(t,0,\cdot\,;\xi)_{\#}f^0(\cdot,\xi)$ for $t\in [0,\ t_*]$ and {\it a.e.} $\xi\in [0,\ 1]$. In other words, we have that
\begin{equation}\label{E-independ2-linear-solution}
\mathcal{L}f(t,x,\xi)=f_0(X_f(0,t,x;\xi))\,J_f(0,t,x;\xi),
\end{equation}
where $J_f(t,s,x;\xi):=\det (\nabla_x X_f(t,s,x;\xi))$ denotes the Jacobian associated to the flow.

For $\nu>0$, we may instead use the heat kernel $G_\nu(t,x)$ and we can again classically obtain a solution through the mild formulation
\begin{equation}\label{mildformulation}
  \begin{split}
    &\mathcal{L}f(t,x,\xi)=G_\nu(t,.)\star_x f^0 (x,\xi)\\
    &\qquad+\int_0^t\int_{\R^d} G_\nu(t-s,x-y)\,\divop_y\left(\mathcal{L}f(s,y,\xi)\,\int_0^1 w(\xi,d\zeta) \int_{\R^d} K(y-z)\, f(s,z,\zeta)\,dz\right)\,ds\,dy.
\end{split}
  \end{equation}

\medskip

$\diamond$ {\it Step~2: A priori estimates of \eqref{independ2-linear}}.\\
We first recall classical estimates for advection and advection-diffusion equations. Consider any $v(t,x)\in L^\infty([0,\ t^*],\;W^{1,\infty}(\R^d))$, any weak solution $u$ in $L^1_{loc}$ to
\begin{equation}
\left\{\begin{array}{l}
\partial_t u+\divop_x(u\,v)=\nu\,\Delta_x u+R(t,x),\\
u(0,x)=u^0(x).
\end{array}\right.
\label{advectiondiffusion}\end{equation}
Then if $R\in L^1_{t,x}$, we have that $u\in L^\infty_t L^1_x$ with
\begin{equation}
\|u(t,\cdot)\|_{L^1_x}\leq \|u^0\|_{L^1_x}+\int_0^t \|R(s,\cdot)\|_{L^1_x}\,ds.\label{propL1}
\end{equation}
If $R\in L^1_t L^\infty_x$, then $u\in L^\infty_{t,x}$ with
\begin{equation}
\begin{split}
  \|u(t,\cdot)\|_{L^\infty_x}\leq &\|u^0\|_{L^\infty_x}\,\exp\left(t\, \|\divop_x v\|_{L^\infty_{t,x}}\right)\\
  &+\int_0^t \|R(s,\cdot)\|_{L^\infty_x}\,\,\exp\left((t-s)\, \|\divop_x v\|_{L^\infty_{t,x}}\right)\,ds.\label{propLinfty}
\end{split}
\end{equation}
If additional derivatives are available on $u^0$, $v$ and $R$, Sobolev bounds are also propagated. Provided that $v$ is $C^1$, $u$ is automatically a classical solution to \eqref{advectiondiffusion} so that
\[
\partial_t \nabla_x u+\divop(v\,\nabla_x u)+\divop(\nabla_x v\,u)=\nu\,\Delta_x \nabla_x u+\nabla_x R.
\]
In particular, we may apply estimates similar to the previous ones as $\nabla_x u$ solves \eqref{advectiondiffusion} with $\tilde R=\nabla_x R-\divop(\nabla_x v\,u)$. We consequently find that, if $u^0\in W^{1,1}_x$ and $R=0$ for simplicity, then
\[\begin{split}
\|\tilde R(t,\cdot)\|_{L^1_x}&\leq \|\divop v(t,\cdot)\|_{W^{1,\infty}_x}\,\|u(t,\cdot)\|_{L^1_x}+\|v(t,\cdot)\|_{W^{1,\infty}_x}\,\|\nabla_x u(t,\cdot)\|_{L^1_x}\\
&\leq \|\divop v(t,\cdot)\|_{W^{1,\infty}_x}\,\|u^0\|_{L^1_x}+\|v(t,\cdot)\|_{W^{1,\infty}_x}\,\|\nabla_x u(t,\cdot)\|_{L^1_x}.  
\end{split}
\]
Hence we obtain by Gronwall's lemma that
\begin{equation}
  \begin{split}
    \|\nabla_x u(t,\cdot)\|_{L^1_x}\leq &\|\nabla_x u^0\|_{L^1_x}\,\exp\left(t\,\|\divop_x v\|_{L^\infty_{t,x}}\right)\\
        &+\|u^0\|_{L^1}\,\int_0^t \|\divop v(s,\cdot)\|_{W^{1,\infty}_x}\,\exp\left((t-s)\,\| v\|_{L^\infty_{t} W^{1,\infty}_x}\right)\,ds.
\end{split}\label{propW11}
  \end{equation}
Similarly if $u^0\in W^{1,\infty}_x$ and $R=0$, then 
\[\begin{split}
\|\tilde R(t,\cdot)\|_{L^\infty_x}&\leq \|\divop v(t,\cdot)\|_{W^{1,\infty}_x}\,\|u(t,\cdot)\|_{L^\infty_x}+\|v(t,\cdot)\|_{W^{1,\infty}_x}\,\|\nabla_x u(t,\cdot)\|_{L^\infty_x}\\
&\leq \|\divop v(t,\cdot)\|_{W^{1,\infty}_x}\,\|u^0\|_{L^\infty_x}\,\exp\left(t\,\|\divop_x v\|_{L^\infty_{t,x}}\right)+\|v(t,\cdot)\|_{W^{1,\infty}_x}\,\|\nabla_x u(t,\cdot)\|_{L^\infty_x},  
\end{split}
\]
and one has that, still by Gronwall's lemma,
\begin{equation}
  \begin{split}
  \|\nabla_x u(t,.)\|_{L^\infty_x}\leq &\|\nabla_x u^0\|_{L^\infty_x}\,\exp\left(t\,\|\divop_x v\|_{L^\infty_{t,x}}\right)\\
        &+\|u^0\|_{L^\infty}\,\int_0^t \|\divop v(s,.)\|_{W^{1,\infty}_x}\,\exp\left((t-s)\,(\| v\|_{L^\infty_{t} W^{1,\infty}_x}+\|\divop_x v\|_{L^\infty_{t,x}})\right)\,ds.
\end{split}\label{propW1infty}
\end{equation}

We may now propagate estimates to show that the solution $\mathcal{L}f$ lies in $E$, and indeed $\mathcal{L}(E_R)\subset E_R$ for an appropriate choice of $R>0$. We note that Equation~\eqref{independ2-linear} is immediately under the form \eqref{advectiondiffusion} where $\xi$ is only a parameter and
\[
v_\xi(t,x)=V_f(t,x,\xi)=\int_0^1 w(\xi,d\zeta) \int_{\R^d} K(x-y)\, f(t,y,\zeta)\,dy.
\]
From here, since $f^0\in L^\infty_\xi L^1_x$ we obtain that
\begin{align}\label{E-independ2-linear-estimate1}
\begin{aligned}
&\Vert \mathcal{L}f(t,\cdot,\xi)\Vert_{L^1_x}=\Vert f^0(\cdot,\xi)\Vert_{L^1_x},\\
  &\Vert \mathcal{L}f(t,\cdot,\xi)\Vert_{L^\infty_x}\leq \Vert f^0(\cdot,\xi)\Vert_{L^\infty}\exp\left(t_*\,\Vert \divop_x \mathcal{V}_f\Vert_{L^\infty_{t,x,\xi}}\right),
\end{aligned}
\end{align}
for $t\in [0,\ t_*]$ and {\it a.e.} $\xi\in [0,\ 1]$.

From the definition of $V_f$, we also directly obtain that
\begin{align}\label{E-independ2-linear-estimate3}
\begin{aligned}
  &\Vert \divop_x V_f\Vert_{L^\infty_{t,x,\xi}}\leq \Vert K\Vert_{L^\infty}\Vert w\Vert_{L^\infty_\xi\mathcal{M}_\zeta\cap L^\infty_\zeta\mathcal{M}_\xi}\Vert \nabla_x f\Vert_{L^\infty_{t,\xi}L^1_x}\\
  &\Vert V_f\Vert_{L^\infty_{t,\xi} W^{1,\infty}_{x}}\leq \Vert K\Vert_{L^\infty}\,\Vert w\Vert_{L^\infty_\xi\mathcal{M}_\zeta\cap L^\infty_\zeta\mathcal{M}_\xi}\,\Vert \nabla_x f\Vert_{L^\infty_{t,\xi}L^1_x}\\
&\|\divop V_f\Vert_{L^\infty_{t,\xi} W^{1,\infty}_x}\leq \Vert \divop K\Vert_{L^1}\,\Vert w\Vert_{L^\infty_\xi\mathcal{M}_\zeta\cap L^\infty_\zeta\mathcal{M}_\xi}\,\Vert \nabla_x f\Vert_{L^\infty_{t,\xi}L^\infty_x}.\\
\end{aligned}
\end{align}

Putting \eqref{E-independ2-linear-estimate1}, and \eqref{E-independ2-linear-estimate3} together into \eqref{propW11} and \eqref{propW1infty} implies that
\begin{align}\label{E-independ2-linear-estimate}
\begin{aligned}
&\Vert \mathcal{L}f\Vert_{L^\infty_{t,\xi}W^{1,1}_x\cap W^{1,\infty}_x}\leq C_1\Vert f^0\Vert_{L^\infty_\xi W^{1,1}_x\cap W^{1,\infty}}\\
&\hskip3.4cm\times \exp\left(C_2\,t_*\,(\Vert K\Vert_{L^\infty}+\Vert \divop K\Vert_{L^1})\Vert w\Vert_{L^\infty_\xi\mathcal{M}_\zeta\cap L^\infty_\zeta\mathcal{M}_\xi}\Vert f\Vert_{L^\infty_{t,\xi}W^{1,1}_x\cap W^{1,\infty}_x}\right),
\end{aligned}
\end{align}
for some universal $C_1,C_2\geq 1$. In particular, the above estimate show that $\mathcal{L}f\in E$ for every $f\in E$.

\medskip

$\diamond$ {\it Step~3: Invariance of $E_R$ and contractivity of $\mathcal{L}$}.\\
Set now any $R>C_1\Vert f^0\Vert_{L^\infty_\xi W^{1,1}_x W^{1,\infty}_x}$ and note that the above estimates also show that $\mathcal{L}(E_R)\subset E_R$ if the maximal time $t_*$ is taken sufficiently small, namely,
\[
0<t_*\leq t_1(R):=\frac{1}{C_2(\Vert K\Vert_{L^\infty}+\Vert \divop K\Vert_{L^1})\Vert w\Vert_{L^\infty_\xi\mathcal{M}_\zeta\cap L^\infty_\zeta\mathcal{M}_\xi} R}\log\left(\frac{R}{C_1\Vert f^0\Vert_{L^\infty_\xi W^{1,1}_x\cap W^{1,\infty}_x}}\right).
\]
Our next goal is to show that $\mathcal{L}$ is contractive under the $L^1$ norm. To do so, let us consider $f,g\in E_R$ and note that $f-g$ follows the following linear transport equation with a source term
\[
\left\{
\begin{array}{l}
\displaystyle\partial_t(\mathcal{L}f-\mathcal{L}g)+\divop_x(\mathcal{V}_g\,(\mathcal{L}f-\mathcal{L}g))+\divop_x((\mathcal{V}_f-\mathcal{V}_g)\,\mathcal{L}f)=\nu\Delta_x (\mathcal{L}f-\mathcal{L}g),\\
\displaystyle (\mathcal{L}f-\mathcal{L}g)(0,\cdot,\cdot)=0.
\end{array}\right.
\]
Taking $L^1$ norms with respect to $x$ and $\xi$ yields
\begin{align}\label{E-independ2-linear-contraction1}
\begin{aligned}
\Vert (\mathcal{L}f-\mathcal{L}g)(t,\cdot,\cdot)\Vert_{L^1_{x,\xi}}&\leq \Vert \divop_x((\mathcal{V}_f-\mathcal{V}_g)\,\mathcal{L}f)\Vert_{L^1_{t,x,\xi}}\\
&\leq \Vert \divop_x(\mathcal{V}_f-\mathcal{V}_g)\Vert_{L^\infty_{t,\xi}L^1_x}\Vert \mathcal{L}f\Vert_{L^\infty_{t,x,\xi}}+\Vert \mathcal{V}_f-\mathcal{V}_g\Vert_{L^\infty_{t,\xi} L^1_x}\Vert \nabla_x\mathcal{L}f\Vert_{L^\infty_{t,x,\xi}},
\end{aligned}
\end{align}
for $t\in [0,\ t_*]$. By the estimate $\eqref{wfL1LinftyBx}_1$ in Lemma \ref{bilinearboundBx} we have
\begin{align}\label{E-independ2-linear-contraction2}
\begin{aligned}
&\Vert \mathcal{V}_f-\mathcal{V}_g\Vert_{L^1_{t,x,\xi}}\leq \Vert K\Vert_{L^1}\Vert w\Vert_{L^\infty_\xi\mathcal{M}_\zeta\cap L^\infty_\zeta\mathcal{M}_\xi} \Vert f-g\Vert_{L^1_{t,x,\xi}},\\
&\Vert \divop_x(\mathcal{V}_f-\mathcal{V}_g)\Vert_{L^1_{t,x,\xi}}\leq \Vert \divop K\Vert_{L^1}\Vert w\Vert_{L^\infty_\xi\mathcal{M}_\zeta\cap L^\infty_\zeta\mathcal{M}_\xi}\Vert f-g\Vert_{L^1_{t,x,\xi}}.
\end{aligned}
\end{align}
Then, putting \eqref{E-independ2-linear-contraction1} and \eqref{E-independ2-linear-contraction2} together, and using that $\mathcal{L}f\in E_R$ by the above step, we have that
\begin{equation}\label{E-independ2-linear}
\Vert \mathcal{L}f-\mathcal{L}g\Vert_{L^1_{t,x,\xi}}\leq R\,t_*(\Vert K\Vert_{L^1}+\Vert \divop K\Vert_{L^1})\Vert w\Vert_{L^\infty_\xi\mathcal{M}_\zeta\cap L^\infty_\zeta\mathcal{M}_\xi}\Vert f-g\Vert_{L^1_{t,x,\xi}}.
\end{equation}
Take $t_*$ sufficiently small so that $t_*\leq t_1(R)$ as above and, in addition
\[
0<t_*< t_2(R):=\frac{1}{R\,(\Vert K\Vert_{L^1}+\Vert \divop K\Vert_{L^1})\Vert w\Vert_{L^\infty_\xi\mathcal{M}_\zeta\cap L^\infty_\zeta\mathcal{M}_\xi}}.
\]
Hence $\mathcal{L}$ is contractive on $E_R$ under the $L^1$ norm by the stability estimate \eqref{E-independ2-linear}. By the Banach fixed point theorem there must exist a (unique) fixed point $f\in E_R$ of $\mathcal{L}$, leading to a weak solution of \eqref{independ2} in $[0,\ t_*]$. Note that $t_*$ must be taken small enough so that $0<t_*<\min\{t_1(R),t_2(R)\}$. However, restarting the argument from $t_*$ allows extending this weak solution solution to any finite time interval as usual, since the above a priori estimate \eqref{E-independ2-linear-estimate} above does not blow up in time.
  \end{proof}

We finish this technical appendix with the proof of Lemma~\ref{gooddensity}.

\begin{proof}[Proof of Lemma~\ref{gooddensity}]
  As usual, one uses an approximation by convolution. Because $w$ is defined on $[0,\ 1]^2$, we first extend  $w$ as a periodic function on with period $1$ in both variables. Similarly we naturally consider any function on $[0,\ 1]$ as periodic, allowing to naturally use convolutions.  Then, choose any convolution kernel $L_n\geq 0$ with $L_n\in C^{\infty}_c(\R)$ for any fixed $n$ and define $w_n=L_n\otimes L_n\star_{\xi,\zeta} w$. Since $w\in \mathcal{M}_{\xi,\zeta}$, we immediately note that $w_n\in L^\infty_{\xi,\zeta}$.

  Next, for any $f\in L^1_\xi C_\zeta$, we trivially have that $\|L_n\otimes L_n \star f\|_{L^1_\xi L^\infty_\zeta}\leq \|f\|_{L^1_\xi L^\infty_\zeta}$. By duality, we deduce that $w_n$ is uniformly bounded in $L^\infty_\xi \mathcal{M}_\zeta\cap L^\infty_\zeta \mathcal{M}_\xi$ and, hence, in $L^\infty_\xi L^1_\zeta\cap L^\infty_\zeta L^1_\xi$, since $w_n\in L^\infty_{\xi,\zeta}$.

  Consider now any $\phi_n$ converging weakly to $\phi$ in $H^1([0,\ 1])$, we have that 
  \[
  \begin{split}
\int_0^1 \phi_n(\zeta)\,L_n\star_\zeta w(\xi,\zeta)\,d\zeta=
\int_0^1 \tilde L_n\star_\zeta \phi_n(\zeta)\,w(\xi,\zeta)\,d\zeta,
  \end{split}
  \]
  with $\tilde L_n(\xi)=L_n(-\xi)$. By Lemma~\ref{bilinearbound},
  for any $\psi\in L^\infty([0,\ 1])$, we deduce that
  \[
\int_0^1 \psi(\xi)\,\left(\int_0^1 \phi_n(\zeta)\,L_n\star_\zeta w(\xi,\zeta)\,d\zeta-\int_0^1 \phi(\zeta)\, w(\xi,d\zeta)\right)\,d\xi\leq \|\psi\|_{L^\infty}\,\|w\|_{L^\infty_\xi \mathcal{M}_\zeta\cap L^\infty_\zeta \mathcal{M}_\xi}\,\|\phi-\tilde L_n\star\phi_n\|_{L^1}.
\]
Since $H^1$ is compactly embedded in $L^1$, this shows that
\[
\left\|\int_0^1 \phi_n(\zeta)\,L_n\star_\zeta w(\xi,\zeta)\,d\zeta-\int_0^1 \phi(\zeta)\, w(\xi,d\zeta)\right\|_{L^1_\xi}\to 0,\quad\mbox{as}\ n\to\infty.
\]
Still by Lemma~\ref{bilinearbound} since $H^1\subset L^\infty$, $\int_0^1 \phi(\zeta)\, w(\xi,d\zeta)$ is a fixed function in $L^\infty$ and consequently its convolution by $L_n$ converges in $L^1$ (and in all $L^p$, $p<\infty$). Therefore
\[
\begin{split}
  &\left\|\int_0^1 \phi_n(\zeta)\,w_n(\xi,\zeta)\,d\zeta-\int_0^1 \phi(\zeta)\, w(\xi,d\zeta)\right\|_{L^1_\xi}\\
  &\quad\leq \left\|L_n\star_\xi\left(\int_0^1 \phi_n(\zeta)\,L_n\star_\zeta w(\xi,\zeta)\,d\zeta-\int_0^1 \phi(\zeta)\, w(\xi,d\zeta)\right)\right\|_{L^1_\xi}\\
&\qquad+\left\|L_n\star_\xi\int_0^1 \phi_n(\zeta)\,w(\xi,\zeta)\,d\zeta-\int_0^1 \phi(\zeta)\, w(\xi,d\zeta)\right\|_{L^1_\xi}\to 0,
\end{split}
\]
proving the convergence of $w_n$ in $L^1_\xi H^{-1}_\zeta$.
\end{proof}



\begin{thebibliography}{99}
\bibitem{Abb} L. F. Abbott, {\em Lapicque's introduction of the integrate-and-fire model neuron (1907)}, Brain Res. Bull. {\bf 50(5-6)} (1999), 303--4. 

\bibitem{A-A} T. Aoki, T. Aoyagi, {\em Co-evolution of Phases and Connection Strengths in a Network of Phase Oscillators}, Phys. Rev. Lett. \textbf{102} (2009), 034101.

\bibitem{BacSze} A. Backhausz, B. Szegedy, {\em Action convergence of operators and graphs}, Canad. J. Math. (2020), 1--50.
  
\bibitem{bara} A. L. Barab\'asi, R. Albert, {\it Emergence of scaling in random networks}, Science {\bf 286} (1999), 509--512. 
  
\bibitem{Boi11} E. Boissard, {\em Probl\`emes d'interaction discret-continu et distances de Wasserstein.} PhD thesis, Universit\'e de Toulouse III, 2011.

\bibitem{BT} S. Bochner and A. E. Taylor, {\em Linear Functionals on Certain Spaces of Abstractly-Valued Functions}, Ann. of Math. {\bf 39(4)} (1938), 913--944.

\bibitem{BCCZ1} C. Borgs, J. T. Chayes, H. Cohn, Y. Zhao, {\em An $L^p$ theory of sparse graph convergence I: limits, sparse random graph models, and power law distributions}, Trans. Am. Math. Soc. {\bf 372(5)} (2019), 3019--3062.

\bibitem{BCCZ2} C. Borgs, J. T. Chayes, H. Cohn, Y. Zhao, {\em An $L^p$ theory of sparse graph convergence II: LD convergence, quotients, and right convergence}, Ann. Probab. {\bf 46}(1) (2018), 337--396.

\bibitem{BCKL} C. Borgs, J. T. Chayes, J. Kahn, L. Lov{\'a}sz, {\it Left and right convergence of graphs with bounded degree}, Random Struct. Algorithms {\bf 42} (2013), 1--28.

\bibitem{BCLSV06} C. Borgs, J. T. Chayes, L. Lov\'{a}sz, V. S\'{o}s, K. Vesztergombi, {\em Counting graph homomorphisms}. In	Topics in discrete mathematics (eds.\ M. Klazar, J. Kratochv\'{\i }l, M. Loebl, J. Matou\v {s}ek, R. Thomas, and P. Valtr), {S}pringer, 2006, {315--371}.
  
\bibitem{BCLSV2} C. Borgs, J. T. Chayes, L. Lov\'{a}sz, V. S\'{o}s,  K. Vesztergombi,  {\em Convergent graph sequences {I}: subgraph frequencies, metric properties, and testing}, Adv. Math. {\bf 219} (2008), 1801--1851.

\bibitem{BCLSV3} C. Borgs, J. T. Chayes, L. Lov\'{a}sz, V. S\'{o}s, K. Vesztergombi,  {\em Convergent graph sequences II: multiway cuts and statistical physics}, Ann. Math. {\bf 176} (2012), 151--219.
  
\bibitem{BH} W. Braun, K. Hepp, {\em The Vlasov dynamics and its fluctuations in the 1/N limit of interacting classical particles}, Comm. Math. Phys. {\bf 56(2)} (1977),101--113.

\bibitem{Bru} N. Brunel, {\em Dynamics of sparsely connected networks of excitatory and inhibitory spiking neurons}, J. Comput. Neurosci. {\bf 8(3)} (2000), 183--208.

\bibitem{Bur} A. N. Burkitt, {\it A review of the integrate-and-fire neuron model: I. Homogeneous synaptic input}, Biol. Cybern. {\bf 95(1)} (2006), 1--19. 
    
\bibitem{CCP} M. J. C\'aceres, J. A. Carrillo, B. Perthame, {\em Analysis of nonlinear noisy integrate \& fire neuron models: blow-up and steady states}, J. Math. Neurosci. {\bf 1} (2011), Art. 7, 33 pp.

\bibitem{CP} M. J. C\'aceres, B. Perthame, {\em Beyond blow-up in excitatory integrate and fire neuronal networks: refractory period and spontaneous activity}, J. Theor. Biol. {\bf 350} (2014), 81--89.

\bibitem{CaGoGuSc} J. A. Carrillo, M. d. M. Gonzalez, M. P. Gualdani, M. E. Schonbek, {\em Classical solutions for a nonlinear Fokker-Planck equation arising in computational neuroscience}, Comm. Part. Differ. Equat. {\bf 38(3)} (2013), 385--409.
    
\bibitem{CPSS} J. A. Carrillo, B. Perthame, D. Salort, D. Smets,  {\em Qualitative Properties of Solutions for the Noisy Integrate \& Fire model in Computational Neuroscience}, Nonlinearity {\bf 28 (9)} (2015), 3365.

\bibitem{CM1} H. Chiba, G. S. Medvedev, {\em The mean field analysis for the Kuramoto model on graphs I. The mean field equation and transition point formulas}, Discrete Contin. Dyn. Syst. Ser. A  {\bf 39(1)} (2019), 131--155.

\bibitem{CM2} H. Chiba, G. S. Medvedev, {\em The mean field analysis of the Kuramoto model on graphs II. Asymptotic stability of the incoherent state, center manifold reduction, and bifurcations}, Discrete Contin. Dyn. Syst. Ser. A  {\bf 39(7)} (2019), 3897--3921.

\bibitem{CMM} H. Chiba, G. S. Medvedev, M. S. Muzuhara, {\em Bifurcations in the Kuramoto model on graphs}, Chaos {\bf 28} (2018), 073109.

 \bibitem{CoBrGoWa} A. Compte, N. Brunel, P. S. Goldman-Rakic, X.-J. Wang, {\em Synaptic mechanisms and network dynamics underlying spatial working memory in a cortical network model}, Cereb. Cortex {\bf 10} (2000), 910--923.
 
\bibitem{Compte} A. Compte, M. V. Sanchez-Vives, D. A. McCormick, X.-J. Wang, {\em Cellular and network mechanisms of slow oscillatory activity ($<$1 Hz) and wave propagations in a cortical network model}, J. Neurophysiol. {\bf 89} (2003), 2707--2725.

\bibitem{Cop22}  F. Coppini, {\em Long time dynamics for interacting oscillators on graphs}, The Annals of Applied
Probability, {\bf 32} (1) 2022.

\bibitem{CDG} F. Coppini, H. Dietert, G. Giacomin, {\em A law of large numbers and large deviations for interacting diffusions on Erd\"os-R\'enyi graphs}, Stoch. Dyn. {\bf 20} (2020), 2050010.

\bibitem{CLP23}  F. Coppini, E. Lu\c con, C. Poquet, {\em Central limit theorems for global and local empirical measures of diffusions on Erd\"os-R\'enyi graphs}, Electronic Journal of Probability,
{\bf 28} (2023).

\bibitem{dLR} T. de La Rue, {\em Espaces de Lebesgue}, S\'eminaire de probabilit\'es de Strasbourg, Tome 27 (1993), pp. 15-21.

\bibitem{DeInRuTa2} F. Delarue, J. Inglis, S. Rubenthaler, E. Tanr\'e, {\em Particle systems with a singular mean-field self-excitation. Application to neuronal networks}, Stoch. Process. Their Appl. 125 (2015) 2451--2492. 

\bibitem{DeInRuTa} F. Delarue, J. Inglis, S. Rubenthaler, E. Tanr\'e, et al, {\em Global solvability of a networked integrate-and-fire model of McKean-Vlasov type}, Ann. Appl. Probab. {\bf 25(4)} (2015), 2096--2133.
  
\bibitem{DGL} S. Delattre, G. Giacomin, E. Lu\c con, {\em A note on dynamical models on random graphs and Fokker-Planck equations}, J. Stat. Phys. {\bf 165(4)} (2016), 785--798. 

\bibitem{DemGalLocPre} A. De Masi, A. Galves, E. L\"ocherbach, E. Presutti, {\em Hydrodynamic limit for interacting neurons}, J. Stat. Phys. {\bf 158} (2015), 866--902.

\bibitem{DU} J. Diestel and J. J. Uhl Jr, {\em Vector Measures}, Mathematical Surveys and Monographs, vol. 15, American Mathematical Society, 1977.
  
\bibitem{DY95} V. Dobri\'c, J. E. Yukich, {\em Asymptotics for transportation cost in high dimensions}, J. Theoret. Probab. {\bf 8(1)} (1995), 97--118.  
  
\bibitem{Do} R. L. Dobrushin, {\em Vlasov equations},  Funct. Anal. its Appl. {\bf 13(2)} (1979), 115--123.

\bibitem{FitzHugh} R. FitzHugh, {\em Impulses and physiological states in theoretical models of nerve membrane}, Biophysical J. {\bf 1} (1961), 445--466.

\bibitem{FPZ} F. Flandoli, E. Priola, G. Zanco, {\em A mean-field model with discontinuous coefficients for neurons with spatial interaction},  Discrete Contin. Dyn. Syst. Ser. A {\bf 39} (2019), 3037--3067.

\bibitem{FouGui} N. Fournier, A. Guillin, {\em On the rate of convergence in Wasserstein distance of the empirical measure}, Probab. Theory Relat. Fields {\bf 162} (2015), 707--738.  
  
\bibitem{FouLoc} N. Fournier, E. L\"ocherbach, {\em On a toy model of interacting neurons}, Ann. Inst. H. Poincar\'e Probab. Statist. {\bf 52(4)} (2016), 1844--1876.

\bibitem{GerKis} W. Gerstner, W. M. Kistler, {\em Spiking neuron models: Single neurons, populations, plasticity}. Cambridge university press, 2002.

\bibitem{GkoKueh} M. A. Gkogkas, C. Kuehn, {\em Graphop Mean-Field Limits for Kuramoto-Type Models},  SIAM J. Appl. Dyn. Syst. {\bf 21(1)} (2022), 248--283.
  
\bibitem{golse} F. Golse, {\em On the dynamics of large particle systems in the mean field limit Macroscopic and Large Scale Phenomena: Coarse Graining, Mean Field Limits and Ergodicity}, Springer, Berlin 2016, 1--144.

\bibitem{H-J} M. Hauray, P. E. Jabin, {\em Particles approximations of Vlasov equations with singular forces: Propagation of chaos}, Ann. Sci. Ec. Norm. Super. \textbf{48(4)}, 891--940 (2015).

\bibitem{H} D.O. Hebb, {\em The Organization of Behavior}, Wiley, New York, 1949.

\bibitem{HH} A. L. Hodgkin, A. F. Huxley, {\em A quantitative description of membrane current and its application to conduction and excitation in nerve}, J. Physiol. {\bf 117} (1952), 500--544.

 \bibitem{HulHabFraTurTakal} B. K. Hulse, H. Haberkern, R. Franconville, D. B. Turner-Evans, S. Y. Takemura, T. Wolff, M. Noorman, M. Dreher, C. Dan, R. Parekh, A. M. Hermundstad, G. M. Rubin, V. Jayaraman, {\em A connectome of the Drosophila central complex reveals network motifs suitable for flexible navigation and context-dependent action selection}, eLife {\bf 10} (2021), e66039.
 
\bibitem{IT-77} A. Ionescu Tulcea, C. Ionescu Tulcea, {\em Topics in the theory of lifting}. Ergebnisse der Mathematik und ihrer Grenzgebiete. 2. Folge, vol. 48, Springer-Verlag, Berlin, Heidelberg, 1977.

\bibitem{Jab} P. E. Jabin, {\em A review of the mean field limits for Vlasov equations}, Kinet. Relat. Mod. {\bf 7} (2014), 661--711.
  
\bibitem{J-W} P. E. Jabin, Z. Wang, {\em Mean field limit and propagation of chaos for Vlasov systems with bounded forces}, J. Funct. Anal. \textbf{271(12)} (2016), 3588--3627.

\bibitem{JW2} P.-E. Jabin, Z. Wang, {\em Mean field limit for stochastic particle systems}. In: Active Particles, Volume 1, Theory, Models, Applications, pp 379--402.  Birkhauser-Springer, Boston  (2017). 

\bibitem{J-W-3} P.-E. Jabin, Z. Wang, {\em Quantitative estimates of propagation of chaos for stochastic systems with $W^{-1,\infty}$ kernels}, Invent. Math. \textbf{214(1)} (2018), 523--591.

 \bibitem{J-Z} P.-E. Jabin, D. Zhou, {\em The mean-field Limit of sparse networks of integrate and fire neurons}, Preprint arXiv:2309.04046.

\bibitem{KueXu} C. Kuehn, C. Xu, {\em Vlasov equations on digraph measures}, {\bf 339} (2022),  J. Differ. Equ. 261--349.

\bibitem{KalMed}  D. Kaliuzhnyi-Verbovetskyi, G.S. Medvedev, {\em The Mean Field Equation for the Kuramoto Model on Graph Sequences with Non-Lipschitz Limit}, SIAM J. Math. Anal., 50(3), 2441–2465.
  
\bibitem{KunLovSze} D. Kunszenti-Kov\'acs, L. Lov\'asz, B. Szegedy, {\em Measures on the square as sparse graph limits}, J. Comb. Theory. Ser. B {\bf 138} (2019), 1--40.
  
\bibitem{Ku1} Y. Kuramoto,  {\em Chemical Oscillations, waves and turbulence.} Springer-Verlag, Berlin, 1984.

\bibitem{Ku2} Y. Kuramoto,  {\em International symposium on mathematical problems in mathematical physics}, Lecture Notes in Theoretical Physics. \textbf{30}  (1975), 420

\bibitem{Lacker} D. Lacker, {\em Hierarchies, entropy and quantitative propagation of chaos for mean field diffusions}. Preprint arXiv:2105.02983.

\bibitem{LRW23}  D. Lacker, K. Ramanan, R. Wu, {\em Local weak convergence for sparse networks of interacting processes}, The Annals of Applied Probability, {\bf 33} (2), 2023.

\bibitem{L} C. Lancellotti,  {\em On the Vlasov limit for systems of nonlinearly coupled oscillators without noise}. Transport Theor. Stat. Phys. \textbf{34} (2005) 523--535.

\bibitem{Le-bas} J. M. Levine, J. Bascompte, P. B. Adler, S. Allesina, {\em Beyond pairwise coexistence: biodiversity maintenance in complex communities}, Nature {\bf 456} (2017), 56--64.

\bibitem{Lo} L. Lov\'{a}sz, {\em Large Networks and Graph Limits}, AMS Colloquium Publications
Volume: 60, 475 pp, 2012.

\bibitem{LS} L. Lov\'{a}sz, B. Szegedy, {\em Limits of dense graph sequences}, J. Comb. Theory. Ser. B {\bf 96} (2006), 933--957.

\bibitem{Lucon} E. Lu\c{c}on, {\em Quenched asymptotics for interacting diffusions on inhomogeneous random graphs},  Stoch. Process. Their Appl. {\bf 130(11)} (2020), 6783--6842.
  
\bibitem{MG1} M. Mattia, P. Del Giudice, {\em Efficient event-driven simulation of large networks of spiking neurons and dynamical synapses}, Neural Comput. {\bf 12(10)} (2000), 2305--2329.

\bibitem{Medvedev2} G. Medvedev, {\em The Nonlinear Heat Equation on Dense Graphs and Graph Limits}, SIAM J. Math. Anal. {\bf 46(4)} (2014), 883--898.

\bibitem{Medvedev} G. Medvedev, {\em The continuum limit of the Kuramoto model on sparse random graphs}, Commun. Math. Sci. {\bf 17(4)} (2019), 883--898.
    
\bibitem{Nagumo} J. Nagumo, S. Arimoto, S. Yoshizawa, {\em An active pulse transmission line simulating nerve axon}, Proc. IRE. {\bf 50} (1962), 2061--2070.

\bibitem{N} H. Neunzert, {\em An introduction to the nonlinear Boltzmann--Vlasov equation}. In Kinetic Theories and the Boltzmann Equation, Lecture Notes in Mathematics vol. \textbf{1048}. Springer-Verlag, 1984.

\bibitem{NW} H. Neunzert, J. Wick, {\em Die Approximation der Lsungvon Integro-Differentialgleichungen durch endliche Punktmengen}, In R. Ansorge and W. T\"ornig, editors, Numerische Behandlung nichtlinearer Integrodifferential - und Differentialgleichun-gen, volume 395 of Lecture Notes in Mathematics, pages 275--290. Springer, Berlin, Heidelberg, 1974.

\bibitem{ORS20} R.I. Oliveira, G.H. Reis, L. M. Stolerman, {\em Interacting diffusions on sparse
graphs: hydrodynamics from local weak limits}, Electronic Journal of Probability, {\bf 25}:135, 2020.

\bibitem{O} A. Omurtag, B.W. Knight, L. Sirovich, {\it On the simulation of large populations of neurons}, J. Comput. Neurosci. {\bf 8(1)} (2000), 51--63.

\bibitem{PaPeSa} K. Pakdaman, B. Perthame, D. Salort, {\em Dynamics of a structured neuron
population}, Nonlinearity, {\bf 23(1)} (2010), 55--75.

\bibitem{PPS} J. Park, D. Poyato, J. Soler, {\em Filippov trajectories and clustering in the Kuramoto model with singular couplings}, J. Eur. Math. Soc. {\bf 23} (2021), 3193--3278.

\bibitem{MG2} E. Pastorelli, C. Capone, F. Simula, M. V. Sanchez-Vives, P. Del Giudice, M. Mattia, P. S. Paolucci, {\em Scaling of a large-scale simulation of synchronous slow-wave and asynchronous awake-like activity of a cortical model with long-range interconnections}, Front. Syst. Neurosci. {\bf 13} (2019), 33.

\bibitem{PS} B. Perthame, D. Salort, {\em On a voltage-conductance kinetic system for integrate \& fireneural networks}, Kinet. Relat. Models {\bf 6} (2013),  841--864.

\bibitem{Phan} D. Pham, D. Karaboga, {\em Intelligent optimisation techniques: genetic algorithms, tabu search, simulated annealing and neural networks}, Springer Science \& Business Media, 2012.

\bibitem{PhPaChVi} J. Pham, K. Pakdaman, J. Champagnat, J.-F. Vibert, {\em Activity in sparsely connected excitatory neural networks: effect of connectivity}, Neural Netw. {\bf 11(3)} (1998),415--434.
  
\bibitem{PJ} A. S. Pillai, V. K. Jirsa, {\em Symmetry Breaking in Space-Time Hierarchies Shapes Brain Dynamics and Behavior}, Neuron {\bf 94(5)} (2017), 1010--1026.

\bibitem{PiShPaShLiChSi} J. W. Pillow, J. Shlens, L. Paninski, A. Sher, A. M. Litke, E. J. Chichilnisky, E. P. Simoncelli, {\em Spatio-temporal correlations and visual signalling in a complete neuronal population}, Nature {\bf 454} (2008), 995--999.

\bibitem{Po} D. Poyato, {\em Filippov flows and mean-field limits in the kinetic singular Kuramoto model}. Preprint arXiv:1903.01305.

\bibitem{P-S} D. Poyato, J. Soler, {\em Euler-type equations and commutators in singular and hyperbolic limits of kinetic Cucker-Smale models}, Math. Models Methods Appl. Sci. \textbf{27(6)} (2017), 1089--1152.

\bibitem{RachevR} S.T. Rachev and L. Ruschendorf. Mass Transportation Problems. Springer-Verlag, 1998.

\bibitem{RBW} A. Renart, N. Brunel, X. J. Wang, {\em Mean-field theory of irregularly spiking neuronal populations and working memory in recurrent cortical networks}, Computational neuroscience: A comprehensive approach, 431--490, J Feng Editor, CRC Press, 2003.

\bibitem{spohn} H. Spohn, {\em Dynamics of Charged Particles and their Radiation Field}, Cambridge University Press, 2004.

\bibitem{sporns} O. Sporns, {\em Networks of the Brain}, Cambridge, MA: MIT Press, 2010. 

\bibitem{sze} E. Szemer\'edi, {\em On sets of integers containing no k elements in arithmetic progression}, Acta Arith. {\bf 27} (1975), 299--345.

\bibitem{Sznit} A.-S. Sznitman, {\em Topics in propagation of chaos}, Ecole d'Et\'e de Probabilit\'es de Saint-Flour XIX -1989 (P. L. Hennequin, ed.), Lecture Notes in Mathematics, vol. 1464, Springer, Berlin, Heidelberg, 1991, pp. 165--251.

\bibitem{Vara} V. S. Varadarajan, {\em On the convergence of sample probability distributions}, Sankhya \textbf{19} (1958), 23--26.
  
\bibitem{W-82} P. Walters, {\em An Introduction to Ergodic Theory}, Graduate Texts in Mathematics, vol. 79, Springer, New-York, 1982.
  
\bibitem{W-S} D. J. Watts, S. H. Strogatz, {\em Collective dynamics of small-world networks}, Nature {\bf 393} (1998), 440--442.
  
\bibitem{WC} H. R. Wilson, J. D. Cowan, {\em Excitatory and inhibitory interactions in localized populations of model neurons}, Biophys. J. {\bf 12} (1972), 1--24.

\bibitem{WJ} M. M. Woodman, V. K. Jirsa, {\em Emergent Dynamics from Spiking Neuron Networks through Symmetry Breaking of Connectivity,} PLoS ONE {\bf 8(5)} (2013), e64339.
\end{thebibliography}
\end{document}